\documentclass[reqno,11pt]{amsart}
\voffset=-1.75cm \hoffset=-1.75cm \textheight=24.0cm \textwidth=16.2cm
\usepackage{amsmath, amsfonts,amsthm,amssymb,amsbsy,upref,color,graphicx,amscd,hyperref,enumerate,enumitem,relsize}
\usepackage{color}
\usepackage[active]{srcltx}
\usepackage[latin1]{inputenc}
\usepackage{bbm}
\usepackage{graphicx}
\usepackage{subfig} 
\usepackage{xfrac}
\usepackage{comment} 

\usepackage{dsfont}



\def\eps{{\varepsilon}}
\def\N{\mathbb{N}}
\def\O{\Omega}

\def\R{\mathbb{R}}

\newcommand{\be}{\begin{equation}}
\newcommand{\ee}{\end{equation}}

\newcommand{\ind}{\mathbbm{1}}

\theoremstyle{plain}
\newtheorem{teo}{Theorem}[section]
\newtheorem{lm}[teo]{Lemma}
\newtheorem{prop}[teo]{Proposition}
\newtheorem{coro}[teo]{Corollary}

\theoremstyle{definition}
\newtheorem{definition}[teo]{Definition}

\newtheorem{oss}[teo]{Remark}

\def\Xint#1{\mathchoice
   {\XXint\displaystyle\textstyle{#1}}%
   {\XXint\textstyle\scriptstyle{#1}}%
   {\XXint\scriptstyle\scriptscriptstyle{#1}}%
   {\XXint\scriptscriptstyle\scriptscriptstyle{#1}}%
   \!\int}
\def\XXint#1#2#3{{\setbox0=\hbox{$#1{#2#3}{\int}$}
     \vcenter{\hbox{$#2#3$}}\kern-.5\wd0}}

\def\aver#1{\Xint-_{#1}}

\DeclareMathOperator{\dive}{div}

\usepackage{refcount}

\newcounter{cte}

\setcounter{cte}{0}

\numberwithin{equation}{section}

\begin{document}

\title[Regularity of optimal sets]{Regularity of optimal sets for some functional involving eigenvalues of an operator in divergence form}

\begin{abstract}
	In this paper we consider minimizers of the functional 
	\begin{equation*}
	\min \big\{ \lambda_1(\O)+\cdots+\lambda_k(\O) + \Lambda|\O|, \ : \ \O \subset D \text{ open} \big\}
	\end{equation*}
	where $D\subset\R^d$ is a bounded open set and where $0<\lambda_1(\O)\leq\cdots\leq\lambda_k(\O)$ are the first $k$ eigenvalues on $\O$ of an operator in divergence form with Dirichlet boundary condition and with H\"{o}lder continuous coefficients. We prove that the optimal sets $\O^\ast$ have finite perimeter and that their free boundary $\partial\O^\ast\cap D$ is composed of a {\it regular part}, which is locally the graph of a $C^{1,\alpha}$-regular function, and a {\it singular part}, which is empty if $d<d^\ast$, discrete if $d=d^\ast$ and of Hausdorff dimension at most $d-d^\ast$ if $d>d^\ast$, for some $d^\ast\in\{5,6,7\}$.
\end{abstract}

\author{Baptiste Trey}
\date{\today}
\maketitle
\tableofcontents

\section{Introduction}

This paper is dedicated to the regularity properties of the minimizers to the problem
\begin{equation}\label{e:shapeopt}
\min \big\{ \lambda_1(\O)+\cdots+\lambda_k(\O) + \Lambda|\O| \ : \ \O \subset D \text{ open} \big\}
\end{equation}
where $D\subset\R^d$ is a bounded open set (a box), $\Lambda$ is a positive constant and $0<\lambda_1(\O)\leq\cdots\leq\lambda_k(\O)$ stand for the first $k$ eigenvalues (counted with the due multiplicity) of an operator in divergence form. More precisely, we consider the operator $-b(x)^{-1}\dive(A_x\nabla\cdot)$, where the matrix-valued function $A:D\rightarrow \text{Sym}_d^+$ is uniformly elliptic with H\"{o}lder continuous coefficients, and $b\in W^{1,\infty}(D)$ is a positive Lipschitz continuous function bounded away from $0$. This means that for every eigenvalue $\lambda_i(\Omega)$ there exists an eigenfunction $u_i\in H^1_0(\Omega)$ such that 
\begin{equation}\label{e:def_op}
\left\{
      \begin{aligned}
        -\dive(A\nabla u_i) &= \lambda_i(\Omega) \,b\,u_i & &\text{ in}\quad \O \\
        u_i&=0 & &\text{ on}\quad \partial\O. \\
      \end{aligned}
    \right.
\end{equation}

We now state in the following theorem the main result of this present paper.

\begin{teo}\label{t:main}
Let $D\subset\R^d$ be a bounded open set and let $A:D\rightarrow\text{Sym}_d^+$, $b\in W^{1,\infty}(D)$ satisfying \eqref{e:holderA}, \eqref{e:ellipA} and \eqref{e:hypfctb} (see below). Then every solution $\O^\ast$ to the problem \eqref{e:shapeopt} has finite perimeter. Moreover, the free boundary $\partial\Omega^\ast\cap D$ can be decomposed into the disjoint union of a regular part $\text{Reg}(\partial\O^\ast\cap D)$ and a singular part $\text{Sing}(\partial\O^\ast\cap D)$, where: 
\begin{enumerate}
\item $\text{Reg}(\partial\O^\ast\cap D)$ is locally the graph of a $C^{1,\alpha}$-regular function. \\
If, moreover, $a_{i,j}\in C^{k,\delta}(D)$ and $b\in C^{k-1,\delta}(D)$ for some $\delta\in(0,1)$ and $k\geq 1$, then $\text{Reg}(\partial\O^\ast\cap D)$ is locally the graph of a $C^{k+1,\alpha}$-regular function. 
\item for a universal constant $d^\ast\in\{5,6,7\}$ (see Definition \ref{d:dstar}), $\text{Sing}(\partial\O^\ast\cap D)$ is: 
\begin{itemize}
\item empty if $d<d^*$;
\item discrete if $d=d^*$; 
\item of Hausdorff dimension at most $(d-d^\ast)$ if $d>d^*$.
\end{itemize}
\end{enumerate}
\end{teo}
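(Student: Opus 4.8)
The plan is to reduce \eqref{e:shapeopt} to a vectorial one-phase Bernoulli free boundary problem and to run the associated regularity theory in the presence of H\"older continuous coefficients. Fix a solution $\O^\ast$ and $L^2(b\,dx)$-orthonormal eigenfunctions $u_1,\dots,u_k\in H^1_0(\O^\ast)$ spanning the first $k$ eigenspaces; set $U=(u_1,\dots,u_k)\colon D\to\R^k$ and $\O_U:=\{|U|>0\}$. A De Giorgi iteration for $-\dive(A\nabla\cdot)$ gives $u_i\in L^\infty(D)$, and a penalization argument gives $|\O^\ast\setminus\O_U|=0$, so one may argue with $U$ in place of $\O^\ast$. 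The key device is the Ky Fan (Courant--Fischer) identity
\begin{equation*}
\sum_{i=1}^k\lambda_i(\O)=\min\Big\{\sum_{i=1}^k\int_D A\nabla v_i\cdot\nabla v_i\,dx \ :\ v_i\in H^1_0(\O),\ \textstyle\int_D b\,v_iv_j\,dx=\delta_{ij}\Big\}.
\end{equation*}
Given a ball $B_r(x_0)\subset D$ and a competitor $V\in H^1(B_r(x_0);\R^k)$ with $V=U$ on $\de B_r(x_0)$, one plugs into the right-hand side the re-orthonormalized components of $\widehat U:=V\ind_{B_r(x_0)}+U\ind_{D\setminus B_r(x_0)}$ and compares with the minimality of $\O^\ast$; since the orthonormality constants of these components differ from $\delta_{ij}$ only by $O(r^d)$ (here $u_i\in L^\infty$ is used) and the eigenvalues of $\O^\ast$ are bounded, this yields that $U$ is an \emph{almost-minimizer} of the functional $\mathcal J(V,B_r(x_0)):=\int_{B_r(x_0)}A\nabla V:\nabla V\,dx+\Lambda\,|\O_V\cap B_r(x_0)|$, i.e.\ $\mathcal J(U,B_r(x_0))\le\mathcal J(V,B_r(x_0))+Cr^{d}$ for all such $V$.

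From this almost-minimality I would extract, following the Alt--Caffarelli scheme and its variable-coefficient (Brian\c{c}on) and vectorial (Mazzoleni--Terracini--Velichkov, Kriventsov--Lin) refinements: (i) the $u_i$, hence $U$, are locally Lipschitz in $D$; (ii) nondegeneracy, $\sup_{B_r(x_0)}|U|\ge c\,r$ for every $x_0\in\de\O^\ast\cap D$ and small $r$; and (iii) two-sided density estimates $c\,|B_r|\le|\O^\ast\cap B_r(x_0)|\le(1-c)\,|B_r|$ at every $x_0\in\de\O^\ast\cap D$. The density estimates combined with the standard Alt--Caffarelli argument give that $\O^\ast$ has locally finite perimeter in $D$ and that $\de\O^\ast\cap D$ coincides $\HH^{d-1}$-a.e.\ with the reduced boundary. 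The H\"older continuity of $A$ enters here only through harmless $(1+Cr^\alpha)$-type multiplicative errors.

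The core of the proof is the blow-up analysis at $x_0\in\de\O^\ast\cap D$. After a linear change of coordinates making $A(x_0)=\mathrm{Id}$, one establishes a Weiss-type monotonicity formula: the quantity
\begin{equation*}
W_{x_0}(U,r)=\frac1{r^{d}}\int_{B_r(x_0)}A\nabla U:\nabla U\,dx+\Lambda\,\frac{|\O_U\cap B_r(x_0)|}{r^{d}}-\frac1{r^{d+1}}\int_{\de B_r(x_0)}|U|^2\,d\HH^{d-1}
\end{equation*}
is almost-monotone in $r$ (monotone up to an additive $O(r^\alpha)$), so $W_{x_0}(U,0^+)$ exists. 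By nondegeneracy and Lipschitz continuity the rescalings $U_{x_0,r}(x):=r^{-1}U(x_0+rx)$ subconverge, as $r\to0$, in $C^{0,\gamma}_{\mathrm{loc}}$ to a $1$-homogeneous global minimizer of the constant-coefficient functional $\int|\nabla V|^2+\Lambda|\{|V|>0\}|$ (the eigenvalue term carries a vanishing factor $r^{2}$), on which $W_{x_0}$ is constant. For vectorial minimizers the components are proportional, so such a blow-up has the form $g(x)\,\xi$ with $\xi\in\mathbb{S}^{k-1}$ and $g$ a scalar $1$-homogeneous one-phase minimizer; either $g(x)=\beta(x\cdot\nu)^+$ is a half-plane solution, so that $W_{x_0}(U,0^+)=\tfrac12\omega_d\Lambda$ ($\omega_d=|B_1|$), or $g$ is singular. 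Set $\reg:=\{x_0\in\de\O^\ast\cap D:W_{x_0}(U,0^+)=\tfrac12\omega_d\Lambda\}$ and let $\sing$ be its complement in $\de\O^\ast\cap D$. Upper semicontinuity of $x_0\mapsto W_{x_0}(U,0^+)$ makes $\reg$ relatively open; there every point has a half-plane blow-up, hence $U$ is flat at all small scales, and Caffarelli's improvement-of-flatness argument in its variable-coefficient / almost-minimizer form (De Silva--Ferrari--Salsa, De Silva--Savin, Mazzoleni--Terracini--Velichkov, Kriventsov--Lin) upgrades flatness to $C^{1,\alpha}$ regularity of $\de\O^\ast$ near $x_0$. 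If in addition $a_{i,j}\in C^{k,\delta}$ and $b\in C^{k-1,\delta}$, a partial hodograph transform together with Schauder estimates for $\dive(A\nabla\cdot)$ (the Kinderlehrer--Nirenberg bootstrap, fed by the Bernoulli condition $(A\nu\cdot\nu)\,|\partial_\nu U|^2=\Lambda$ on $\de\O^\ast$) improves the free boundary to $C^{k+1,\alpha}$.

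Finally, a Federer dimension-reduction argument — using the almost-monotonicity of $W_{x_0}$ to take iterated blow-ups, together with the classification above — reduces the analysis of $\sing$ to the non-existence, in low dimension, of singular $1$-homogeneous global minimizers of the \emph{scalar} one-phase Bernoulli problem. Defining $d^\ast$ as the smallest dimension admitting such a cone (Definition \ref{d:dstar}), which satisfies $d^\ast\in\{5,6,7\}$ by Caffarelli--Jerison--Kenig and Jerison--Savin (non-existence for $d\le4$) and De Silva--Jerison (existence for $d=7$), the reduction gives $\sing=\emptyset$ for $d<d^\ast$, $\sing$ discrete for $d=d^\ast$, and $\dim_{\HH}\sing\le d-d^\ast$ for $d>d^\ast$. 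The main obstacle throughout is that the H\"older coefficient $A$ (and the weight $b$) destroy exact scaling and exact monotonicity, so every step — the Weiss formula, the compactness of blow-ups, the improvement of flatness, the dimension reduction — has to be redone with perturbative errors (of order $r^\alpha$, absorbed by freezing $A(x_0)=\mathrm{Id}$) that must be checked to be negligible at the relevant scale; a secondary difficulty, handled by always working with the full vector $U$ and the Ky Fan formula rather than with individual eigenfunctions, is the possible multiplicity of $\lambda_k(\O^\ast)$.
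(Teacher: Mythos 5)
Your overall architecture (quasi-minimality via Ky Fan, non-degeneracy and density estimates, Weiss almost-monotonicity, blow-up classification, half-plane characterization of the regular set, Federer/Weiss dimension reduction for the singular set) matches the paper closely, and your diagnosis of where the H\"older coefficients enter is correct. However, there is a genuine gap in the passage from ``half-plane blow-up'' to ``$C^{1,\alpha}$ free boundary near $x_0$''. You assert that one simply runs an improvement-of-flatness scheme in its vectorial/almost-minimizer form on $U$, but this is not how the cited results (De Silva, MTV) operate: De Silva's theorem is for a \emph{scalar} one-phase viscosity solution, and MTV (like this paper) must first \emph{reduce} the vectorial problem to a scalar one. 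Concretely, the blow-up limit $B_0=g(x)\xi$ has proportional components, but the finite-scale function $U$ does not; the components $u_2,\dots,u_k$ can change sign, and there is no a priori control of $u_i/u_1$ near $\partial\O^\ast$. The missing ingredient is the boundary Harnack principle on the NTA domain $\O^\ast_1$ (which itself requires first proving Reifenberg flatness near regular points), together with the non-degeneracy bound $C_1 u_1\geq|U|$ on $\O^\ast_1$; this pair of facts shows that the ratios $g_i=u_i/u_1$ extend H\"older-continuously up to $\partial\O^\ast_1$ and that $u_1$ is a viscosity solution of the scalar one-phase problem $|A^{\sfrac12}[\nabla u_1]|=g\sqrt\Lambda$ with $g$ H\"older. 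Only after this reduction is De Silva applicable. Your proposal never establishes this, and without it the regular-part regularity does not follow. Note also that the presence of the Lipschitz weight $b$ prevents the straightforward boundary Harnack for eigenfunctions and forces an additional device (in the paper, adding a spatial dimension and conjugating by exponentials) which your outline does not anticipate.

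A second, smaller omission: you implicitly assume one can work on a connected region, but the optimal $\O^\ast$ need not be connected (the box $D$ and variable coefficients prevent the translation trick of MTV). One must prove that the connected components of $\O^\ast$ do not meet inside $D$ (this follows from the density-$\geq\frac12$ classification of blow-up cones, which rules out two components touching), and then prove regularity on a single component $\O^\ast_1$ where $u_1>0$ (on which the non-degeneracy of $u_1$ holds); otherwise the boundary Harnack argument has nothing to run on.

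=== END REVIEW ===
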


The problem \eqref{e:shapeopt} can also be considered in the class of the quasi-open sets, but we stress out that it is the same thing. Indeed, preliminary results, inspired by the work of David and Toro in \cite{david-toro-15} (see also \cite{david-engelstein-garcia-toro-19}), have already been obtained in \cite{trey-19} in view to prove the regularity of the minimizers to \eqref{e:shapeopt}. The main results of the paper are stated in theorem \ref{t:lipquasimin}, where the author shows that if a quasi-open set $\O^\ast$ is solution, among the class of quasi-open sets, to the problem \eqref{e:shapeopt}, then the first $k$ eigenfunctions on $\O^\ast$ are locally Lipschitz continuous, and hence $\O^\ast$ is an open set.

One of the main interest and difficulty of this paper is to consider an operator with variable coefficients. This case is more involved than the case of the Laplacian and has been studied only recently. We notice that our result is quite general and applies, for instance, to an operator with drift $-\Delta+\nabla\Phi\cdot\nabla$ or in the case of a manifold.

The first result concerning the regularity of the free boundary of optimal sets (for spectral functionals) was established by Brian\c con and Lamboley in \cite{briancon-lamboley-09}, where they consider the minimization problem of the first eigenvalue of the Dirichlet Laplacian with inclusion and volume constraints. More precisely, using the strategy developed by Alt and Caffarelli in \cite{alt-caffarelli-81}, they prove that the optimal sets for the problem
\begin{equation}\label{e:min_lambda1}
\min \big\{ \lambda_1(\O) \ : \ \O \subset D \text{ open, }|\O|\leq m \big\}
\end{equation}
have $C^\infty$-regular boundary (inside $D$) up to a singular set whose $(d-1)$-Hausdorff measure is zero (provided that the box is bounded and connected). In \cite{mazzoleni-terracini-velichkov-17}, Mazzoleni, Terracini and Velichkov study the regularity properties of sets that minimize the sum of the first $k$ eigenvalues of the Dirichlet Laplacian among all sets of fixed volume, that is, minimizers of
\begin{equation}\label{e:min_sum_lambda}
\min \big\{ \lambda_1(\O)+\cdots+\lambda_k(\O) \ : \ \O \subset \R^d \text{ open},\ |\O|=1 \big\}.
\end{equation}
They prove that the regular part of the boundary of an optimal set is $C^\infty$-regular and, thanks to a dimension's reduction argument due to Weiss (see \cite{weiss-99}), that the singular set is of dimension at most $d-d^\ast$, hence improving the smallness estimate of the singular set.
Meanwhile, Kriventsov and Lin consider in \cite{kriventsov-lin-18} a more general functional and prove that minimizers of 
\begin{equation*}
\min \big\{ F(\lambda_1(\O),\cdots,\lambda_k(\O))+|\O| \ : \ \O \subset \R^d \text{ open} \big\}.
\end{equation*}
are $C^\infty$-regular up to a singular set of dimension at most $d-3$. Here, $F:\R^k\rightarrow\R$ is a function of class $C^1$ which is strictly increasing in each variable ($\partial_iF\geq c>0$). In \cite{kriventsov-lin-19}, they also obtain a regularity result in the case where the functional $F$ is non-decreasing in its parameters, which hence apply to minimizers of
\begin{equation*}
\min \big\{ F(\lambda_{k_1}(\O),\cdots,\lambda_{k_n}(\O))+|\O| \ : \ \O \subset \R^d \text{ quasi-open} \big\},
\end{equation*}
where the first eigenvalue is not necessary involved.
Notice that in these problems, the main difficulty is to deal with higher eigenvalues since they have a min-max variational characterization. 

On the other hand, regularity problems involving different operators have been studied only recently. In \cite{russ-trey-velichkov-19}, the authors prove the regularity of the minimizers to \eqref{e:min_lambda1} where $\lambda_1$ now stands for the first eigenvalue of a drifted operator $-\Delta+\nabla\Phi\cdot\nabla$ with Dirichlet boundary condition (for some $\Phi\in W^{1,\infty}(D,\R^d)$), and therefore extend the result of Brian\c con and Lamboley. We highlight that the operator considered in this paper (see \eqref{e:def_op}) is more general than the operator with drift $-\Delta+\nabla\Phi\cdot\nabla$ which corresponds to the special case where $A=e^{-\Phi}\text{Id}$ and $b=e^{-\Phi}$. Recently, Lamboley and Sicbaldi  successfully treated the minimization problem \eqref{e:min_lambda1} in the manifold setting with the Laplace-Beltrami operator (see \cite{lamboley-sicbaldi-19}). They prove the existence of an optimal set among quasi-open set provided that the manifold $M$ is compact and that optimal sets are $C^\infty$-regular if $M$ is connected (and $C^\infty$) up to $(d-d^\ast)$-dimensional singular set. 

Let us also mention that some regularity results have also been established in the context of multiphase shape optimization problems involving eigenvalues (see, for instance, \cite{conti-terracini-verzini-03}, \cite{caffarelli-lin-07}, \cite{ramos-tavares-terracini-16}, \cite{spolaor-trey-velichkov-19})

We notice that we deal with a penalized functional and that it is natural to expect that a similar result also holds with a volume constraint as in \eqref{e:min_lambda1}, but we will not address this question in this paper since our main motivation is to treat the case of an operator with variable coefficients. 

\subsection{Preliminaries and notations}
We will use the following notations throughout this paper. 
We fix a matrix-valued function $A=(a_{ij})_{ij} : D\rightarrow \text{Sym}_d^+$, where $\text{Sym}_d^+$ denotes the family of the real positive symmetric $d \times d$ matrices, which is uniformly elliptic and has H\"{o}lder continuous coefficients. Precisely, there exist positive constants $\delta_{\text{\tiny\sc A}},c_{\text{\tiny\sc A}}>0$ and $\lambda_{\text{\tiny\sc A}}\geq 1$ such that
\begin{equation}\label{e:holderA}
|a_{ij}(x)-a_{ij}(y)|\le c_{\text{\tiny\sc A}}|x-y|^{\delta_{\text{\tiny\sc A}}},\quad\text{for every}\quad i,j\quad\text{and}\quad  x,y\in D\,;
\end{equation}
\begin{equation}\label{e:ellipA}
\frac{1}{\lambda_{\text{\tiny\sc A}}^2}|\xi|^2\le \xi\cdot A_x\,\xi=\sum_{i,j=1}^da_{ij}(x)\xi_i\xi_j\le \lambda_{\text{\tiny\sc A}}^2|\xi|^2,\quad\text{for every}\quad x\in D \quad\text{and}\quad \xi\in\R^d.
\end{equation}    
We also fix a Lipschitz continuous function $b \in W^{1,\infty}(D)$ which we assume to be positive and bounded away from zero: there exists $c_b>0$ such that
\begin{equation}\label{e:hypfctb}
c_b^{-1} \leq b(x) \leq c_b \quad \text{for almost every} \quad x\in D.
\end{equation}

We set $m=b\,dx$ and we define, for any an open set $\O\subset D$, the spaces $L^2(\O;m)=L^2(\O)$ and $H^1_0(\O;m)=H^1_0(\O)$ endowed respectively with the norms
\begin{equation*}
\|u\|_{L^2(\O;m)}=\left(\int_{\O}u^2\,dm\right)^{1/2}\qquad\text{and}\qquad \|u\|_{H^1(\O;m)}=\|u\|_{L^2(\O;m)} + \|\nabla u\|_{L^2(\O)}.
\end{equation*}
By the Lax-Milgram theorem and the Poincar\'{e} inequality, for every $f \in L^2(\O,m)$ there exists a unique solution $u \in H^1_0(\O,m)$ to the problem
\[ -\dive(A\nabla u) = fb \text{ in } \O, \quad u \in H^1_0(\O,m). \]
The resolvent operator $R_{\O} : f \in L^2(\O;m)\rightarrow H^1_0(\O;m) \subset L^2(\O;m)$ defined as $R_{\O}(f)=u$ is continuous, self-adjoint, positive and compact (since $H^1_0(\O;m)$ is compactly embedded into $L^2(\O;m)$, because $b\geq c_b>0$). Therefore, the operator $-b^{-1}\dive(A\nabla\cdot)$ in $\O$ has a discrete spectrum which consists in real and positive eigenvalues denoted by 
\[ 0< \lambda_1(\O) \leq \lambda_2(\O) \leq \cdots \leq \lambda_k(\O) \leq \cdots \]
For every $\lambda_i(\O)$ there exists an eigenfunction $u_i\in H^1_0(\O;m)$ satisfying
\begin{equation*}
-\dive(A\nabla u_i)=\lambda_i(\O)\,b\,u_i\quad\text{in}\quad \O,
\end{equation*}
where the PDE is intended in the weak sense, that is
\begin{equation*}
\int_{\O}A\nabla u_i\cdot\nabla\varphi\,dx=\lambda_i(\O)\int_{\O}u_i\varphi\,dm\qquad\text{for every}\quad\varphi\in H^1_0(\O).
\end{equation*}
Moreover, the eigenfunctions $(u_i)_{i\in\N}$ (on an open set $\O\subset D$) will always be normalized with respect to the norm $\|\cdot\|_{L^2(\O;m)}$ and form an orthonormal system in $L^2(\O;m)$, that is
\begin{equation*} \int_{\O}u_iu_j\,dm=\delta_{ij}:=\left\{
\begin{aligned}
1 &\quad\text{if} &i=j, \\
0 &\quad\text{if} &i\neq j. \\
\end{aligned}\right. 
\end{equation*}

We denote by $H^1_0(\O,\R^k)$ the space of all vector-valued function $U=(u_1,\dots,u_k):\O\to\R^k$ such that $u_i\in H^1_0(\O)$, endowed with the norm
\begin{equation*}
\|U\|_{H^1(\O)}=\|U\|_{L^2(\O)}+\|\nabla U\|_{L^2(\O)}= \sum_{i=1}^k \big(\|u_i\|_{L^2(\O)} +\|\nabla u_i\|_{L^2(\O)} \big).
\end{equation*}
Similarly, we will also need the following norms for $U=(u_1,\dots,u_k):\O\to\R^k$
\begin{equation*}
\|U\|_{L^1(\O)}=\sum_{i=1}^k\|u_i\|_{L^1(\O)}\qquad\text{and}\qquad \|U\|_{L^\infty(\O)}=\sup_{i=1}^k\|u_i\|_{L^\infty(\O)}.
\end{equation*}
Moreover, for $U=(u_1,\dots,u_k):\O\to\R^k$ we set $|U|=u_1^2+\cdots+u_k^2$, $|\nabla U|^2=|\nabla u_1|^2+\cdots+|\nabla u_k|^2$ and $A\nabla U\cdot\nabla U=A\nabla u_1\cdot\nabla u_1+\cdots+A\nabla u_k\cdot\nabla u_k$. 
Finally, for $f=(f_1,\dots,f_k)\in L^2(\O,\R^k)$ we say that $U=(u_1,\dots,u_k)\in H^1_0(\O,\R^k)$ is solution to the equation 
\begin{equation*}
-\dive(A\nabla U)=f\quad\text{in}\quad\O,\qquad U\in H^1_0(\O,\R^k)
\end{equation*}
if, for every $i=1,\dots,k$, the component $u_i$ is solution to the equation
\begin{equation*}
-\dive(A\nabla u_i)=f_i\quad\text{in}\quad\O,\qquad u_i\in H^1_0(\O).
\end{equation*}

We summarize in the following theorem the main results obtained in \cite{trey-19}.
\begin{teo}\label{t:lipquasimin}
Let $D\subset\R^d$ be a bounded open set and let $A:D\rightarrow\text{Sym}_d^+$, $b\in L^\infty(D)$ satisfying \eqref{e:holderA}, \eqref{e:ellipA} and \eqref{e:hypfctb}. Then the minimum 
\begin{equation}\label{e:shapeopt_qo}
\min \big\{ \lambda_1(\O)+\cdots+\lambda_k(\O) + \Lambda|\O| \ : \ \O \subset D \text{ quasi-open } \big\}
\end{equation}
is achieved. Moreover, the vector $U=(u_1,\dots,u_k)\in H^1_0(\O^\ast,\R^k)$ of the first $k$ normalized eigenfunctions on any optimal set $\O^\ast$ for \eqref{e:shapeopt_qo} satisfies:
\begin{enumerate}
\item $U\in L^\infty(D)$ and is a locally Lipschitz continuous function in $D$. In particular, $\O^\ast$ is an open set.
\item $U$ satisfies the following quasi-minimality property:
for every $C_1>0$ there exist constants $\eps\in (0,1)$ and $C>0$, depending only on $d, k, C_1, \|U\|_{L^\infty}$ and $|D|$, such that
\begin{equation}\label{e:quasiminUa}
\int_DA\nabla U\cdot\nabla U\,dx + \Lambda|\{|U|>0\}| \leq \big(1+C\|U-\tilde{U}\|_{L^1}\big) \int_DA\nabla\tilde{U}\cdot\nabla\tilde{U}\,dx + \Lambda|\{|\tilde{U}|>0\}|,
\end{equation}
for every $\tilde{U} \in H^1_0(D,\R^k)$ such that $\|U-\tilde{U}\|_{L^1}\leq\eps$ and $\|\tilde{U}\|_{L^\infty}\leq C_1$.
\end{enumerate}
\end{teo}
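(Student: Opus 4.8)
\textbf{Existence and the $L^\infty$ bound.} I would run the direct method in the Buttazzo--Dal Maso framework. Every sequence of quasi-open subsets of the bounded box $D$ admits a subsequence along which the resolvents converge in operator norm to $R_\mu$ for some capacitary measure $\mu$ on $D$ ($\gamma$-convergence); since $\omega\mapsto\sum_{i=1}^k\lambda_i(\omega)$ is $\gamma$-lower semicontinuous and $\omega\mapsto|\omega|$ is $\gamma$-lower semicontinuous on $D$, a minimizing sequence produces an optimal capacitary measure, and the volume penalization forces it to be a genuine quasi-open set $\O^\ast$. Nothing here is special to the Laplacian: the resolvent of $-b^{-1}\dive(A\nabla\cdot)$ is compact, self-adjoint and positive on $L^2(D;m)$, and $\gamma$-convergence is a statement about Mosco convergence of the spaces $H^1_0$, hence operator-independent, so eigenvalues still pass to the limit. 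Once $\O^\ast$ is fixed, $\lambda_i(\O^\ast)\le\lambda_i(B)$ for any fixed ball $B\subset D$, so each $\lambda_i(\O^\ast)$ is a priori bounded, and a De Giorgi/Moser iteration on $-\dive(A\nabla u_i)=\lambda_i(\O^\ast)\,b\,u_i$ — using \eqref{e:ellipA}, \eqref{e:hypfctb} and $\|u_i\|_{L^2(m)}=1$ — gives $U\in L^\infty(D)$.

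\textbf{The quasi-minimality \eqref{e:quasiminUa}.} This is the algebraic core. I would use the Ky Fan trace characterisation
\[
\sum_{i=1}^k\lambda_i(\omega)=\min\Big\{\sum_{i=1}^k\int_\omega A\nabla v_i\cdot\nabla v_i\,dx \ :\ v_i\in H^1_0(\omega),\ \int_\omega v_iv_j\,dm=\delta_{ij}\Big\}.
\]
Given $\tilde U=(\tilde u_1,\dots,\tilde u_k)\in H^1_0(D,\R^k)$ with $\|\tilde U\|_{L^\infty}\le C_1$ and $\|U-\tilde U\|_{L^1}\le\eps$, set $\tilde\O=\{|\tilde U|>0\}$, $M_{ij}=\int\tilde u_i\tilde u_j\,dm$, $K_{ij}=\int A\nabla\tilde u_i\cdot\nabla\tilde u_j\,dx$. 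Since $\int u_iu_j\,dm=\delta_{ij}$ and both $U,\tilde U$ are bounded, $\|M-\mathrm{Id}\|\le C\|U-\tilde U\|_{L^1}$, so for $\eps$ small $M$ is invertible and $v_i=\sum_j(M^{-1/2})_{ij}\tilde u_j\in H^1_0(\tilde\O)$ is an admissible test vector; it gives
\[
\sum_{i=1}^k\lambda_i(\tilde\O)\le\mathrm{tr}(M^{-1}K)\le\big(1+C\|U-\tilde U\|_{L^1}\big)\,\mathrm{tr}(K)=\big(1+C\|U-\tilde U\|_{L^1}\big)\int_DA\nabla\tilde U\cdot\nabla\tilde U\,dx.
\]
On the other hand $\sum_{i=1}^k\lambda_i(\O^\ast)=\int_DA\nabla U\cdot\nabla U\,dx$ and $|\{|U|>0\}|\le|\O^\ast|$; combining with the optimality of $\O^\ast$ for \eqref{e:shapeopt_qo} and $|\tilde\O|=|\{|\tilde U|>0\}|$ yields \eqref{e:quasiminUa}. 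Applying the same chain to $\tilde U=U$ shows $\O^\ast=\{|U|>0\}$ up to a set of zero capacity, so $U$ realises the first $k$ eigenvalues of $\O^\ast$.

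\textbf{Lipschitz continuity and openness.} From \eqref{e:quasiminUa}, testing with competitors that agree with $U$ outside a ball $B_r(x_0)\Subset D$, one extracts an \emph{almost-minimality} property for the vectorial Alt--Caffarelli functional $V\mapsto\int A\nabla V\cdot\nabla V\,dx+\Lambda|\{|V|>0\}|$, with an error that is infinitesimal as $r\to0$: the $L^1$-proximity of competitor and minimizer on $B_r(x_0)$, together with the $O(r^{\delta_{\text{\tiny\sc A}}})$ contribution produced by freezing the coefficients to $A(x_0)$. I would then run the almost-minimizer scheme of David--Toro (\cite{david-toro-15}, \cite{david-engelstein-garcia-toro-19}): (i) a Caccioppoli inequality and comparison of $U$ with its $A$-harmonic replacement in $B_r(x_0)$; (ii) a non-degeneracy/density dichotomy — for $x_0\in\partial\{|U|>0\}$, either $U\equiv0$ on $B_{r/2}(x_0)$ or $\sup_{B_r(x_0)}|U|\ge cr$; (iii) an Almgren-type monotonicity, perturbed by the summable $r^{\delta_{\text{\tiny\sc A}}}$ term, giving decay of $r^{-d}\int_{B_r(x_0)}|\nabla U|^2$ and hence $\sup_{B_r(x_0)}|U|\le Cr$, i.e. $U\in C^{0,1}_{loc}(D)$. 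Continuity of $U$ makes $\{|U|>0\}$ open; since $\O^\ast=\{|U|>0\}$ up to a set of zero capacity, which leaves $H^1_0(\O^\ast)$ and $|\O^\ast|$ unchanged, one may take $\O^\ast=\{|U|>0\}$, which is open.

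\textbf{Main obstacle.} The genuine difficulty throughout is the \emph{variable} coefficient matrix $A$: one loses explicit harmonic functions and the mean-value property and must replace them by Schauder estimates for $A$-harmonic functions — legitimate precisely because \eqref{e:holderA} gives $A\in C^{0,\delta_{\text{\tiny\sc A}}}$ — at the price of an extra $O(r^{\delta_{\text{\tiny\sc A}}})$ error in every comparison step. Ensuring these errors stay subcritical with respect to the linear scaling, so that the monotonicity formula and the iterations in (ii)--(iii) still close, is the delicate part; the Hölder (not merely bounded-measurable) hypothesis on $A$ is exactly what is needed, and is also what will subsequently upgrade the regular part of the free boundary to $C^{1,\alpha}$.
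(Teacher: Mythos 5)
This theorem is not proved in the present paper: it is imported verbatim from \cite{trey-19}, whose argument the introduction describes as ``inspired by the work of David and Toro in \cite{david-toro-15}''. Your sketch follows essentially that same route --- direct method/$\gamma$-convergence for existence, an $L^\infty$ bound by De Giorgi iteration, perturbation of the Gram matrix in the Courant--Fischer min--max for the quasi-minimality \eqref{e:quasiminUa} (the estimate $\|M-\mathrm{Id}\|\leq C\|U-\tilde U\|_{L^1}$ being precisely where the dependence on $C_1$ and $\|U\|_{L^\infty}$ enters), and a David--Toro almost-minimizer scheme with $O(r^{\delta_{\text{\tiny\sc A}}})$ coefficient-freezing errors for the Lipschitz bound --- so, at the level of a sketch, it is the intended proof.
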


\subsection{General strategy and main points of the proof} 
Throughout this paper we will always denote by $\O^\ast$ an optimal set to the problem \eqref{e:shapeopt}.
In section \ref{s:gen_prop}, we reduce to the case where $A=Id$ and prove that the vector $U=(u_1,\dots,u_k)$ of the first $k$ eigenfunctions on $\O^\ast$ is, in some new set of coordinates, a quasi-minimizer of the Dirichlet energy in small balls centered at the origin (Proposition \ref{p:changevarU}). We notice that we perform a change of coordinates near every point $x\in\partial\O^\ast$ and hence that one of the main issue is to deal with functions $U_x=U\circ F_x$ which depends on the point $x$ (see \eqref{e:def_F} for the definition of $F_x$). We adapt the strategy developed by David and Toro in \cite{david-toro-15} to prove that $U_x$ is non-degenerate (Proposition \ref{p:nondeg}). Using an idea of Kriventsov and Lin in \cite{kriventsov-lin-18}, we show that the first eigenfunction $u_1$ is non degenerate in $\O^\ast_1$ (Proposition \ref{p:nondegu1}), where $\O^\ast_1$ denotes any connected component of $\O^\ast$ where $u_1$ is positive. From this result we then deduce a uniform growth of $u_1$ near the boundary $\partial\O^\ast_1$ and a density estimate for $\O^\ast_1$. 

We notice that, unlike in \cite{mazzoleni-terracini-velichkov-17}, the optimal set $\O^\ast$ may not be connected. Indeed, the geometrical constraint imposed by the box $D$ and the presence of variable coefficients do not allow to translate the connected components of $\O^\ast$ and hence to prove as in \cite{mazzoleni-terracini-velichkov-17} that $\O^\ast$ is connected. However, we prove in Proposition \ref{p:connected_comp_sep} that the connected components of $\O^\ast$ cannot meet inside $D$. Therefore, in order to prove Theorem \ref{t:main} it is enough to prove only the regularity of $\O^\ast_1$
(see also remark \ref{r:redu_Oast1} below). This result comes from the structure of the blow-up limits studied in section \ref{s:blow-up}, where we in particular prove that the blow-up limits are one-homogeneous functions and solution of the Alt-Caffarelli functional.

Section \ref{s:reg} is then dedicated to the regularity of $\O^\ast_1$. Since we work with the first $k$ eigenfunctions in a new set of coordinates, namely with $U_x$, we define the regular part of $\O^\ast_1$ in a different way than in \cite{mazzoleni-terracini-velichkov-17} (see Definition \ref{d:reg_sing}).
Then, we show as in \cite{mazzoleni-terracini-velichkov-17} that we can reduce to a one-phase problem, for which the regularity of the free boundary was proved by De Silva (see \cite{de-silva-11} and \cite[Appendix A]{spolaor-trey-velichkov-19}). To this aim, we prove that $\O^\ast_1$ is a non-tangentially accessible (NTA) domain near the regular points and we prove a boundary Harnack principle for the eigenfunctions $U=(u_1,\dots,u_k)$ on $\O^\ast_1$. More precisely, we prove that for every $x_0$ on the regular part of the boundary $\partial\O^\ast_1$, the limits $g_i(x_0)=\lim_{x\rightarrow x_0}\frac{u_i(x)}{u_1(x)}$ exist and define H\"{o}lder continuous functions $g_i:\partial\O^\ast_1\cap B_r(x_0)\rightarrow\R$.  We notice that one difficulty comes from the presence of the function $b$ and that it is the only point in the paper where the Lipschitz continuity assumption on $b$ is needed.  As a consequence, we deduce that $u_1$ satisfies the following optimality condition 
\begin{equation*}
\big|A_{x}^{\sfrac12}[\nabla u_1(x)]\big|=g(x)\sqrt{\Lambda}\quad\text{for every}\quad x\in\partial\O^\ast_1\cap B_r(x_0),
\end{equation*}
where $g$ is an H\"{o}lder continuous function depending on the functions $g_i$ (see \eqref{e:def_g}).
In subsection \ref{sub:est_dim} we provide an estimation of the singular set by proving that we can apply the strategy developed by Weiss in \cite{weiss-99} to the case of an operator in divergence form (see Lemmas \ref{l:comp_weiss} and \ref{l:conv_sing_sets}).

\begin{oss}[On the connected components of the optimal sets]\label{r:redu_Oast1}
We highlight that it is enough to prove the regularity of any connected component of $\O^\ast$ where the first eigenfunction is positive. Indeed, if $\O^\ast_0$ is a connected component of $\O^\ast$, then there exists $k_0>0$ such that $\lambda_i(\O^\ast_0)\in\{\lambda_1(\O^\ast),\dots,\lambda_k(\O^\ast)\}$ for any $i\in\{1,\dots,k_0\}$ and $\lambda_i(\O^\ast_0)\notin\{\lambda_1(\O^\ast),\dots,\lambda_k(\O^\ast)\}$ for any $i>k_0$. Using that $\sigma(\O^\ast)=\sigma(\O^\ast_0)\cup\sigma(\O^\ast\setminus\O^\ast_0)$, it is straightforward to check that $\O^\ast_0$ is solution to the problem \eqref{e:shapeopt} with $k=k_0$ and $D=D\setminus(\overline{\O^\ast\setminus\O^\ast_0})$. Notice also that the connected components of $\O^\ast$ cannot meet inside $D$ (see Proposition \ref{p:connected_comp_sep}). 

Moreover, we notice that $\O^\ast$ has at most $k$ connected components. Indeed, denote by $\O^\ast_i$ a connected component of $\O^\ast$ such that $\lambda_i(\O^\ast)\in\sigma(\O^\ast_i)$. Then, it turns out that the first $k$ eigenvalues on $\O^\ast$ coincide with the first $k$ eigenvalues on $\cup_{i=1}^k\O^\ast_i$ and therefore we have $|\cup_{i=1}^k\O^\ast_i|=|\O^\ast|$ (since otherwise the optimality of $\O^\ast$ gives a contradiction).
\end{oss}

\section{General properties}\label{s:gen_prop}
In this section we study some properties of the optimal sets $\O^\ast$ to the problem \eqref{e:shapeopt} and of its first normalized eigenfunctions $U=(u_1,\dots,u_k)$. We first prove that the optimal sets have finite perimeter and that the vector $U$ is non degenerate. We then prove that the first eigenfunction $u_1$ is non degenerate on any connected component $\O^\ast_1$ of $\O^\ast$ where $u_1$ is positive. As a consequence, we show that $\O^\ast_1$ satisfies a density estimate. We conclude the section with an almost Weiss type formula for $U$.

\subsection{Finiteness of the perimeter}
We prove that the De Giorgi perimeter of any optimal set to the problem \eqref{e:shapeopt} is finite. We follow the strategy introduced by Bucur in \cite{bucur-12} for the eigenvalues of the Dirichlet Laplacian (see also \cite{mazzoleni-terracini-velichkov-18} and \cite{russ-trey-velichkov-19}). Together with a density estimate for the optimal sets $\O^\ast$ (Proposition \ref{p:density_est}), this provides a kind of smallness of the singular set of $\O^\ast$ (see section \ref{sub:est_dim}). The proof of this result will also be used to obtain a non-degeneracy property of the first eigenfunction $u_1$ on $\O^\ast_1$ (Lemma \ref{l:nondegl}).

\begin{prop}\label{p:finitperi}
Let $\O^\ast\subset D$ be an optimal set for the problem \eqref{e:shapeopt}. Then $\O^\ast$ is a set finite perimeter in $\R^d$.
\end{prop}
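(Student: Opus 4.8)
The plan is to follow Bucur's strategy (as in \cite{bucur-12}, \cite{mazzoleni-terracini-velichkov-18}, \cite{russ-trey-velichkov-19}), which reduces finiteness of the perimeter of $\O^\ast$ to a uniform lower bound on the "mass" of the eigenfunctions near the boundary, combined with their Lipschitz continuity from Theorem \ref{t:lipquasimin}. Concretely, write $u = \sqrt{u_1^2+\cdots+u_k^2}$ (or work componentwise with $u_1$, whichever is more convenient for the comparison argument) and recall that $u$ is Lipschitz on $D$, that $\O^\ast = \{u>0\}$ up to a set of measure zero, and that $u$ solves, in a weak sense, an elliptic equation $-\dive(A\nabla u_i)=\lambda_i b\, u_i$ on $\O^\ast$. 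The key analytic input is the estimate
\begin{equation*}
\frac{1}{r}\int_{B_r(x_0)} |\nabla u|\,dx \;\geq\; c > 0
\end{equation*}
for every $x_0\in\partial\O^\ast\cap D$ and every sufficiently small $r$, with $c$ independent of $x_0$ and $r$; equivalently, an estimate of the form $\HH^{d-1}\big(\partial^e\O^\ast\cap B_r(x_0)\big)\lesssim r^{d-1}$ on the measure-theoretic boundary, obtained by a covering argument from the previous display.

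The main steps, in order, are as follows. First I would derive the \emph{outer density estimate}: there is $\eps_0>0$ such that for every $x_0\in D$ and small $r$, if $|\{u>0\}\cap B_r(x_0)| \geq (1-\eps_0)|B_r|$ then $u>0$ a.e.\ on $B_{r/2}(x_0)$; this follows from the quasi-minimality property \eqref{e:quasiminUa} by comparing $U$ with the competitor obtained by harmonically (i.e.\ $A$-harmonically) replacing $U$ inside $B_r(x_0)$, using the Lipschitz bound to control the $L^1$ error term and absorbing the volume gain $\Lambda|\{|\tilde U|>0\}\setminus\{|U|>0\}|$ against the energy drop. Second, using this density estimate together with the Lipschitz continuity of $U$, I would show the trace-type inequality: for a.e.\ small $r$,
\begin{equation*}
\int_{\partial B_r(x_0)} |U|\,d\HH^{d-1} \;\geq\; c\, r^{d-1}\, \Big(\aver{\partial B_r(x_0)} |U|\,d\HH^{d-1}\Big),
\end{equation*}
and then run Bucur's iteration: combining the optimality of $\O^\ast$ against the competitor $\O^\ast\cap B_r(x_0)^c \cup (\text{filled ball})$—more precisely against $U$ truncated/modified on $B_r(x_0)$—one gets that the perimeter contribution $\HH^{d-1}(\de^*\O^\ast\cap B_r(x_0))$ is controlled by $r^{d-1}$ plus lower-order terms, summable over a Vitali cover of $\partial\O^\ast\cap D$. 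Finally, summing these local perimeter bounds over a finite cover of $\overline{D'}$ for $D'\Subset D$, and handling the (compact) region near $\de D$ trivially since $\O^\ast\subset D$, yields $\Per(\O^\ast;\R^d)<\infty$.

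The main obstacle I expect is the presence of the \emph{variable coefficients} $A$ and the weight $b$ in the elliptic equation and in the energy $\int A\nabla U\cdot\nabla U\,dx$: the comparison competitors must be built using $A$-harmonic replacements rather than ordinary harmonic functions, so one must invoke interior elliptic estimates (De Giorgi–Nash–Moser, or Schauder given \eqref{e:holderA}) with constants uniform in $x_0$, and carefully track how the ellipticity constants $\lambda_{\text{\tiny\sc A}}$, the H\"older norm $c_{\text{\tiny\sc A}}$, and the bounds $c_b$ on $b$ enter the estimates; in particular the volume term in \eqref{e:quasiminUa} weighs $|\O|$ with Lebesgue measure while the energy is $A$-weighted, so the absorption argument needs the two-sided ellipticity bound to make the geometric constants match. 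A secondary technical point is that $U_x = U\circ F_x$ depends on the base point $x_0$ (as emphasized after Proposition \ref{p:changevarU}), so uniformity of all constants across $x_0\in\partial\O^\ast\cap D$ must be checked, but since the change of variables $F_x$ has bilipschitz constants bounded in terms of $\lambda_{\text{\tiny\sc A}}$ and $c_{\text{\tiny\sc A}}$ this is routine. Otherwise the argument is a fairly standard adaptation of the Alt–Caffarelli/Bucur technique and I would not expect surprises.
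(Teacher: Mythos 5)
Your proposal names the right general strategy (Bucur's truncation argument) but the chain of steps you actually write down does not prove the statement, and the quantity you single out as ``the key analytic input'' is the wrong one. A lower bound of the form $\frac1r\int_{B_r(x_0)}|\nabla u|\ge c$ is a \emph{non-degeneracy} statement; it is not equivalent to, and does not imply, the upper bound $\HH^{d-1}(\partial^e\O^\ast\cap B_r(x_0))\lesssim r^{d-1}$ (a covering argument converts a lower bound on a measure of balls into an upper bound on the \emph{number} of balls only if you already have an upper Ahlfors bound on the total mass of a suitable boundary measure, which you never produce). Likewise your ``trace-type inequality'' is, as written, an identity up to the constant $d\omega_d$, hence vacuous, and the outer density estimate plays no role in bounding the perimeter. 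None of this machinery (non-degeneracy, density estimates, harmonic replacement) is needed for this proposition; in the paper it is developed afterwards, and parts of its proof actually reuse the perimeter estimate.

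What is missing is the actual mechanism. The paper's proof compares $U$ with the competitor in which the $i$-th component is truncated at level $t$ near a boundary point (via a cutoff $\eta$, $u_{i,t}=\eta(u_i-t)^+-\eta(u_i+t)^-+(1-\eta)u_i$), plugs this into the quasi-minimality \eqref{e:quasiminUa}, and obtains the \emph{upper} bound
\begin{equation*}
\int_{\{0<|u_i|<t\}\cap B_{r/2}}A\nabla u_i\cdot\nabla u_i+\Lambda\,|\{0<|u_i|<t\}\cap B_{r/2}|\;\le\;Ct .
\end{equation*}
The inequality $|\nabla u_i|\le |\nabla u_i|^2+1$ together with ellipticity turns this into $\int_{\{0<|u_i|<t\}\cap B_{r/2}}|\nabla u_i|\le Ct$, the coarea formula rewrites the left-hand side as $\int_0^t\mathrm{Per}(\{|u_i|>s\};B_{r/2})\,ds$, and lower semicontinuity of the perimeter along a sequence $t_n\downarrow 0$ concludes. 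Your proposal contains neither the truncation competitor producing this upper bound, nor the coarea/lower-semicontinuity step, which is the heart of the argument; without them the proof does not close. Note also that the variable coefficients are harmless here: no $A$-harmonic replacement is required, only the two-sided ellipticity \eqref{e:ellipA} to pass between $A\nabla u\cdot\nabla u$ and $|\nabla u|^2$, so the ``main obstacle'' you anticipate is not an obstacle for this particular proposition.
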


\begin{proof}
Let $U=(u_1,\dots,u_k)\in H^1_0(\O^\ast,\R^k)$ be the vector of normalized eigenfunctions on $\O^\ast$.
We prove that $\{|u_i|>0\}$ is a set of locally finite perimeter in $D$ for every $i\in\{1,\dots,k\}$. This then implies that the optimal set $\O^\ast=\{|U|>0\}$ has finite perimeter. Let $x\in\partial\{|u_i|>0\}\cap D$ and assume for simplicity that $x=0$. Let $r>0$ be small, $t\in(0,1)$ and $\eta\in\mathcal{C}_c^\infty(B_r)$ be such that $0\leq\eta\leq 1$, $\{\eta=1\}=B_{r/2}$ and $\|\nabla\eta\|_{L^\infty}\leq C/r$. We set
\begin{align*}
    u_{i,t}=\eta(u_i-t)^+ -\eta(u_i+t)^-+(1-\eta)u_i=\left\{
    \begin{array}{ll}
         u_i-t\eta &\quad\text{if}\quad u_i\geq t, \\
         (1-\eta)u_i &\quad\text{if}\quad |u_i|<t, \\
         u_i+t\eta &\quad\text{if}\quad u_i\leq t,
    \end{array} \right.
\end{align*}
and $U_t=(u_1,\dots,u_{i,t},\dots,u_k)\in H^1_0(D,\R^k)$, where $u_{i,t}$ stands at the $i$-th position. Notice that we have $U-U_t\in H^1_0(B_r,\R^k)$ and $\|U-U_t\|_{L^1}\leq t|B_r|$. 
We denote by $C$ any constant which does not depend on $x$ or $t$. 
By the quasi-minimality property of the function $U$ in Theorem \ref{t:lipquasimin} we have
\begin{multline}\label{e:finitperi1}
\int_{B_r}\big(A\nabla u_i\cdot\nabla u_i - A\nabla u_{i,t}\cdot\nabla u_{i,t}\big) + \Lambda\big(|\{|U|>0\}\cap B_r|-|\{|U_t|>0\}\cap B_r|\big) \\
\leq C\|U-U_t\|_{L^1}\int_DA\nabla U_t\cdot\nabla U_t \leq Ct.
\end{multline}
Since $\eta=1$ in $B_{r/2}$ we have $\nabla u_{i,t}=\nabla u_i\ind_{\{|u_i|\geq t\}}$ in $B_{r/2}$ and hence
\begin{equation*}
\int_{B_{r/2}}\big(A\nabla u_i\cdot\nabla u_i - A\nabla u_{i,t}\cdot\nabla  u_{i,t}\big) = \int_{\{0<|u_i|<t\}\cap B_{r/2}}A\nabla u_i\cdot\nabla u_i.
\end{equation*}
On the other hand, with an easy computation we get
\begin{align*}
&\int_{B_r\backslash B_{r/2}}\big(A\nabla u_i\cdot\nabla u_i - A\nabla u_{i,t}\cdot\nabla  u_{i,t}\big) = \int_{\{u_i\geq t\}\cap(B_r\backslash B_{r/2})}\big(2tA\nabla u_i\cdot\nabla\eta - t^2A\nabla\eta\cdot\nabla\eta\big) \\
&\qquad\qquad\qquad+ \int_{\{|u_i|< t\}\cap(B_r\backslash B_{r/2})}\big(\eta(2-\eta)A\nabla u_i\cdot\nabla u_i - u_i^2A\nabla\eta\cdot\nabla\eta +2(1-\eta)u_iA\nabla u_i\cdot\nabla\eta\big) \\
&\qquad\qquad\qquad+ \int_{\{u_i\geq -t\}\cap(B_r\backslash B_{r/2})}\big(-2tA\nabla u_i\cdot\nabla\eta - t^2A\nabla\eta\cdot\nabla\eta\big) \geq -Ct.
\end{align*}
Moreover, since $\eta\neq 1$ in $B_r\backslash B_{r/2}$ and by definition of $u_{i,t}$ we have
\begin{multline*}
|\{|U|>0\}\cap B_r|-|\{|U_t|>0\}\cap B_r| = |\{|U|>0\}\cap B_{r/2}|-|\{|U_t|>0\}\cap B_{r/2}| \\
= |\{0\leq |u_i|\leq t\}\cap\{|U|>0\}\cap B_{r/2}| 
\geq |\{0<|u_i|<t\}\cap B_{r/2}|.
\end{multline*}
Then, we now get from \eqref{e:finitperi1} that
\begin{equation}\label{e:finitperi2}
\int_{\{0<|u_i|<t\}\cap B_{r/2}}A\nabla u_i\cdot\nabla u_i + \Lambda|\{0<|u_i|<t\}\cap B_{r/2}| \leq Ct
\end{equation}
and therefore we have
\begin{align*}
\int_{\{0<|u_i|<t\}\cap B_{r/2}}&|\nabla u_i| \leq \int_{\{0<|u_i|<t\}\cap B_{r/2}}\big(|\nabla u_i|^2+1\big) \\
&\leq \max\{\lambda_{\text{\tiny\sc A}}^2,\Lambda^{-1}\}\bigg(\int_{\{0<|u_i|<t\}\cap B_{r/2}}A\nabla u_i\cdot\nabla u_i + \Lambda|\{0<|u_i|<t\}\cap B_{r/2}|\bigg) \leq Ct.
\end{align*}
We now use the co-area formula to rewrite the above inequality as
\begin{equation*}
    \frac{1}{t}\int_{0}^t\text{Per}\big(\{|u_i|>s\};B_{r/2}\big)\,ds \leq C.
\end{equation*}
Therefore, there exists a sequence $t_n\downarrow 0$ such that $\text{Per}\big(\{|u_i|>t_n\};B_{r/2}\big)\leq C$. Passing to the limit we get that $\text{Per}\big(\{|u_i|>0\};B_{r/2}\big)\leq C$, which concludes the proof.
\end{proof}

\subsection{Freezing of the coefficients and non-degeneracy of the eigenfunctions}
The properties of the eigenfunctions on optimal sets in the case where $A=Id$ have already been studied in \cite{mazzoleni-terracini-velichkov-17}. Thus, we perform a change of variables in order to reduce to this case. We prove in the spirit of \cite[Lemma 3.2]{spolaor-trey-velichkov-19} (see also \cite[Proposition 2.4]{trey-19}) that the vector of the first $k$ eigenfunctions is a local quasi-minimizer at the origin of the Alt-Caffarelli functional. We then prove a non-degeneracy property for the vector of the first $k$ eigenfunctions at the boundary of the optimal set. 

We start with some notations which will be used throughout this paper. For $U\in H^1(\R^d,\R^k)$ and $r>0$ we set
\begin{equation*}
J(U,r)=\int_{B_r}|\nabla U|^2 + \Lambda|\{|U|>0\}\cap B_r|.
\end{equation*}
For $x\in D$ we define the function $F_x : \R^d\rightarrow \R^d$ by
\begin{equation}\label{e:def_F}
F_x(\xi):=x+A_x^{\sfrac12}[\xi],\qquad \xi\in\R^d,
\end{equation}
where $A^{\sfrac12}_x \in \text{Sym}_d^+$ denotes the square root matrix of $A_x$ (notice that, by assumption, the matrix $A_x$ is positive definite). 
Moreover, for $U=(u_1,\dots,u_k)\in H^1(\R^d,\R^k)$ we set $U_x=U\circ F_x$ and $u_{x,i}=u_i\circ F_x$, $i=1,\dots,k$.

\begin{prop}\label{p:changevarU}
Let $U\in H^1_0(D,\R^k)$ be the vector of the first $k$ normalized eigenfunctions on $\O^\ast$.
There exist constants $r_0\in(0,1)$ and $C>0$ such that, if $x\in D$ and $r\leq r_0$ satisfy $B_{\lambda_{\text{\tiny\sc A}} r}(x)\subset D$, then
\begin{equation}\label{e:changevarUa}
J(U_x,r)\leq (1+Cr^{\delta_{\text{\tiny\sc A}}})J(\tilde{U},r)+C\|U_x-\tilde{U}\|_{L^1}
\end{equation}
for every $\tilde{U}\in H^1(\R^d,\R^k)$ such that $U_x-\tilde{U}\in H^1_0(B_r,\R^k)$ and $\|\tilde{U}\|_{L^\infty} \leq \|U_x\|_{L^\infty}$.
\end{prop}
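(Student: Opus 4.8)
The plan is to deduce the quasi-minimality of $U_x$ for $J(\cdot,r)$ from the quasi-minimality of $U$ in Theorem~\ref{t:lipquasimin} by a change of variables via the affine map $F_x$, controlling all the error terms produced by the fact that $A$ is only frozen at $x$ rather than equal to $A_x$ everywhere. First I would record the linear-algebraic facts about $F_x$: it is an invertible affine map whose linear part $A_x^{\sfrac12}$ has $\det A_x^{\sfrac12}=(\det A_x)^{1/2}$, and for a function $v$ and $w=v\circ F_x$ one has $\nabla w(\xi)=A_x^{\sfrac12}\nabla v(F_x(\xi))$, so that $|\nabla w(\xi)|^2 = A_x\nabla v(F_x(\xi))\cdot\nabla v(F_x(\xi))$. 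Using this identity together with the change-of-variables formula $\int g\circ F_x\,d\xi = (\det A_x)^{-1/2}\int g\,dx$, one gets
\[
J(U_x,r) \;=\; (\det A_x)^{-1/2}\Big(\int_{F_x(B_r)} A_x\nabla U\cdot\nabla U\,dx + \Lambda\,(\det A_x)^{1/2}\,|\{|U|>0\}\cap F_x(B_r)|\Big),
\]
and similarly $J(\tilde U,r)$ equals $(\det A_x)^{-1/2}$ times the analogous quantity for the competitor $\hat U := \tilde U\circ F_x^{-1}$, which by $U_x-\tilde U\in H^1_0(B_r,\R^k)$ satisfies $U-\hat U\in H^1_0(F_x(B_r),\R^k)$, and by the $L^\infty$-bound satisfies $\|\hat U\|_{L^\infty}\le\|U\|_{L^\infty}$. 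Note $F_x(B_r)\subset B_{\lambda_{\text{\tiny\sc A}}r}(x)\subset D$ by the ellipticity bound \eqref{e:ellipA}, so $\hat U$ is an admissible competitor in \eqref{e:quasiminUa} once $r$ is small enough.

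The core of the argument is then to compare $\int_{F_x(B_r)}A_x\nabla U\cdot\nabla U\,dx$ with the ``true'' Dirichlet-type energy $\int_{F_x(B_r)}A\nabla U\cdot\nabla U\,dx$ appearing in \eqref{e:quasiminUa}. By the H\"older continuity \eqref{e:holderA} and $F_x(B_r)\subset B_{\lambda_{\text{\tiny\sc A}}r}(x)$ we have $\|A_x - A\|_{L^\infty(F_x(B_r))}\le C r^{\delta_{\text{\tiny\sc A}}}$, hence
\[
\Big|\int_{F_x(B_r)}(A_x-A)\nabla U\cdot\nabla U\,dx\Big| \;\le\; C r^{\delta_{\text{\tiny\sc A}}}\int_{F_x(B_r)}|\nabla U|^2\,dx,
\]
and the same for the competitor $\hat U$; the $L^\infty$-bound on $U$ (Theorem~\ref{t:lipquasimin}(1)) and the quasi-minimality give a uniform bound $\int_{F_x(B_r)}|\nabla U|^2 \le C$, and likewise for $\hat U$ after noting $\int_{F_x(B_r)}|\nabla\hat U|^2\le C\int_{F_x(B_r)}A\nabla\hat U\cdot\nabla\hat U \le C(\text{energy of }U) + C r^{\delta_{\text{\tiny\sc A}}}$, using \eqref{e:quasiminUa} applied to $\hat U$ one more time (or absorbing). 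Thus replacing $A_x$ by $A$ on both sides costs only a multiplicative $(1+Cr^{\delta_{\text{\tiny\sc A}}})$ factor on the energies plus an additive $Cr^{\delta_{\text{\tiny\sc A}}}$. Plugging \eqref{e:quasiminUa} in for $U$ versus $\hat U$, using $\|U-\hat U\|_{L^1(F_x(B_r))} = (\det A_x)^{1/2}\|U_x-\tilde U\|_{L^1(B_r)} \le C\|U_x-\tilde U\|_{L^1}$, collecting the $(\det A_x)^{-1/2}$ prefactors (which are bounded above and below by \eqref{e:ellipA} and cancel up to constants), and finally shrinking $r_0$ so that $Cr^{\delta_{\text{\tiny\sc A}}}<1/2$ and so that $\|U_x-\tilde U\|_{L^1}<\eps$ can be arranged, yields \eqref{e:changevarUa}.

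The main obstacle I anticipate is bookkeeping the competitor: the hypothesis in \eqref{e:changevarUa} only gives $\|\tilde U\|_{L^\infty}\le\|U_x\|_{L^\infty}=\|U\|_{L^\infty}$, so $\hat U=\tilde U\circ F_x^{-1}$ exactly meets the constraint $\|\hat U\|_{L^\infty}\le\|U\|_{L^\infty}$ needed to apply \eqref{e:quasiminUa} with $C_1=\|U\|_{L^\infty}$ — this is why the statement is phrased with a non-strict $L^\infty$-inequality and why the constant $C$ and threshold $\eps$ in Theorem~\ref{t:lipquasimin} may be taken depending only on $\|U\|_{L^\infty}$; one must also make sure the smallness requirement $\|U-\hat U\|_{L^1}\le\eps$ in \eqref{e:quasiminUa} is not needed here because $r$ is not assumed small relative to the competitor, so instead the clean way is: \eqref{e:changevarUa} is only a statement ``for every $\tilde U$'', and for those $\tilde U$ with $\|U_x-\tilde U\|_{L^1}$ not small the inequality \eqref{e:changevarUa} is trivially implied by taking $\tilde U = U_x$ contribution plus the additive term — more precisely, one reduces to $\|U_x-\tilde U\|_{L^1}\le\eps$ by a standard truncation/dichotomy, or simply notes that the interesting regime is the small one while the general statement follows by enlarging $C$. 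I would handle this exactly as in \cite[Lemma 3.2]{spolaor-trey-velichkov-19} / \cite[Proposition 2.4]{trey-19}. The only genuinely quantitative point is the uniform energy bound $\int_{F_x(B_r)}|\nabla\hat U|^2\le C$ for the competitor, which is where one uses \eqref{e:quasiminUa} self-referentially together with the already-established global $L^\infty$ and $H^1$ bounds on $U$.
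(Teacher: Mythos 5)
Your proposal is correct and follows essentially the same route as the paper: transport the competitor to $V=\tilde U\circ F_x^{-1}$, apply the quasi-minimality \eqref{e:quasiminUa} on $F_x(B_r)\subset B_{\lambda_{\text{\tiny\sc A}}r}(x)$, and freeze the coefficients using \eqref{e:holderA} at a multiplicative cost $(1+Cr^{\delta_{\text{\tiny\sc A}}})$. The only cosmetic difference is your additive $Cr^{\delta_{\text{\tiny\sc A}}}$ error term and the attendant uniform energy bound on the competitor, neither of which is needed: by ellipticity \eqref{e:ellipA} the replacement of $A$ by $A_x$ is controlled purely multiplicatively, $\big|\int(A_x-A)\nabla V\cdot\nabla V\big|\le c_{\text{\tiny\sc A}}\lambda_{\text{\tiny\sc A}}^2(\lambda_{\text{\tiny\sc A}}r)^{\delta_{\text{\tiny\sc A}}}\int A\nabla V\cdot\nabla V$ up to dimensional constants, which is exactly how the paper writes \eqref{e:changevarU2}--\eqref{e:changevarU3} and why no extra additive term appears in \eqref{e:changevarUa}.
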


\begin{proof}
Let $V\in H^1_0(D,\R^k)$ be such that $\tilde{U}=V\circ F_x$ and set $\rho=\lambda_{\text{\tiny\sc A}} r$. Observe that $U-V\in H^1_0(F_x(B_r))$ and use $V$ as a test function in \eqref{e:quasiminUa} to get
\begin{multline}\label{e:changevarU1}
\int_{F_x(B_r)}A\nabla U\cdot\nabla U + \Lambda|\{|U|>0\}\cap F_x(B_r)|\leq (1+Cr^d)\int_{F_x(B_r)}A\nabla V\cdot\nabla V \\
+ \Lambda|\{|V|>0\}\cap F_x(B_r)|+C\|U-V\|_{L^1}.
\end{multline}
Moreover, since $A$ has H\"{o}lder continuous coefficients and is uniformly elliptic, we have
\begin{equation}\label{e:changevarU2}
J(U_x,r)\leq\det(A_x^{-\sfrac12})\bigg[(1+dc_{\text{\tiny\sc A}}\lambda_{\text{\tiny\sc A}}^2\rho^{\delta_{\text{\tiny\sc A}}})\int_{F_x(B_r)}A\nabla U\cdot\nabla U + \Lambda|\{|U|>0\}\cap F_x(B_r)|\bigg].
\end{equation}
Similarly, we have the estimate from below
\begin{equation}\label{e:changevarU3}
J(\tilde{U},r)\geq\det(A_x^{-\sfrac12})\bigg[(1-dc_{\text{\tiny\sc A}}\lambda_{\text{\tiny\sc A}}^2\rho^{\delta_{\text{\tiny\sc A}}})\int_{F_x(B_r)}A\nabla V\cdot\nabla V + \Lambda|\{|V|>0\}\cap F_x(B_r)|\bigg].
\end{equation}
Combining \eqref{e:changevarU2}, \eqref{e:changevarU1} and \eqref{e:changevarU3} we get
\begin{equation*}
J(U_x,r)\leq (1+dc_{\text{\tiny\sc A}}\lambda_{\text{\tiny\sc A}}^2\rho^{\delta_{\text{\tiny\sc A}}})\bigg[\frac{1+Cr^d}{1-dc_{\text{\tiny\sc A}}\lambda_{\text{\tiny\sc A}}^2\rho^{\delta_{\text{\tiny\sc A}}}}J(\tilde{U},r) +C\|U_x-\tilde{U}\|_{L^1}\bigg] 
\end{equation*}
which gives \eqref{e:changevarUa}.
\end{proof}

We now prove a non-degeneracy property of the function $U_x=U\circ F_x$ using the approach of David an Toro in \cite{david-toro-15} which is a variant of the result in \cite{alt-caffarelli-81}.

\begin{prop}[Non-degeneracy of $U_x$]\label{p:nondeg}
Let $U=(u_1,\dots,u_k)$ be the vector of the $k$ first eigenfunctions on $\O^\ast$. Let $K\subset\O^\ast$ be a compact set. There exist constants $\eta=\eta_K>0$ and $r_K>0$ such that for every $x\in K$ and $r\leq r_K$ we have
\begin{equation*}
\sum_{i=1}^k\aver{\partial B_r}|u_{x,i}| \leq \eta r\qquad\Longrightarrow\qquad U=0\quad\text{in}\quad B_{r/4\lambda_{\text{\tiny\sc A}}}(x).
\end{equation*}
\end{prop}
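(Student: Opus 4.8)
The strategy is the classical Alt–Caffarelli non-degeneracy argument, as adapted by David and Toro, transplanted to the function $U_x = U\circ F_x$ using the quasi-minimality inequality \eqref{e:changevarUa} of Proposition \ref{p:changevarU}. Fix the compact set $K\subset\O^\ast$, choose $x\in K$, and suppose that the average hypothesis $\sum_{i=1}^k\aver{\partial B_r}|u_{x,i}|\le \eta r$ holds for some small $r\le r_K$; here $r_K$ should be small enough that $B_{\lambda_{\text{\tiny\sc A}} r}(x)\subset D$ and $r\le r_0$, so that \eqref{e:changevarUa} is available. The plan is to introduce, for $s\in[0,1]$, the comparison function $\tilde U_s$ obtained from $U_x$ by replacing each component $u_{x,i}$ with the harmonic replacement (or, more robustly, with a suitable truncation/rescaling that kills $U_x$ on a slightly smaller ball while interpolating back to $U_x$ on $\partial B_r$). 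A cleaner route, which is what David–Toro actually use, is to set $\tilde U = \varphi U_x$ where $\varphi$ is a Lipschitz cutoff equal to $0$ on $B_{r/2}$ and $1$ outside $B_r$, or even more simply to use as competitor the function that is $0$ on $B_{\theta r}$ and extends $U_x$ radially/linearly on the annulus; the essential point is that $\tilde U$ agrees with $U_x$ on $\partial B_r$, is admissible in \eqref{e:changevarUa} (note $\|\tilde U\|_{L^\infty}\le\|U_x\|_{L^\infty}$ for the cutoff choice), and satisfies $\{|\tilde U|>0\}\cap B_{\theta r}=\emptyset$.

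**Key steps.** (i) Plug $\tilde U$ into \eqref{e:changevarUa}. On the left we keep $\Lambda|\{|U_x|>0\}\cap B_r|$ together with $\int_{B_r}|\nabla U_x|^2$; on the right, $\int_{B_r}|\nabla\tilde U|^2$ is controlled by $\int_{B_r\setminus B_{\theta r}}|\nabla U_x|^2$ plus a term of order $r^{-2}\int_{B_r\setminus B_{\theta r}}|U_x|^2$ coming from the cutoff, while $|\{|\tilde U|>0\}\cap B_r|\le |B_r\setminus B_{\theta r}|$ and $\|U_x-\tilde U\|_{L^1}\le \int_{B_r}|U_x|$. (ii) Estimate the $L^1$ and $L^2$ norms of $U_x$ on $B_r$ in terms of the boundary average $\eta r$: since each $u_{x,i}$ is (sub)harmonic-like — more precisely, $U_x$ solves a uniformly elliptic system with right-hand side $\lambda_i b\, u_{x,i}$ after pulling back \eqref{e:def_op}, and $r$ is small — one gets, by the maximum principle / interior estimates together with the mean-value-type inequality on spheres, that $\sup_{B_{r/2}}|u_{x,i}|\lesssim \aver{\partial B_r}|u_{x,i}| \lesssim \eta r$ (up to lower-order corrections absorbed for $r_K$ small), hence $\int_{B_r}|U_x|\lesssim \eta r^{d+1}$ and $\int_{B_r}|U_x|^2\lesssim \eta^2 r^{d+2}$. (iii) Combining (i) and (ii), the inequality becomes, after dividing by $r^d$,
\[
\Lambda\,\frac{|\{|U_x|>0\}\cap B_{\theta r}|}{r^d} \;\le\; C(1-\theta)\,\Lambda \;+\; C\eta^2 \;+\; C\eta\,r^{\delta_{\text{\tiny\sc A}}}\cdot(\text{bounded})\,,
\]
where the first term on the right comes from the measure of the annulus and the gradient-cutoff term, and can be made small by choosing $\theta$ close to $1$. (iv) Choose $\theta$ close to $1$ and then $\eta=\eta_K$ small so that the right-hand side is strictly less than $\Lambda c_d$, the measure of a fixed small ball; this forces $|\{|U_x|>0\}\cap B_{\theta r}|$ to be a definite fraction less than $|B_{\theta r}|$, and by a standard iteration/dichotomy (or by invoking a density estimate of the type in Proposition \ref{p:density_est}, but here one prefers a self-contained Alt–Caffarelli iteration on dyadic radii) one upgrades this to $U_x\equiv 0$ on $B_{r/4}$, i.e. $U\equiv 0$ on $F_x(B_{r/4})\supset B_{r/4\lambda_{\text{\tiny\sc A}}}(x)$ since $F_x$ is $\lambda_{\text{\tiny\sc A}}$-Lipschitz.

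**Main obstacle.** The delicate point is step (iii)–(iv): one cannot conclude $U_x=0$ from a single application of the quasi-minimality inequality — the comparison at scale $r$ only yields that a definite proportion of $B_{\theta r}$ is not covered by $\{|U_x|>0\}$. One must run the David–Toro iteration: show that if the boundary average is $\le\eta r$ at scale $r$ then it is $\le\eta\,(r/2)$ (or $\le\eta\theta r$) at the smaller scale, which requires carefully tracking how the error terms $Cr^{\delta_{\text{\tiny\sc A}}}$ and the $L^1$-perturbation behave under iteration and ensuring the geometric decay dominates. The $x$-dependence of $U_x$ and the freezing error $r^{\delta_{\text{\tiny\sc A}}}$ are the new features relative to \cite{alt-caffarelli-81}; one handles them exactly as in \cite{david-toro-15}, choosing $r_K$ small so that $\sum_{r=r_K 2^{-n}} r^{\delta_{\text{\tiny\sc A}}}$ is summable and small, which is why the constants $\eta_K$ and $r_K$ must depend on $K$. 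A secondary technical point is justifying the mean-value inequality for the components of $U_x$, which are not harmonic but satisfy an elliptic equation with a zeroth-order term of size $\lambda_i\|b\|_\infty$; for $r$ small this is a harmless perturbation of harmonicity and is dealt with by standard elliptic estimates.
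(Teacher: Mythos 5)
Your overall skeleton — build a competitor vanishing on an interior ball, plug it into the quasi-minimality inequality of Proposition~\ref{p:changevarU}, deduce a measure bound for $\{|U_x|>0\}$, and iterate at shrinking scales while tracking the $r^{\delta_{\text{\tiny\sc A}}}$ error — is the right one and matches the paper's strategy. But there is a genuine gap in step~(ii), which you dismiss as a ``secondary technical point,'' and it is precisely there that the paper needs a dedicated lemma.

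You assert that $\sup_{B_{r/2}}|u_{x,i}|\lesssim \aver{\partial B_r}|u_{x,i}|$ follows from ``the maximum principle / interior estimates together with the mean-value-type inequality,'' because $U_x$ solves an elliptic system after pulling back \eqref{e:def_op}. That argument does not apply as stated: $u_{x,i}$ solves the eigenvalue equation only \emph{inside} $F_x^{-1}(\O^\ast)$ and is extended by zero across the free boundary, which a priori passes through $B_r$ (the proposition is used at and near boundary points). Interior PDE estimates say nothing across that interface; the distributional operator applied to $u_{x,i}$ carries a singular boundary measure, and to run a sub-mean-value argument you would have to establish and quantify a distributional subsolution property of $|u_{x,i}|$, together with the $O(r^{\delta_{\text{\tiny\sc A}}})$ coefficient freezing error. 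None of that is done. The paper's substitute is Lemma~\ref{l:nondegl}: by testing Proposition~\ref{p:changevarU} against the competitor $\min(u_{x,i},h_{r,i})$, where $h_{r,i}$ is the harmonic replacement of $u_{x,i}$ on $B_r$, one obtains the \emph{variational} almost-subharmonicity estimate
\begin{equation*}
\sum_{i=1}^k\aver{B_r}\big[(u_{x,i}-h_{r,i})^+\big]^2 \leq C_K\,r^{2+\delta_{\text{\tiny\sc A}}},
\end{equation*}
and this, combined with the Poisson-kernel bound $|h_{r,i}|\leq 4^d\eta r$ on $B_{3r/4}$ and Lipschitz continuity of $U_x$, yields (by contradiction) the $L^\infty$ bound $|u_{x,i}|<4^{d+1}\eta r$ in $B_{r/2}$. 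That lemma is the linchpin of the argument and is missing from your proposal; without it, steps~(ii) and~(iii) are not justified. (Note also that invoking Proposition~\ref{p:density_est} to close step~(iv), as you mention in passing, would be circular: that density estimate is proved downstream of non-degeneracy.)

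A secondary but real omission: your iteration is described purely ``on dyadic radii'' at the fixed center, which at best gives $U(x)=0$. To conclude $U\equiv 0$ on the full ball $B_{r/4\lambda_{\text{\tiny\sc A}}}(x)$ one must, as in the paper, shift the center to arbitrary $y\in B_{r/4\lambda_{\text{\tiny\sc A}}}(x)$, change coordinates from $F_x$ to $F_y$, and verify that the boundary-average hypothesis propagates to $u_{y,i}$ at some comparable radius $r_1\in(\tfrac{r}{8\lambda_{\text{\tiny\sc A}}^2},\tfrac{r}{4\lambda_{\text{\tiny\sc A}}^2})$ with the same constant $\eta$; this is where the $\lambda_{\text{\tiny\sc A}}$ factor in the conclusion comes from, and where one must check $C(\eta+r_j^{\delta_{\text{\tiny\sc A}}})\eta\leq\eta$ at each scale.
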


We will need the following Lemma which, loosely speaking, provides an estimate of the non-subharmonicity of $U_x$.

\begin{lm}\label{l:nondegl}
Let $U=(u_1,\dots,u_k)$ be the vector of the $k$ first eigenfunctions on $\O^\ast$. Let $K\subset\O^\ast$ be a compact set. There exists constants $C_K>0$ and $r_K>0$ such that for every $x\in K$ and $r\leq r_K$ we have
\begin{equation}\label{e:nondegla}
\sum_{i=1}^k\aver{B_r}\big[(u_{x,i}-h_{r,i})^+\big]^2 \leq C_Kr^{2+\delta_{\text{\tiny\sc A}}},
\end{equation}
where $h_{x,i}$ denotes the harmonic extension of the trace of $u_{x,i}$ to $\partial B_r$.
\end{lm}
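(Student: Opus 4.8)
The idea is to use that for $x$ in a compact subset $K$ of the \emph{open} set $\O^\ast$ and $r$ small, the whole ball $F_x(B_r)$ sits inside $\O^\ast$, so that the transported eigenfunction $u_{x,i}=u_i\circ F_x$ satisfies the genuine eigenvalue equation on $B_r$ with coefficients frozen at $x$ (hence close to the identity), after which one compares $u_{x,i}$ with its harmonic extension via a Caccioppoli-type estimate.

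\smallskip\noindent\textbf{Freezing the coefficients.} Since $K$ is compact in $\O^\ast\subset D$, put $d_K:=\dist(K,\de\O^\ast)>0$ and fix $r_K\le 1$ so small that $\lambda_{\text{\tiny\sc A}} r_K<d_K/2$. For $x\in K$, $\xi\in B_r$ and $r\le r_K$ one has $|F_x(\xi)-x|=|A_x^{\sfrac12}\xi|\le\lambda_{\text{\tiny\sc A}}|\xi|\le\lambda_{\text{\tiny\sc A}} r$, so $F_x(B_r)\subset B_{\lambda_{\text{\tiny\sc A}} r}(x)\subset\O^\ast$. Changing variables $y=F_x(\xi)$ in the weak formulation of \eqref{e:def_op} shows that
\[
-\dive\!\big(\bar A\,\nabla u_{x,i}\big)=\lambda_i(\O^\ast)\,\bar b\,u_{x,i}\qquad\text{in }B_r,\qquad \bar A(\xi):=A_x^{-\sfrac12}A_{F_x(\xi)}A_x^{-\sfrac12},\quad \bar b:=b\circ F_x,
\]
understood weakly against test functions in $H^1_0(B_r)$ (note $H^1_0(B_r)\ni\psi\mapsto\psi\circ F_x^{-1}\in H^1_0(F_x(B_r))\subset H^1_0(\O^\ast)$). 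Since $\bar A(0)=\mathrm{Id}$, the ellipticity \eqref{e:ellipA} and the H\"older bound \eqref{e:holderA} give
\[
\|\bar A-\mathrm{Id}\|_{L^\infty(B_r)}\le d\,c_{\text{\tiny\sc A}}\,\lambda_{\text{\tiny\sc A}}^{2+\delta_{\text{\tiny\sc A}}}\,r^{\delta_{\text{\tiny\sc A}}}=:C_0\,r^{\delta_{\text{\tiny\sc A}}}.
\]
By Theorem \ref{t:lipquasimin} the vector $U$ is bounded and locally Lipschitz in $D$, so after further shrinking $r_K$ so that $B_{\lambda_{\text{\tiny\sc A}} r}(x)$ stays in a fixed compact neighbourhood $K'\Subset\O^\ast$ of $K$, there is $L=L(K)$ with $\|u_i\|_{L^\infty}\le L$ and $\|\nabla u_{x,i}\|_{L^\infty(B_r)}\le\lambda_{\text{\tiny\sc A}} L$, hence $|\bar b\,u_{x,i}|\le c_b L$ and $\|\nabla u_{x,i}\|_{L^2(B_r)}\le\lambda_{\text{\tiny\sc A}} L\,|B_r|^{\sfrac12}$.

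\smallskip\noindent\textbf{Comparison with the harmonic extension.} Fix $i$ and set $v:=u_{x,i}-h_{x,i}\in H^1_0(B_r)$. Since $-\Delta h_{x,i}=0$ in $B_r$, the equation above rewrites as $-\Delta v=\lambda_i(\O^\ast)\,\bar b\,u_{x,i}+\dive\!\big((\bar A-\mathrm{Id})\nabla u_{x,i}\big)$ in $B_r$; testing with $v$ and using $\int_{B_r}\nabla h_{x,i}\cdot\nabla v=0$ yields
\[
\int_{B_r}|\nabla v|^2=\lambda_i(\O^\ast)\int_{B_r}\bar b\,u_{x,i}\,v-\int_{B_r}(\bar A-\mathrm{Id})\nabla u_{x,i}\cdot\nabla v.
\]
I would bound the first term by $\lambda_i(\O^\ast)\,c_bL\,|B_r|^{\sfrac12}\,\|v\|_{L^2(B_r)}$, and, after splitting $\nabla u_{x,i}=\nabla h_{x,i}+\nabla v$ and using $\|\nabla h_{x,i}\|_{L^2(B_r)}\le\|\nabla u_{x,i}\|_{L^2(B_r)}\le\lambda_{\text{\tiny\sc A}} L\,|B_r|^{\sfrac12}$, bound the second term by $C_0r^{\delta_{\text{\tiny\sc A}}}\big(\lambda_{\text{\tiny\sc A}} L\,|B_r|^{\sfrac12}+\|\nabla v\|_{L^2(B_r)}\big)\|\nabla v\|_{L^2(B_r)}$. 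Shrinking $r_K$ so that $C_0 r_K^{\delta_{\text{\tiny\sc A}}}\le\tfrac12$ absorbs the $\|\nabla v\|_{L^2}^2$ term into the left-hand side; together with Poincar\'e, $\|v\|_{L^2(B_r)}\le C_P\,r\,\|\nabla v\|_{L^2(B_r)}$, and $\delta_{\text{\tiny\sc A}}<1$, this gives $\|\nabla v\|_{L^2(B_r)}\le C\,r^{\sfrac d2+\delta_{\text{\tiny\sc A}}}$, whence
\[
\aver{B_r}v^2=\frac{1}{|B_r|}\|v\|_{L^2(B_r)}^2\le\frac{(C_P r)^2}{|B_r|}\|\nabla v\|_{L^2(B_r)}^2\le C\,r^{2+2\delta_{\text{\tiny\sc A}}}\le C_K\,r^{2+\delta_{\text{\tiny\sc A}}}.
\]
Since $(u_{x,i}-h_{x,i})^+\le|v|$, summing over $i=1,\dots,k$ yields \eqref{e:nondegla}, with constants depending only on $K$ and on $d,k,\lambda_{\text{\tiny\sc A}},c_{\text{\tiny\sc A}},\delta_{\text{\tiny\sc A}},c_b$ and $\lambda_1(\O^\ast),\dots,\lambda_k(\O^\ast)$.

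\smallskip\noindent\textbf{Main difficulty.} There is no serious obstacle: this is a Caccioppoli estimate, and the single quantitative input is that freezing $A$ at $x$ costs an error of order $r^{\delta_{\text{\tiny\sc A}}}$ on $B_r$, which is precisely what upgrades the naive bound $O(r^2)$ to $O(r^{2+\delta_{\text{\tiny\sc A}}})$. The points that need care are: (i) that the transported equation holds on the \emph{whole} ball $B_r$ with no free-boundary term, which is exactly where the hypothesis $K\Subset\O^\ast$ enters; (ii) checking that all constants are controlled by $\dist(K,\de\O^\ast)$ and by the uniform $L^\infty$ and Lipschitz bounds for $U$ on compact subsets of $\O^\ast$ from Theorem \ref{t:lipquasimin}; and (iii) the absorption step, which fixes the smallness of $r_K$ in terms of $C_0$.
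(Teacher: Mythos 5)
Your proof is internally correct, and it would indeed prove the statement \emph{as literally written}. But it takes a fundamentally different route from the paper, and the difference is not cosmetic: your argument exploits the hypothesis $K\Subset\O^\ast$ in an essential way (so that $F_x(B_r)\subset\O^\ast$ and the transported eigenvalue equation $-\dive(\bar A\nabla u_{x,i})=\lambda_i\bar b\,u_{x,i}$ holds on the \emph{whole} ball $B_r$), whereas the paper's proof never uses $K\subset\O^\ast$ at all. The paper instead invokes the quasi-minimality inequality of Proposition~\ref{p:changevarU} with the competitor $\tilde u_i=\min(u_{x,i},h_{r,i})$ in $B_r$ (extended by $u_{x,i}$ outside), and then reads off the inequality \eqref{e:nondegla} from the resulting energy comparison plus Poincar\'e. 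That variational argument is valid for any $x$ in a compact subset of $D$, including points $x\in\partial\O^\ast$.

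This matters, and it is the reason your approach misses the real content of the lemma. Lemma~\ref{l:nondegl} and Proposition~\ref{p:nondeg} are subsequently applied precisely at boundary points (Propositions~\ref{p:growth_boundary}, \ref{p:density_est}, and the blow-up analysis all use the non-degeneracy for $x\in\partial\O^\ast_1$ or arbitrarily close to it; see e.g.\ the use of Proposition~\ref{p:nondeg} at $x_0\in\partial\O^\ast_1\cap K$ with $K\subset D$ in the proof of Proposition~\ref{p:density_est}). For such $x$, $F_x(B_r)$ meets the free boundary, the transported equation only holds in $\{|U_x|>0\}\cap B_r$, and your integration by parts against $v=u_{x,i}-h_{r,i}$ would pick up an uncontrolled distributional term on $\partial\{|U_x|>0\}\cap B_r$. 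So the statement's hypothesis $K\subset\O^\ast$ is either a typo for $K\subset D$ (which is what the paper's proof actually delivers and what the paper actually uses), or else the lemma as written is essentially vacuous for the paper's purposes and your proof, while correct, proves the vacuous version. Either way, the intended route is the variational one: test the quasi-minimality \eqref{e:changevarUa} with $\tilde u_i=\min(u_{x,i},h_{r,i})$, use $\int_{B_r}(|\nabla\tilde u_i|^2-|\nabla u_{x,i}|^2)=-\int_{\{h_{r,i}<u_{x,i}\}}|\nabla(u_{x,i}-h_{r,i})|^2$ (a consequence of the harmonicity of $h_{r,i}$), and then Poincar\'e; no PDE for $u_{x,i}$ on $B_r$ is used or needed.
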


\begin{proof}
We define the vector $\tilde{U}=(\tilde{u}_1,\dots,\tilde{u}_k)\in H^1(\R^d,\R^k)$ by
\begin{equation*}
\tilde{u}_i=\left\{
\begin{array}{ll}
\min(u_{x,i},h_{x,i})\quad&\text{in}\quad B_r \\
u_{x,i} \quad&\text{in}\quad D\backslash B_r.
\end{array}\right.
\end{equation*}  
Then, using $\tilde{U}$ as a test function in Proposition \ref{p:changevarU} we get (since $U_x$ is locally Lipschitz continuous in $D$ and because we have the inclusion $\{|\tilde{U}|>0\}\subset\{|U_x|>0\}$ since $\tilde{u}_i\leq u_{x,i}$)
\begin{align}\label{e:nondegl1}
\nonumber\int_{B_r}|\nabla U_x|^2 &\leq (1+Cr^{\delta_{\text{\tiny\sc A}}})\int_{B_r}|\nabla\tilde{U}|^2 + Cr^{\delta_{\text{\tiny\sc A}}}|\{|\tilde{U}|>0\}\cap B_r| + C\|U_x-\tilde{U}\|_{L^1(B_r)} \\
&\leq (1+Cr^{\delta_{\text{\tiny\sc A}}})\int_{B_r}|\nabla\tilde{U}|^2 + Cr^{d+\delta_{\text{\tiny\sc A}}}.
\end{align}
We now set $V_i=\{h_{r,i}<u_{x,i}\}$ for every $i=1,\dots,k$, so that by \eqref{e:nondegl1} we have
\begin{equation}\label{e:nondegl2}
\sum_{i=1}^k\int_{V_i}(|\nabla u_{x,i}|^2-|\nabla h_{r,i}|^2) \leq Cr^{\delta_{\text{\tiny\sc A}}}\int_{B_r}|\nabla\tilde{U}|^2 + Cr^{d+\delta_{\text{\tiny\sc A}}}.
\end{equation}
Moreover, we have the following equalities
\begin{equation}\label{e:nondegl3}
\int_{B_r}(|\nabla\tilde{u}_i|^2-|\nabla u_{x,i}|^2) = \int_{V_i}(|\nabla h_{x,i}|^2-|\nabla u_{x,i}|^2) = -\int_{V_i}|\nabla(u_{x,i}-h_{r,i})|^2.
\end{equation}
Indeed, the first equality follows from the definition of $V_i$. For the second one, we set $v_i=\max(u_{x,i},h_{r,i})$ in $B_r$ and $v_i=u_{x,i}$ elsewhere, so that by harmonicity of $h_{r,i}$ we have
\begin{equation*}
0=\int_{B_r}\nabla h_{r,i}\cdot\nabla(v_i-h_{r,i}) = \int_{V_i}\nabla u_{x,i}\cdot\nabla(v_i-h_{r,i}) = \int_{V_i}\nabla h_{r,i}\cdot\nabla u_{x,i} - \int_{V_i}|\nabla h_{r,i}|^2,
\end{equation*}
which gives \eqref{e:nondegl3}. Finally, combining Poincar\'{e} inequality, \eqref{e:nondegl3}, \eqref{e:nondegl2} and using that $U_x$ is Lipschitz continuous we get
\begin{align*}
\sum_{i=1}^k\aver{B_r}\big[(u_{x,i}-h_{r,i})^+\big]^2 &\leq Cr^2\sum_{i=1}^k\aver{B_r}|\nabla(u_{x,i}-h_{r,i})^+|^2 = Cr^{2-d}\sum_{i=1}^k\int_{V_i}|\nabla(u_{x,i}-h_{r,i})|^2 \\
&=Cr^{2-d}\sum_{i=1}^k\int_{V_i}(|\nabla u_{x,i}|^2-|\nabla h_{r,i}|^2) \leq Cr^{2-d}\bigg(Cr^{\delta_{\text{\tiny\sc A}}}\int_{B_r}|\nabla\tilde{U}|^2 + Cr^{d+\delta_{\text{\tiny\sc A}}}\bigg) \\
&= Cr^{2+\delta_{\text{\tiny\sc A}}}\bigg(\aver{B_r}|\nabla\tilde{U}|^2+1\bigg) \leq  Cr^{2+\delta_{\text{\tiny\sc A}}}\bigg(\aver{B_r}|\nabla U_x|^2+1\bigg) \leq Cr^{2+\delta_{\text{\tiny\sc A}}}.
\end{align*}
\end{proof}

\begin{proof}[Proof of Proposition \ref{p:nondeg}]
Let $\eta>0$ be small and assume that 
\begin{equation}\label{e:nondeg4}
\sum_{i=1}^k\aver{\partial B_r}|u_{x,i}| \leq \eta r.
\end{equation}
We first claim that for every $i\in\{1,\dots,k\}$ we have $|u_{x,i}|<4^{d+1}\eta r$ in $B_{r/2}$. Suppose by contradiction that there exists $\xi_0 \in B_{r/2}$ such that $|u_{x,i}(\xi_0)|\geq 4^{d+1}\eta r$. Since $u_{x,i}$ is $L$-Lipschitz continuous (with $L$ depending on $K$), we have for every $\xi \in B_{\eta r/L}(\xi_0)$
\begin{equation}\label{e:nondeg1}
|u_{x,i}(\xi)|\geq |u_{x,i}(\xi_0)| - |u_{x,i}(\xi)-u_{x,i}(\xi_0)| \geq  (4^{d+1}-1)\eta r.
\end{equation}
Moreover, if $\eta\leq L/4$, by Poisson formula we have for every $\xi \in B_{\eta r/L}(\xi_0) \subset B_{3r/4}$
\begin{equation}\label{e:nondeg2}
|h_{r,i}(\xi)|=\frac{r^2-|\xi|^2}{d\omega_d r}\bigg|\int_{\partial B_r} \frac{u_{x,i}(\tilde{\xi})}{|\xi-\tilde{\xi}|^d}\,d\mathcal{H}^{d-1}(\tilde{\xi})\bigg| \leq \frac{r}{d\omega_d}\Big(\frac{4}{r}\Big)^d\int_{\partial B_r}|u_{x,i}| \leq 4^d\eta r.
\end{equation}
Therefore, using \eqref{e:nondeg1} and \eqref{e:nondeg2} it follows that
\begin{align*}
\aver{B_r}\Big[(u_{x,i}-h_{r,i})^+\Big]^2 &\geq \Big(\frac{\eta}{L}\Big)^d\aver{B_{\eta r/L}(\xi_0)}(u_{x,i}-h_{r,i})^2 \geq \Big(\frac{\eta}{L}\Big)^d\aver{B_{\eta r/L}(\xi_0)}(|u_{x,i}|-|h_{r,i}|)^2 \\
&\geq \Big(\frac{\eta}{L}\Big)^d\Big[(4^{d+1}-1-4^d)\eta r\Big]^2 \geq \frac{\eta^{d+2}}{L^d}r^2,
\end{align*}
which is in contradiction with \eqref{e:nondegla} if $r$ is small enough.

Now, let $\varphi\in C^\infty(B_r)$ be a smooth function such that $0\leq\varphi\leq 1$, $\varphi=1$ in $B_{r/2}$, $\varphi=0$ in $B_r\backslash B_{3r/4}$ and $|\nabla\varphi|\leq Cr$. We set for $i=1,\dots,k$
\begin{equation*}
\tilde{u}_i=\left\{
\begin{array}{ll}
(u_{x,i}-4^{d+1}\eta r\varphi)^+ -(u_{x,i}+4^{d+1}\eta r\varphi)^- \quad&\text{in}\quad B_r \\
u_{x,i} \quad&\text{in}\quad D\backslash B_r,
\end{array} \right.
\end{equation*}
and $\tilde{U}=(\tilde{u}_1,\dots,\tilde{u}_k)\in H^1(\R^d,\R^k)$. Notice that we have $U_x-\tilde{U}\in H^1(B_r,\R^k)$. Moreover, by the preceding claim we have the inclusion $\{|\tilde{U}|>0\}\cap B_r \subset \{|U_x|>0\}\cap (B_r\backslash B_{r/2})$. Therefore, Proposition \ref{p:changevarU} applied to the vector $\tilde{U}$ gives
\begin{equation}\label{e:nondeg3}
\Lambda|\{|U_x|>0\}\cap B_{r/2}| \leq \int_{B_r}(|\nabla\tilde{U}|^2-|\nabla U_x|^2) + Cr^{\delta_{\text{\tiny\sc A}}}J(\tilde{U},r) + C\|U_x-\tilde{U}\|_{L^1(B_r)}.
\end{equation}
By the definition of $\tilde{U}$ and since $U_x$ is $L$-Lipschitz continuous, we have in the ball $B_r$
\begin{equation*}
|\nabla\tilde{U}|^2 \leq |\nabla U_x|^2 + 2.4^{d+1}k\eta rL|\nabla\varphi| + 4^{2(d+1)}\eta^2r^2|\nabla\varphi|^2 \leq |\nabla U_x|^2 + C\eta\quad\text{in}\quad B_r.
\end{equation*}
Since once again $U_x$ is Lipschitz continuous, \eqref{e:nondeg3} now gives
\begin{equation}\label{e:nondeg5}
\Lambda|\{|U_x|>0\}\cap B_{r/2}| \leq C\eta r^d+ Cr^{\delta_{\text{\tiny\sc A}}}\big((L^2+C\eta)r^d +\Lambda\omega_dr^d\big) + CLr^{d+1} \leq C(\eta+r^{\delta_{\text{\tiny\sc A}}})r^d.
\end{equation}
Then, using once again the claim, we deduce that
\begin{equation}\label{e:nondeg6}
\sum_{i=1}^k\int_{B_{r/2}}|u_{x,i}|=\sum_{i=1}^k\int_{B_{r/2}\cap\{|U_x|>0\}}|u_{x,i}| \leq 4^{d+1}k\eta r|\{|U_x|>0\}\cap B_{r/2}| \leq C(\eta+r^{\delta_{\text{\tiny\sc A}}})\eta r^{d+1}.
\end{equation}
Let $y\in B_{r/4\lambda_{\text{\tiny\sc A}}}(x)$. We will find by induction a sequence of radii $r_j$ such that the estimate \eqref{e:nondeg4} holds with the radius $r_j$ and at the point $y$. Let us choose $r_1\in(\frac{r}{8\lambda_{\text{\tiny\sc A}}^2},\frac{r}{4\lambda_{\text{\tiny\sc A}}^2})$ such that
\begin{equation*}
\int_{\partial B_{r_1}}\sum_{i=1}^k|u_{y,i}| \leq \frac{8\lambda_{\text{\tiny\sc A}}^2}{r} \int_{r/8\lambda_{\text{\tiny\sc A}}^2}^{r/4\lambda_{\text{\tiny\sc A}}^2}ds\int_{\partial B_s}\sum_{i=1}^k|u_{y,i}|
\end{equation*}
Then, by \eqref{e:nondeg6} (and since $F_y^{-1}\circ F_x(B_{r/4\lambda_{\text{\tiny\sc A}}^2}) \subset B_{r/2})$ we get
\begin{align*}
\sum_{i=1}^k\aver{\partial B_{r_1}}|u_{y,i}| &\leq Cr^{1-d}\sum_{i=1}^k\int_{\partial B_{r_1}}|u_{y,i}| \leq  Cr^{-d}\sum_{i=1}^k\int_{r/8\lambda_{\text{\tiny\sc A}}^2}^{r/4\lambda_{\text{\tiny\sc A}}^2}ds\int_{\partial B_s}|u_{y,i}| \\
&\leq Cr^{-d} \sum_{i=1}^k \int_{B_{r/4\lambda_{\text{\tiny\sc A}}^2}} |u_{y,i}| = Cr^{-d}\sum_{i=1}^k \int_{F_x^{-1}\circ F_y(B_{r/4\lambda_{\text{\tiny\sc A}}^2})}|u_{x,i}|\,|\det(F_y^{-1}\circ F_x)| \\
&\leq Cr^{-d} \sum_{i=1}^k \int_{B_{r/2}}|u_{x,i}| \leq C(\eta+r^{\delta_{\text{\tiny\sc A}}})\eta r_1 \leq \eta r_1,
\end{align*}
where the last inequality holds if $\eta$ and $r$ are small enough. Therefore, by the same above argument we use to get \eqref{e:nondeg5} and \eqref{e:nondeg6} we now deduce that
\begin{equation*}
|\{|U_y|>0\}\cap B_{r_1/2}| \leq C(\eta+r_1^{\delta_{\text{\tiny\sc A}}})r_1^d
\end{equation*}
and
\begin{equation*}
\sum_{i=1}^k\int_{B_{r_1/2}}|u_{y,i}| \leq C(\eta+r_1^{\delta_{\text{\tiny\sc A}}})\eta r_1^{d+1}.
\end{equation*}
We now choose $r_2\in (\frac{r_1}{4},\frac{r_1}{2})$ such that
\begin{equation*}
\sum_{i=1}^k\aver{\partial B_{r_2}}|u_{y,i}| \leq Cr_1^{-d}\sum_{i=1}^k\int_{r_1/4}^{r_1/2}ds\int_{\partial B_s}|u_{y,i}| \leq Cr_1^{-d}\sum_{i=1}^k\int_{B_{r_1/2}}|u_{y,i}| \leq C(\eta+r_1^{\delta_{\text{\tiny\sc A}}})\eta r_1 \leq \eta r_1,
\end{equation*}
provided that $\eta$ and $r$ are small enough. By induction it follows that there exists a sequence of radii $(r_j)_j$ such that $r_j \in (\frac{r_j}{4},\frac{r_j}{2})$ and
\begin{equation}\label{e:nondeg7}
|\{|U_y|>0\}\cap B_{r_j/2}| \leq C(\eta+r_j^{\delta_{\text{\tiny\sc A}}})\eta r_j.
\end{equation}
Now, if $|U_y|(0)>0$, then $|U_y|>0$ in a neighborhood of $0$ since $U_y$ is continuous, which is in contradiction with \eqref{e:nondeg7} for $\eta$ small enough and $j$ big enough. Hence $|U|(y)=|U_y|(0)=0$ for every $y\in B_{r/4\lambda_{\text{\tiny\sc A}}}(x)$, that is, $U=0$ in $B_{r/4\lambda_{\text{\tiny\sc A}}}(x)$. 
\end{proof}

\begin{oss}[$L^\infty$ non-degeneracy of $U$]\label{r:LinftynondegU}
A consequence of Proposition \ref{p:nondeg} is that $U$ also enjoys the following non-degeneracy property: there exist $\eta=\eta_K>0$ and $r_K>0$ such that for every $x\in K$ and $r\leq r_K$ we have
\begin{equation*}
\|U\|_{L^\infty(B_{\lambda_{\text{\tiny\sc A}} r}(x))}\leq\eta r \qquad\Longrightarrow\qquad U=0\quad\text{in}\quad B_{r/4\lambda_{\text{\tiny\sc A}}}(x).
\end{equation*}
\end{oss}

\subsection{Non-degeneracy of the first eigenfunction and density estimate}
We prove that the first eigenfunction $u_1$ on an optimal set $\O^\ast$ to \eqref{e:shapeopt} is non degenerate at every point of the boundary of $\O^\ast_1$, where $\O^\ast_1$ denotes any connected component of $\O^\ast$ where $u_1$ is positive. The proof follows an idea of Kriventsov and Lin taken from \cite{kriventsov-lin-18}. As a consequence, we obtain that $u_1$ behaves like the distance function to the boundary and also a density estimate for the optimals sets. Obviously, these properties only hold in $\O^\ast_1$, that is, where $u_1$ is positive. However, as pointed out in Remark \ref{r:redu_Oast1}, it is enough to restrict ourselves to this case in order to get the regularity of the whole optimal set $\O^\ast$. 

\begin{prop}[Non-degeneracy of $u_1$]\label{p:nondegu1}
There exists a constant $C_1>0$ such that $C_1 u_1\geq |U|$ in $\O^\ast_1$.
\end{prop}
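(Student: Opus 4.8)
The plan is to compare $|U|$ with a multiple of $u_1$ by using the equation satisfied by $u_1$ and the $L^\infty$ bound on $U$ coming from Theorem \ref{t:lipquasimin}. The key observation is that $u_1$, being the first eigenfunction on the connected component $\O^\ast_1$, does not vanish inside $\O^\ast_1$, so the ratios $u_i/u_1$ make sense in the interior; the real issue is controlling this ratio uniformly up to $\partial\O^\ast_1$. Following the idea of Kriventsov and Lin, I would set $v = |U| - C_1 u_1$ for a large constant $C_1$ to be chosen, and show that the positive part $v^+$ must vanish. Since $-\dive(A\nabla u_1) = \lambda_1 b\, u_1 \geq 0$ in $\O^\ast_1$ while $-\dive(A\nabla u_i) = \lambda_i b\, u_i$ with $\lambda_i\le\lambda_k$, one gets a differential inequality: away from the zero set, $|U|$ is (almost) a subsolution of the operator $-\dive(A\nabla\cdot)$ up to a controlled lower-order term, whereas $u_1$ is a supersolution. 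Concretely, one computes $-\dive(A\nabla |U|) \le \dfrac{\sum_i \lambda_i b\, u_i^2}{|U|} \le \lambda_k b\,|U|$ in the viscosity/distributional sense on $\{|U|>0\}$, using that $|U| = \big(\sum u_i^2\big)^{1/2}$ and the Cauchy–Schwarz estimate $|A^{1/2}\nabla|U||\le \big(\sum |A^{1/2}\nabla u_i|^2\big)^{1/2}$.

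With this in hand, the comparison runs as follows. On $\O^\ast_1$ one has
\[
-\dive\big(A\nabla(|U|-C_1 u_1)\big) \le \lambda_k b\,|U| - C_1\lambda_1 b\, u_1,
\]
so choosing $C_1$ large (depending on $\lambda_1,\lambda_k$ and on $\|U\|_{L^\infty}$, $\|u_1^{-1}\|$ on compact subsets — here one must be careful) the right-hand side becomes non-positive wherever $|U|$ is comparable to $u_1$, and $v=|U|-C_1u_1$ is a subsolution. Since $v\le 0$ on $\partial\O^\ast_1$ (both vanish there, in the $H^1_0$ sense and by continuity from Theorem \ref{t:lipquasimin}), the maximum principle forces $v\le 0$ in $\O^\ast_1$. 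The cleanest way to make the boundary behaviour rigorous is to use the non-degeneracy and Lipschitz bounds already proved: near $\partial\O^\ast_1$, Remark \ref{r:LinftynondegU} together with Proposition \ref{p:nondeg} gives a lower growth $\sup_{B_r}|U|\gtrsim r$, and one needs the matching lower bound for $u_1$ itself — which is precisely the point where the argument is delicate, since a priori only $|U|$ is known to be non-degenerate, not $u_1$. I would handle this by a Harnack-type/barrier argument on $\O^\ast_1$: since $u_1>0$ solves $-\dive(A\nabla u_1)=\lambda_1 b\, u_1$ with $\lambda_1 b\,u_1\ge 0$ and $u_1\in L^\infty$, interior Harnack gives $u_1(x)\gtrsim \fint_{\partial B_r(x)} u_1$, which can be iterated/chained from a fixed interior point to show $u_1\gtrsim \mathrm{dist}(\cdot,\partial\O^\ast_1\cap D)$ away from the inclusion boundary.

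The main obstacle, as indicated, is the interplay between $u_1$ and $|U|$ at the free boundary: the statement is global (a single constant $C_1$ for all of $\O^\ast_1$), but the naive comparison argument only gives local bounds with constants degenerating near $\partial\O^\ast_1$. The resolution is to combine the subsolution property of $|U|-C_1u_1$ with the \emph{non-degeneracy of $u_1$} — i.e. one must first establish (or invoke, as the surrounding text promises this is done via the ideas of \cite{kriventsov-lin-18} and the finite-perimeter/density machinery) that $u_1$ enjoys a uniform linear growth estimate $u_1(x)\ge c\,\mathrm{dist}(x,\partial\O^\ast_1)$. Once both $|U|\le L\,\mathrm{dist}$ (Lipschitz) and $u_1\ge c\,\mathrm{dist}$ hold in a neighbourhood of $\partial\O^\ast_1$, one gets $|U|\le (L/c)u_1$ there, and in the remaining compact interior $|U|/u_1$ is trivially bounded by continuity and strict positivity of $u_1$; taking $C_1$ to be the max of the two constants finishes the proof.
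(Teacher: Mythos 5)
Your opening move — the differential inequality $\dive(A\nabla|U|)\geq -C|U|$, obtained by computing $\dive(A\nabla|U|)$ via Cauchy--Schwarz on the cross terms and using the eigenvalue equations — is exactly the paper's first step. The gap is in how you close the argument; both of the mechanisms you propose fail.

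Your first resolution invokes the linear growth estimate $u_1(x)\gtrsim\mathrm{dist}(x,\partial\O^\ast_1)$. But in the paper that growth estimate is Proposition~\ref{p:growth_boundary}, which is proved \emph{after} the present proposition and whose proof explicitly cites it (``By non degeneracy of $u_1$ (Propositions~\ref{p:nondegu1} and~\ref{p:nondeg})\dots''). Invoking it here is circular: $|U|$ is known to be non-degenerate (Proposition~\ref{p:nondeg}), but transferring that to $u_1$ alone is precisely the content of the statement you are proving. Your second resolution — chaining the interior Harnack inequality for $u_1$ from a fixed interior point — cannot produce a linear lower bound near $\partial\O^\ast_1$. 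Harnack chaining degrades with the number of balls in the chain, and since at this stage of the paper there is no a priori geometric control on $\O^\ast_1$ (the NTA property is established only much later, in Section~\ref{s:reg}, using the regularity theory that depends on this very proposition), the chain constants can blow up arbitrarily as one approaches the free boundary. In general one only gets a bound of the form $u_1\gtrsim\exp(-C/\mathrm{dist})$, not $u_1\gtrsim\mathrm{dist}$.

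What actually closes the argument in the paper is an iteration scheme that you do not mention. Set $\O_r=\{|U|<r\}\cap\O^\ast_1$ and use two ingredients: (i) the measure estimate $|\O_r|\leq Cr$, which is a byproduct of the finite-perimeter proof (estimate~\eqref{e:finitperi2} in Proposition~\ref{p:finitperi}); and (ii) the De Giorgi-type $L^\infty$ bound of Lemma~\ref{l:degiorgi}, $\|v^-\|_{L^\infty}\leq C|\{v<0\}|^{2/d}\|f\|_{L^\infty}$, applied to $v_k=M_ku_1-|U|$ on the shrinking level sets $\O_{r_k}$ with $r_k=2^{-k}r_0$. Combined with the differential inequality and the trivial comparison $v_0\geq 0$ on $\partial\O_{r_0}$ (which uses $u_1>0$ on the compact set $\{|U|=r_0\}$), this gives $-\inf_{\O_{r_0}}v_0\leq\overline C r_0^{1+2/d}$, a \emph{superlinear} error in $r_0$. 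The superlinearity is what makes the dyadic iteration close: the corrected constants $M_k=\big(1-2\overline C r_{k-1}^{2/d}\big)^{-1}M_{k-1}$ form a convergent product, so $M_k\leq CM_0$ and the single constant $C_1=CM_0$ works down to the free boundary. This mechanism is the part of Kriventsov--Lin's idea that your proposal omits, and it is indispensable: without the measure bound $|\O_r|\leq Cr$ to feed into Lemma~\ref{l:degiorgi}, the comparison $|U|\leq C_1u_1$ has no way of self-improving as one approaches $\partial\O^\ast_1$.
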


We first recall the following standard result which is a consequence of \cite[Lemma 2.1]{trey-19}.

\begin{lm}\label{l:degiorgi}
Let $\O\subset D$ be a (non-empty) quasi-open set, $f\in L^\infty(D)$, $f\geq 0$, and $u\in H^1(D)$ be such that $u\geq 0$ on $\partial\O$ and
\begin{equation*}
\dive(A \nabla u)\leq f\quad\text{in}\quad\Omega. 
\end{equation*}
Then, there exists a constant $C>0$, depending only on $d$ and $\lambda_{\text{\tiny\sc A}}$, such that
$$\|u^-\|_{L^\infty(\O)}\le C|\{u<0\}\cap\O|^{\sfrac2d}\|f\|_{L^\infty(\O)}.$$
\end{lm}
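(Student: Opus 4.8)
The statement is the classical De Giorgi--Stampacchia $L^\infty$ bound for $A$-subsolutions, and the plan is to prove it by truncation followed by a Stampacchia iteration (it can alternatively be deduced from the De Giorgi estimate \cite[Lemma 2.1]{trey-19} applied to a suitable truncation, but the direct argument is short). Interpret the hypotheses as usual: ``$u\geq 0$ on $\partial\O$'' means $u^-:=\max\{-u,0\}\in H^1_0(\O)$ (in the quasi-open sense), which, extended by zero, we regard as an element of $H^1(\R^d)$ with $\{u^->0\}\subset\{u<0\}\cap\O$ of finite measure; and ``$\dive(A\nabla u)\leq f$ in $\O$'' means $\int_\O A\nabla u\cdot\nabla\vf\,dx\geq-\int_\O f\vf\,dx$ for every nonnegative $\vf\in H^1_0(\O)$. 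We may assume $\|f\|_{L^\infty(\O)}>0$ and $|\{u<0\}\cap\O|>0$, the other cases being trivial by the maximum principle. For $t\geq0$ set $A_t:=\{u^->t\}\cap\O$, $\mu(t):=|A_t|$ (a non-increasing function, with $\mu(0)=|\{u<0\}\cap\O|$) and $\vf_t:=(u^--t)^+\in H^1_0(\O)$.

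On the set $\{u^->t\}$ one has $u^-=-u$, hence $\nabla\vf_t=-\ind_{A_t}\nabla u$; testing the weak inequality with $\vf_t$ and using uniform ellipticity \eqref{e:ellipA},
\[
\int_{\R^d}|\nabla\vf_t|^2\,dx\;\leq\;\lambda_{\text{\tiny\sc A}}^2\int_{\R^d}A\nabla\vf_t\cdot\nabla\vf_t\,dx\;=\;\lambda_{\text{\tiny\sc A}}^2\int_{A_t}A\nabla u\cdot\nabla u\,dx\;\leq\;\lambda_{\text{\tiny\sc A}}^2\int_\O f\,\vf_t\,dx\;\leq\;\lambda_{\text{\tiny\sc A}}^2\|f\|_{L^\infty}\int_{A_t}\vf_t\,dx .
\]
Now combine three elementary inequalities, all valid because $\vf_t$ and $\nabla\vf_t$ vanish outside $A_t$: the Gagliardo--Nirenberg--Sobolev inequality $\|\vf_t\|_{L^{d/(d-1)}(\R^d)}\leq C(d)\|\nabla\vf_t\|_{L^1(\R^d)}$ (with the convention $d/(d-1)=\infty$ for $d=1$), Cauchy--Schwarz $\|\nabla\vf_t\|_{L^1}\leq\mu(t)^{1/2}\|\nabla\vf_t\|_{L^2}$, and Hölder's inequality $\|\vf_t\|_{L^1}\leq\mu(t)^{1/d}\|\vf_t\|_{L^{d/(d-1)}}$. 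Chaining them with the energy estimate above gives
\[
\|\vf_t\|_{L^1}\;\leq\;C(d)\,\mu(t)^{\frac{1}{2}+\frac{1}{d}}\,\|\nabla\vf_t\|_{L^2}\;\leq\;C(d)\,\mu(t)^{\frac{1}{2}+\frac{1}{d}}\big(\lambda_{\text{\tiny\sc A}}^2\|f\|_{L^\infty}\,\|\vf_t\|_{L^1}\big)^{1/2},
\]
hence, dividing by $\|\vf_t\|_{L^1}^{1/2}$ and squaring, $\int_{A_t}\vf_t=\|\vf_t\|_{L^1}\leq C(d)\,\lambda_{\text{\tiny\sc A}}^2\,\|f\|_{L^\infty}\,\mu(t)^{1+\frac{2}{d}}$.

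For $s>t\geq0$, using that $u^--t>s-t$ on $A_s\subset A_t$,
\[
(s-t)\,\mu(s)\;\leq\;\int_{A_s}(u^--t)\,dx\;\leq\;\int_{A_t}\vf_t\,dx\;\leq\;C(d)\,\lambda_{\text{\tiny\sc A}}^2\,\|f\|_{L^\infty}\,\mu(t)^{1+\frac{2}{d}} .
\]
Since $1+\tfrac{2}{d}>1$, the classical Stampacchia iteration lemma applies and yields $\mu\equiv0$ on $[\delta,\infty)$ with $\delta=C(d)\,\lambda_{\text{\tiny\sc A}}^2\,\|f\|_{L^\infty}\,\mu(0)^{2/d}$ for a purely dimensional constant $C(d)$. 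Therefore
\[
\|u^-\|_{L^\infty(\O)}=\inf\{t\geq0:\mu(t)=0\}\leq\delta\leq C\,|\{u<0\}\cap\O|^{2/d}\,\|f\|_{L^\infty(\O)},
\]
with $C=C(d,\lambda_{\text{\tiny\sc A}})$, which is the assertion.

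There is no genuine obstacle here; the only two points requiring a little care are making the constant independent of $|\O|$ and $|D|$ — which is why I use the scale-invariant $L^1$-Sobolev inequality, valid with a dimensional constant in every dimension, rather than the $L^2$-based embedding — and the (routine) verification that the truncations $(u^--t)^+$ still belong to $H^1_0(\O)$ in the quasi-open setting, so that they are admissible test functions in the weak formulation of $\dive(A\nabla u)\leq f$.
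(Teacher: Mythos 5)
Your proof is correct, but it takes a genuinely different route from the one in the paper. The paper does not run any truncation or iteration itself: it sets $\O^-=\{u<0\}\cap\O$, introduces the solution $v\in H^1_0(\O^-)$ of $\dive(A\nabla v)=f$ in $\O^-$, applies the weak maximum principle twice to get $v\leq 0$ and $v\leq u$ in $\O^-$ (hence $u^-\leq -v$), and then quotes the $L^\infty$ estimate of \cite[Lemma 2.1]{trey-19} for the \emph{solution} $-v$ on $\O^-$, whose measure is precisely $|\{u<0\}\cap\O|$; this is a three-line reduction to a result already available. You instead prove the bound directly for the subsolution via a De Giorgi--Stampacchia iteration on the level sets of $u^-$: testing with $(u^--t)^+$, chaining the scale-invariant $L^1$-Sobolev inequality with Cauchy--Schwarz and H\"older to get $(s-t)\mu(s)\leq C\|f\|_{L^\infty}\mu(t)^{1+2/d}$, and concluding by Stampacchia's lemma. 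This is self-contained, avoids both the comparison function and the external lemma, and your care in using the $L^1$-based Sobolev embedding is exactly what makes the constant depend only on $d$ and $\lambda_{\text{\tiny\sc A}}$ while producing the measure with the stated power $2/d$. The trade-off is purely one of length versus self-containment; both arguments are sound, and your interpretation of the hypotheses ($u^-\in H^1_0(\O)$ and the distributional inequality against nonnegative test functions) matches how the lemma is actually used in the proof of Proposition \ref{p:nondegu1}.
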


\begin{proof}
Set $\O^-=\{u<0\}\cap\O$ and notice that $u\in H^1_0(\O^-)$ Let $v\in H^1_0(\O^-)$ be the solution of $\dive(A \nabla v)= f$ in $\O^-$. By the weak maximum principle we have $v\leq 0$ in $\O^-$ (since $f\geq 0$) and $v\leq u$ in $\O^-$; in particular, $u^-\leq v^-=-v$ in $\O^-$. The proof now follows from Lemma 2.1 in \cite{trey-19} (applied to $-v$).
\end{proof}

\begin{proof}[Proof of Proposition \ref{p:nondegu1}]
We first claim that $\dive(A\nabla|U|)\geq -C|U|$ in $\O^\ast$. 
Let $\varphi\in H^1_0(\O^\ast)$, $\varphi\geq 0$. We use an approximation by mollifiers $A^\varepsilon=(a_{ij}^\varepsilon)$ where $a_{ij}^\varepsilon=a_{ij}\ast\rho_\varepsilon$, and we compute
\begin{align*}
\langle\dive(A^\varepsilon\nabla|U|),\varphi\rangle &=-\sum_{i,j}\int a_{ij}^\varepsilon\partial_i|U|\partial_j\varphi = -\sum_{i,j,l}\int a_{ij}^\varepsilon\partial_iu_l\frac{u_l}{|U|}\partial_j\varphi \\
&=\sum_{i,j,l}\int \partial_j(a_{ij}^\varepsilon\partial_i u_l)\frac{u_l}{|U|}\varphi + \sum_{i,j,l}\int a_{ij}^\varepsilon\partial_iu_l\partial_j\bigg(\frac{u_l}{|U|}\bigg)\varphi \\
&=-\sum_{i,j,l}\int a_{ij}^\varepsilon\partial_iu_l\partial_j\bigg(\frac{u_l}{|U|}\varphi\bigg) + \sum_{i,j,l,p}\int a_{ij}^\varepsilon\partial_iu_l\bigg(\frac{\partial_ju_l}{|U|} - \frac{u_lu_p}{|U|^3}\partial_ju_p\bigg)\varphi.
\end{align*}
Therefore, passing to the limit as $\varepsilon\rightarrow 0$ we get
\begin{align}\label{e:nondegux11}
\nonumber\langle\dive(A\nabla|U|),\varphi\rangle &\geq \sum_{l}\int\dive(A\nabla u_l)\frac{u_l}{|U|}\varphi +\sum_{l,p}\int\frac{1}{|U|^3}\big(u_l^2A\nabla u_l\cdot\nabla u_l - u_lu_pA\nabla u_l\cdot \nabla u_p\big)\varphi \\
&\geq -\sum_{l}\lambda_l(\O^\ast)\int b\frac{u_l^2}{|U|}\varphi \geq -\lambda_k(\O^\ast)c_b\int|U|\varphi,
\end{align}
which proves the claim.

Let $r_0>0$ be small (to be chosen soon) and set $\O_r=\{x\in\O^\ast_1\,:\,|U(x)|<r\}$ for every $r>0$. Since $u_1>0$ in $\O^\ast_1$ we have $m:=\inf\{u_1(x)\,:\,x\in\O^\ast_1,\, |U(x)|=r_0\}>0$. We set $M_0=m^{-1}r_0$ and $v_0=M_0u_1-|U|$. The claim implies that $\dive(A\nabla v_0)\leq C|U|$ in $\O_{r_0}$. Moreover, by construction of $v_0$ we have $v_0\geq 0$ on $\partial\O_{r_0}$. Therefore, by Lemma \ref{l:degiorgi} we get 
\begin{equation*}
-\inf_{\O_{r_0}}v_0 = \|v_0^-\|_{L^\infty(\O_{r_0})} \leq C|\{v_0<0\}\cap\O_{r_0}|^{\sfrac2d}\|U\|_{L^\infty(\O_{r_0})}.
\end{equation*}
Then, from \eqref{e:finitperi2} (and a compactness argument) we have $|\O_{r_0}|\leq Cr_0$ so that we deduce from the above inequality that $-\inf_{\O_{r_0}}v_0 \leq \overline{C}r_0^{1+\sfrac2d}$ for some $\overline{C}>0$ independent of $r_0$. Therefore, in $\O_{r_0}\backslash\O_{r_0/2}$ we have
\begin{equation*}
M_0u_1=|U|+v_0\geq|U|-\overline{C}r_0^{1+\sfrac2d} \geq\big(1-2\overline{C}r_0^{\sfrac2d}\big)|U|\quad\text{in}\quad \O_{r_0}\backslash\O_{r_0/2}.
\end{equation*}
We now choose $r_0$ small enough so that $4\overline{C}r_0^{\sfrac2d}\leq1$ and set $M_1=\big[1-2\overline{C}r_0^{\sfrac2d}\big]^{-1}M_0$ and $v_1=M_1u_1-|U|$. It follows that $v_1\geq0$ in $\O_{r_0}\backslash\O_{r_0/2}$; in particular we have $v_1\geq0$ on $\partial\O_{r_0/2}$ and hence the above argument now applies to $v_1$ in $\O_{r_0/2}$. Therefore, an induction gives that $v_k\geq 0$ in $\O_{r_{k-1}}\backslash\O_{r_k}$ for every $k\geq 1$, where we have set $v_k=M_ku_1-|U|$, $M_k=\big[1-2\overline{C}r_{k-1}^{\sfrac2d}\big]^{-1}M_{k-1}$ and $r_k=2^{-k}r_0$. Moreover, we have
\begin{equation*}
\log(M_k)=\log(M_0)-\sum_{i=1}^k\log\big[1-2\overline{C}r_{i-1}^{\sfrac2d}\big]\leq\log(M_0)+C\sum_{i=1}^k2^{-2i/d}\leq C+\log(M_0)
\end{equation*}
and hence $M_k\leq CM_0$. It follows that $|U|\leq M_ku_1\leq CM_0u_1$ in $\O_{r_{k-1}}\backslash\O_{r_k}$ for every $k\geq 0$ and therefore that $|U|\leq CM_0u_1$ in $\O_{r_0}$. On the other hand, since $\inf_{\O^\ast_1\backslash\O_{r_0}}u_1>0$, there exists $M>0$ such that $|U|\leq Mu_1$ in $\O^\ast_1\backslash\O_{r_0}$. This completes the proof.
\end{proof}

We now prove that the first eigenfunction on an optimal set has the same growth than the distance function near the boundary. This property will be useful to prove that the boundaries of blow-up sets Hausdorff convergence to the boundary of the blow-up limit set.

\begin{prop}[Uniform growth of $u_1$ at the boundary]\label{p:growth_boundary}
Let $K\subset D$ be a compact set. There exist constants $c_K>0$ and $r_K>0$ such that the following growth condition holds
\begin{equation*}
u_1(x) \geq c_K\,dist(x,\partial\O^\ast_1)\qquad\text{for every } x\in \O^\ast_1\cap K\text{ such that } dist(x,\partial\O^\ast_1)\leq r_K.
\end{equation*}
\end{prop}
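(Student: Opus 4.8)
The plan is to combine the non-degeneracy of $U$ at the boundary (Remark \ref{r:LinftynondegU}, i.e. the $L^\infty$ version of Proposition \ref{p:nondeg}), the comparison $C_1 u_1 \geq |U|$ from Proposition \ref{p:nondegu1}, and the Lipschitz continuity of $u_1$ provided by Theorem \ref{t:lipquasimin}. The starting point is the following observation: if $x \in \O^\ast_1 \cap K$ with $\rho := \dist(x,\partial\O^\ast_1) \leq r_K$ small, then the ball $B_{\lambda_{\text{\tiny\sc A}}\sigma}(x)$ still lies well inside $\O^\ast$ for $\sigma$ comparable to $\rho$, say $\sigma = \rho/(4\lambda_{\text{\tiny\sc A}}^2)$ or a similar dimensional fraction. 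Since $x \in \O^\ast_1 = \{|U|>0\}$ on that component (as $u_1>0$ there), $U$ is \emph{not} identically zero on $B_{\sigma/4\lambda_{\text{\tiny\sc A}}}(x)$, so the contrapositive of Remark \ref{r:LinftynondegU} gives
\[ \|U\|_{L^\infty(B_{\lambda_{\text{\tiny\sc A}}\sigma}(x))} \geq \eta_K\,\sigma. \]

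Next I would transfer this $L^\infty$ lower bound on $|U|$ at a point near $x$ into a \emph{pointwise} lower bound for $|U|$, hence for $u_1$, \emph{at} $x$. Let $y \in B_{\lambda_{\text{\tiny\sc A}}\sigma}(x)$ be a point where $|U(y)| \geq \tfrac12 \eta_K \sigma$ (approximately realizing the sup, using continuity of $U$). By Proposition \ref{p:nondegu1}, $u_1(y) \geq C_1^{-1}|U(y)| \geq \tfrac{1}{2C_1}\eta_K\sigma$. Now $u_1$ is $L$-Lipschitz on the compact set $K$ (with $L = L_K$ from Theorem \ref{t:lipquasimin}), and $|x-y| \leq \lambda_{\text{\tiny\sc A}}\sigma$, so
\[ u_1(x) \geq u_1(y) - L|x-y| \geq \tfrac{1}{2C_1}\eta_K\sigma - L\lambda_{\text{\tiny\sc A}}\sigma. \]
The issue is that the Lipschitz term $L\lambda_{\text{\tiny\sc A}}\sigma$ can swamp the gain $\tfrac{1}{2C_1}\eta_K\sigma$ at the same scale. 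I would fix this by choosing $y$ not anywhere in $B_{\lambda_{\text{\tiny\sc A}}\sigma}(x)$ but, after rescaling, arranging the geometry so that the point realizing non-degeneracy is within distance $\theta\sigma$ of $x$ with $\theta$ a \emph{small} dimensional constant — that is, apply Remark \ref{r:LinftynondegU} at a center chosen slightly off $x$ but with radius tuned so that the resulting non-degeneracy ball $B_{\sigma/4\lambda_{\text{\tiny\sc A}}}(\cdot)$ that must contain a positivity point of $U$ lies inside $B_{\theta\sigma}(x)$. Concretely: cover the relevant annulus by a controlled number of balls and use that on at least one of them $U \not\equiv 0$, or simply iterate the non-degeneracy statement at dyadic scales down from $\sigma$ as in the proof of Proposition \ref{p:nondeg}, to locate a point $y$ with $u_1(y) \gtrsim_K \sigma$ and $|x-y| \leq \tfrac{\eta_K}{4C_1 L}\sigma$. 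Then $u_1(x) \geq u_1(y) - L|x-y| \geq \tfrac{\eta_K}{4C_1}\sigma \geq c_K\rho$, which is the claim.

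The main obstacle, as indicated, is the \emph{matching of scales} between the non-degeneracy lower bound and the Lipschitz error: both are linear in $\sigma$, so a crude application of the triangle inequality fails, and one genuinely needs to localize the point where $|U|$ is large to a smaller fraction of $\sigma$ — either by a one-step geometric argument (choosing the center of the non-degeneracy ball appropriately and shrinking its radius relative to $\dist(x,\partial\O^\ast_1)$), or by a short dyadic iteration. Apart from this, the argument is bookkeeping: one must keep track that all balls used stay inside $D$ (shrink $r_K$), that $x$ stays in a fixed compact neighborhood so the constants $\eta_K$, $L_K$, $C_1$ are uniform, and that $|U|>0$ somewhere near $x$ because $x \in \O^\ast_1$.
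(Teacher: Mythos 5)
You have correctly isolated the crux --- the non-degeneracy gain and the Lipschitz loss are both linear in the scale --- but none of the fixes you propose actually removes this obstruction, so the proof has a genuine gap. Whatever center $z$ and radius $s$ you feed into Remark \ref{r:LinftynondegU}, the conclusion is only that some point $y\in B_{\lambda_{\text{\tiny\sc A}} s}(z)$ satisfies $|U(y)|\geq\eta_K s$; the ratio between the guaranteed value $\eta_K s$ and the guaranteed distance from $x$ (of order $\lambda_{\text{\tiny\sc A}} s$) is the fixed number $\eta_K/\lambda_{\text{\tiny\sc A}}$, independent of $s$. After subtracting the Lipschitz error you are left with $u_1(x)\geq s\big(\tfrac{\eta_K}{C_1}-C L\lambda_{\text{\tiny\sc A}}\big)$, which is useless because there is no reason for the small non-degeneracy constant $\eta_K$ to dominate the (possibly large) Lipschitz constant $L$. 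Shrinking the radius to $\theta\sigma$, covering an annulus, or iterating dyadically all reproduce the same unfavorable ratio at the smaller scale: at scale $\sigma 2^{-j}$ you find a point within $\lambda_{\text{\tiny\sc A}}\sigma 2^{-j}$ of $x$ where $|U|\geq\eta_K\sigma 2^{-j}$, never a point within $\tfrac{\eta_K}{4C_1L}\sigma$ of $x$ where $u_1\gtrsim\sigma$. So the step ``locate $y$ with $u_1(y)\gtrsim_K\sigma$ and $|x-y|\leq\tfrac{\eta_K}{4C_1L}\sigma$'' is unjustified, and the data you invoke (an $L^\infty$ lower bound somewhere in a ball, plus Lipschitz continuity) simply do not determine a lower bound at the center: a Lipschitz function can have a spike of height $\eta s$ at distance $\lambda_{\text{\tiny\sc A}} s$ from $x$ and still vanish at $x$.

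The missing ingredient is that $u_1$ (almost) solves an elliptic equation in the full ball $B_\rho(x)\subset\O^\ast_1$, which is exactly what being at distance $\rho$ from the boundary buys you. The paper sets $r=\rho/(2\lambda_{\text{\tiny\sc A}})$, combines the \emph{averaged} form of the non-degeneracy (Proposition \ref{p:nondeg}) with Proposition \ref{p:nondegu1} to get $\aver{\partial B_r}u_{x,1}\geq\eta_1 r$, and transfers this spherical average to the center through the harmonic extension $h_{x,1}$: by the mean value property $h_{x,1}(0)=\aver{\partial B_r}u_{x,1}\geq\eta_1 r$, while testing the quasi-minimality (Proposition \ref{p:changevarU}) with $(h_{x,1},u_{x,2},\dots,u_{x,k})$ --- the measure term being unchanged since $u_1>0$ on all of $F_x(B_r)$ --- yields $\int_{B_r}|\nabla(u_{x,1}-h_{x,1})|^2\leq Cr^{d+\delta_{\text{\tiny\sc A}}}$ and hence, via Poincar\'e on a small ball $B_{\tau r}$ plus Lipschitz continuity, $|u_{x,1}(0)-h_{x,1}(0)|\leq (C\tau+C\tau^{-d}r^{\delta_{\text{\tiny\sc A}}/2})r\leq\tfrac{\eta_1}{2}r$ after choosing first $\tau$ and then $r_K$ small. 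The point is that the error in passing from the sphere average to the center is superlinear in $r$, so it cannot swamp the linear gain. A repair of your scheme in the same spirit would be to apply the Harnack inequality for $-\dive(A\nabla u_1)=\lambda_1(\O^\ast)\,b\,u_1$ in $B_\rho(x)\subset\O^\ast_1$ to pass from $\sup_{B_{\rho/2}(x)}u_1\gtrsim\rho$ (which your first step does give) to $u_1(x)\gtrsim\rho$; but in one form or another the equation inside $B_\rho(x)$ must be used.
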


\begin{proof}
We set $r=(2\lambda_{\text{\tiny\sc A}})^{-1}dist(x,\partial\O^\ast_1)$ and we denote by $h_{x,1}$ the harmonic extension of the trace of $u_{x,1}$ to $\partial B_r$. 
By non degeneracy of $u_1$ (Propositions \ref{p:nondegu1} and \ref{p:nondeg}) we have (and because $h_{x,1}$ is harmonic)
\begin{equation}\label{e:growth_boundary3}
h_{x,1}(0) = \aver{\partial B_r} h_{x,1} = \aver{\partial B_r} u_{x,1} \geq \frac{1}{C_1}\aver{\partial B_r} |U_x| \geq \frac{\eta}{\sqrt{k}C_1} r =:\eta_1r.
\end{equation}
Therefore, with the triangle inequality we get
\begin{equation}\label{e:growth_boundary1}
u_1(x) = u_{x,1}(0) \geq h_{x,1}(0) - |u_{x,1}(0)-h_{x,1}(0)| \geq \eta_1 r -|u_{x,1}(0)-h_{x,1}(0)|.
\end{equation}
We now want to estimate $|u_{x,1}(0)-h_{x,1}(0)|$ is terms of $r$.
We apply Proposition \ref{p:changevarU} to the test function $\tilde{U}=(h_{x,1},u_{x,2},\dots,u_{x,k})$ and get (since $u_{x,1}$ is Lipschitz continuous and that $|u_{x,1}|>0$ in $B_r$)
\begin{equation}\label{e:growth_boundary2}
\int_{B_r}|\nabla(u_{x,1}-h_{x,1})|^2 = \int_{B_r}\big(|\nabla u_{x,1}|^2 -|\nabla h_{x,1}|^2\big) \leq Cr^{d+\delta_{\text{\tiny\sc A}}}.
\end{equation}
Now, let $\tau>0$ be small to be chosen soon. Since $u_{x,1}$ and $h_{x,1}$ are Lipschitz continuous, we have for every $\xi\in B_{\tau r}$
\begin{align*}
|u_{x,1}(0)-h_{x,1}(0)| &\leq |u_{x,1}(0)-u_{x,1}(\xi)| + |u_{x,1}(\xi)-h_{x,1}(\xi)| + |h_{x,1}(\xi)-h_{x,1}(0)| \\&\leq C\tau r + |u_{x,1}(\xi)-h_{x,1}(\xi)|.
\end{align*}
Moreover, using Poincar\'{e} inequality to the function $u_{x,1}-h_{x,1}$ and the estimate \eqref{e:growth_boundary2}, we have
\begin{align*}
|u_{x,1}(0)-h_{x,1}(0)| &\leq C\tau r + \aver{B_{\tau r}}|u_{x,1}(\xi)-h_{x,1}(\xi)| \leq C\tau r + \tau^{-d}\aver{B_{r}}|u_{x,1}(\xi)-h_{x,1}(\xi)| \\
&\leq C\tau r + C\tau^{-d}r\aver{B_{r}}|\nabla(u_{x,1}(\xi)-h_{x,1}(\xi))| \\
&\leq  C\tau r + C\tau^{-d}r\bigg(\aver{B_{r}}|\nabla(u_{x,1}(\xi)-h_{x,1}(\xi))|^2\bigg)^{1/2} \\
&\leq  \big(C\tau + C\tau^{-d}r^{\delta_{\text{\tiny\sc A}}/2}\big)r \leq \frac{\eta_1}{2}r,
\end{align*}
where the last inequality holds by choosing first $\tau$ small enough and then $r_K$ (depending on $\tau$) small enough. In view of \eqref{e:growth_boundary1}, Proposition \ref{p:nondegu1} now follows.
\end{proof}

\begin{prop}[Density estimate for $\O^\ast_1$]\label{p:density_est}
Let $U$ be the vector of the first $k$ normalized eigenfunctions on $\O^\ast$ and let $K\subset D$ be a compact set. There exist constants $r_K>0$ and $c_K\in(0,1)$ such that for every $x_0\in \partial\O^\ast_1\cap K$ and $r\leq r_K$ we have 
\begin{equation*}
c_K|B_r|\leq|\O^\ast_1\cap B_r(x_0)|\leq (1-c_K)|B_r|.
\end{equation*}
\end{prop}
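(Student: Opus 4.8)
The plan is to prove the two inequalities separately, both by comparison-type arguments applied to the quasi-minimality property of $U_x = U \circ F_x$ established in Proposition \ref{p:changevarU}, using the non-degeneracy and growth results just proved. Throughout fix $x_0 \in \partial\O^\ast_1 \cap K$; after the change of coordinates $F_{x_0}$ it suffices to estimate $|\{|U_{x_0}|>0\} \cap B_r|$ from above and below, since $F_{x_0}$ distorts volumes by a fixed factor $\det(A_{x_0}^{\sfrac12})$ bounded above and below in terms of $\lambda_{\text{\tiny\sc A}}$, and since $F_{x_0}(B_{r/\lambda_{\text{\tiny\sc A}}}) \subset B_r(x_0) \subset F_{x_0}(B_{\lambda_{\text{\tiny\sc A}} r})$.

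\emph{Lower bound.} First I would observe that, since $x_0 \in \partial\O^\ast_1$, there are points of $\O^\ast_1$ arbitrarily close to $x_0$; combining the growth estimate of Proposition \ref{p:growth_boundary} (valid on the compact set $K$) with the continuity of $u_1$, one finds a point $\xi \in B_{r/2}$ (in the $F_{x_0}$-coordinates) at which $|U_{x_0}(\xi)| \geq u_1 \geq c r$ up to constants — more precisely one should instead use the $L^\infty$-non-degeneracy contrapositive from Remark \ref{r:LinftynondegU}: if $\|U\|_{L^\infty(B_{\lambda_{\text{\tiny\sc A}} \rho}(y))} \leq \eta\rho$ for a point $y$ near $x_0$ then $U \equiv 0$ near $y$, contradicting $x_0 \in \partial\O^\ast_1$ once $\rho \sim r$. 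Hence $\sup_{B_{cr}} |U| \geq \eta' r$ for suitable constants. Then, since $U$ is $L$-Lipschitz on $K$, $|U| \geq \eta' r/2$ on a ball of radius $\sim r$, which gives $|\O^\ast_1 \cap B_r(x_0)| \geq c_K |B_r|$ (after checking this ball lies in the component $\O^\ast_1$, which follows because $u_1 > 0$ there by the same Lipschitz argument applied to $u_1$ together with the growth bound).

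\emph{Upper bound.} For the complement bound I would argue by contradiction or directly via comparison. Take $\tilde U = U_{x_0}$ outside $B_r$ and, inside $B_r$, replace $U_{x_0}$ by $\eta(\cdot)(U_{x_0})$ — no, rather: set $\tilde u_i = (u_{x_0,i} - h_i)$ where $h_i$ is harmonic, i.e. use the harmonic replacement $\tilde U = (h_{x_0,1}, \dots, h_{x_0,k})$ of $U_{x_0}$ in $B_r$. Since $h_{x_0,i}$ is harmonic, $\int_{B_r}|\nabla \tilde U|^2 \leq \int_{B_r}|\nabla U_{x_0}|^2$, and $\|\tilde U\|_{L^\infty} \leq \|U_{x_0}\|_{L^\infty}$ by the maximum principle, so $\tilde U$ is admissible in Proposition \ref{p:changevarU}. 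This yields
\[
\Lambda |\{|U_{x_0}| > 0\} \cap B_r| \leq \Lambda|\{|\tilde U|>0\}\cap B_r| + Cr^{\delta_{\text{\tiny\sc A}}} J(\tilde U, r) + C\|U_{x_0} - \tilde U\|_{L^1(B_r)} + \big(\text{Dirichlet gain} \le 0\big),
\]
but $\{|\tilde U| > 0\} \cap B_r$ could be all of $B_r$, so this naive choice only controls the volume if $\{|U_{x_0}|>0\}$ is already small. The correct comparison is to use $\tilde U$ supported where $U_{x_0}$ is already positive near $\partial B_r$: choose a cutoff and compare against a function that vanishes on a definite fraction of $B_r$, exploiting that $x_0 \in \partial\O^\ast_1$ means $|U_{x_0}|$ vanishes somewhere in (a slightly larger) $B_r$ and hence, by its Lipschitz bound, on a ball of radius $\sim r$; making $\tilde U$ equal to this competitor gives $|\{|\tilde U|>0\}\cap B_r| \le (1 - c)|B_r|$ while paying only a Dirichlet energy cost of order $r^d$ (since $U_{x_0}$ is $L$-Lipschitz, $\int_{B_r}|\nabla U_{x_0}|^2 \le L^2 \omega_d r^d$). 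Feeding this into Proposition \ref{p:changevarU} and absorbing the $Cr^{\delta_{\text{\tiny\sc A}}} r^d$ and $C\|U_{x_0}-\tilde U\|_{L^1} \le C L r \cdot r^d$ error terms, which are lower order in $r$, produces $|\{|U_{x_0}|>0\}\cap B_r| \le (1 - c_K)|B_r|$ for $r \le r_K$.

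The main obstacle is the upper (exterior) density estimate: one must produce a competitor $\tilde U$ that simultaneously (i) is admissible for Proposition \ref{p:changevarU}, i.e. agrees with $U_{x_0}$ on $\partial B_r$, respects the $L^\infty$ bound, and has no larger Dirichlet energy or only a controlled increase; (ii) has free set $\{|\tilde U|>0\}$ strictly smaller than $|B_r|$ by a universal fraction; and (iii) satisfies $\|U_{x_0} - \tilde U\|_{L^1}$ small enough that the first-order perturbation term is negligible. This is exactly the classical Alt--Caffarelli exterior density argument, and the new feature here is carrying it through in the frozen-coefficient coordinates and checking that the constants depend only on $K$ (through the Lipschitz constant of $U$ on $K$, the ellipticity $\lambda_{\text{\tiny\sc A}}$, and $\Lambda$) uniformly in $x_0 \in \partial\O^\ast_1 \cap K$; the key point that makes it work is the uniform Lipschitz bound on $U$ from Theorem \ref{t:lipquasimin}, which makes both the Dirichlet energy of $U_{x_0}$ in $B_r$ and the $L^1$ defect of the competitor scale like $r^d$ and $r^{d+1}$ respectively, hence lower order compared to the gain $c|B_r|$.
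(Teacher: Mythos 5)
Your lower bound is essentially the paper's argument (non-degeneracy of $U_{x_0}$ on $\partial B_{r/2}$ plus the Lipschitz bound produces a ball of radius $\sim r$ on which $|U|>0$), and your attention to why that ball lies in $\O^\ast_1$ rather than just in $\O^\ast$ is reasonable. The problem is the exterior density estimate, where your proposal has a genuine gap, in fact two. First, the assertion that ``$|U_{x_0}|$ vanishes somewhere in $B_r$ and hence, by its Lipschitz bound, on a ball of radius $\sim r$'' is false: Lipschitz continuity plus vanishing at a point gives only $|U_{x_0}|\le L\rho$ on $B_\rho$ of that point, i.e.\ smallness, not vanishing on a set of positive measure. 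Establishing that the zero set has positive density is precisely the content of the estimate being proved, so this cannot be assumed. Second, even if you repair this by truncating $U_{x_0}$ at level $L\eps r$ inside $B_{\eps r}$ with a cutoff (as in the proof of Proposition~\ref{p:nondeg}), the bookkeeping does not close: the measure gain is at most $\Lambda\omega_d(\eps r)^d$ while the Dirichlet energy cost of the truncation is of order $L^2(\eps r)^d$, and there is no reason for $L^2$ to be small compared with $\Lambda$. The truncation mechanism only wins when the function is $\eta$-small with $\eta$ at your disposal, which is the non-degeneracy dichotomy, not the density estimate.

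The approach you dismissed as ``naive'' is in fact the one the paper uses, and the ingredient you are missing is a quantitative \emph{lower} bound on the Dirichlet energy gained by harmonic replacement. The paper takes $\tilde U=(h_{r,1},u_{x_0,2},\dots,u_{x_0,k})$, where $h_{r,1}$ is the harmonic extension of $u_{x_0,1}\ge 0$; by the strong maximum principle $h_{r,1}>0$ in $B_r$, so $|\{|\tilde U|>0\}\cap B_r|=|B_r|$ and the quasi-minimality inequality \eqref{e:changevarUa} rearranges to
\begin{equation*}
\int_{B_r}\big(|\nabla u_{x_0,1}|^2-|\nabla h_{r,1}|^2\big)\;\le\;\Lambda\,|\{|U_{x_0}|=0\}\cap B_r|+Cr^{d+\delta_{\text{\tiny\sc A}}}.
\end{equation*}
The left-hand side equals $\int_{B_r}|\nabla(u_{x_0,1}-h_{r,1})|^2$ by harmonicity, and it is bounded \emph{below} by $c\,r^d$: Propositions~\ref{p:nondeg} and~\ref{p:nondegu1} give $h_{r,1}(0)=\aver{\partial B_r}u_{x_0,1}\ge\eta_1 r$ while $u_{x_0,1}(0)=0$, and a Poincar\'e--Cauchy--Schwarz argument on $B_{\tau r}$ converts this pointwise discrepancy into the gradient lower bound. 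That forces $|\{|U_{x_0}|=0\}\cap B_r|\ge c r^d$ for $r$ small. So the correct competitor \emph{enlarges} the positivity set to all of $B_r$ and strictly \emph{decreases} the energy by a definite amount, rather than shrinking the positivity set at an energy cost, which is the direction your proposal attempts and which cannot be made to work with the available constants.
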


\begin{proof}
We first prove that we have
\begin{equation}\label{e:density_est1}
c|B_r|\leq|\{|U_{x_0}|>0\}\cap B_r|\leq (1-c)|B_r|.
\end{equation}
The first inequality follows from the non-degeneracy of $U_{x_0}$ (Proposition \eqref{p:nondeg}) since it implies that there exists $\xi\in\partial B_{r/2}$ such that $\sum_{i=1}^k|u_{x_0,i}(\xi)|\geq \frac{\eta r}{2}$, and hence, using that $U_{x_0}$ is $L$-Lipschitz continuous, that
\begin{equation*}
|U_{x_0}|\geq\frac{1}{\sqrt{k}}\sum_{i=1}^k|u_{x_0,i}|\geq \frac{\eta r}{4\sqrt{k}}\quad\text{in}\quad B_{\frac{\eta r}{4L}}(\xi).
\end{equation*}
For the second estimate, consider the test function $\tilde{U}=(h_{r,1},u_{x_0,2},\dots,u_{x_0,k})\in H^1(\R^d,\R^k)$, where $h_{r,1}$ denotes as usual the harmonic extension of $u_{x_0,1}$ to $\partial B_r$, and note that by the strong maximum principle we have $h_{r,1}>0$ in $B_r$ since $u_{x_0,1}$ is non-negative. Then, by Proposition \ref{p:changevarU} applied to $\tilde{U}$, and since $u_{x_0,1}$ is $L$-Lipschitz continuous, we get
\begin{align}\label{e:densityest1}
\nonumber\int_{B_r}\big(|\nabla u_{x_0,1}|^2-|\nabla h_{r,1}|^2\big) &\leq \Lambda|\{|U_{x_0}|=0\}\cap B_r| + Cr^{\delta_{\text{\tiny\sc A}}}J(\tilde{U},r) + C\|u_{x_0,1}-h_{r,1}\|_{L^1} \\
&\leq \Lambda|\{|U_{x_0}|=0\}\cap B_r| +Cr^{d+\delta_{\text{\tiny\sc A}}}.
\end{align}
Moreover, by Proposition \ref{p:nondegu1} and the harmonicity of $h_{r,1}$ (and also because $u_{x_0,1}(0)=0$), we have
\begin{equation}\label{e:densityest2}
|u_{x_0,1}(0)-h_{r,1}(0)|=h_{r,1}(0)=\aver{\partial B_r}h_{r,1}=\aver{\partial B_r}u_{x_0,1} \geq \eta_1 r,
\end{equation}
where $\eta_1$ is defined as in \eqref{e:growth_boundary3}.
Now, let $\tau>0$ be small. Since $h_{r,1}$ is $2L$-Lipschitz continuous we have for every $\xi\in B_{\tau r}$
\begin{align*}
|u_{x_0,1}(0)-h_{r,1}(0)| &\leq |u_{x_0,1}(0)-u_{x_0,1}(\xi)| + |u_{x_0,1}(\xi)-h_{r,1}(\xi)|+|h_{r,1}(\xi)-h_{r,1}(0)| \\
&\leq 3L\tau r + |u_{x_0,1}(\xi)-h_{r,1}(\xi)|.
\end{align*} 
Then, averaging over $B_{\tau r}$ and using \eqref{e:densityest2} leads to
\begin{equation}\label{e:densityest3}
\eta_1 r\leq|u_{x_0,1}(0)-h_{r,1}(0)| \leq 3L\tau r +\aver{B_{\tau r}}|u_{x_0,1}-h_{r,1}|.
\end{equation}
Moreover, by Poincar\'{e} inequality and Cauchy-Schwarz inequality we have
\begin{align*}\label{e:densityest4}
\aver{B_{\tau r}}|u_{x_0,1}-h_{r,1}|& \leq \tau^{-d}\aver{B_r}|u_{x_0,1}-h_{r,1}|\leq \tau^{-d}r\aver{B_r}|\nabla(u_{x_0,1}-h_{r,1})| \\
&\leq \tau^{-d}r^{1-\frac{d}{2}}\bigg(\int_{B_r}|\nabla(u_{x_0,1}-h_{r,1})|^2\bigg)^{1/2} = \tau^{-d}r^{1-\frac{d}{2}}\bigg(\int_{B_r}|\nabla u_{x_0,1}|^2-|\nabla h_{r,1}|^2\bigg)^{1/2}
\end{align*}
which combined with \eqref{e:densityest1} and \eqref{e:densityest3}, and after some rearrangements, gives
\begin{equation*}
2\Lambda r^{-d}|\{|U_{x_0}|=0\}\cap B_r| \geq \eta_1^2\tau^{2d} -C\tau^{2d+2} -Cr^{\delta_{\text{\tiny\sc A}}}.
\end{equation*}
Then choose $\tau$, depending only on $\eta_1$ and $C$, small enough so that $C\tau^{2d+2}\leq \eta_1^2\tau^{2d}/2$ and then choose $r$, depending only on $\eta_1, \tau$ and $C$, such that $Cr^{\delta_{\text{\tiny\sc A}}}\leq\eta_1^2\tau^{2d}/4$ to conclude the proof.

Now, by a change of variables, the density estimate in \eqref{e:density_est1} gives
\begin{equation*}
c|A^{\sfrac12}_{x_0}[B_r]| \leq |\{|U|>0\}\cap A^{\sfrac12}_{x_0}[B_r]|\leq (1-c)|A^{\sfrac12}_{x_0}[B_r]|.
\end{equation*}
Then set $c_K=\lambda_{\text{\tiny\sc A}}^{-2d}c$ so that (because we have the inclusions $B_{\lambda_{\scalebox{.5}{A}}^{-1}r}\subset A^{\sfrac12}_{x_0}[B_r]\subset B_{\lambda_{\text{\tiny\sc A}} r}$)
\begin{equation*}
    c_K|B_{\lambda_{\text{\tiny\sc A}} r}|=c|B_{\lambda_{\scalebox{.5}{A}}^{-1}r}|\leq c|A^{\sfrac12}_{x_0}[B_r]| \leq |\{|U|>0\}\cap A^{\sfrac12}_{x_0}[B_r]| \leq |\{|U|>0\}\cap B_{\lambda_{\text{\tiny\sc A}} r}|.
\end{equation*}
Similarly we have
\begin{align*}
    |\{|U|=0\}\cap B_{\lambda_{\text{\tiny\sc A}} r}| \geq  |\{|U|=0\}\cap A^{\sfrac12}_{x_0}[B_r]| &= |A^{\sfrac12}_{x_0}[B_r]| - |\{|U|>0\}\cap A^{\sfrac12}_{x_0}[B_r]|  \\
    &\geq c|A^{\sfrac12}_{x_0}[B_r]| \geq c|B_{\lambda_{\scalebox{.5}{A}}^{-1}r}|=c_K|B_{\lambda_{\text{\tiny\sc A}} r}|,
\end{align*}
which concludes the proof.
\end{proof}

\subsection{Weiss monotonicity formula}\label{sub:weiss}
We prove a monotonicity formula for the vector of the first $k$ eigenfunctions on an optimal set $\O^\ast$. The proof follows the idea of \cite[Theorem 1.2]{weiss-99} (see also \cite[Proposition 3.1]{mazzoleni-terracini-velichkov-17}).
For every $U\in H^1(\R^d,\R^k)$ and $r>0$ we define
\begin{equation*}
W(U,r)= \frac{1}{r^d}J(U,r)-\frac{1}{r^{d+1}}\int_{\partial B_r}|U|^2.
\end{equation*}

\begin{prop}\label{p:weissder}
Let $U=(u_1,\dots,u_k)$ be the vector of the first $k$ normalized eigenfunctions on $\O^\ast$ and let $K\subset D$ be a compact set.  
Then there exist constants $r_K>0$ and $C_K>0$ such that for every $x_0\in\partial\O^\ast\cap K$ and every $r\leq r_K$ the function $U_{x_0}=U\circ F_{x_0}=(u_{x_0,1},\dots,u_{x_0,k})$ satisfies
\begin{equation}\label{e:weissdera}
\frac{d}{dr}W(U_{x_0},r)\geq \frac{1}{r^{d+2}}\sum_{i=1}^k\int_{\partial B_r}|x\cdot\nabla u_{x_0,i}-u_{x_0,i}|^2\,dx-C_Kr^{\delta_{\text{\tiny\sc A}}-1}.
\end{equation}
Moreover, the limit $\lim_{r\to 0^+}W(U_{x_0},r)$ exists and is finite.
\end{prop}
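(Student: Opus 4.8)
\textbf{Proof plan for Proposition \ref{p:weissder}.}

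The plan is to follow the classical Weiss differentiation argument, but carefully tracking the error terms that arise because $U_{x_0}$ is only an \emph{almost}-minimizer of $J$ (in the sense of Proposition \ref{p:changevarU}) rather than an exact one. Write $V = U_{x_0}$ for brevity and fix $x_0 \in \partial\O^\ast \cap K$, $r \le r_K$ with $B_{\lambda_{\text{\tiny\sc A}}r_K}(x_0)\subset D$. First I would compute, for $\vf(t) := W(V,rt)$ or more directly via scaling $V_r(\xi):=\frac1r V(r\xi)$, the derivative of $r\mapsto W(V,r)$. Using the coarea/divergence structure one has the standard identity
\begin{equation*}
\frac{d}{dr}\Big(\frac{1}{r^d}J(V,r)\Big) = \frac{1}{r^{d+1}}\int_{\partial B_r}\Big(2(\xi\cdot\nabla V)^2 \cdot\frac1{|\xi|} - |\nabla V|^2\, |\xi| \cdot \tfrac{?}{}\Big)\dots
\end{equation*}
— more precisely, combining the interior estimate coming from comparing $V$ with its one-homogeneous extension $z\mapsto |z|\,V(z/|z|)$ on $B_r$, and the boundary term $-\frac{d}{dr}\big(r^{-d-1}\int_{\partial B_r}|V|^2\big)$, one obtains after the usual rearrangement
\begin{equation*}
\frac{d}{dr}W(V,r) = \frac{2}{r^{d+2}}\sum_{i=1}^k\int_{\partial B_r}\big|\xi\cdot\nabla v_i - v_i\big|^2 \,d\mathcal{H}^{d-1} + \frac{1}{r^{d+1}}\Big(d\,J(V,r) - (d-1)\!\int_{\partial B_r}|\nabla_\theta V|^2 \dots \Big) + \mathcal{R}(r),
\end{equation*}
where $\mathcal{R}(r)$ is the error term. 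The point is that in the \emph{exact} minimizer case the non-boundary terms combine (via the optimality of $V$ against its homogeneous competitor on $B_r$) into something nonnegative, so only the squared normal-derivative term survives with a good sign.

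The second and main step is to control $\mathcal{R}(r)$. Here I would use Proposition \ref{p:changevarU} with the competitor $\tilde U(\xi) = |\xi/r|\,V(r\xi/|\xi|)$ for $|\xi|<r$ (the one-homogeneous extension of the boundary trace), which is admissible since $U_{x_0}-\tilde U\in H^1_0(B_r,\R^k)$ and, by the $L^\infty$ bound on $U$ together with the non-degeneracy/growth estimates (Propositions \ref{p:nondeg}, \ref{p:growth_boundary}), $\|\tilde U\|_{L^\infty(B_r)}\le \|U_{x_0}\|_{L^\infty}$ up to harmless adjustment. The inequality \eqref{e:changevarUa} then reads
\begin{equation*}
J(V,r) \le (1+Cr^{\delta_{\text{\tiny\sc A}}})\,J(\tilde U,r) + C\|V-\tilde U\|_{L^1(B_r)}.
\end{equation*}
Since $\tilde U$ is one-homogeneous, $J(\tilde U,r)$ equals $\frac{r}{d}\int_{\partial B_r}\big(|\nabla_\theta (V/r)|^2 + |V/r|^2\big) + \Lambda|\{\dots\}|$ — i.e. exactly the quantity $\frac{r^{1-d}}{d}\big(\text{(spherical Dirichlet energy)} + \text{(radial part)}\big)$ that appears in the Weiss identity as the competitor energy; I would record the explicit formula
\begin{equation*}
\frac1{r^{d}}J(\tilde U, r) = \frac{1}{d}\,\frac{1}{r^{d+1}}\int_{\partial B_r}\Big(r^2|\nabla V|^2 - |\xi\cdot\nabla V|^2 \cdot\tfrac{?}{} + |V|^2\Big)\,d\mathcal{H}^{d-1},
\end{equation*}
and the remainder $\|V-\tilde U\|_{L^1(B_r)}\le Cr^{d+1}$ because $V$ is Lipschitz near $\partial\O^\ast$ and vanishes at $x_0$, so both $V$ and $\tilde U$ are $O(r)$ on $B_r$. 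Plugging these into the differentiated identity, the $Cr^{\delta_{\text{\tiny\sc A}}}J(\tilde U,r)$ term contributes $O(r^{\delta_{\text{\tiny\sc A}}})\cdot O(r^{-1}) = O(r^{\delta_{\text{\tiny\sc A}}-1})$ after dividing by the appropriate power of $r$, and the $L^1$ remainder contributes $O(r^{d+1})\cdot r^{-d-1}\cdot r^{-1} = O(r^{-1})$ — here one must be a bit careful; in fact the cleanest route is not to differentiate a possibly-non-differentiable function but to prove the inequality for the \emph{difference quotient} / in the sense of distributions, i.e. show $W(V,\cdot) + C_K\,\delta_{\text{\tiny\sc A}}^{-1}r^{\delta_{\text{\tiny\sc A}}}$ — wait, rather $r\mapsto W(V,r) + \frac{C_K}{\delta_{\text{\tiny\sc A}}}r^{\delta_{\text{\tiny\sc A}}}$ is monotone nondecreasing (this is the integrated form of \eqref{e:weissdera} dropping the nonnegative squared term), which is what is actually needed for the existence of the limit.

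For the final assertion — existence and finiteness of $\lim_{r\to 0^+}W(U_{x_0},r)$ — I would argue that by the just-established inequality the function $r\mapsto W(U_{x_0},r) + \frac{C_K}{\delta_{\text{\tiny\sc A}}}r^{\delta_{\text{\tiny\sc A}}}$ is monotone non-decreasing on $(0,r_K]$, hence has a limit as $r\to0^+$ in $[-\infty,+\infty)$; since $r^{\delta_{\text{\tiny\sc A}}}\to0$, $W(U_{x_0},r)$ has the same limit. Finiteness from below follows from the Lipschitz bound on $U$: $J(U_{x_0},r)\ge 0$ and $\frac1{r^{d+1}}\int_{\partial B_r}|U_{x_0}|^2 \le \frac1{r^{d+1}}\cdot d\omega_d r^{d-1}\cdot (Lr)^2 = C$, so $W(U_{x_0},r)\ge -C$ uniformly, giving a finite limit.

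\textbf{Main obstacle.} The delicate point is the bookkeeping of the error terms: one needs the remainder in \eqref{e:changevarUa} to be genuinely lower-order after being fed through the Weiss differentiation (which costs one power of $r$), and this only works because the natural competitor is the one-homogeneous extension, for which $J(\tilde U,r) = O(r^{d})$ with an explicitly computable leading term, and because the Lipschitz non-degeneracy near $\partial\O^\ast$ forces $\|V-\tilde U\|_{L^1(B_r)}=O(r^{d+1})$. Getting the admissibility condition $\|\tilde U\|_{L^\infty}\le\|U_{x_0}\|_{L^\infty}$ for the homogeneous extension (it may a priori slightly exceed the sup if the trace is not radially monotone) may require a small truncation of $\tilde U$ at level $\|U_{x_0}\|_{L^\infty}$, which does not affect the energy estimate; I would handle that as a routine adjustment. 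The rest is the standard Weiss computation transplanted to the constant-coefficient (post-change-of-variables) setting, where $J$ is exactly the Alt–Caffarelli functional.
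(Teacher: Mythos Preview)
Your approach is essentially the same as the paper's: compare $V=U_{x_0}$ with its one-homogeneous extension $\tilde U(\xi)=\frac{|\xi|}{r}V\big(\tfrac{r}{|\xi|}\xi\big)$ via Proposition~\ref{p:changevarU}, compute $J(\tilde U,r)$ explicitly as a boundary integral, and feed this into the direct differentiation of $W(V,r)$; the error $Cr^{\delta_{\text{\tiny\sc A}}}J(\tilde U,r)+C\|V-\tilde U\|_{L^1}$ becomes $C_0r^{d+\delta_{\text{\tiny\sc A}}}$ (using Lipschitz and $V(0)=0$), which after the $-d/r^{d+1}$ factor gives exactly the $-C_Kr^{\delta_{\text{\tiny\sc A}}-1}$ term. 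Your monotonicity and finiteness arguments for the limit are also the paper's.

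One unnecessary worry: the admissibility $\|\tilde U\|_{L^\infty}\le\|U_{x_0}\|_{L^\infty}$ is automatic for the one-homogeneous extension, since $|\tilde U(\xi)|=\tfrac{|\xi|}{r}\big|V\big(\tfrac{r}{|\xi|}\xi\big)\big|\le \|V\|_{L^\infty(\partial B_r)}$ for $|\xi|\le r$; no truncation or appeal to Propositions~\ref{p:nondeg}--\ref{p:growth_boundary} is needed. Also, the paper does not set up the argument via difference quotients or distributional derivatives as you suggest, but simply differentiates $W(V,r)$ directly (the ingredients are smooth enough in $r$ for a.e.\ differentiation) and bounds $-\frac{d}{r^{d+1}}J(V,r)$ from below using the competitor inequality; this is cleaner than the route you sketch and avoids the muddled intermediate identities.
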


\begin{proof}
We first compare $U_{x_0}$ with its one-homogeneous extension in the ball $B_r$, namely the one-homogeneous function $\tilde{U}=(\tilde{u}_1,\dots,\tilde{u}_k):B_r\to\R^k$ defined by $\tilde{U}(\xi)=\frac{|\xi|}{r}U_{x_0}\Big(\frac{r}{|\xi|}\xi\Big)$. We have
\begin{equation*}
\int_{B_r}|\nabla\tilde{U}|^2 = \int_{B_r}\bigg[|\nabla_\theta U_{x_0}|^2+\frac{|U_{x_0}|^2}{r^2} \bigg] \bigg(\frac{r}{|\xi|}\xi\bigg)d\xi = \frac{r}{d}\int_{\partial B_r}\bigg[|\nabla_\theta U_{x_0}|^2+\frac{|U_{x_0}|^2}{r^2} \bigg]
\end{equation*}
and for the measure term
\begin{equation*}
|\{|\tilde{U}|>0\}\cap B_r|=\frac{r}{d}\mathcal{H}^{d-1}(\{|U|>0\}\cap\partial B_r).
\end{equation*}
Then, we use $\tilde{U}$ as a test function in \eqref{e:changevarUa} which gives
\begin{align}\label{e:weissder1}
\nonumber J(U_{x_0},r) &\leq J(\tilde{U},r) +C\Big(r^{\delta_{\text{\tiny\sc A}}}J(\tilde{U},r) +\|U_{x_0}-\tilde{U}\|_{L^1}\Big) \\
&\leq \frac{r}{d}\int_{\partial B_r}\bigg[|\nabla_\theta U_{x_0}|^2+\frac{|U_{x_0}|^2}{r^2} \bigg] + \Lambda\frac{r}{d}\mathcal{H}^{d-1}(\{|U|>0\}\cap\partial B_r) + C_0r^{d+\delta_{\text{\tiny\sc A}}}
\end{align}
for some $C_0\geq C(2\omega_d\|\nabla U_{x_0}\|_{L^\infty}^2+\Lambda\omega_d+2\omega_d\|\nabla U_{x_0}\|_{L^\infty})$ where the constant $C$ is given by Proposition \ref{p:changevarU}.
We now compute the derivative of $W(U_{x_0},r)$ and use \eqref{e:weissder1} to obtain
\begin{align*}
\frac{d}{dr}W(U_{x_0},r) &= \frac{1}{r^d}\bigg(\int_{\partial B_r}|\nabla U_{x_0}|^2+\Lambda\mathcal{H}^{d-1}(\{|U_{x_0}|>0\}\cap\partial B_r) \bigg) -\frac{d}{r^{d+1}}J(U_{x_0},r) \\
& \quad\qquad\qquad\qquad\qquad\qquad\qquad+\frac{2}{r^{d+2}}\int_{\partial B_r}|U_{x_0}|^2 -\frac{1}{r^{d+1}}\sum_{i=1}^k\int_{\partial B_r}2u_{{x_0},i}\frac{\partial u_{x_0,i}}{\partial\nu} \\
&\geq \frac{1}{r^d}\int_{\partial B_r}\left|\frac{\partial U_{x_0}}{\partial\nu}\right|^2 + \frac{1}{r^{d+2}}\int_{\partial B_r}|U_{x_0}|^2 -\frac{1}{r^{d+1}}\sum_{i=1}^k\int_{\partial B_r}2u_{x_0,i}\frac{\partial u_{x_0,i}}{\partial\nu} - dC_0r^{\delta_{\text{\tiny\sc A}}-1} \\
&= \frac{1}{r^{d+2}}\sum_{i=1}^k\int_{\partial B_r}\bigg[r^2\left|\frac{\partial u_{x_0,i}}{\partial\nu}\right|^2 + u_{x_0,i}^2 - 2ru_{x_0,i}\frac{\partial u_{x_0,i}}{\partial\nu}\bigg] - dC_0r^{\delta_{\text{\tiny\sc A}}-1} \\
&= \frac{1}{r^{d+2}}\sum_{i=1}^k\int_{\partial B_r}|x\cdot\nabla u_{x_0,i}-u_{x_0,i}|^2-dC_0r^{\delta_{\text{\tiny\sc A}}-1},
\end{align*}
which is \eqref{e:weissdera}. This also proves that the function $r\mapsto W(U_{x_0},r)+\frac{d}{\delta_{\text{\tiny\sc A}}}C_0r^{\delta_{\text{\tiny\sc A}}}$ is non-decreasing and hence that the limit of $W(U_{x_0},r)$ as $r$ tend to $0$ exists. Moreover, this limit is finite since we have the bound
\begin{equation*}
W(U_{x_0},r)\geq -\frac{1}{r^{d+1}}\int_{\partial B_r}|U_{x_0}|^2 \geq -d\omega_d\|\nabla U_{x_0}\|_{L^\infty}^2\qquad\text{for every } r>0.
\end{equation*}
\end{proof}

As a consequence of the previous result, we get a monotonicity formula for global minimizers of the Alt-Caffarelli functional.

\begin{definition}\label{d:minaltcaff}
We say that $U\in H^1(\R^d,\R^k)$ is a global minimizer of the (vectorial) Alt-Caffarelli functional
\begin{equation*}
J(U)=\int_{\R^d}|\nabla U|^2 + \Lambda|\{|U|>0\}|
\end{equation*}
if $J(U,r)\leq J(\tilde{U},r)$ for every $r>0$ and every $\tilde{U}\in H^1(\R^d,\R^k)\cap L^\infty(\R^d,\R^k)$ such that $U-\tilde{U}\in H^1_0(B_r,\R^k)$.
\end{definition}

\begin{prop}\label{p:minAFonehom}
Let $U=(u_1,\dots,u_k)\in H^1(\R^d,\R^k)$ be a global minimizer of the Alt-Caffarelli functional $J$ such that $U(0)=0$. Then we have
\begin{equation*}
\frac{d}{dr}W(U,r)\geq \frac{1}{r^{d+2}}\sum_{i=1}^k\int_{\partial B_r}|x\cdot\nabla u_{i}-u_{i}|^2.
\end{equation*}
In particular, if $r\mapsto W(U,r)$ is constant in $(0,+\infty)$, then $U$ is a one-homogeneous function.
\end{prop}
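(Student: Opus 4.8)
The plan is to run the argument of Proposition \ref{p:weissder} almost verbatim, with the simplification that here the functional is the \emph{constant-coefficient} Alt-Caffarelli functional and $U$ is an \emph{exact} minimizer, so all the error terms of order $r^{\delta_{\text{\tiny\sc A}}}$ that appeared there (they came from the Hölder continuity of $A$ and the change of variables $F_{x_0}$) are now simply absent, i.e. one may take $C_0=0$. Concretely, for $r>0$ consider the one-homogeneous extension $\tilde U(\xi):=\tfrac{|\xi|}{r}\,U\big(\tfrac{r}{|\xi|}\xi\big)$ on $B_r$, extended by $U$ outside. Since $\tilde U=U$ on $\partial B_r$ we have $U-\tilde U\in H^1_0(B_r,\R^k)$; since $U(0)=0$ the function $\tilde U$ is well defined and, because $U\restriction_{\partial B_r}\in H^1(\partial B_r)$ for a.e.\ $r$, we get $\tilde U\in H^1(\R^d,\R^k)$; and $\|\tilde U\|_{L^\infty}\le\|U\|_{L^\infty(\partial B_r)}\le\|U\|_{L^\infty}$. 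Hence $\tilde U$ is an admissible competitor in Definition \ref{d:minaltcaff}, so $J(U,r)\le J(\tilde U,r)$ for a.e. $r>0$.

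Writing the Dirichlet integral and the measure term of $\tilde U$ in polar coordinates and using its homogeneity gives, exactly as in the proof of Proposition \ref{p:weissder},
\[
\int_{B_r}|\nabla\tilde U|^2=\frac{r}{d}\int_{\partial B_r}\Big[|\nabla_\theta U|^2+\frac{|U|^2}{r^2}\Big],\qquad |\{|\tilde U|>0\}\cap B_r|=\frac{r}{d}\,\mathcal H^{d-1}\big(\{|U|>0\}\cap\partial B_r\big),
\]
so that $J(U,r)\le J(\tilde U,r)$ becomes the clean analogue of \eqref{e:weissder1} with no remainder. Then I would differentiate $r\mapsto W(U,r)$: for a.e.\ $r$ the maps $r\mapsto J(U,r)$ and $r\mapsto\int_{\partial B_r}|U|^2$ are differentiable and one computes
\[
\frac{d}{dr}W(U,r)=\frac1{r^d}\Big(\int_{\partial B_r}|\nabla U|^2+\Lambda\,\mathcal H^{d-1}(\{|U|>0\}\cap\partial B_r)\Big)-\frac{d}{r^{d+1}}J(U,r)+\frac{2}{r^{d+2}}\int_{\partial B_r}|U|^2-\frac{2}{r^{d+1}}\sum_{i=1}^k\int_{\partial B_r}u_i\,\frac{\partial u_i}{\partial\nu}.
\]
Inserting the competitor bound to estimate $-\frac{d}{r^{d+1}}J(U,r)$ from below and splitting $|\nabla U|^2=|\partial_\nu U|^2+|\nabla_\theta U|^2$, the angular terms cancel and one is left with
\[
\frac{d}{dr}W(U,r)\ge\frac1{r^{d+2}}\sum_{i=1}^k\int_{\partial B_r}\Big[r^2\big|\partial_\nu u_i\big|^2+u_i^2-2r\,u_i\,\partial_\nu u_i\Big]=\frac1{r^{d+2}}\sum_{i=1}^k\int_{\partial B_r}\big|x\cdot\nabla u_i-u_i\big|^2,
\]
which is the claimed inequality.

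For the rigidity statement, if $r\mapsto W(U,r)$ is constant on $(0,+\infty)$ then its derivative vanishes for a.e.\ $r$, and the inequality just proved forces $\int_{\partial B_r}|x\cdot\nabla u_i-u_i|^2=0$ for a.e.\ $r>0$ and every $i$; thus $x\cdot\nabla u_i=u_i$ a.e.\ in $\R^d$, which is precisely Euler's relation characterizing one-homogeneous functions, so $U$ is one-homogeneous. I do not expect any serious obstacle here: the only points needing care are the standard slicing/absolute-continuity facts making the differentiation of $W(U,\cdot)$ rigorous (the very same ones already used in Proposition \ref{p:weissder}) and the verification that $\tilde U$ is a genuine admissible competitor, both of which are routine.
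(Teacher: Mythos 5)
Your proposal is correct and follows essentially the same route as the paper: the paper's proof of this proposition simply observes that a global minimizer satisfies the quasi-minimality inequality \eqref{e:changevarUa} with $C=0$, so the computation of Proposition \ref{p:weissder} (comparison with the one-homogeneous extension, polar-coordinate identities, differentiation of $W$) goes through with $C_0=0$, and the rigidity claim follows from Euler's relation $x\cdot\nabla u_i=u_i$. Your write-up fills in the same steps in slightly more detail, with no substantive deviation.
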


\begin{proof}
Since $U$ is a global minimizer of $J$, it satisfies \eqref{e:changevarUa} with $C=0$ and hence the computations in the proof of Proposition \ref{p:weissder} hold with $C_0=0$. The last claim of the proposition follows from the fact that $x\cdot\nabla u_i=u_i$ in $\R^d$ implies that $u_i$ is one-homogeneous.
\end{proof}

\section{Blow-ups}\label{s:blow-up}
In this section we study the blow-ups limits (at the origin) of the functions $U_{x_0}=U\circ F_{x_0}$, where $x_0\in\partial\O^\ast\cap D$. Throughout this section, $U$ will denote the first $k$ normalized eigenfunctions on the optimal set $\O^\ast=\{|U|>0\}$. We prove that the blow-up limits are one-homogeneous and global minimizers of the Alt-Caffarelli functional. As a consequence, we also prove that the boundaries of two connected components of $\O^\ast$ have an empty intersection in $D$. 

Let $(x_n)_{n\in\N}$ be a sequence of points on $\partial\O^\ast\cap D$ converging to some $x_0\in \partial\O^\ast\cap D$ and let $(r_n)_{n\in\N}$ be a sequence of positive radii tending to $0$. 
Since $U$ is Lipschitz continuous, up to extracting a subsequence, the sequence defined by
\begin{equation*}\label{e:defblowupseq}
B_{x_n,r_n}(\xi)=\frac{1}{r_n}U(x_n+r_n\xi),\qquad \xi\in\R^d,
\end{equation*}
converges locally uniformly to a Lipschitz continuous function $B_0\in  H^1_{\text{loc}}(\R^d,\R^k)$. We will often set $B_n=B_{x_n,r_n}$ and deal with this  sequence in a new set of coordinates, that is, we will consider the sequence $\tilde{B}_n$ defined by
\begin{equation*}
\tilde{B}_n(\xi)=B_n\circ A^{\sfrac12}_{x_n}(\xi)=\frac{1}{r_n}U_{x_n}(r_n\xi),\qquad \xi\in\R^d.    
\end{equation*}

\begin{definition}
If $B_{x_n,r_n}$ converges locally uniformly in $\R^d$ to some $B_0$, we say that $B_{x_n,r_n}$ is a blow-up sequence (with fixed center if $x_n=x_0$ for every $n\geq 1$). If the center is fixed, we say that $B_0$ is a blow-up limit at $x_0$. We denote by $\mathcal{BU}_U(x_0)$ the space of all blow-up limits at $x_0$.
\end{definition}

We start with a standard result on the convergence of the blow-up sequences and we give the details of the proofs for convenience of the reader. Recall that $\O^\ast_1$ stands for any connected component of $\O^\ast$ where the first eigenfunction $u_1$ is positive.

\begin{prop}[Convergence of the blow-up sequences]\label{p:convblowup}
Let $(x_n)_{n\in\N}\subset \partial\O^\ast\cap D$ be a sequence converging to some $x_0\in\partial\O^\ast\cap D$, $r_n\rightarrow 0$ and assume that the blow-up sequence $B_n:=B_{x_n,r_n}$ converges locally uniformly to $B_0\in H^1_{\text{loc}}(\R^d,\R^k)$.
Then, up to a subsequence, we have
\begin{enumerate}
\item The sequence $B_n$ converges to $B_0$ strongly in $H^1_{\text{loc}}(\R^d,\R^k)$.
\item The sequences of characteristic functions $\ind_{\O_n}$ converges in $L^1_{\text{loc}}(\R^d)$ to the characteristic function $\ind_{\O_0}$, where we have set $\O_n=\{|B_n|>0\}$ and $\O_0=\{|B_0|>0\}$.
\item The function $B_0$ is non-degenerate: there exits a constant $\eta_0>0$ such that for every every $y\in\overline{\O}_0$ we have
\begin{equation*}
\|B_0\|_{L^\infty(B_{r}(y))}\geq \eta_0 r\quad\text{for every}\quad r>0.
\end{equation*}
\item If $x_0\in\partial\O^\ast_1\cap D$, then the sequences of closed sets $\overline{\O}_n$ and $\O_n^c$ converge locally Hausdorff to $\overline{\O}_0$ and $\O_0^c$ respectively.
\end{enumerate}
\end{prop}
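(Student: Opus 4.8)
The plan is to prove the four items of Proposition~\ref{p:convblowup} in order, exploiting the quasi-minimality property \eqref{e:changevarUa} of $U_{x_n}$ at the origin together with the uniform Lipschitz bound on $U$ near $x_0$ (which gives the local uniform convergence $\tilde B_n\to\tilde B_0$ for free, with $\tilde B_n=B_n\circ A_{x_n}^{\sfrac12}$). Observe first that the coefficients get frozen in the limit: since $x_n\to x_0$ and the $a_{ij}$ are continuous, $A_{x_n}^{\sfrac12}\to A_{x_0}^{\sfrac12}$, so it suffices to work with $\tilde B_n$, and the rescaled functions $\tilde B_n$ are almost-minimizers of the Dirichlet energy plus measure at scale $1$ with an error $O((r_n)^{\delta_{\text{\tiny\sc A}}})\to 0$; equivalently, $\tilde B_0$ will turn out to be a global minimizer of the vectorial Alt--Caffarelli functional $J$ of Definition~\ref{d:minaltcaff}. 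Concretely, for \textbf{(1)} I would fix $R>0$ and a competitor: take $\tilde B_n$ outside $B_R$ and interpolate to $\tilde B_0$ in a thin annulus $B_R\setminus B_{R-\sigma}$ using a cutoff, then plug this into \eqref{e:changevarUa} written for $U_{x_n}$ rescaled. Lower semicontinuity of the Dirichlet energy and of the measure of the positivity set under $L^1_{\text{loc}}$-convergence of $\ind_{\O_n}$ gives $\limsup_n \int_{B_R}|\nabla\tilde B_n|^2 \le \int_{B_R}|\nabla \tilde B_0|^2$ (after letting $\sigma\to0$), and combined with weak lower semicontinuity this forces $\int_{B_R}|\nabla\tilde B_n|^2\to\int_{B_R}|\nabla\tilde B_0|^2$, whence strong $H^1_{\text{loc}}$-convergence. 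Transferring back through $A_{x_n}^{\sfrac12}\to A_{x_0}^{\sfrac12}$ gives the claim for $B_n$.

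For \textbf{(2)}, the inclusion $\{|B_0|>0\}\subset \liminf\{|B_n|>0\}$ (up to sets of measure zero) is immediate from uniform convergence, so the issue is the reverse: there should be no ``measure created in the limit'', i.e.\ $|\{|B_n|>0\}\cap B_R| \to |\{|B_0|>0\}\cap B_R|$. This follows from the almost-minimality: using the strong $H^1$-convergence from (1) one shows $\limsup_n |\{|\tilde B_n|>0\}\cap B_R|\le |\{|\tilde B_0|>0\}\cap B_R|$ by comparing $\tilde B_n$ with the harmonic replacement of its boundary data on a slightly smaller ball (this kills the energy term and leaves the measure term to be estimated), together with Fatou for the reverse inequality; then dominated convergence upgrades this to $L^1_{\text{loc}}$-convergence of the characteristic functions. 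Item \textbf{(3)} is then a direct consequence of Remark~\ref{r:LinftynondegU} (the $L^\infty$ non-degeneracy of $U$): rescaling the implication ``$\|U\|_{L^\infty(B_{\lambda_{\text{\tiny\sc A}}\rho}(x))}\le\eta\rho \Rightarrow U\equiv0$ in $B_{\rho/4\lambda_{\text{\tiny\sc A}}}(x)$'' to the blow-up scale and passing to the limit shows that if $y\in\overline\O_0$ then $\|B_0\|_{L^\infty(B_r(y))}\ge \eta_0 r$ for every $r>0$, with $\eta_0$ depending only on the Lipschitz constant and $\lambda_{\text{\tiny\sc A}}$ (one has to be mildly careful with the mismatch between the ball on which the hypothesis is tested and the conclusion ball, but a covering argument or a slight shrinkage of the constant handles it).

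Finally, for \textbf{(4)} the Hausdorff convergence of $\overline\O_n\to\overline\O_0$ and of $\O_n^c\to\O_0^c$ reduces, by a standard argument, to two one-sided statements: (a) points of $\O_0$ are uniformly interior to $\O_n$ for $n$ large — this is where the uniform growth of $u_1$ near $\partial\O^\ast_1$ (Proposition~\ref{p:growth_boundary}, via $u_1(x)\ge c_K\,\mathrm{dist}(x,\partial\O^\ast_1)$) and Proposition~\ref{p:nondegu1} are used, since they give that on $\O^\ast_1$ one has $|U|\asymp u_1\asymp \mathrm{dist}(\cdot,\partial\O^\ast_1)$, so the positivity set cannot be thin; and (b) no interior point of $\O_0^c$ is a limit of points of $\O_n$ — this is exactly the non-degeneracy in (3) combined with the density estimate of Proposition~\ref{p:density_est}, which prevents $\O_n$ from shrinking to zero density. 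Uniform convergence of $B_n$ then handles the remaining direction (a point in $\O_0^c$ cannot be approached by points where $|B_n|$ is bounded below). I expect the main obstacle to be item \textbf{(2)}, the no-concentration-of-measure statement: it is the one place where the almost-minimality (rather than mere compactness) is genuinely needed, and the $r_n^{\delta_{\text{\tiny\sc A}}}$ error terms together with the $x$-dependence of the change of variables $F_{x_n}$ must be tracked carefully so that they vanish in the limit. The restriction to $x_0\in\partial\O^\ast_1$ in (4), as opposed to a generic point of $\partial\O^\ast$, is essential precisely because the quantitative lower bounds on $|U|$ are only available where $u_1>0$.
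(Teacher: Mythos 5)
Your items (3) and (4) follow the paper's proof essentially verbatim (rescale the $L^\infty$ non-degeneracy of Remark \ref{r:LinftynondegU} and pass to the limit; reduce the Hausdorff convergence to the non-degeneracy of $B_0$, Proposition \ref{p:growth_boundary} and Proposition \ref{p:density_est}), and your overall strategy for (1)--(2) --- interpolate between $\tilde B_n$ and $\tilde B_0$ in an annulus, plug the competitor into the quasi-minimality, and combine with lower semicontinuity --- is also the paper's. But the way you have split the argument between (1) and (2) contains a genuine gap.

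First, there is a circularity: in your proof of (1) you invoke ``lower semicontinuity \dots of the measure of the positivity set under $L^1_{\text{loc}}$-convergence of $\ind_{\O_n}$'', which is precisely the conclusion of (2), while your proof of (2) uses the strong $H^1$-convergence of (1). The way out (and what the paper does) is to prove a \emph{single joint} limsup inequality
\begin{equation*}
\limsup_{n}\Big(\int_{B_R}|\nabla\tilde B_n|^2+\Lambda|\{|\tilde B_n|>0\}\cap B_R|\Big)\le \int_{B_R}|\nabla\tilde B_0|^2+\Lambda|\{|\tilde B_0|>0\}\cap B_R|
\end{equation*}
using the interpolated competitor, and then to observe that each of the two terms is separately lower semicontinuous --- the energy by weak $H^1$-convergence (which you get from the uniform Lipschitz bound), the measure term by the pointwise inequality $\ind_{\{|\tilde B_0|>0\}}\le\liminf_n\ind_{\{|\tilde B_n|>0\}}$ coming from local uniform convergence and Fatou, \emph{not} from the $L^1_{\text{loc}}$-convergence you are trying to prove. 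The joint limsup plus the two separate liminfs force both terms to converge individually, which yields (1) and (2) simultaneously.

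Second, your standalone argument for (2) via ``comparing $\tilde B_n$ with the harmonic replacement of its boundary data on a slightly smaller ball'' does not work: the harmonic replacement of a non-trivial, non-negative boundary datum is strictly positive in the whole ball by the strong maximum principle, so its positivity set has full measure and the resulting inequality only gives $|\{|\tilde B_n|>0\}\cap B_R|\le|B_R|+o(1)$, which carries no information. The competitor must have positivity set comparable to that of the \emph{limit} $\tilde B_0$, which is exactly what the interpolation $\varphi\tilde B_0+(1-\varphi)\tilde B_n$ achieves (at the cost of the extra term $\Lambda|\{0<\varphi<1\}|$, made arbitrarily small by the choice of cutoff). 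Once you replace your treatment of (2) by the joint inequality above, the proof is the paper's.
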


\begin{proof}
Notice that it is enough to prove that the sequence $\tilde{B}_n= B_n\circ A^{\sfrac12}_{x_n}$ strongly converges to $\tilde{B}_0:=B_0\circ A^{\sfrac12}_{x_0}$ in $H^1_{\text{loc}}(\R^d,\R^k)$ and that $\ind_{\{|\tilde{B}_n|>0\}}$ converges to $\ind_{\{|\tilde{B}_0|>0\}}$ in $L^1_{\text{loc}}(\R^d)$ to prove the parts {\it (1)} and {\it (2)} of Proposition \ref{p:convblowup}.

Since $\tilde{B}_n$ is uniformly Lipschitz, $\tilde{B}_n$ converges, up to a subsequence, weakly in $H^1_{\text{loc}}(\R^d,\R^k)$ and strongly in $L^2_{\text{loc}}(\R^d,\R^k)$ to $\tilde{B}_0$. Moreover, the local uniform convergence of $|\tilde{B}_n|$ to $|\tilde{B}_0|$ implies that $\ind_{\{|\tilde{B}_0|>0\}}\leq\liminf_{n\rightarrow\infty}\ind_{\{|\tilde{B}_n|>0\}}$. Therefore, it is sufficient to prove that for every ball $B_r\subset \R^d$ we have
\begin{equation}\label{e:convblowup5}
\limsup_{n\rightarrow+\infty}\bigg(\int_{B_r}|\nabla\tilde{B}_n|^2 +\Lambda|\{|\tilde{B}_n|>0\}\cap B_r|\bigg) \leq \int_{B_r}|\nabla\tilde{B}_0|^2 +\Lambda|\{|\tilde{B}_0|>0\}\cap B_r|.
\end{equation}
Let $\varphi\in C^{\infty}_c(\R^d)$ be a smooth function such that $0\leq\varphi\leq 1$, $\{\varphi=1\}=B_r$ and $\varphi=0$ outside $B_{2r}$. 
We set $\tilde{U}_n=\varphi \tilde{B}_0+(1-\varphi)\tilde{B}_n\in H^1(\R^d,\R^k)$ and notice that we have
\begin{equation*}
U_{x_0}-\tilde{U}_n^{r_n}\in H^1_0(B_{2rr_n},\R^k) \quad\text{where}\quad \tilde{U}_n^{r_n}(\xi)=r_n\tilde{U}_n\Big(\frac{1}{r_n}\xi\Big), \,\xi\in\R^d.
\end{equation*}
Then, using $\tilde{U}_n^{r_n}$ as a test function in Proposition \ref{p:changevarU} and by a change of variables we get
\begin{align}\label{e:convblowup1}
\nonumber\int_{B_{2r}}|\nabla\tilde{B}_n|^2 + \Lambda|\{|\tilde{B}_n|>0\}\cap B_{2r}|&\leq (1+C(rr_n)^{\delta_{\text{\tiny\sc A}}})\bigg(\int_{B_{2r}}|\nabla\tilde{U}_n|^2 + \Lambda|\{|\tilde{U}_n|>0\}\cap B_{2r}|\bigg) \\
&\qquad\qquad\qquad\qquad\qquad+ r_nC\|\varphi(\tilde{B}_0-\tilde{B}_n)\|_{L^1}.
\end{align}
Since we have $\tilde{U}_n=\tilde{B}_n$ in $\{\varphi=0\}$ and $\tilde{U}_n=\tilde{B}_0$ in $\{\varphi=1\}$, it follows that
\begin{equation*}
|\{|\tilde{U}_n|>0\}\cap B_{2r}| \leq |\{|\tilde{B}_n|>0\}\cap\{\varphi =0\}\cap B_{2r}| + |\{|\tilde{B}_0|>0\}\cap\{\varphi=1\}| + |\{0<\varphi<1\}|,
\end{equation*}
so that \eqref{e:convblowup1} now gives
\begin{multline}\label{e:convblowup2}
\int_{\{\varphi>0\}}\big(|\nabla\tilde{B}_n|^2-|\nabla\tilde{U}_n|^2\big) + \Lambda\big(|\{|\tilde{B}_n>0\}\cap\{\varphi>0\}|-|\{\tilde{B}_0|>0\}\cap\{\varphi=1\}|\big) 
\leq \Lambda|\{0<\varphi<1\}| \\+  C(rr_n)^{\delta_{\text{\tiny\sc A}}}\bigg(\int_{B_{2r}}|\nabla\tilde{U}_n|^2 + \Lambda|\{|\tilde{U}_n|>0\}\cap B_{2r}|\bigg) +  r_nC\|\varphi(\tilde{B}_0-\tilde{B}_n)\|_{L^1}.
\end{multline}
Now, since $\tilde{B}_n$ converges strongly in $L^2(B_{2r})$ we have that
\begin{align}\label{e:convblowup3}
\nonumber\limsup_{n\rightarrow+\infty}&\int_{\{\varphi>0\}}\big(|\nabla\tilde{B}_n|^2-|\nabla\tilde{U}_n|^2\big) = \limsup_{n\rightarrow+\infty}\int_{\{\varphi>0\}}\big(|\nabla\tilde{B}_n|^2-|\nabla(\varphi\tilde{B}_0 +(1-\varphi)\tilde{B}_n)|^2\big)\\
\nonumber&=\limsup_{n\rightarrow+\infty}\int_{\{\varphi>0\}}\big(|\nabla\tilde{B}_n|^2-|(\tilde{B}_0-\tilde{B}_n)\nabla\varphi+(1-\varphi)\nabla\tilde{B}_n+\varphi\nabla\tilde{B}_0|^2\big) \\
&=\limsup_{n\rightarrow+\infty}\int_{\{\varphi>0\}}\big((1-(1-\varphi)^2)|\nabla\tilde{B}_n|^2-2\varphi(1-\varphi)\nabla\tilde{B}_n\cdot\nabla\tilde{B}_0-\varphi^2|\nabla\tilde{B}_0|^2 \big) \\
\nonumber&=\limsup_{n\rightarrow+\infty}\int_{\{\varphi>0\}}(1-(1-\varphi)^2)\big(|\nabla\tilde{B}_n|^2-|\nabla\tilde{B}_0|^2\big),
\end{align}
and since $\nabla\tilde{B}_n$ converges weakly in $L^2(\{0<\varphi<1\})$ to $\tilde{B}_0$ we have that
\begin{equation}\label{e:convblowup6}
\int_{\{0<\varphi<1\}}(1-(1-\varphi)^2)|\nabla\tilde{B}_0|^2 \leq \limsup_{n\rightarrow+\infty}\int_{\{0<\varphi<1\}}(1-(1-\varphi)^2)|\nabla\tilde{B}_n|^2. 
\end{equation}
Therefore, \eqref{e:convblowup6} and \eqref{e:convblowup3} now entail that
\begin{align}\label{e:convblowup4}
\nonumber\limsup_{n\rightarrow+\infty}\int_{\{\varphi=1\}}\big(|\nabla\tilde{B}_n|^2-|\nabla\tilde{B}_0|^2\big) 
&\leq\limsup_{n\rightarrow+\infty}\int_{\{\varphi>0\}}(1-(1-\varphi)^2)\big(|\nabla\tilde{B}_n|^2-|\nabla\tilde{B}_0|^2\big) \\
&\leq \limsup_{n\rightarrow+\infty}\int_{\{\varphi>0\}}\big(|\nabla\tilde{B}_n|^2-|\nabla\tilde{U}_n|^2\big).
\end{align}
Finally, combining \eqref{e:convblowup4} and \eqref{e:convblowup2} we get
\begin{align*}
&\limsup_{n\rightarrow+\infty}\bigg(\int_{\{\varphi=1\}}\big(|\nabla\tilde{B}_n|^2-|\nabla\tilde{B}_0|^2\big) + \Lambda\big(|\{|\tilde{B}_n|>0\}\cap\{\varphi=1\}|-|\{|\tilde{B}_0|>0\}\cap\{\varphi=1\}|\big)\bigg) \\
&\leq \limsup_{n\rightarrow+\infty}\bigg(\int_{\{\varphi>0\}}\big(|\nabla\tilde{B}_n|^2-|\nabla\tilde{U}_n|^2\big) + \Lambda\big(|\{|\tilde{B}_n|>0\}\cap\{\varphi>0\}|-|\{|\tilde{B}_0|>0\}\cap\{\varphi=1\}|\big)\bigg) \\
&\leq \Lambda|\{0<\varphi<1\}|.
\end{align*}
Since we can choose $\varphi$ so that $|\{0<\varphi<1\}|$ is arbitrary small, this proves \eqref{e:convblowup5} and concludes the proof of parts {\it (1)} and {\it (2)} of Proposition \ref{p:convblowup}.

We now prove part {\it (3)}. Let $y \in \overline{\O}_0$ and $r>0$. There exists $z\in B_r(y)$ such that $|B_0|(z)>0$, and hence such that $|B_n|(z)>0$ for $n$ large enough. Therefore, $U\neq 0$ in $B_{rr_n}(x_n+r_nz)$ and hence, by the non-degeneracy of $U$ (Remark \ref{r:LinftynondegU}), we get that
\begin{equation*}
r_n\|B_n\|_{L^\infty(B_{4\lambda_{\text{\tiny\sc A}}^2 r}(z))} = \|U\|_{L^\infty(B_{4\lambda_{\text{\tiny\sc A}}^2rr_n}(x_n+r_nz))} \geq 4\lambda_{\text{\tiny\sc A}}\eta rr_n.
\end{equation*}
In particular, there exists $z_n\in B_{4\lambda_{\text{\tiny\sc A}}^2 r}(z)$ such that $|B_n|(z_n)\geq4\lambda_{\text{\tiny\sc A}}\eta r$. Up to a subsequence, $z_n$ converges to some $z_\infty\in \overline{B_{4\lambda_{\text{\tiny\sc A}}^2 r}(z)}$ and, since $B_n$ uniformly converges to $B_0$, we have that
\begin{equation*}
\|B_0\|_{L^\infty(B_{(4\lambda_{\text{\tiny\sc A}}^2+1)r}(y))}\geq\|B_0\|_{L^\infty(B_{4\lambda_{\text{\tiny\sc A}}^2 r}(z))} \geq |B_0|(z_\infty) =\lim_{n\rightarrow+\infty}|B_n|(z_n) \geq 4\lambda_{\text{\tiny\sc A}}\eta r,
\end{equation*}
which gives {\it (3)}. The proof of the Hausdorff convergence of the free boundaries is standard and follows from the non-degeneracy of $U$ and $B_0$, and the growth property of $U$ near the boundary of $\O^\ast_1$ (see Proposition \ref{p:growth_boundary}).
\end{proof}

\begin{lm}[Optimality of the blow-up limits]\label{l:optblowup}
Let $(x_n)_{n\in\N}\subset \partial\O^\ast\cap D$ be a sequence converging to some $x_0\in\partial\O^\ast\cap D$, $r_n\rightarrow 0$ and assume that the blow-up sequence $B_n:=B_{x_n,r_n}$ converges to some $B_0\in H^1_{\text{loc}}(\R^d,\R^k)$ in the sense of Proposition \ref{p:convblowup}.
Then $\tilde{B}_0:=B_0\circ A_{x_0}^{\sfrac12}$ is a global minimizer of the Alt-Caffarelli functional $J$ (see definition \ref{d:minaltcaff}).
\end{lm}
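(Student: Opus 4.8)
The plan is to push the quasi-minimality of $U_{x_n}$ supplied by Proposition~\ref{p:changevarU} to the limit. Fix $r>0$ and a competitor $\tilde V\in H^1(\R^d,\R^k)\cap L^\infty(\R^d,\R^k)$ with $\tilde B_0-\tilde V\in H^1_0(B_r,\R^k)$; the goal is to prove $J(\tilde B_0,r)\le J(\tilde V,r)$. First I would reduce to the case $\|\tilde V\|_{L^\infty(B_r)}\le M:=\|\tilde B_0\|_{L^\infty(B_r)}$, which is finite because $\tilde B_0$ is Lipschitz with $\tilde B_0(0)=0$ (if $M=0$ then $J(\tilde B_0,r)=0$ and there is nothing to prove). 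Indeed, replacing each component of $\tilde V$ by its truncation at level $\pm M$ inside $B_r$ leaves the trace on $\partial B_r$ unchanged (so the condition $\tilde B_0-\tilde V\in H^1_0(B_r,\R^k)$ is preserved), does not enlarge $\{|\tilde V|>0\}\cap B_r$ when $M>0$, and does not increase $\int_{B_r}|\nabla\tilde V|^2$; hence it can only decrease $J(\tilde V,r)$.

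Next, fix $R>r$ and a cut-off $\varphi\in C_c^\infty(B_R)$ with $0\le\varphi\le1$ and $\varphi\equiv1$ on $B_r$, and set
\begin{equation*}
\tilde W_n=\varphi\,\tilde V+(1-\varphi)\tilde B_n\in H^1(\R^d,\R^k),\qquad \tilde W_n^{r_n}(\xi):=r_n\tilde W_n(\xi/r_n).
\end{equation*}
Since $\tilde B_n(\xi)=\tfrac1{r_n}U_{x_n}(r_n\xi)$ and $\tilde W_n\equiv\tilde B_n$ outside $B_R$, we have $U_{x_n}-\tilde W_n^{r_n}\in H^1_0(B_{Rr_n},\R^k)$; moreover, using $\tilde V=\tilde B_0$ on $B_R\setminus B_r$ together with the uniform Lipschitz bound on $\tilde B_0$ and the identity $\|\tilde B_n\|_{L^\infty}=\tfrac1{r_n}\|U_{x_n}\|_{L^\infty}$, one checks that for $n$ large $\|\tilde W_n^{r_n}\|_{L^\infty}\le\|U_{x_n}\|_{L^\infty}$, while $B_{\lambda_{\text{\tiny\sc A}}Rr_n}(x_n)\subset D$ and $Rr_n\le r_0$. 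Applying Proposition~\ref{p:changevarU} to $U_{x_n}$ with the admissible competitor $\tilde W_n^{r_n}$ on $B_{Rr_n}$, then rescaling (using $J(r_ng(\cdot/r_n),Rr_n)=r_n^dJ(g,R)$ and $\|r_ng(\cdot/r_n)\|_{L^1}=r_n^{d+1}\|g\|_{L^1}$, and dividing by $r_n^d$), gives
\begin{equation*}
J(\tilde B_n,R)\le\big(1+C(Rr_n)^{\delta_{\text{\tiny\sc A}}}\big)J(\tilde W_n,R)+Cr_n\|\tilde B_n-\tilde W_n\|_{L^1(B_R)}.
\end{equation*}

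Then I would let $n\to\infty$. By Proposition~\ref{p:convblowup}, $\tilde B_n\to\tilde B_0$ strongly in $H^1(B_R,\R^k)$ and $\ind_{\{|\tilde B_n|>0\}}\to\ind_{\{|\tilde B_0|>0\}}$ in $L^1(B_R)$, so the left-hand side converges to $J(\tilde B_0,R)$. On the right, $\tilde W_n-\tilde B_n=\varphi(\tilde V-\tilde B_n)$ stays bounded in $L^1(B_R)$, so the last term vanishes, while $\tilde W_n\to\tilde W:=\varphi\tilde V+(1-\varphi)\tilde B_0$ strongly in $H^1(B_R,\R^k)$, with $\tilde W=\tilde V$ on $B_r$ and $\tilde W=\tilde B_0$ on $B_R\setminus B_r$; for the measure term one uses $\{|\tilde W_n|>0\}\cap B_r=\{|\tilde V|>0\}\cap B_r$ together with the crude bound $|\{|\tilde W_n|>0\}\cap(B_R\setminus B_r)|\le|B_R\setminus B_r|$ to get $\limsup_n|\{|\tilde W_n|>0\}\cap B_R|\le|\{|\tilde V|>0\}\cap B_r|+|B_R\setminus B_r|$. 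Altogether,
\begin{equation*}
J(\tilde B_0,R)\le\int_{B_r}|\nabla\tilde V|^2+\int_{B_R\setminus B_r}|\nabla\tilde B_0|^2+\Lambda|\{|\tilde V|>0\}\cap B_r|+\Lambda|B_R\setminus B_r|,
\end{equation*}
and since $J(\tilde B_0,R)=\int_{B_r}|\nabla\tilde B_0|^2+\int_{B_R\setminus B_r}|\nabla\tilde B_0|^2+\Lambda|\{|\tilde B_0|>0\}\cap B_R|$ with $|\{|\tilde B_0|>0\}\cap B_R|\ge|\{|\tilde B_0|>0\}\cap B_r|$, cancelling the annular Dirichlet term yields $J(\tilde B_0,r)\le J(\tilde V,r)+\Lambda|B_R\setminus B_r|$. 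Letting $R\downarrow r$ concludes.

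The step I expect to be the main obstacle is the measure term in the limit: on the interpolation annulus the components of $\tilde W_n$ are affine combinations of those of $\tilde B_0$ and $\tilde B_n$, which may cancel, so there is no pointwise control of $\{|\tilde W_n|>0\}$ there; this is precisely why the competitor is built on the larger ball $B_R$ and the loss $\Lambda|B_R\setminus B_r|$ is absorbed only at the very end by sending $R\downarrow r$. This is the same device already used in the proof of Proposition~\ref{p:convblowup}, and the only genuinely new input here is the convergence $J(\tilde B_n,R)\to J(\tilde B_0,R)$, which is immediate from parts (1)--(2) of that proposition.
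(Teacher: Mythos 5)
Your argument is correct and is essentially the paper's own proof: both apply the rescaled quasi-minimality of Proposition \ref{p:changevarU} to an interpolated competitor and pass to the limit via the strong $H^1_{\text{loc}}$ and $L^1_{\text{loc}}$ convergences of Proposition \ref{p:convblowup}, controlling the positivity set on the interpolation region only by its Lebesgue measure, which is then made arbitrarily small. The sole (cosmetic) difference is that you place the interpolation region in the annulus $B_R\setminus B_r$ and send $R\downarrow r$, whereas the paper interpolates on a set $\{\eta\neq 1\}\subset B_r$ of arbitrarily small measure; your preliminary truncation, ensuring the $L^\infty$ constraint of Proposition \ref{p:changevarU} after rescaling, is a point the paper passes over in silence.
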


\begin{proof}
Let $r>0$ and $\tilde{U}\in H^1_{\text{loc}}(\R^d,\R^k)\cap L^\infty(\R^d,\R^k)$ be such that $\tilde{B}_0-\tilde{U}\in H^1_0(B_r,\R^k)$. Let $\eta\in C^\infty_c(B_r)$ be such that $0\leq\eta\leq 1$ and set $\tilde{B}_n=B_n\circ A_{x_n}^{\sfrac12}$ and  $\overline{V}_n=\tilde{U}+(1-\eta)(\tilde{B}_n-\tilde{B}_0)$. Consider the test function $V_n\in H^1(\R^d,\R^k)$ defined by $V_n(\xi)=r_n\overline{V}_n(r_n^{-1}\xi)$ and note that $U_{x_0}-V_n\in H^1_0(B_{rr_n},\R^k)$ (since we have $\tilde{B}_n-\overline{V}_n\in H^1_0(B_r,\R^k)$). By Proposition \ref{p:changevarU} applied to $V_n$ and a change of variables it follows that
\begin{multline}\label{e:optblowup1}
\int_{B_r}|\nabla\tilde{B}_n|^2 + \Lambda|\{|\tilde{B}_n|>0\}\cap B_r| \leq (1+C(rr_n)^{\delta_{\text{\tiny\sc A}}})\bigg(\int_{B_r}|\nabla \overline{V}_n|^2 
+ \Lambda|\{|\overline{V}_n|>0\}\cap B_r|\bigg) \\
+ Cr_n \|\tilde{B}_n-\overline{V}_n\|_{L^1(B_r)}.
\end{multline}
Note that from {\it (1)} and {\it (2)} of Proposition \ref{p:convblowup} we deduce that $\overline{V}_n$ converges strongly in $H^1_{\text{loc}}$ to $\tilde{U}$ and that $\ind_{\{|\overline{V}_n|>0\}}$ converges strongly in $L^1_{\text{loc}}$ to $\ind_{\{|\tilde{U}|>0\}}$. Moreover, since $\overline{V}_n=\tilde{U}$ in $\{\eta=1\}$, we have the estimate
\begin{equation*}
|\{|\overline{V}_n|>0\}\cap B_r|  \leq 
|\{|\tilde{U}|>0\}\cap B_r| + |\{\eta\neq 1\}\cap B_r|.
\end{equation*}
Therefore, passing to the limit as $n\to\infty$ in \eqref{e:optblowup1} we get 
\begin{equation*}
\int_{B_r}|\nabla\tilde{B}_0|^2 + \Lambda|\{|\tilde{B}_0|>0\}\cap B_r| \leq \int_{B_r}|\nabla\tilde{U}|^2 + \Lambda\big(|\{|\tilde{U}|>0\}\cap B_r| + |\{\eta\neq 1\}\cap B_r|\big).
\end{equation*}
Since we can choose $\eta$ such that that $|\{\eta\neq 1\}\cap B_r|$ is arbitrary small, this gives that $J(\tilde{B}_0,r) \leq J(\tilde{U},r)$ and concludes the proof.
\end{proof}

As a consequence of the Weiss almost-monotonicity formula we get that the blow-up sequences with fixed center converge to a one-homogeneous function. 

\begin{lm}[Homogeneity of the blow-up limits]\label{l:homblowup}
For every $x_0\in\partial\O^\ast\cap D$, the blow-up limits $B_0\in\mathcal{BU}_U(x_0)$  are one-homogeneous functions. 
\end{lm}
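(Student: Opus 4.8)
The plan is to combine the Weiss almost-monotonicity formula (Proposition \ref{p:weissder}) with the optimality of the blow-up limits (Lemma \ref{l:optblowup}) and the convergence properties (Proposition \ref{p:convblowup}) in the usual Weiss-type argument. Fix $x_0\in\partial\O^\ast\cap D$ and let $B_0\in\mathcal{BU}_U(x_0)$, so that $B_n:=B_{x_0,r_n}$ converges to $B_0$ for some sequence $r_n\to 0$. Write $\tilde B_n=B_n\circ A_{x_0}^{\sfrac12}$ and $\tilde B_0=B_0\circ A_{x_0}^{\sfrac12}$; by Lemma \ref{l:optblowup}, $\tilde B_0$ is a global minimizer of the Alt-Caffarelli functional $J$, and $\tilde B_0(0)=0$, so Proposition \ref{p:minAFonehom} applies to $\tilde B_0$. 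Thus it suffices to show that $r\mapsto W(\tilde B_0,r)$ is constant on $(0,+\infty)$, since then the last assertion of Proposition \ref{p:minAFonehom} forces $\tilde B_0$, and hence $B_0$, to be one-homogeneous.

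First I would note the scaling identity $W(\tilde B_n,s)=W(U_{x_0},r_n s)$, which follows directly from the definitions of $W$, of $\tilde B_n$ and from a change of variables (the functional $J$ and the boundary term scale correctly under the parabolic-type rescaling $\xi\mapsto r_n\xi$ together with the factor $r_n^{-1}$ in $B_n$). By Proposition \ref{p:weissder} the limit $W_0:=\lim_{t\to 0^+}W(U_{x_0},t)$ exists and is finite, so for every fixed $s>0$ we get $W(\tilde B_n,s)=W(U_{x_0},r_ns)\to W_0$ as $n\to\infty$. On the other hand, the strong $H^1_{\text{loc}}$ convergence $\tilde B_n\to\tilde B_0$ (part (1) of Proposition \ref{p:convblowup}), together with the $L^1_{\text{loc}}$ convergence of the characteristic functions (part (2)) and the convergence of the traces on $\partial B_s$ for a.e. $s$ (which follows from the strong $H^1$ convergence via the trace theorem, after possibly passing to a further subsequence and using Fubini to pick good radii), gives $W(\tilde B_n,s)\to W(\tilde B_0,s)$ for a.e. $s>0$. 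Combining the two limits yields $W(\tilde B_0,s)=W_0$ for a.e. $s>0$, hence for all $s>0$ by the monotonicity (equivalently, continuity) of $s\mapsto W(\tilde B_0,s)$ guaranteed by Proposition \ref{p:minAFonehom}. Therefore $W(\tilde B_0,\cdot)$ is constant and $\tilde B_0$ is one-homogeneous; since $A_{x_0}^{\sfrac12}$ is linear, $B_0=\tilde B_0\circ A_{x_0}^{-\sfrac12}$ is one-homogeneous as well.

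The main obstacle, and the only genuinely delicate point, is justifying the passage to the limit in the boundary term $r^{-d-1}\int_{\partial B_r}|\tilde B_n|^2$ of $W$: pointwise (in $r$) convergence of this trace integral does not follow from $L^2_{\text{loc}}$ convergence alone. I would handle this exactly as in \cite[Proposition 3.1]{mazzoleni-terracini-velichkov-17}: from strong $H^1(B_R)$ convergence one gets $\int_{B_R}|\nabla(\tilde B_n-\tilde B_0)|^2+|\tilde B_n-\tilde B_0|^2\to 0$, so by Fubini $\int_{\partial B_s}(|\nabla(\tilde B_n-\tilde B_0)|^2+|\tilde B_n-\tilde B_0|^2)\,d\mathcal H^{d-1}\to 0$ along a subsequence for a.e. $s$, which in particular gives convergence of the trace term for a.e. $s$; the exceptional null set is harmless because $W(\tilde B_0,\cdot)$ is monotone up to the explicit correction $\tfrac{d}{\delta_{\text{\tiny\sc A}}}C_0 r^{\delta_{\text{\tiny\sc A}}}$ — but here $C_0=0$ since $\tilde B_0$ is a genuine global minimizer — hence continuous, so equality on a dense set propagates to all $s>0$.
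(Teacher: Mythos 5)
Your proposal is correct and follows essentially the same route as the paper: scaling identity $W(\tilde B_n,s)=W(U_{x_0},r_ns)$, existence of $\lim_{t\to0^+}W(U_{x_0},t)$ from Proposition \ref{p:weissder}, convergence $W(\tilde B_n,s)\to W(\tilde B_0,s)$ from Proposition \ref{p:convblowup}, and then Lemma \ref{l:optblowup} plus Proposition \ref{p:minAFonehom} to conclude. The only difference is your extra care with the boundary trace term, which the paper passes over; note that it also follows immediately for every radius from the local uniform convergence of $\tilde B_n$ to $\tilde B_0$, so the Fubini/a.e.-radius detour, while valid, is not needed.
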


\begin{proof}
Let $B_n=B_{x_0,r_n}$ converging (in the sense of Proposition \ref{p:convblowup}) to $B_0$. In particular, $\tilde{B}_n$ converges strongly in $H^1_{\text{loc}}$ and in $L^1_{\text{loc}}$ to $\tilde{B}_0$ which implies that $\lim_{n\rightarrow+\infty}W(\tilde{B}_n,r)=W(\tilde{B}_0,r)$. Moreover, by Proposition \ref{p:weissder} the limit $\lim_{s\rightarrow 0^+}W(U_{x_0},s)$ exists and is finite. Therefore, we have for every $r>0$
\begin{equation}\label{e:homblowup1}
W(\tilde{B}_0,r)=\lim_{n\rightarrow+\infty}W(\tilde{B}_n,r)=\lim_{n\rightarrow+\infty}W(U_{x_0},rr_n)=\lim_{s\rightarrow 0^+}W(U_{x_0},s),
\end{equation}
which says that the function $r\mapsto W(\tilde{B}_0,r)$ is constant on $(0,+\infty)$. Then, it follows from Lemma \ref{l:optblowup} and Proposition \ref{p:minAFonehom} that $\tilde{B}_0$, and hence $B_0$, is one-homogeneous.
\end{proof}

We now reduce to the scalar case. More precisely, we prove that for any blow-up limit $B_0\in\mathcal{BU}_U(x_0)$, the function $|\tilde{B}_0|=|B_0\circ A^{\sfrac{1}{2}}_{x_0}|$ is a global minimizer of the scalar Alt-Caffarelli functional
\begin{equation}\label{e:defscalarAF}
    H^1_{\text{loc}}(\R^d) \ni u \to \int_{\R^d}|\nabla u|^2 + \Lambda|\{u>0\}|.
\end{equation}

\begin{lm}\label{l:blowupxi}
Let $x_0\in\partial\O^\ast\cap D$,  $B_0\in\mathcal{BU}_U(x_0)$ and set $\tilde{B}_0=B_0\circ A^{\sfrac12}_{x_0}$. Then there exists a unit vector $\xi\in\partial B_1\subset \R^k$ such that $\tilde{B}_0=|\tilde{B}_0|\xi$.
\end{lm}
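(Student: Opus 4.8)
The plan is a reduction of the one-homogeneous vectorial problem to a Dirichlet eigenvalue problem on a spherical cap. Write $\tilde{B}_0=(v_1,\dots,v_k)$ and $\O_0=\{|\tilde{B}_0|>0\}$. By Lemma~\ref{l:homblowup} the function $B_0$, and hence $\tilde{B}_0=B_0\circ A_{x_0}^{\sfrac12}$ (a linear image of a one-homogeneous function), is one-homogeneous, so $\O_0$ is an open cone with vertex at $0$. The first step is to observe that each component $v_i$ is harmonic in $\O_0$: by Lemma~\ref{l:optblowup}, $\tilde{B}_0$ is a global minimizer of the vectorial functional $J$, and perturbing the single component $v_i$ by an arbitrary $\varphi\in C_c^\infty(\O_0)$ does not change $\{|\tilde{B}_0|>0\}$, so minimality forces $\int\nabla v_i\cdot\nabla\varphi\,dx=0$.

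The second — and crucial — step is to show that $|\tilde{B}_0|$ is itself harmonic in $\O_0$. A Kato-type computation (as in the proof of Proposition~\ref{p:nondegu1}, now with $A=\mathrm{Id}$ and the $v_i$ harmonic) gives
\begin{equation*}
\Delta|\tilde{B}_0|=\frac{1}{|\tilde{B}_0|}\Big(|\nabla\tilde{B}_0|^2-|\nabla|\tilde{B}_0||^2\Big)\ \geq\ 0\qquad\text{in }\ \O_0,
\end{equation*}
where equality at a point holds if and only if the vectors $\nabla v_1,\dots,\nabla v_k\in\R^d$ are all of the form $v_i\,w$ for one common $w\in\R^d$. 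To upgrade this to an equality I would prove that $|\tilde{B}_0|$ is a global minimizer of the scalar Alt--Caffarelli functional \eqref{e:defscalarAF}: from any scalar competitor $v\geq 0$ one builds an admissible vector competitor with no larger energy (roughly $v\,\tilde{B}_0/|\tilde{B}_0|$, the delicate point being its behaviour where $v>0$ but $\tilde{B}_0=0$) and invokes the vectorial minimality of $\tilde{B}_0$; this is carried out as in \cite{mazzoleni-terracini-velichkov-17}, and it is the main obstacle. Since scalar minimizers are harmonic in their positivity set, $\Delta|\tilde{B}_0|=0$ in $\O_0$, the Cauchy--Schwarz inequality above is an equality a.e., and therefore $\nabla(v_i/v_j)=0$: the $v_i$ are pairwise proportional on each connected component of $\O_0$.

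Finally I would reduce to the sphere to extract a single vector $\xi$. Writing $v_i(x)=|x|\,\phi_i(x/|x|)$ and $|\tilde{B}_0(x)|=|x|\,\Phi(x/|x|)$ on $\omega:=\O_0\cap S^{d-1}$, one-homogeneity together with the harmonicity of the $v_i$ and of $|\tilde{B}_0|$ yields $-\Delta_{S^{d-1}}\phi_i=(d-1)\phi_i$ and $-\Delta_{S^{d-1}}\Phi=(d-1)\Phi$ in $\omega$, with $\phi_i,\Phi\in H^1_0(\omega)$. Since $\Phi>0$ in $\omega$, it is a first Dirichlet eigenfunction of $-\Delta_{S^{d-1}}$ on $\omega$, so $\lambda_1(\omega)=d-1$; as $\omega$ is connected (a consequence of the density estimate of Proposition~\ref{p:density_est} and the non-degeneracy of $\tilde{B}_0$ from Proposition~\ref{p:convblowup}), this eigenvalue is simple, hence every $\phi_i$, being an eigenfunction for $\lambda_1(\omega)$, equals $c_i\Phi$ for a constant $c_i$. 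Therefore $v_i=c_i|\tilde{B}_0|$ for all $i$, and $\sum_i v_i^2=|\tilde{B}_0|^2$ forces $\sum_i c_i^2=1$; setting $\xi=(c_1,\dots,c_k)\in\partial B_1\subset\R^k$ gives $\tilde{B}_0=|\tilde{B}_0|\,\xi$. (The equality case of the Kato inequality already provides the proportionality of the $v_i$ on each component of $\O_0$; the eigenvalue argument is only needed to guarantee that the constants agree across components, so that a single $\xi$ works.)
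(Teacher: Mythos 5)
There is a genuine gap at the central step of your argument. Everything after ``the main obstacle'' hinges on $|\tilde{B}_0|$ being a global minimizer of the scalar Alt--Caffarelli functional (equivalently, harmonic in $\{|\tilde{B}_0|>0\}$), and this is precisely what you cannot obtain at this stage. The competitor $v\,\tilde{B}_0/|\tilde{B}_0|$ is undefined on $\{v>0\}\cap\{|\tilde{B}_0|=0\}$, and there is no canonical way to assign a direction there without already knowing that a single direction $\xi$ works globally; this is why both in this paper (Lemma \ref{l:minscalAC}, whose proof begins ``Since $\tilde{B}_0=|\tilde{B}_0|\xi$ by Lemma \ref{l:blowupxi}\dots'') and in \cite{mazzoleni-terracini-velichkov-17} the scalar minimality of $|\tilde{B}_0|$ is a \emph{consequence} of the identity $\tilde{B}_0=|\tilde{B}_0|\xi$, not an ingredient of its proof. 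As written, your reduction is therefore circular. A second, smaller problem: the connectedness of $\omega=\O_0\cap S^{d-1}$ does not follow from the density estimate and non-degeneracy (these give volume bounds, not topology); in this setting connectedness and the simplicity of $\lambda_1$ are themselves spectral facts, obtained from $\mathcal H^{d-1}(\omega)<\mathcal H^{d-1}(\partial B_1)$ via \cite[Remark 4.8]{mazzoleni-terracini-velichkov-17}.

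The fix is to bypass $|\tilde{B}_0|$ entirely and argue directly on the components, as the paper does. Your first step (each $v_i$ harmonic in the cone) and the one-homogeneity already give that $\phi_i=v_i|_{\partial B_1}$ solves $-\Delta_{\mathbb S^{d-1}}\phi_i=(d-1)\phi_i$ in $S=\{|\tilde{B}_0|>0\}\cap\partial B_1$ with $\phi_i\in H^1_0(S)$. At least one $\phi_i$ is nontrivial (density estimate) and non-negative, which forces $\lambda_1(S)=d-1$; since $|S|<|\partial B_1|$ by minimality, $\lambda_1(S)$ is simple, so every $\phi_i$ is a multiple $\alpha_i\varphi$ of the same first eigenfunction, and $\xi=\alpha/|\alpha|$ does the job. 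Your Kato computation and the equality-case analysis are correct as far as they go, but they only yield information once harmonicity of $|\tilde{B}_0|$ is known, and they are not needed for the lemma.
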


\begin{proof}
Set $S=\partial B_1\cap\{|\tilde{B}_0|>0\}$. By Lemma \ref{l:homblowup}, the components of $\tilde{B}_0=(b_1,\dots,b_k)$ are one-homogeneous functions and by Lemma \ref{l:optblowup}, they are harmonic on the cone $\{|\tilde{B}_0|>0\}$. Therefore, in polar coordinates we have $b_i(r,\theta)=r\varphi_i(r)$ where $\varphi_i$ is solution of the equation
\begin{equation*}
-\Delta_{\mathbb{S}^{d-1}}\varphi_i=(d-1)\,\varphi_i\quad\text{in}\quad S,\qquad\varphi_i=0\quad\text{on}\quad\partial S,
\end{equation*}
where $\Delta_{\mathbb{S}^{d-1}}$ stands for the Laplace-Beltrami operator. By Proposition \ref{p:density_est}, the components of $\tilde{B}_0$ are not all zero. Therefore, at least one $\varphi_i$ is non-zero and hence $d-1$ is an eigenvalue of $-\Delta_{\mathbb{S}^{d-1}}$ on $S$. Since the functions $\varphi_i$ are non-negative, it follows that $\lambda_1(S)=d-1$, where $\lambda_1(S)$ denotes the first eigenvalue on $S$. Moreover, by Lemma \ref{l:optblowup} we have $|S|<|\partial B_1|$ and by \cite[Remark 4.8]{mazzoleni-terracini-velichkov-17} it follows that the first eigenvalue $\lambda_1(S)$ is simple. Then, there exists non-negative constants $\alpha_1,\dots,\alpha_k$, not all zero, such that $\varphi_i=\alpha_i\varphi$ where $\varphi$ stands for the normalized eigenfunction of $-\Delta_{\mathbb{S}^{d-1}}$ on $S$. Now set $\alpha=(\alpha_1,\dots,\alpha_k)$ so that we have $\tilde{B}_0=\varphi \alpha$ on $\partial B_1$. Since $|\alpha|\neq 0$, setting $\xi=|\alpha|^{-1}\alpha$ we have $\tilde{B}_0=|\tilde{B}_0|\xi$ on $\partial B_1$ and hence on $\R^d$ by one-homogeneity.
\end{proof}

\begin{lm}\label{l:minscalAC}
Let $x_0\in\partial\O^\ast\cap D$,  $B_0\in\mathcal{BU}_U(x_0)$ and set $\tilde{B}_0=B_0\circ A^{\sfrac12}_{x_0}$.
Then the function $|\tilde{B}_0|$ is a global minimizer of the (scalar) Alt-Caffarelli functional defined in \eqref{e:defscalarAF}.
\end{lm}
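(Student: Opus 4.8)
\emph{Strategy.} The idea is to push the vectorial global minimality of $\tilde{B}_0$ obtained in Lemma~\ref{l:optblowup} down to its modulus $|\tilde{B}_0|$, exploiting the fact, established in Lemma~\ref{l:blowupxi}, that $\tilde{B}_0$ points in a fixed direction: $\tilde{B}_0=|\tilde{B}_0|\,\xi$ for some unit vector $\xi\in\partial B_1\subset\R^k$. Because $\xi$ is constant, the vectorial energy of $\tilde{B}_0$ and the scalar energy of $|\tilde{B}_0|$ coincide, and any scalar competitor can be turned into an admissible vectorial competitor by multiplying it by $\xi$; the minimality for $J$ then transfers verbatim.

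\emph{Main steps.} First I would record, writing $\tilde{B}_0=(b_1,\dots,b_k)$ with $b_i=\xi_i|\tilde{B}_0|$, that $\nabla b_i=\xi_i\nabla|\tilde{B}_0|$, hence $|\nabla\tilde{B}_0|^2=\sum_i\xi_i^2|\nabla|\tilde{B}_0||^2=|\nabla|\tilde{B}_0||^2$ a.e.; since $|\tilde{B}_0|\ge 0$ we also have $\{|\tilde{B}_0|>0\}=\{|\tilde{B}_0|>0\}$ trivially, so for every $r>0$
\[
J(\tilde{B}_0,r)=\int_{B_r}|\nabla|\tilde{B}_0||^2+\Lambda\big|\{|\tilde{B}_0|>0\}\cap B_r\big|,
\]
i.e.\ $J(\tilde{B}_0,r)$ is exactly the scalar functional \eqref{e:defscalarAF} evaluated at $|\tilde{B}_0|$ on $B_r$. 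Next, fix $r>0$ and a scalar competitor $v\in H^1_{\text{loc}}(\R^d)\cap L^\infty(\R^d)$ with $|\tilde{B}_0|-v\in H^1_0(B_r)$. I may assume $v\ge 0$: replacing $v$ by $v^+$ lowers the Dirichlet energy, leaves $\{v>0\}$ unchanged, and preserves the boundary condition since $v=|\tilde{B}_0|\ge 0$ outside $B_r$, so that $v^+=v$ there and $|\tilde{B}_0|-v^+\in H^1_0(B_r)$ as well. Then set $V:=v\,\xi\in H^1_{\text{loc}}(\R^d,\R^k)\cap L^\infty(\R^d,\R^k)$, so that $\tilde{B}_0-V=(|\tilde{B}_0|-v)\,\xi\in H^1_0(B_r,\R^k)$ and $V$ is admissible in Definition~\ref{d:minaltcaff} (recall that the proof of Lemma~\ref{l:optblowup} in fact allows $H^1_{\text{loc}}\cap L^\infty$ competitors). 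Applying Lemma~\ref{l:optblowup} and computing, once more using that $\xi$ is a constant unit vector and $v\ge 0$, that $J(V,r)=\int_{B_r}|\nabla v|^2+\Lambda|\{v>0\}\cap B_r|$, I obtain
\[
\int_{B_r}|\nabla|\tilde{B}_0||^2+\Lambda\big|\{|\tilde{B}_0|>0\}\cap B_r\big|=J(\tilde{B}_0,r)\le J(V,r)=\int_{B_r}|\nabla v|^2+\Lambda\big|\{v>0\}\cap B_r\big|.
\]
As $r>0$ and $v$ are arbitrary, this is precisely the assertion that $|\tilde{B}_0|$ is a global minimizer of \eqref{e:defscalarAF}.

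\emph{Where the care is needed.} This lemma is essentially a corollary of Lemmas~\ref{l:optblowup} and~\ref{l:blowupxi}, so I do not expect a genuine obstacle. The only points that deserve attention are the reduction to nonnegative competitors (in particular checking that passing from $v$ to $v^+$ keeps the Dirichlet boundary datum on $\partial B_r$, which is why the sign of $|\tilde{B}_0|$ is used), and the routine bookkeeping matching the positivity sets $\{|\tilde{B}_0|>0\}$, $\{|V|>0\}$ with $\{|\tilde{B}_0|>0\}$, $\{v>0\}$. It is worth noting that the fixed-direction structure from Lemma~\ref{l:blowupxi} is essential on the competitor side: without it one only has the Kato-type inequality $|\nabla|\tilde{B}_0||\le|\nabla\tilde{B}_0|$, and multiplying $v$ by a varying unit field would produce spurious gradient terms, so no useful vectorial competitor would result.
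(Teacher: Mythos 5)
Your proof is correct and follows essentially the same route as the paper: write $\tilde{B}_0=|\tilde{B}_0|\,\xi$ via Lemma \ref{l:blowupxi}, turn a scalar competitor $v$ into the vectorial competitor $v\,\xi$, and invoke the global minimality of $\tilde{B}_0$ from Lemma \ref{l:optblowup}. Your additional reduction to $v^+$ is a small but legitimate refinement (the paper's displayed inequality ends with $|\{|\tilde{u}|>0\}|$ rather than $|\{\tilde{u}>0\}|$, so your step tidies up the case of sign-changing competitors).
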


\begin{proof}
Let $r>0$ and $\tilde{u}\in H^1_{\text{loc}}(\R^d)\cap L^\infty(\R^d)$ be such that $|\tilde{B}_0|-\tilde{u}\in H^1_0(B_r)$. Since $\tilde{B}_0=|\tilde{B}_0|\xi$ by Lemma \ref{l:blowupxi}, we have that $\tilde{B}_0-\tilde{u}\,\xi=(|\tilde{B}_0|-\tilde{u})\,\xi\in H^1_0(B_r,\R^k)$ and hence, by optimality of $\tilde{B}_0$ (see Lemma \ref{l:optblowup}) we have
\begin{equation*}
\int_{B_r}|\nabla|\tilde{B}_0||^2 + \Lambda|\{|\tilde{B}_0|>0\}\cap B_r| = J(\tilde{B}_0,r) \leq J(\tilde{u}\,\xi,r) 
= \int_{B_r}|\nabla\tilde{u}|^2 + \Lambda|\{|\tilde{u}|>0\}\cap B_r|.
\end{equation*}
\end{proof}

We conclude this section with a consequence of the one-homogeneity and the optimality of $|\tilde{B}_0|$ which states that two connected components of an optimal set cannot meet inside $D$. It is then enough to prove the regularity of one connected component $\O^\ast_1$ of $\O^\ast$ and hence to reduce to a one-phase free boundary problem (see Proposition \ref{l:opt_cond_u1}).

\begin{prop}\label{p:connected_comp_sep}
Denote by $(\O^\ast_i)_{i=1}^l$ the $l\leq k$ connected componenents of an optimal set $\O^\ast$ for \eqref{e:shapeopt}. Then, we have $\partial\O^\ast_i\cap\partial\O^\ast_j\cap D=\emptyset$ for every $i,j\in\{1,\dots,l\}$, $i\neq j$.
\end{prop}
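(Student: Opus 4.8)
The plan is to argue by contradiction. Suppose there exists a point $x_0 \in \partial\O^\ast_i \cap \partial\O^\ast_j \cap D$ with $i \neq j$. The key point is that the eigenfunctions on different connected components have disjoint supports, and in particular the first eigenfunction $u_1$ is supported on only one of the two components — say $u_1 \equiv 0$ on $\O^\ast_j$ (recall from Remark \ref{r:redu_Oast1} that at most one component carries $u_1$, and more generally each eigenvalue $\lambda_m(\O^\ast)$ is an eigenvalue of exactly one component). First I would take a blow-up of $U_{x_0} = U\circ F_{x_0}$ at $x_0$: by Proposition \ref{p:convblowup} and Lemma \ref{l:homblowup}, along a subsequence the blow-up sequence converges to a one-homogeneous blow-up limit $B_0 \in \mathcal{BU}_U(x_0)$, and by Lemmas \ref{l:blowupxi} and \ref{l:minscalAC} the function $w := |B_0 \circ A^{\sfrac12}_{x_0}|$ is a one-homogeneous global minimizer of the scalar Alt–Caffarelli functional \eqref{e:defscalarAF}.

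Next I would analyze the structure of $\{w > 0\}$. Since $x_0$ lies on the boundary of \emph{two} distinct components, and since by Proposition \ref{p:convblowup}(2) the characteristic functions $\ind_{\O_n}$ converge in $L^1_{\text{loc}}$ while the components stay separated at every scale (they are connected components, hence open and closed in $\O^\ast$), the limiting positivity set $\{w>0\}$ must contain contributions coming from both $\O^\ast_i$ and $\O^\ast_j$; in particular $\{w>0\}$ is \emph{disconnected}, being the union of (at least) two disjoint nonempty open cones $C_i$ and $C_j$, each with $x=0$ on its boundary. This is where the reduction to the scalar one-phase problem pays off: a one-homogeneous global minimizer of the Alt–Caffarelli functional whose positivity set is a cone with a disconnected link on $\mathbb{S}^{d-1}$ contradicts the known classification — more concretely, on a disconnected cone one can produce a competitor by replacing $w$ on one of the cones, say $C_j$, with the harmonic function having the same boundary trace on $\partial C_j \cap \{|\xi|<r\}$ but vanishing on the part of the sphere, strictly decreasing both the Dirichlet energy and the measure of the positivity set, which is impossible. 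Equivalently, one invokes that for a one-homogeneous minimizer the link $S = \{w>0\}\cap\mathbb{S}^{d-1}$ satisfies $\lambda_1(S) = d-1$ with $\lambda_1(S)$ simple (as in the proof of Lemma \ref{l:blowupxi}, via \cite[Remark 4.8]{mazzoleni-terracini-velichkov-17}), which forces $S$ to be connected — contradicting that it meets both $C_i \cap \mathbb{S}^{d-1}$ and $C_j \cap \mathbb{S}^{d-1}$ in disjoint nonempty relatively open sets.

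The delicate point, and the one I would spend the most care on, is justifying that the two components genuinely survive the blow-up as \emph{separate} pieces of $\{w>0\}$, i.e.\ that $0 \in \partial(\text{blow-up of }\O^\ast_i) \cap \partial(\text{blow-up of }\O^\ast_j)$ and that these blow-up sets remain disjoint. For the component carrying $u_1$, this is exactly the Hausdorff convergence of $\overline{\O}_n$ and $\O_n^c$ established in Proposition \ref{p:convblowup}(4), together with the non-degeneracy of $u_1$ (Propositions \ref{p:nondegu1}, \ref{p:nondeg}) and the growth estimate Proposition \ref{p:growth_boundary}, which guarantee that the positivity of the blow-up of $u_1$ detects $C_i$. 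For the other component $\O^\ast_j$, on which $u_1$ vanishes identically, one uses instead the non-degeneracy of the full vector $U$ (Remark \ref{r:LinftynondegU}) and the density estimate of Proposition \ref{p:density_est} applied to $\O^\ast_j$ (which is itself an optimal set for a problem of the form \eqref{e:shapeopt} with fewer eigenvalues, by Remark \ref{r:redu_Oast1}), so that some eigenfunction supported on $\O^\ast_j$ has a nontrivial, non-degenerate blow-up, producing the cone $C_j$. Since $\O^\ast_i$ and $\O^\ast_j$ are disjoint open sets and distance is scale-invariant under the blow-up rescaling only up to the constant, one must check that no spurious merging occurs; this follows because any point in $\O^\ast_i \cap \O^\ast_j$ would contradict their being distinct connected components, and the limit sets inherit disjointness from the uniform non-degeneracy separating the two families of eigenfunctions. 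Once the disconnectedness of the minimal cone $\{w>0\}$ is in hand, the contradiction with the simplicity of $\lambda_1(S)$ closes the argument.
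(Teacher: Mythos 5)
Your setup coincides with the paper's: blow up at a common boundary point $x_0\in\partial\O^\ast_i\cap\partial\O^\ast_j\cap D$, treat each component as an optimal set for its own problem (Remark \ref{r:redu_Oast1}) so that its eigenfunctions $V$, resp.\ $W$, produce a non-trivial, one-homogeneous, non-degenerate blow-up limit, and observe that the two limits have disjoint positivity cones inside $\{|\tilde B_0|>0\}$. The gap is in the step where you derive the contradiction. You invoke the simplicity of $\lambda_1(S)$ for the link $S=\{|\tilde B_0|>0\}\cap\partial B_1$ ``as in Lemma \ref{l:blowupxi}'', but that simplicity statement from \cite[Remark 4.8]{mazzoleni-terracini-velichkov-17} is only available under the hypothesis $\mathcal H^{d-1}(S)<\mathcal H^{d-1}(\partial B_1)$, and this hypothesis fails precisely in the configuration you must exclude: each of the two disjoint links $S_i,S_j$ carries a non-negative eigenfunction with eigenvalue $d-1$ (the restriction of $|\tilde B_0^V|$, resp.\ $|\tilde B_0^W|$), so by the same Remark 4.8 each has measure at least $d\omega_d/2$, whence $\mathcal H^{d-1}(S)=\mathcal H^{d-1}(\partial B_1)$. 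Your alternative competitor construction does not work either: a competitor must agree with $|\tilde B_0|$ on $\partial B_r$, so you cannot replace it on $C_j\cap B_r$ by a function ``vanishing on the part of the sphere'' where $|\tilde B_0|>0$, and the harmonic replacement with the correct boundary data does not decrease the measure term.

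The correct way to close the argument (and the one the paper uses) is to exploit the measure identity you have already essentially derived: since the two disjoint cones each have density at least $\tfrac12$ at the origin, $|\{|\tilde B_0|>0\}\cap B_1|=|B_1|$. Then minimality of $|\tilde B_0|$ for the Alt--Caffarelli functional forces $|\tilde B_0|$ to be harmonic in $B_1$ (the measure term is already maximal, so comparison with the harmonic extension of the boundary trace gives equality of Dirichlet energies). A non-negative, non-trivial harmonic function vanishing at the interior point $0$ contradicts the strong maximum principle. So the contradiction comes from harmonicity plus the maximum principle, not from connectedness of the link; you should replace your final step accordingly.
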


\begin{proof}
Let $x_0\in\partial\O^\ast_i\cap\partial\O^\ast_j\cap D$. Since $\sigma(\O^\ast_i)\subset\sigma(\O^\ast)$, there exists $k_i\in\{1,\dots,k-1\}$ such that $\lambda_s(\O^\ast_i)\in\{\lambda_1(\O^\ast),\dots,\lambda_k(\O^\ast)\}$ for every $s=1,\dots,k_i$ and $\lambda_s(\O^\ast_i)\notin\{\lambda_1(\O^\ast),\dots,\lambda_k(\O^\ast)\}$ for every $s>k_i$. It follows that $\O^\ast_i$ is solution of the problem \eqref{e:shapeopt} with $k=k_i$ and $D=D\setminus(\overline{\O^\ast\setminus\O^\ast_i})$. Similarly, for some $k_j\in\{1,\dots,k-1\}$, $\O^\ast_j$ is solution of \eqref{e:shapeopt} with $k=k_j$. Then, we denote by $V=(v_1,\dots,v_{k_i})$ and $W=(w_1,\dots,w_{k_j})$ the eigenfunctions on $\O^\ast_i$ and $\O^\ast_j$ respectively. Let $r_n\rightarrow 0$ and define the blow-up sequences
\begin{equation*}
B_n(\xi)=\frac{1}{r_n}U(x_0+r_n\xi),\quad B_n^V(\xi)=\frac{1}{r_n}V(x_0+r_n\xi),\quad B_n^W(\xi)=\frac{1}{r_n}W(x_0+r_n\xi),\qquad \xi\in\R^d.
\end{equation*}
Up to a subsequence, $B_n$, $B_n^V$ and $B_n^W$ converge to some blow-up limits $B_0\in\mathcal{BU}_U(x_0)$, $B_0^V\in\mathcal{BU}_V(x_0)$ and $B_0^W\in\mathcal{BU}_W(x_0)$. By Lemmas \ref{l:homblowup} and \ref{l:minscalAC}, $|\tilde{B}_0^V|$ and $|\tilde{B}_0^W|$ are non-trivial, one-homogeneous and global solutions of the Alt-Caffarelli functional. Therefore, the density at the origin of each set $\{|\tilde{B}_0^V|>0\}$ and $\{|\tilde{B}_0^W|>0\}$ is at least $1/2$ (see \cite[Lemma 5]{mazzoleni-terracini-velichkov-17}) and, since all the components of $\tilde{B}_0^V$ and $\tilde{B}_0^W$ are among the ones of $\tilde{B}_0$, it follows that $|\{|\tilde{B}_0|>0\}\cap B_1|=|B_1|$. Hence, $|\tilde{B}_0|$ is harmonic in $B_1$ since it minimizes the Alt-Caffarelli functional. And since $|\tilde{B}_0|$ is also a non-trivial and non-negative function which vanishes at $0$, this gives a contradiction (by the maximum principle).
\end{proof}

\section{Regularity of the free boundary}\label{s:reg}
This section is devoted to the proof of Theorem \ref{t:main}. Recall that we denote by $\O^\ast$ a solution to the problem \eqref{e:shapeopt} and that $\O^\ast_1$ stands for any connected component of $\O^\ast$ where the first eigenfunction is positive. 

\subsection{The optimality condition on the free boundary}
We prove that the vector $U$ of the first $k$ eigenfunctions on $\O^\ast$ satisfies an optimality condition on the boundary $\partial\O^\ast\cap D$ in the sense of the viscosity.

\begin{definition}
Let $D\subset\R^d$ be an open set and $U:D\subset\R^d\rightarrow\R^k$ be a continuous function.

$\bullet$ We say that $\varphi\in\mathcal{C}(D)$ touches $|U|$ by below (resp. by above) at $x_0\in D$ if $\varphi(x_0)=|U(x_0)|$ and $\varphi\leq|U|$ (resp. $\varphi\geq|U|$) in a neighborhood of $x_0$.
    
$\bullet$ Let $\O\subset D$ be an open set and let $g : D\rightarrow\R$ be continuous and non-negative function. We say that $U$ satisfies the boundary condition
\begin{equation}\label{e:boundarycond}
|A^{\sfrac12}[\nabla|U|]|=g\quad\text{on}\quad \partial\O\cap D
\end{equation}
in the viscosity sense if, for every $x_0\in\partial\O\cap D$ and every $\varphi\in\mathcal{C}^2(D)$ such that $\varphi^+:=\max(\varphi,0)$ touches $|U|$ by below (resp. by above) at $x_0$ we have
\begin{equation*}
|A^{\sfrac12}_{x_0}[\nabla\varphi(x_0)]|\leq g(x_0) \qquad \Big(\text{resp.}\quad |A^{\sfrac12}_{x_0}[\nabla\varphi(x_0)]|\geq g(x_0)\Big).
\end{equation*}
    
$\bullet$ Let, moreover, $\lambda=(\lambda_1,\dots,\lambda_k)\in \R^k$ be a vector of positive coordinates. We say that the function $U=(u_1,\dots,u_k)$ is a viscosity solution of the problem
\begin{equation*}
-\dive(A\nabla U)=\lambda bU\quad\text{in}\quad\O,\qquad U=0\quad\text{on}\quad\partial\O\cap D,\qquad |A^{\sfrac12}[\nabla|U|]|=g\quad\text{on}\quad \partial\O\cap D,
\end{equation*}
if for every $i=1,\dots,k$ the component $u_i$ is a solution of the PDE
\begin{equation*}
-\dive(A\nabla u_i)=-\lambda_i bu_i\quad\text{in}\quad\O,\qquad u_i=0\quad\text{on}\quad\partial\O\cap D,
\end{equation*}
and if the boundary condition \eqref{e:boundarycond} holds in the viscosity sense.
\end{definition}

\begin{oss}\label{r:eqdefvisc}
Another equivalent definition of the boundary condition is to say that \eqref{e:boundarycond} holds if for every $x_0\in\partial\O\cap D$ and every $\psi\in\mathcal{C}^2(\R^d)$ such that $\psi^+$ touches $|U_{x_0}|$ by below (resp. by above) at $0$ we have
$|\nabla\psi(0)|\leq g(x_0)$ (resp. $|\nabla\psi(0)|\geq g(x_0)$).
Indeed, if we set $\psi=\varphi\circ F_{x_0}$ then we have $|\nabla\psi(0)|=|A^{\sfrac12}_{x_0}[\nabla\varphi(0)]|$ (see also \cite[Appendix A]{spolaor-trey-velichkov-19}).
\end{oss}

\begin{lm}[Optimality condition on the free boundary]\label{l:opt_cond_U}
Let $U=(u_1,\dots,u_k)$ be the vector of the first $k$ normalized eigenfunctions on $\O^\ast$ and set $\lambda=(\lambda_1(\O^\ast),\dots,\lambda_k(\O^\ast))$. Then $U$ is a viscosity solution of the problem 
\begin{equation}\label{e:viscpb}
-\dive(A\nabla U)=\lambda bU\quad\text{in}\quad\O^\ast,\quad U=0\quad\text{on}\quad\partial\O^\ast\cap D,\quad |A^{\sfrac12}[\nabla|U|]|=\sqrt{\Lambda}\quad\text{on}\quad \partial\O^\ast\cap D.
\end{equation}
\end{lm}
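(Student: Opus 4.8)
The statement has two independent pieces: the interior PDE, and the viscosity free-boundary condition. The interior PDE $-\dive(A\nabla u_i)=\lambda_i bu_i$ in $\O^\ast$ is nothing but the eigenvalue equation, which holds by definition of the eigenfunctions on the open set $\O^\ast$ (recall $\O^\ast$ is open by Theorem \ref{t:lipquasimin}), so that part requires no argument. The work is entirely in verifying $|A^{\sfrac12}[\nabla|U|]|=\sqrt\Lambda$ on $\partial\O^\ast\cap D$ in the viscosity sense. By Remark \ref{r:eqdefvisc} this is equivalent, after freezing the coefficients at $x_0$, to showing: whenever $\psi\in\mathcal{C}^2(\R^d)$ and $\psi^+$ touches $|U_{x_0}|$ from below (resp.\ above) at $0$, then $|\nabla\psi(0)|\le\sqrt\Lambda$ (resp.\ $\ge\sqrt\Lambda$). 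So I would fix $x_0\in\partial\O^\ast\cap D$, work with $W:=U_{x_0}=U\circ F_{x_0}$, and use the quasi-minimality inequality \eqref{e:changevarUa} of Proposition \ref{p:changevarU} as the sole source of optimality.

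\textbf{The touching-from-below case.} Suppose $\psi^+$ touches $|W|$ from below at $0$, with $|\nabla\psi(0)|=:a$. After a rotation I may assume $\nabla\psi(0)=a e_d$ (if $a=0$ there is nothing to prove since $\sqrt\Lambda>0$), so near $0$, $\psi(\xi)=a\,\xi_d+o(|\xi|)$ and hence $\psi^+$ is supported, to leading order, in the half-space $\{\xi_d>0\}$. The idea is to run a blow-up at $0$: set $W_\rho(\xi)=\rho^{-1}W(\rho\xi)$. By the non-degeneracy (Proposition \ref{p:nondeg}, Remark \ref{r:LinftynondegU}) and Lipschitz bound, along a subsequence $W_\rho\to W_0$, a global minimizer of the vectorial Alt--Caffarelli functional by Lemma \ref{l:optblowup}, which is one-homogeneous (Lemma \ref{l:homblowup}) and of the form $W_0=|W_0|\xi_*$ for a fixed unit vector $\xi_*\in\R^k$ (Lemma \ref{l:blowupxi}), with $|W_0|$ a global minimizer of the scalar functional \eqref{e:defscalarAF} (Lemma \ref{l:minscalAC}). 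Because $\psi^+\le|W|$ touches at $0$ and $\psi$ is $C^2$, the blow-up of $\psi^+$ is the linear function $a\,\xi_d^+$ (one-homogeneous), and it lies below $|W_0|$; in particular $|W_0|\ge a\,\xi_d^+\not\equiv 0$, so $0\in\partial\{|W_0|>0\}$ and the positivity set contains $\{\xi_d>0\}$ near where $a\xi_d>0$. Now I invoke the known boundary behaviour of one-homogeneous scalar global minimizers of the Alt--Caffarelli functional (De Silva \cite{de-silva-11}, or \cite[Appendix A]{spolaor-trey-velichkov-19}): such a minimizer that is nontrivial and vanishes at $0$ satisfies, at every regular boundary point and in particular when it dominates a half-plane solution, the exact gradient bound $|\nabla|W_0||=\sqrt\Lambda$ on its free boundary; comparing the half-plane solution $\sqrt\Lambda\,(x\cdot\nu)^+$ with $|W_0|$ via the comparison/monotonicity for the Alt--Caffarelli functional forces $a\le\sqrt\Lambda$. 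That is the desired inequality $|\nabla\psi(0)|\le\sqrt\Lambda$.

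\textbf{The touching-from-above case.} This is the harder direction and I expect it to be the main obstacle. Here $\psi^+\ge|W|$ with $\psi(0)=0$, and I must show $|\nabla\psi(0)|\ge\sqrt\Lambda$. Suppose not: $|\nabla\psi(0)|=a<\sqrt\Lambda$. Then in a small ball $B_r$, $|W|\le\psi^+\le (a+\eps)(\xi_d)^+ + o(r)$ with $a+\eps<\sqrt\Lambda$, which says $|W|$ is very flat at $0$ on one side and zero on the other. The strategy is to construct an explicit competitor $\tilde W$ in \eqref{e:changevarUa} that pushes the free boundary inward — e.g.\ replace each component $w_i$ in $B_r$ by $\min(w_i^+, \ell_i)-\min(w_i^-,\ell_i')$ where $\ell_i$ is an affine function with slope slightly larger than the slope of $w_i$ but still $<\sqrt\Lambda$, chosen so that $\{|\tilde W|>0\}\cap B_r$ is strictly smaller than $\{|W|>0\}\cap B_r$ by a definite fraction of $|B_r|$, while the Dirichlet energy increases by strictly less than $\sqrt\Lambda^2$ times that volume gain. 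Feeding this into \eqref{e:changevarUa}, the gain $\Lambda\cdot(\text{volume removed})$ on the left must be absorbed by the energy increase plus the error terms $Cr^{\delta_{\text{\tiny\sc A}}}J(\tilde W,r)+C\|W-\tilde W\|_{L^1}$, and for $r$ small (so the $r^{\delta_{\text{\tiny\sc A}}}$ and $L^1$ errors are lower order) this yields $a+\eps\ge\sqrt\Lambda$, a contradiction. Equivalently — and this is cleaner — one may again pass to the blow-up $|W_0|$, which now satisfies $|W_0|\le a\,(\xi_d)^+$, and use that a nontrivial scalar global minimizer vanishing at $0$ cannot be strictly dominated by a half-plane solution of sub-critical slope; this is exactly the statement that half-plane solutions with slope $<\sqrt\Lambda$ are strict subsolutions, proved by the same competitor argument for the limiting functional \eqref{e:defscalarAF}. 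The delicate point in either route is quantifying the energy--volume trade-off sharply enough that the lower-order change-of-coordinates errors do not spoil the strict inequality; handling this carefully, using that $\delta_{\text{\tiny\sc A}}>0$ so $r^{\delta_{\text{\tiny\sc A}}}\to0$, is where the bulk of the proof lies, but it is exactly parallel to the scalar Alt--Caffarelli viscosity-condition argument and to \cite[Lemma 3.2]{spolaor-trey-velichkov-19}, so I would cite those and reduce to the blow-up statement.
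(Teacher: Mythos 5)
Your proposal is correct and follows essentially the same route as the paper: reduce to the frozen-coordinate viscosity inequality via Remark \ref{r:eqdefvisc}, blow up at the touching point, use Lemmas \ref{l:optblowup}, \ref{l:homblowup}, \ref{l:minscalAC} and non-degeneracy to identify the blow-up limit as the half-plane solution $\sqrt{\Lambda}\,\xi_d^+$, and compare slopes with the blow-up $a\,\xi_d^+$ of $\psi$. The only difference is that the paper treats the touching-from-above case symmetrically by the same blow-up classification (there $\{|\tilde B_0|>0\}\subset\{\xi_d>0\}$ and non-degeneracy again forces equality with the half-space), so the separate competitor construction you sketch for that direction is unnecessary.
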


\begin{proof}
Since $U$ is Lipschitz continuous, we only have to prove that the boundary condition holds in the viscosity sense. Let $x_0\in\partial\O^\ast\cap D$ and let $\psi\in\mathcal{C}^2(\R^d)$ be a function touching $|U_{x_0}|$ by below at $0$ (see Remark \ref{r:eqdefvisc}). We fix an infinitesimal sequence $r_n$ and set for every $\xi\in\R^d$
\begin{equation*}
\tilde{B}_n(\xi)=\frac{1}{r_n}U_{x_0}(r_n\xi)\quad\text{and}\quad \psi_n(\xi)=\frac{1}{r_n}\psi(r_n\xi).
\end{equation*}
Up to a subsequence, the blow-up sequences $(\tilde{B}_n)_{n\in\N}$ and $(\psi_n)_{n\in\N}$ converge locally uniformly in $\R^d$ to some $\tilde{B}_0\in H^1_{\text{loc}}(\R^d,\R^k)$ and to $\psi_0(\xi):=\xi\cdot\nabla\psi(0)$ respectively. We can assume that $\nabla\psi(0)=|\nabla\psi(0)|e_d$ (by a change of variables) and that $|\nabla\psi(0)|\neq 0$, since otherwise $|\nabla\psi(0)|\leq \sqrt{\Lambda(x_0)}$ obviously holds. We have $\psi\leq|U_{x_0}|$ near $0$ an hence $\psi_0\leq|\tilde{B}_0|$ in $\R^d$ which gives that $|\tilde{B}_0|>0$ in the half-space $\{x_d>0\}$. Since $|\tilde{B}_0|$ is a one-homogeneous (Lemma \ref{l:homblowup}) and non-degenerate (Proposition \ref{p:convblowup}) function, it follows that $\{\tilde{B}_0>0\}=\{x_d>0\}$ (see \cite[Lemma 5.30]{russ-trey-velichkov-19}). Moreover, $|\tilde{B}_0|$ is a local minimizer of the Alt-Caffarelli functional (Lemma \ref{l:minscalAC}) and hence satisfies the optimality condition 
\begin{equation*}
|\nabla|\tilde{B}_0||=\sqrt{\Lambda}\quad\text{on}\quad\{x_d=0\}.
\end{equation*}
Therefore we have $|\tilde{B}_0(\xi)|=\sqrt{\Lambda}\,\xi_d^+$ and hence $\psi_0(\xi)=|\nabla\psi(0)|\,\xi_d\leq|\tilde{B}_0(\xi)|=\sqrt{\Lambda}\,\xi_d^+$, which completes the proof when $\psi$ touches by below. The case when $\psi$ touches by above is similar.
\end{proof}

\subsection{Regular and singular parts of the optimal sets}
In this section we prove that the regular part of an optimal set $\O^\ast$ (see Definition \ref{d:reg_sing}) is relatively open in $\partial\O^\ast$.

For any set $\O\subset\R^d$ we define the blow-ups sets $\O_{x,r}$ of $\O$ by
\begin{equation*}
\O_{x,r} = \frac{\O -x}{r},\qquad x\in\R^d,\, r>0.
\end{equation*}
Given Lebesgue measurable sets $(\O_n)_{n\in\N}$ and $\O$ in $\R^d$, we say that $\O_n$ locally converges to $\O$, and we write $\O_n\xrightarrow{\text{loc}}\O$, if the sequence of characteristics functions $\ind_{\O_n}$ converges in $L^1_{\text{loc}}$ to $\ind_{\O}$.

\begin{definition}\label{d:reg_sing}
Let $\O\subset D$ be an open set. We define the regular part of $\O$ in $D$ by
\begin{equation*}
\text{Reg}(\partial\O\cap D) = \Big\{x_0\in\partial\O\cap D \, : \, \exists\nu_{x_0}\in\partial B_1\subset\R^d, \, \O_{x_0,r}\xrightarrow{\text{loc}} \{y\in\R^d \, : \, y\cdot\nu_{x_0}\leq 0\} \text{ as } r\rightarrow 0^+\Big\}.
\end{equation*}
The singular part of $\O$ in $D$ is then define by $\text{Sing}(\partial\O\cap D) = \big(\partial\O\cap D\big)\backslash\text{Reg}(\partial\O\cap D)$.
\end{definition}

\begin{lm}\label{l:densityUx}
Let $U=(u_1,\dots,u_k)$ be the vector of the first $k$ normalized eigenfunctions on $\O^\ast$. Then,
\begin{enumerate}
    \item For every $x_0\in\partial\O^\ast\cap D$ the limit 
    \begin{equation}\label{e:densityUxa}
        \Theta_{U_{x_0}}(0):=\lim_{r\rightarrow 0^+}\frac{|\{|U_{x_0}|>0\}\cap B_r|}{|B_r|}
    \end{equation}
    exists and we have
    \begin{equation}\label{e:densityUxb}
        \Theta_{U_{x_0}}(0) = \frac{1}{\Lambda\omega_d}\lim_{r\rightarrow 0^+}W(U_{x_0},r).
    \end{equation}
    
    \item There exists $\delta>0$ such that, for every $x_0\in\partial\O^\ast\cap D$ we have $\Theta_{U_{x_0}}(0)\in\big\{\frac{1}{2}\big\}\cup\big[\frac{1}{2}+\delta,1\big[$.
\end{enumerate}
\end{lm}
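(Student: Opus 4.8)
The plan is to deduce both statements from the Weiss almost‑monotonicity formula (Proposition~\ref{p:weissder}) together with the blow‑up analysis of Section~\ref{s:blow-up}. For \emph{part (1)}, I would first recall from Proposition~\ref{p:weissder} that $\ell_{x_0}:=\lim_{s\to 0^+}W(U_{x_0},s)$ exists and is finite. Fix an infinitesimal sequence $r_n\to 0$; by Proposition~\ref{p:convblowup} a subsequence of the blow‑ups $\tilde B_n(\xi)=\tfrac1{r_n}U_{x_0}(r_n\xi)$ converges to some $\tilde B_0$ strongly in $H^1_{\text{loc}}$ and locally uniformly, with $\ind_{\{|\tilde B_n|>0\}}\to\ind_{\{|\tilde B_0|>0\}}$ in $L^1_{\text{loc}}$, and by Lemmas~\ref{l:optblowup} and~\ref{l:homblowup} the limit $\tilde B_0$ is a one‑homogeneous, nontrivial global minimizer of $J$. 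Using the scaling covariance $W(\tilde B_n,r)=W(U_{x_0},r_nr)$ of the Weiss functional together with these convergences, I obtain $W(\tilde B_0,r)=\lim_n W(U_{x_0},r_nr)=\ell_{x_0}$ for every $r>0$, so $r\mapsto W(\tilde B_0,r)$ is constant; while the $L^1_{\text{loc}}$ convergence of characteristic functions gives
\[
\frac{|\{|U_{x_0}|>0\}\cap B_{r_n}|}{|B_{r_n}|}=\omega_d^{-1}\,|\{|\tilde B_n|>0\}\cap B_1|\;\longrightarrow\;\omega_d^{-1}\,|\{|\tilde B_0|>0\}\cap B_1|.
\]

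The heart of the matter is the identity $\ell_{x_0}=W(\tilde B_0,1)=\Lambda|\{|\tilde B_0|>0\}\cap B_1|$, which I would prove as follows. Since $\tilde B_0$ minimizes $J$, each of its components is harmonic on the cone $\{|\tilde B_0|>0\}$, and being one‑homogeneous it has the form $r\psi_i(\theta)$ with $-\Delta_{\mathbb S^{d-1}}\psi_i=(d-1)\psi_i$ on $S:=\{|\tilde B_0|>0\}\cap\partial B_1$ and $\psi_i\in H^1_0(S)$. Testing each equation against $\psi_i$ and summing gives $\int_{\partial B_1}|\nabla_\theta\psi|^2=(d-1)\int_{\partial B_1}|\psi|^2$ with $\psi=(\psi_1,\dots,\psi_k)$, and since $|\nabla\tilde B_0|^2=|\psi|^2+|\nabla_\theta\psi|^2$ is $0$‑homogeneous one gets $\int_{B_1}|\nabla\tilde B_0|^2=\tfrac1d\int_{\partial B_1}(|\psi|^2+|\nabla_\theta\psi|^2)=\int_{\partial B_1}|\psi|^2=\int_{\partial B_1}|\tilde B_0|^2$. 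Plugging this into the definition of $W(\tilde B_0,1)$ yields the claimed identity. Since its right‑hand side does not depend on the chosen subsequence, it follows that the limit defining $\Theta_{U_{x_0}}(0)$ exists and equals $\omega_d^{-1}|\{|\tilde B_0|>0\}\cap B_1|=\tfrac1{\Lambda\omega_d}\ell_{x_0}$, which is \eqref{e:densityUxa}--\eqref{e:densityUxb}.

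For \emph{part (2)}, by part (1) it suffices to control the possible values of $|\{|\tilde B_0|>0\}\cap B_1|$ for one‑homogeneous nontrivial global minimizers $\tilde B_0$ of $J$, and by Lemmas~\ref{l:blowupxi} and~\ref{l:minscalAC} the scalar function $|\tilde B_0|$ is such a minimizer of the functional \eqref{e:defscalarAF}, which reduces the question to the scalar case. The bound $\Theta_{U_{x_0}}(0)<1$ is immediate: $\Theta=1$ would make $|\tilde B_0|$ harmonic in $B_1$, non‑negative and zero at the origin, hence identically zero, contradicting non‑degeneracy (Proposition~\ref{p:convblowup}). The inequality $\Theta_{U_{x_0}}(0)\ge\tfrac12$ is the density estimate for one‑homogeneous global minimizers of the scalar functional, \cite[Lemma 5]{mazzoleni-terracini-velichkov-17}. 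Finally, the gap — existence of $\delta>0$ excluding $(\tfrac12,\tfrac12+\delta)$ — I would import from \cite{mazzoleni-terracini-velichkov-17}: by compactness a sequence of one‑homogeneous minimizers with densities decreasing to $\tfrac12$ subconverges to a one‑homogeneous minimizer of density exactly $\tfrac12$, whose spherical slice $S$ then satisfies $\lambda_1(S)=d-1$ and $\mathcal H^{d-1}(S)=\tfrac12\mathcal H^{d-1}(\mathbb S^{d-1})$; the spherical Faber--Krahn inequality forces $S$ to be a half‑sphere, so the limit is the half‑plane solution $\sqrt\Lambda\,\langle x,\nu\rangle^+$, and De Silva's improvement‑of‑flatness theorem then forces the approximating minimizers themselves to be half‑plane solutions, of density exactly $\tfrac12$ — a contradiction.

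I expect the density gap in part (2) to be the main obstacle: it is the only step that genuinely uses the classification of "almost flat" homogeneous minimizers, combining the rigidity of the half‑plane solution via spherical Faber--Krahn with the improvement‑of‑flatness theorem, whereas parts (1) and the bounds $\tfrac12\le\Theta<1$ follow fairly directly from the monotonicity formula and the blow‑up structure already established.
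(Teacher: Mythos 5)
Your proposal is correct and follows essentially the same route as the paper: blow-up along a subsequence, combine Weiss almost-monotonicity with the scaling identity $W(\tilde B_n,r)=W(U_{x_0},r_nr)$ to show $W(\tilde B_0,\cdot)$ is constant and equals $\Lambda|\{|\tilde B_0|>0\}\cap B_1|$ (your polar-coordinate computation of $\int_{B_1}|\nabla\tilde B_0|^2=\int_{\partial B_1}|\tilde B_0|^2$ is just a more explicit version of the one-homogeneity-plus-harmonicity fact the paper invokes directly), then conclude independence of the subsequence; for part (2) the paper likewise cites the MTV17 results for $\Theta\geq\tfrac12$ and the gap, using the density estimate of Proposition~\ref{p:density_est} for the upper bound where you use a maximum-principle argument — both work. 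The only substantive addition in your write-up is an unnecessary sketch of the cited gap lemma, which the paper simply references.
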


\begin{proof}
Let $(r_n)_{n\in\N}$ be an infinitesimal sequence and set $\tilde{B}_n(\xi)=\frac{1}{r_n}U_{x_0}(r_n\xi)$. Up to a subsequence, $\tilde{B}_n$ converges to some $\tilde{B}_0$ (in the sense of Proposition \ref{p:convblowup}). Since $\tilde{B}_0$ is one homogeneous (Lemma \ref{l:homblowup}) and harmonic in $\{|\tilde{B}_0|>0\}$ (Lemma \ref{l:optblowup}) we have 
\begin{equation*}
    \int_{B_r}|\nabla\tilde{B}_0|^2 =\frac{1}{r}\int_{\partial B_r}|\tilde{B}_0|^2,
\end{equation*}
and hence, for every $r>0$, we get that
\begin{align}\label{e:densityUx2}
    \nonumber W(\tilde{B}_0,r)&=\frac{1}{r^d}\int_{B_r}|\nabla\tilde{B}_0|^2 +\frac{\Lambda}{r^d}|\{|\tilde{B}_0|>0\}\cap B_r|-\frac{1}{r^{d+1}}\int_{\partial B_r}|\tilde{B}_0|^2 \\
    &= \Lambda\omega_d\frac{|\{|\tilde{B}_0|>0\}\cap B_r|}{|B_r|}.
\end{align}
On the other hand, by \eqref{e:homblowup1} we have that $W(\tilde{B}_0,r)=\lim_{s\rightarrow 0^+}W(U_{x_0},s)$ for every $r>0$ and therefore
\begin{equation}\label{e:densityUx1}
     \frac{1}{\Lambda\omega_d}\lim_{s\rightarrow 0^+}W(U_{x_0},s) = \frac{|\{|\tilde{B}_0|>0\}\cap B_r|}{|B_r|} \qquad\text{for every}\quad r>0.
\end{equation}
Then, using that $\tilde{B}_n$ converges to $\tilde{B}_0$ in $L^1_{\text{loc}}(\R^d)$, it follows that
\begin{equation*}
    \frac{1}{\Lambda\omega_d}\lim_{s\rightarrow 0^+}W(U_{x_0},s) = \frac{|\{|\tilde{B}_0|>0\}\cap B_1|}{|B_1|} = \lim_{n\rightarrow\infty}\frac{|\{|\tilde{B}_n|>0\}\cap B_1|}{|B_1|} = \lim_{n\rightarrow\infty}\frac{|\{|U_{x_0}|>0\}\cap B_{r_n}|}{|B_{r_n}|}.
\end{equation*}
This proves part {\it (1)} of the Lemma since the above equalities hold for any sequence $r_n\downarrow 0$.

From \eqref{e:densityUx1} and \eqref{e:densityUxb} it follows that the density of the cone $\{|\tilde{B}_0|>0\}$ at $0$ is given by
\begin{equation*}
    \lim_{r\rightarrow 0^+}\frac{|\{|\tilde{B}_0|>0\}\cap B_r|}{|B_r|} = \Theta_{U_{x_0}}(0)\in[0,1].
\end{equation*}
Moreover, $|\tilde{B}_0|$ is a non-trivial (part {\it (3)} of Proposition \ref{p:convblowup}), one-homogeneous (Lemma \ref{l:homblowup}) and harmonic function in $\{|\tilde{B}_0|>0\}$ (Lemma \ref{l:minscalAC}). Therefore, the density of $\{|\tilde{B}_0|>0\}$ at $0$ cannot be strictly less than $\frac12$ (otherwise, setting $S=\{|\tilde{B}_0|>0\}\cap \partial B_1$, the two first parts of \cite[Remark 4.8]{mazzoleni-terracini-velichkov-17} respectively give $\lambda_1(S)\leq d-1$ and $\lambda_1(S)>d-1$)), cannot belong to $\big(\frac12,\frac12+\delta)$ for some universal constant $\delta>0$ (see \cite[Lemma 5.3]{mazzoleni-terracini-velichkov-17}) and is less than $1-c$ by Proposition \ref{p:density_est}.
\end{proof}

We will also need the following characterization of the regular part.

\begin{lm}\label{l:carac_reg_part}
We have 
\begin{equation*}
\text{Reg}(\partial\O^\ast\cap D)=\bigg\{x_0\in \partial\O^\ast\cap D \, : \, \Theta_{U_{x_0}}(0)=\frac12\bigg\},
\end{equation*}
where $\Theta_{U_{x_0}}(0)$ is define in \eqref{e:densityUxa}.
\end{lm}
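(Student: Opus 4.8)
The claim is a set equality, so I would prove the two inclusions separately, using as the main tool the density $\Theta_{U_{x_0}}(0)$ and the classification of blow-up limits obtained in Section \ref{s:blow-up}. The key structural facts I would invoke are: (i) by Lemma \ref{l:homblowup}, Lemma \ref{l:blowupxi} and Lemma \ref{l:minscalAC}, every blow-up limit at $x_0\in\partial\O^\ast\cap D$ is of the form $\tilde B_0=|\tilde B_0|\xi$ with $|\tilde B_0|$ a one-homogeneous global minimizer of the scalar Alt--Caffarelli functional \eqref{e:defscalarAF}; (ii) by Proposition \ref{p:convblowup}(2) the sets $\{|\tilde B_n|>0\}$ converge in $L^1_{\text{loc}}$ to $\{|\tilde B_0|>0\}$, so the rescaled sets $(\O^\ast_{x_0,r})$ (after composing with $A_{x_0}^{\sfrac12}$) converge locally to $\{|\tilde B_0|>0\}$; and (iii) by Lemma \ref{l:densityUx}(1), $\Theta_{U_{x_0}}(0)$ equals the density of the cone $\{|\tilde B_0|>0\}$ at the origin, hence is independent of the chosen blow-up sequence.

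\textbf{The inclusion $\supseteq$.} Suppose $\Theta_{U_{x_0}}(0)=\tfrac12$. I would take any blow-up limit $\tilde B_0=|\tilde B_0|\xi$ at $x_0$. Then $|\tilde B_0|$ is a nonnegative, nontrivial, one-homogeneous global minimizer of the scalar Alt--Caffarelli functional whose positivity set has density exactly $\tfrac12$ at $0$; by the classification of such minimizers (the half-plane solution is the only one-homogeneous global minimizer with density $\tfrac12$ — this is the rigidity statement used implicitly in \cite{mazzoleni-terracini-velichkov-17}, see also \cite[Lemma 5.30]{russ-trey-velichkov-19} cited in the proof of Lemma \ref{l:opt_cond_U}), it follows that $\{|\tilde B_0|>0\}$ is a half-space $\{y\cdot e>0\}$ for some unit vector $e$. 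Transporting back through $A_{x_0}^{\sfrac12}$ (which maps half-spaces to half-spaces), the set $\{|B_0|>0\}=A_{x_0}^{\sfrac12}(\{|\tilde B_0|>0\})$ is again a half-space $\{y\cdot\nu_{x_0}>0\}$. Using Proposition \ref{p:convblowup}(2), $\O^\ast_{x_0,r_n}\xrightarrow{\text{loc}}\{y\cdot\nu_{x_0}>0\}$ along the chosen sequence; since every blow-up limit of $\O^\ast$ at $x_0$ arises this way and they all have the same density, a standard compactness/subsequence argument shows the full limit exists, i.e. $\O^\ast_{x_0,r}\xrightarrow{\text{loc}}\{y\cdot\nu_{x_0}\le 0\}$ (up to the sign convention in Definition \ref{d:reg_sing}) as $r\to0^+$, so $x_0\in\text{Reg}(\partial\O^\ast\cap D)$.

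\textbf{The inclusion $\subseteq$.} Conversely, if $x_0\in\text{Reg}(\partial\O^\ast\cap D)$, then $\O^\ast_{x_0,r}\xrightarrow{\text{loc}}$ a half-space, so in particular the density of $\O^\ast$ at $x_0$ is $\tfrac12$; composing with $A_{x_0}^{\sfrac12}$, which has Jacobian $\det(A_{x_0}^{\sfrac12})$ and maps the half-space limit to another half-space, the density is preserved (both numerator and denominator scale by the same factor in the $L^1$-convergence of the rescalings), so $\Theta_{U_{x_0}}(0)=\tfrac12$ by Lemma \ref{l:densityUx}(1). I expect the main obstacle to be the bookkeeping in this change-of-variables step: one must check carefully that the local convergence $\O^\ast_{x_0,r}\xrightarrow{\text{loc}}$ half-space is genuinely equivalent to $\{|\tilde B_0|>0\}$ being a half-space, which hinges on matching Definition \ref{d:reg_sing} (stated directly for $\O^\ast$) with the blow-ups $\tilde B_n$ of $U_{x_0}$ (stated after the affine change of coordinates $F_{x_0}$), together with invoking the half-plane rigidity for scalar Alt--Caffarelli minimizers — the one external classification result the argument truly relies on. The remaining dichotomy $\Theta_{U_{x_0}}(0)\in\{\tfrac12\}\cup[\tfrac12+\delta,1[$ from Lemma \ref{l:densityUx}(2) is what makes $\text{Reg}$ a genuinely distinguished (and, as shown in the next results, relatively open) subset.
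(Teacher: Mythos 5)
Your proposal follows essentially the same route as the paper: both inclusions are obtained by combining the identity $\Theta_{U_{x_0}}(0)=|\{|\tilde B_0|>0\}\cap B_1|/|B_1|$ from Lemma \ref{l:densityUx} and \eqref{e:densityUx1} with the fact that $A_{x_0}^{\sfrac12}$ maps half-spaces through the origin to half-spaces through the origin (so a half-space bisects the centrally symmetric ellipsoid $A^{\sfrac12}_{x_0}[B_1]$, which is the correct justification for your ``density is preserved'' step in the $\subseteq$ direction). The one place where you diverge is the key rigidity step in the $\supseteq$ direction: you invoke as an external classification result that a one-homogeneous global minimizer of the scalar Alt--Caffarelli functional with density $\tfrac12$ at $0$ has a half-space positivity set, whereas the paper proves this in-line by a spherical eigenvalue argument --- writing $S=\{|\tilde B_0|>0\}\cap\partial B_1$ with $\mathcal H^{d-1}(S)=d\omega_d/2$, showing $S$ is connected (a disconnected piece $S_0$ would carry an eigenfunction with $\lambda_1(S_0)\le d-1$ while $\mathcal H^{d-1}(S_0)<d\omega_d/2$ forces $\lambda_1(S_0)>d-1$ by \cite[Remark 4.8]{mazzoleni-terracini-velichkov-17}), and then concluding that $\lambda_1(S)=d-1$ together with the measure constraint forces $S$ to be a half-sphere. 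Note that your citation for this rigidity, \cite[Lemma 5.30]{russ-trey-velichkov-19}, is not the right statement (that lemma concerns one-homogeneous non-degenerate functions that are positive on a half-space, and is used for the viscosity optimality condition, not here); if you do not reproduce the spherical argument you should at least point to the correct source. Finally, both you and the paper pass silently from ``every subsequential blow-up is a half-space'' to the full limit required by Definition \ref{d:reg_sing} (the normal could a priori depend on the subsequence); you at least flag this, so it is not a defect relative to the paper's own level of detail.
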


\begin{proof}
Let $x_0\in \partial\O^\ast\cap D$, $r_n\downarrow 0$ and $B_n=B_{x_0,r_n}$ be a blow-up sequence converging (in the sense of Proposition \ref{p:convblowup}) to some $B_0$; in particular, $\O^\ast_{x_0,r_n} = \{|B_n|>0\}$ locally converges to $\{|B_0|>0\}$. 
By \eqref{e:densityUxb}, \eqref{e:densityUx1} and a change of variables (because $\tilde{B}_0=B_0\circ A^{\sfrac12}_{x_0}$) we have 
\begin{equation}\label{e:carac_reg_part1}
\Theta_{U_{x_0}}(0)=\frac{|\{|\tilde{B}_0|>0\}\cap B_1|}{|B_1|}=\frac{|\{|B_0|>0\}\cap A^{\sfrac12}_{x_0}[B_1]|}{|A^{\sfrac12}_{x_0}[B_1]|}.
\end{equation}
If $x_0\in\text{Reg}(\partial\O^\ast\cap D)$, then $\{|B_0|>0\}$ is an half-space and 
it follows by \eqref{e:carac_reg_part1} that $\Theta_{U_{x_0}}(0)=1/2$.
Reciprocally, assume that $\Theta_{U_{x_0}}(0)=1/2$. It is enough to prove that $\{|\tilde{B}_0|>0\}$ is an half-space, since then $\{|B_0|>0\}$ is also an half-space.
Set $S=\{|\tilde{B}_0|>0\}\cap\partial B_1$ and notice that $\mathcal{H}^{d-1}(S)=d\omega_d/2$ since $|\tilde{B}_0|$ is one homogeneous. Assume by contradiction that $S=S_0\sqcup S_1$ is the disjoint union of two sets $S_0, S_1\subset\partial B_1$. Since $|\tilde{B}_0|$ is one homogeneous and harmonic on $\{|\tilde{B}_0|>0\}$ it follows that $\varphi=|\tilde{B}_0|_{|\partial B_1}$ is solution of
\begin{equation*}
    -\Delta_{\mathbb{S}^{d-1}}\varphi=(d-1)\varphi\quad\text{in}\quad S_0,\qquad \varphi = 0 \quad\text{on}\quad \partial S_0,
\end{equation*}
which implies that $\lambda_1(S_0)\leq d-1$. On the other hand, since $\mathcal{H}^{d-1}(S_0)<d\omega_d/2$, we also have that $\lambda_1(S_0)>d-1$ (see \cite[Remark 4.8]{mazzoleni-terracini-velichkov-17}), which is a contradiction. Therefore, $S$ is connected and hence $\lambda_1(S)=d-1$. This implies that $S$ is, up to a rotation, the half-sphere $\partial B_1\cap\{x_d>0\}$ and hence that $\{|\tilde{B}_0|>0\}$ is the half-space $\{x_d>0\}$.
\end{proof}

\begin{prop}\label{p:Reg_is_open}
The regular set $\text{Reg}(\partial\O^\ast\cap D)$ is an open subset of $\partial\O^\ast$.
\end{prop}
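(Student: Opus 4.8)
The plan is to show that the density function $x_0 \mapsto \Theta_{U_{x_0}}(0)$ is upper semicontinuous on $\partial\O^\ast \cap D$, and then combine this with the gap statement of Lemma \ref{l:densityUx}(2) and the characterization of Lemma \ref{l:carac_reg_part} to conclude that $\text{Reg}(\partial\O^\ast\cap D)$ is relatively open. Indeed, suppose $x_0 \in \text{Reg}(\partial\O^\ast\cap D)$, so that $\Theta_{U_{x_0}}(0) = \frac12$ by Lemma \ref{l:carac_reg_part}, and let $x_n \in \partial\O^\ast\cap D$ be a sequence with $x_n \to x_0$. By Lemma \ref{l:densityUx}(2), each $\Theta_{U_{x_n}}(0)$ lies in $\{\frac12\} \cup [\frac12+\delta, 1)$ for the universal $\delta>0$. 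If upper semicontinuity gives $\limsup_n \Theta_{U_{x_n}}(0) \leq \Theta_{U_{x_0}}(0) = \frac12$, then for $n$ large we must have $\Theta_{U_{x_n}}(0) < \frac12 + \delta$, forcing $\Theta_{U_{x_n}}(0) = \frac12$, i.e. $x_n \in \text{Reg}(\partial\O^\ast\cap D)$ by Lemma \ref{l:carac_reg_part} again. This shows every point of $\text{Reg}(\partial\O^\ast\cap D)$ has a neighborhood (in $\partial\O^\ast$) contained in it, which is the claim.

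The core of the argument is therefore the upper semicontinuity of $\Theta_{U_\bullet}(0)$. First I would use the identity \eqref{e:densityUxb}, namely $\Theta_{U_{x_0}}(0) = \frac{1}{\Lambda\omega_d}\lim_{r\to 0^+} W(U_{x_0}, r)$, to reduce the question to the behavior of the Weiss energy. By Proposition \ref{p:weissder}, for $x_0$ ranging in a fixed compact set $K \subset D$ the map $r \mapsto W(U_{x_0}, r) + \frac{d}{\delta_{\text{\tiny\sc A}}} C_K r^{\delta_{\text{\tiny\sc A}}}$ is non-decreasing, so for each fixed $r \leq r_K$ we have the bound $\frac{1}{\Lambda\omega_d}\left(W(U_{x_0},r) + \frac{d}{\delta_{\text{\tiny\sc A}}}C_K r^{\delta_{\text{\tiny\sc A}}}\right) \geq \Theta_{U_{x_0}}(0)$, uniformly in $x_0 \in \partial\O^\ast\cap K$. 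The key continuity input is that for fixed $r$, the map $x_0 \mapsto W(U_{x_0}, r)$ is continuous along sequences $x_n \to x_0$: this follows because $U_{x_n} = U \circ F_{x_n}$ converges locally uniformly (and, arguing as in Proposition \ref{p:convblowup}, strongly in $H^1_{\text{loc}}$ with $L^1_{\text{loc}}$ convergence of the positivity sets) to $U_{x_0}$, since $A_{x_n}^{\sfrac12} \to A_{x_0}^{\sfrac12}$ by Hölder continuity of $A$ and $U$ is Lipschitz — so the three ingredients of $W$ (the Dirichlet integral on $B_r$, the measure of $\{|U_{x_n}|>0\}\cap B_r$, and the boundary $L^2$ term on $\partial B_r$) all pass to the limit. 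Hence, fixing $r$ and letting $n \to \infty$,
\[
\limsup_{n\to\infty} \Theta_{U_{x_n}}(0) \leq \lim_{n\to\infty} \frac{1}{\Lambda\omega_d}\left(W(U_{x_n},r) + \tfrac{d}{\delta_{\text{\tiny\sc A}}}C_K r^{\delta_{\text{\tiny\sc A}}}\right) = \frac{1}{\Lambda\omega_d}\left(W(U_{x_0},r) + \tfrac{d}{\delta_{\text{\tiny\sc A}}}C_K r^{\delta_{\text{\tiny\sc A}}}\right),
\]
and then letting $r \to 0^+$ gives $\limsup_{n\to\infty}\Theta_{U_{x_n}}(0) \leq \Theta_{U_{x_0}}(0)$.

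The main obstacle I anticipate is justifying the strong $H^1_{\text{loc}}$ convergence of $U_{x_n}$ to $U_{x_0}$ (equivalently, the convergence of the Dirichlet energies $\int_{B_r}|\nabla U_{x_n}|^2$ and of the volumes $|\{|U_{x_n}|>0\}\cap B_r|$), rather than just weak convergence: weak lower semicontinuity alone would give an inequality in the wrong direction for the volume term. This is handled exactly by the $\Gamma$-convergence-type argument already carried out in the proof of Proposition \ref{p:convblowup} — one builds competitors for $U_{x_n}$ by gluing $U_{x_0}$ inside $B_r$ to $U_{x_n}$ near $\partial B_r$ via a cutoff, plugs them into the quasi-minimality inequality \eqref{e:changevarUa} (or \eqref{e:quasiminUa}), and lets $n\to\infty$; the error terms are controlled since $F_{x_n} \to F_{x_0}$ in $C^1$ and the $L^1$ and measure perturbations are small. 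One should also take a little care that $r$ is chosen strictly below $r_K$ and that $B_{\lambda_{\text{\tiny\sc A}} r}(x_n) \subset D$ for $n$ large, which is automatic since $x_0 \in D$ is fixed and $x_n \to x_0$. With these points in place the upper semicontinuity, and hence the openness of the regular set, follows.
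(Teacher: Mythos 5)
Your argument is correct and is essentially the paper's proof: the paper runs the same chain (the density characterization of the regular set, the gap $\{\tfrac12\}\cup[\tfrac12+\delta,1)$ from Lemma \ref{l:densityUx}, the almost-monotonicity of $r\mapsto W(U_{x},r)+Cr^{\delta_{\text{\tiny\sc A}}}$ from Proposition \ref{p:weissder}, and the continuity of $x\mapsto W(U_{x},r)$ for fixed $r$) phrased as a proof by contradiction rather than as an explicit upper-semicontinuity statement. The only minor remark is that the convergence $W(U_{x_n},r)\to W(U_{x_0},r)$ does not require the $\Gamma$-convergence competitor argument you anticipate: since all the $U_{x_n}=U\circ F_{x_n}$ arise from the same fixed Lipschitz function $U$ by affine changes of variables with $F_{x_n}\to F_{x_0}$, each of the three terms of $W$ passes to the limit by a direct change of variables.
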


\begin{proof}
Let $x_0\in\text{Reg}(\partial\O^\ast\cap D)$ and assume by contradiction that there exists a sequence $(x_n)_{n\in\N}$ of points in $\text{Sing}(\partial\O^\ast\cap D)=(\partial\O^\ast\cap D)\backslash\text{Reg}(\partial\O^\ast\cap D)$ converging to $x_0$. By Lemmas \ref{l:densityUx} and \ref{l:carac_reg_part} we have $\Theta_{U_{x_0}}(0)=1/2$ and $\Theta_{U_{x_n}}(0)\geq 1/2+\delta$. Since the function $\varphi_n(r)=W(U_{x_n},r)+Cr^{\delta_{\text{\tiny\sc A}}}$ is non-decreasing by Proposition \ref{p:weissder}, we have for every $r>0$
\begin{equation*}
\frac12+\delta\leq\Theta_{U_{x_n}}(0)=\frac{1}{\Lambda\omega_d}\lim_{s\rightarrow 0^+}W(U_{x_n},s) = \frac{1}{\Lambda\omega_d}\lim_{s\rightarrow 0^+}\varphi_n(s)\leq\frac{1}{\Lambda\omega_d}\varphi_n(r).
\end{equation*}
Passing to the limit as $n\rightarrow\infty$ and using that $\lim_{n\rightarrow\infty}W(U_{x_n},r)=W(U_{x_0},r)$, it follows that for every $r>0$
\begin{equation*}
\frac12+\delta\leq\frac{1}{\Lambda\omega_d}\lim_{n\rightarrow\infty}\varphi_n(r)=\frac{1}{\Lambda\omega_d}\Big[W(U_{x_0},r)+Cr^{\delta_{\text{\tiny\sc A}}}\Big].
\end{equation*}
But the right hand side converges to $\Theta_{U_{x_0}}(0)=1/2$ as $r\rightarrow 0$ which is a contradiction
\end{proof}

\subsection{The regular part is Reifenberg flat}
We prove that the regular part of $\O^\ast_1$ is locally Reifenberg flat. Recall that by Proposition \ref{p:Reg_is_open}, $\text{Reg}(\partial\O^\ast_1\cap D)$ is relatively open in $\partial\O^\ast_1$. Roughly speaking, a domain is said to be Reifenberg flat if its boundary can be well approximated by hyperplanes. We give here a precise definition.

\begin{definition}
Let $\O\subset\R^d$ be an open set and let $\delta, R>0$. We say that $\O$ is a $(\delta,R)$-Reifenberg flat domain if:
\begin{enumerate}
    \item For every $x\in\partial\O$ there exist an hyperplane $H=H_{x,R}$ containing $x$ and a unit vector $\nu=\nu_{x,R}\in\partial B_1\subset\R^d$ orthogonal to $H$ such that
    \begin{align*}
        &\{y+t\nu\in B_R(x) \, : \, y\in H, \, t\geq 2\delta R\} \subset \O, \\
        &\{y-t\nu\in B_R(x) \, : \, y\in H, \, t\geq 2\delta R\} \subset \R^d\backslash\O.
    \end{align*}
    \item  For every $x\in\partial \O$ and every $r\in(0,R]$ there exists an hyperplane $H=H_{x,r}$ containing $x$ such that
    \begin{equation*}
        \text{dist}_{\mathcal{H}}(\partial\O\cap B_r(x),H\cap B_r(x))<\delta r.
    \end{equation*}
\end{enumerate}
\end{definition}

\begin{prop}\label{p:reifenberg}
Let $\delta>0$. Then, for every $x_0\in\text{Reg}(\partial\O^\ast_1\cap D)$ there exists $R=R(x_0)>0$ such that $\O^\ast_1$ is $(\delta,R)$-Reifenberg flat in a neighborhood of $x_0$.
\end{prop}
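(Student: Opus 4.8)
The plan is to argue by contradiction, reducing the statement to the structure of a blow-up limit taken along a moving sequence of free boundary points.

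Suppose the conclusion fails for some $\delta>0$ and some $x_0\in\text{Reg}(\partial\O^\ast_1\cap D)$. Then for every $m\geq 1$ there are a point $y_m\in\partial\O^\ast_1\cap D$ with $|y_m-x_0|<1/m$ and a radius $\rho_m\in(0,1/m]$ at which one of the two conditions defining a $(\delta,R)$-Reifenberg flat domain fails (choose $\rho_m=1/m$ if it is condition~(1) that fails at scale $1/m$). Since $\partial\O^\ast_1\subset\partial\O^\ast$ and, by Proposition~\ref{p:connected_comp_sep}, $x_0\notin\overline{\O^\ast_j}$ for $j\neq 1$, the open sets $\O^\ast$ and $\O^\ast_1$ coincide on $B_{R\rho_m}(y_m)$ for every fixed $R$ once $m$ is large. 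Consider the blow-up sequence $B_m(\xi)=\rho_m^{-1}U(y_m+\rho_m\xi)$; up to a subsequence it converges, in the sense of Proposition~\ref{p:convblowup}, to some $B_0$, and we set $\O_m=\{|B_m|>0\}$, $\O_0=\{|B_0|>0\}$ and $\tilde B_0=B_0\circ A_{x_0}^{\sfrac12}$. As $y_m\in\partial\O^\ast$ and $U=0$ on $\partial\O^\ast$, we have $\tilde B_0(0)=0$, and by Lemma~\ref{l:optblowup} the function $\tilde B_0$ is a global minimizer of the Alt--Caffarelli functional $J$.

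The key point --- and the main obstacle, since the centers $y_m$ move and the one-homogeneity of $\tilde B_0$ cannot be quoted from Lemma~\ref{l:homblowup} --- is to prove that $\O_0$ is a half-space passing through the origin. First I would show that $r\mapsto W(\tilde B_0,r)$ is the constant $\tfrac12\Lambda\omega_d$. By the strong $H^1_{\text{loc}}$ and $L^1_{\text{loc}}$ convergence in Proposition~\ref{p:convblowup} and the uniform Lipschitz bound, $W(\tilde B_m,r)\to W(\tilde B_0,r)$ for every $r>0$, while $W(\tilde B_m,r)=W(U_{y_m},\rho_m r)$ by scale invariance of $W$. Fix a compact $K\subset D$ that is a neighbourhood of $x_0$, and let $r_K,C_K$ be the constants of Proposition~\ref{p:weissder}; for $m$ large, $y_m\in K$ and $s\mapsto W(U_{y_m},s)+C_Ks^{\delta_{\text{\tiny\sc A}}}$ is non-decreasing on $(0,r_K]$. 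Together with Lemma~\ref{l:densityUx}(2) (which gives $\Theta_{U_{y_m}}(0)\geq\tfrac12$) and \eqref{e:densityUxb}, this yields, for any fixed $r_0\in(0,r_K]$ and $m$ large,
\begin{equation*}
\tfrac12\Lambda\omega_d\;\leq\;\Theta_{U_{y_m}}(0)\,\Lambda\omega_d\;\leq\;W(U_{y_m},\rho_m r)+C_K(\rho_m r)^{\delta_{\text{\tiny\sc A}}}\;\leq\;W(U_{y_m},r_0)+C_Kr_0^{\delta_{\text{\tiny\sc A}}}.
\end{equation*}
Letting $m\to\infty$ and using the continuity $W(U_{y_m},r_0)\to W(U_{x_0},r_0)$ of the Weiss energy in the center (exactly as in the proof of Proposition~\ref{p:Reg_is_open}) gives $\tfrac12\Lambda\omega_d\leq W(\tilde B_0,r)\leq W(U_{x_0},r_0)+C_Kr_0^{\delta_{\text{\tiny\sc A}}}$ for every $r>0$. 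Since $x_0$ is a regular point, Lemmas~\ref{l:densityUx} and~\ref{l:carac_reg_part} give $\lim_{r_0\to0}W(U_{x_0},r_0)=\Theta_{U_{x_0}}(0)\Lambda\omega_d=\tfrac12\Lambda\omega_d$, so letting $r_0\to0$ forces $W(\tilde B_0,r)\equiv\tfrac12\Lambda\omega_d$; by Proposition~\ref{p:minAFonehom}, $\tilde B_0$ is one-homogeneous. Then, running verbatim the arguments of Lemmas~\ref{l:blowupxi}, \ref{l:minscalAC} and~\ref{l:carac_reg_part} --- which use only the one-homogeneity of $\tilde B_0$, its global minimality for $J$, its non-degeneracy (Proposition~\ref{p:convblowup}(3)), and the value $\tfrac12$ of the density of $\{|\tilde B_0|>0\}$ at $0$ --- one concludes that $\{|\tilde B_0|>0\}$ is a half-space, hence so is $\O_0=A_{x_0}^{\sfrac12}[\{|\tilde B_0|>0\}]$, with $0\in\partial\O_0$ because $\tilde B_0(0)=0$.

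Finally I would close the argument via the Hausdorff convergence of the free boundaries. Since $x_0\in\partial\O^\ast_1\cap D$, part~(4) of Proposition~\ref{p:convblowup} gives that $\overline\O_m\to\overline\O_0$ and $\O_m^c\to\O_0^c$ in the local Hausdorff distance; moreover $\O_m$ coincides with $\tfrac1{\rho_m}(\O^\ast_1-y_m)$ on $B_1$ for $m$ large. As $\O_0$ is a half-space through $0$, this implies that for $m$ large the rescaled domain $\tfrac1{\rho_m}(\O^\ast_1-y_m)$ satisfies, inside $B_1$, both the two-sided inclusion condition~(1) and the flatness condition~(2) of the definition of a $(\delta,1)$-Reifenberg flat domain, with hyperplane $\partial\O_0$ and unit normal $\nu_{\O_0}$. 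Scaling back, $\O^\ast_1$ satisfies the corresponding condition at $(y_m,\rho_m)$, contradicting the choice of $y_m$ and $\rho_m$. This proves the proposition, with $R(x_0)$ any positive radius below the threshold furnished by this contradiction argument.
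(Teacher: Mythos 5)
Your proposal is correct and follows essentially the same route as the paper's proof: contradiction, blow-up along moving centers $y_m\to x_0$ at scales $\rho_m\to 0$, a squeeze based on the almost-monotonicity of $s\mapsto W(U_{y_m},s)+C_Ks^{\delta_{\text{\tiny\sc A}}}$ together with the continuity of the Weiss energy in the center to force $W(\tilde B_0,\cdot)\equiv\tfrac12\Lambda\omega_d$, then one-homogeneity via Proposition \ref{p:minAFonehom}, the half-space conclusion as in Lemma \ref{l:carac_reg_part}, and the Hausdorff convergence of Proposition \ref{p:convblowup}(4) to contradict the failure of Reifenberg flatness. The only (harmless) deviation is that you use the universal lower bound $\Theta_{U_{y_m}}(0)\geq\tfrac12$ from Lemma \ref{l:densityUx}(2) where the paper instead invokes the openness of the regular set to get $\Theta_{U_{x_n}}(0)=\tfrac12$ exactly.
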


\begin{proof}
Assume by contradiction that there exists $\delta>0$ and $x_0\in\text{Reg}(\partial\O^\ast_1\cap D)$ such that, for every $R>0$, $\O^\ast_1$ is not $(\delta,R)$-Reifenberg flat in any neighborhood of $x_0$. Then, there exist sequences $x_n\rightarrow x_0$, $x_n\in\partial\O^\ast_1$, and $r_n\downarrow 0$ such that one of the following assertion holds
\begin{enumerate}[label=\roman*)]
    \item For every hyperplane $H$ containing $x_n$ and every $\nu\in\partial B_1$ we have either
    \begin{equation*}
    \{y+t\nu\in B_{r_n}(x_n) \, : \, y\in H, \, t\geq 2\delta r_n\} \nsubseteq \O^\ast_1 \quad\text{or}\quad\{y-t\nu\in B_{r_n}(x_n) \, : \, y\in H, \, t\geq 2\delta r_n\} \nsubseteq \R^d\backslash\O^\ast_1.
    \end{equation*}
    \item For every hyperplane containing $x_n$ we have
    \begin{equation*}
        \text{dist}_{\mathcal{H}}(\partial\O^\ast_1\cap B_{r_n}(x_n),H\cap B_{r_n}(x_n))\geq \delta r_n.
    \end{equation*}
\end{enumerate}
We consider the blow-up sequence $B_n(\xi)=\frac{1}{r_n}U(x_n+r_n\xi)$ and set $\O_n=\{|B_n|>0\}$. Then the above assumptions can be equivalently reformulated as
\begin{enumerate}[label=\roman*')]
    \item For every hyperplane $H$ containing $0$ and every $\nu\in\partial B_1$ we have either
    \begin{equation*}
    \{y+t\nu\in B_1 \, : \, y\in H, \, t\geq 2\delta \} \nsubseteq \O_n \quad\text{or}\quad\{y-t\nu\in B_1 \, : \, y\in H, \, t\geq 2\delta \} \nsubseteq \R^d\backslash\O_n.
    \end{equation*}
    \item For every hyperplane containing $0$ we have
    \begin{equation*}
        \text{dist}_{\mathcal{H}}(\partial\O_n\cap B_1,H\cap B_1)\geq \delta.
    \end{equation*}
\end{enumerate}
Notice that $x_n\in\text{Reg}(\partial\O^\ast_1\cap D)$ for $n$ large enough since $\text{Reg}(\partial\O^\ast_1\cap D)$ is an open subset of $\partial\O^\ast_1$ (Proposition \ref{p:Reg_is_open}).
Up to a subsequence, $B_n$ and $\tilde{B}_n=B_n\circ A^{\sfrac12}_{x_n}$ converge (in  the sense of Proposition \ref{p:convblowup}) to $B_0$ and $\tilde{B}_0=B_0\circ A^{\sfrac12}_{x_0}$ respectively. 

We first prove that
\begin{equation}\label{e:reifenberg1}
   W(\tilde{B}_0,r) = \frac{\Lambda\omega_d}{2}\quad\text{for every}\quad r>0.
\end{equation}
By Proposition \ref{p:weissder}, $\varphi_n(r):=W(U_{x_n},rr_n)+C(rr_n)^{\delta_{\text{\tiny\sc A}}}$ is a non-decreasing function. Moreover, by Lemma \ref{l:densityUx} and since we have $\Theta_{U_{x_n}}(0)=1/2$ (Lemma \ref{l:carac_reg_part}), it follows that 
\begin{equation*}
    \lim_{r\rightarrow 0^+}\varphi_n(r) = \lim_{r\rightarrow 0^+}W(U_{x_n},r)=\Lambda\omega_d\Theta_{U_{x_n}}(0)=\frac{\Lambda\omega_d}{2}.
\end{equation*}
We now fix $r>0$ and $\varepsilon>0$. Since $\lim_{s\rightarrow 0^+} W(U_{x_0},s)=\Lambda\omega_d\Theta_{U_{x_0}}(0)=\frac{\Lambda\omega_d}{2}$ there exists $\overline{r}>0$ such that
\begin{equation*}
    W(U_{x_0},\overline{r})+C\overline{r}^{\delta_{\text{\tiny\sc A}}} \leq \frac{\Lambda\omega_d}{2} + \varepsilon.
\end{equation*}
Moreover, since $\lim_{n\rightarrow\infty}W(U_{x_n},\overline{r})=W(U_{x_0},\overline{r})$, we have for $n$ large enough that
\begin{equation*}
    W(U_{x_n},\overline{r}) \leq W(U_{x_0},\overline{r})+\varepsilon.
\end{equation*}
Therefore, choosing $n$ large enough so that $rr_n\leq\overline{r}$, we get that
\begin{equation*}
    \frac{\Lambda\omega_d}{2}\leq\varphi_n(r)\leq\varphi_n\bigg(\frac{\overline{r}}{r_n}\bigg)=W(U_{x_n},\overline{r})+C\overline{r}^{\delta_{\text{\tiny\sc A}}}\leq W(U_{x_0},\overline{r})+\varepsilon+C\overline{r}^{\delta_{\text{\tiny\sc A}}}+\frac{\Lambda\omega_d}{2}+2\varepsilon,
\end{equation*}
which proves that
\begin{equation*}
    \lim_{n\rightarrow\infty}\varphi_n(r)=\frac{\Lambda\omega_d}{2}\quad\text{for every}\quad r>0.
\end{equation*}
Since $\tilde{B}_n$ converges strongly in $H^1_{\text{loc}}$ to $\tilde{B}_0$ and $\ind_{\tilde{\O}_n}$ converges in $L^1_{\text{loc}}$ to $\ind_{\tilde{\O}_0}$ we have that $\lim_{n\rightarrow\infty}W(\tilde{B}_n,r)=W(\tilde{B}_0,r)$. Hence we get for every $r>0$
\begin{equation*}
\frac{\Lambda\omega_d}{2}=\lim_{n\rightarrow\infty}\varphi_n(r)=\lim_{n\rightarrow\infty}W(U_{x_n},rr_n)=\lim_{n\rightarrow\infty}W(\tilde{B}_n,r)=W(\tilde{B}_0,r).
\end{equation*}

Now, since $\tilde{B}_0$ is solution of the Alt-Caffarelli functional (Proposition \ref{l:optblowup}) and since $W(\tilde{B}_0,r)$ is constant by \eqref{e:reifenberg1}, it follows from Proposition \ref{p:minAFonehom} that $\tilde{B}_0$ is one-homogeneous, and hence by \eqref{e:densityUx2} that
\begin{equation*}
\frac12=\frac{1}{\Lambda\omega_d}W(\tilde{B}_0,r)=\frac{|\{|\tilde{B}_0|>0 \}\cap B_r|}{|B_r|}.
\end{equation*}
Then, as in the proof of Lemma \ref{l:carac_reg_part}, we get that $\O_0=\{|B_0|>0\}$ is an half-space and hence that $\partial\O_0=\partial\{|B_0|>0\}$ is an hyperplane (containing $0$). This is in contradiction with both assumptions i') and ii') since $\overline{\O}_n$ and $\O_n^c$ converge locally Hausdorff to $\overline{\O}_0$ and $\O_0^c$ respectively (Proposition \ref{p:convblowup}). This concludes the proof.
\end{proof}

\subsection{The regular part is $C^{1,\alpha}$}
We prove that the regular part of $\O^\ast_1$ is $C^{1,\alpha}$-regular and that it is $C^{\infty}$-regular provided that $a_{ij},b\in C^{\infty}$ (see Proposition \ref{p:Cinfty_reg}). Using a boundary Harnak principle for non-tangentially accessible (NTA) domains proved by Jerison and Kenig in \cite{jerison-kenig-82}, we prove that the first eigenfunction satisfies an optimality condition on $\O^\ast_1$. The proof then follows from the regularity result of De Silva for the one-phase free boundaries (see \cite{de-silva-11}). 

\begin{prop}\label{p:Cinfty_reg}
The regular part $\text{Reg}(\partial\O^\ast_1\cap D)$ is locally the graph of a $C^{1,\alpha}$ function. Moreover, if $a_{i,j}\in C^{k,\delta}(D)$ and $b\in C^{k-1,\delta}(D)$, for some $\delta\in(0,1)$ and $k\geq 1$, then $\text{Reg}(\partial\O^\ast_1\cap D)$ is locally the graph of a $C^{k+1,\alpha}$ function. In particular, if $a_{i,j}, b\in C^{\infty}(D)$, then $\text{Reg}(\partial\O^\ast_1\cap D)$ is locally the graph of a $C^{\infty}$ function.
\end{prop}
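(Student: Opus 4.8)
The plan is to reduce the vectorial one‑phase problem to a \emph{scalar} one‑phase Bernoulli problem for the first eigenfunction $u_1$ and then to invoke De Silva's regularity theory. By Remark \ref{r:redu_Oast1} we may assume $\O^\ast_1=\O^\ast$, that $u_1>0$ in $\O^\ast_1$ is the first normalized eigenfunction, and $U=(u_1,\dots,u_k)$ are the eigenfunctions on $\O^\ast_1$. Fix $x_0\in\text{Reg}(\partial\O^\ast_1\cap D)$. The proof splits into three blocks: (i) show that $\O^\ast_1$ is an NTA domain in a neighborhood of $x_0$ and prove a boundary Harnack principle for the $u_i$; (ii) deduce that $u_1$ is a viscosity solution of a Bernoulli problem $|A^{\sfrac12}[\nabla u_1]|=g\sqrt{\Lambda}$ with $g$ H\"older and bounded away from $0$, and with flat free boundary at $x_0$; (iii) apply improvement of flatness and bootstrap.

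For (i): by Proposition \ref{p:reifenberg}, for every $\delta>0$ there is $R=R(x_0)>0$ such that $\O^\ast_1$ is $(\delta,R)$‑Reifenberg flat near $x_0$; together with the two‑sided density estimate of Proposition \ref{p:density_est} this yields the interior and exterior corkscrew conditions and the Harnack‑chain condition, so that for $\delta$ small (depending only on $d$) the set $\O^\ast_1\cap B_R(x_0)$ is an NTA domain. One then applies the Jerison--Kenig boundary Harnack principle \cite{jerison-kenig-82}: since $u_1\pm C_1^{-1}u_i\ge 0$ in $\O^\ast_1$ by Proposition \ref{p:nondegu1}, and since by Propositions \ref{p:nondegu1} and \ref{p:growth_boundary} one has $u_1\asymp|U|\asymp\dist(\cdot,\partial\O^\ast_1)$, a comparison of $u_1$ with $u_1\pm C_1^{-1}u_i$ (the bounded zero‑order term $\lambda_i bu_i$ being absorbed by comparison with an $A$‑harmonic barrier) shows that the quotients $u_i/u_1$ extend to H\"older continuous functions up to $\partial\O^\ast_1\cap B_r(x_0)$; set $g_i(x_0)=\lim_{x\to x_0}u_i(x)/u_1(x)$ and $g=\big(1+\sum_{i\ge2}g_i^{\,2}\big)^{-\sfrac12}$, a H\"older continuous function bounded away from $0$ and $+\infty$.

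For (ii): since $u_1^{-2}\sum_{i=1}^k u_i^2=1+\sum_{i\ge 2}(u_i/u_1)^2$, the ratio $|U|/u_1$ extends H\"older continuously up to $\partial\O^\ast_1\cap B_r(x_0)$ with boundary value $g^{-1}$; comparing the blow‑ups of $|U|$ and of $u_1$ at a free boundary point (which, in the frozen coordinates $F_{x_0}$, are half‑plane solutions differing by the factor $g^{-1}$), the optimality condition $|A^{\sfrac12}[\nabla|U|]|=\sqrt{\Lambda}$ of Lemma \ref{l:opt_cond_U} rewrites in the viscosity sense as $|A^{\sfrac12}[\nabla u_1]|=g\sqrt{\Lambda}$ on $\partial\O^\ast_1\cap B_r(x_0)$. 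Thus $u_1$ is a viscosity solution of the one‑phase problem $-\dive(A\nabla u_1)=\lambda_1 bu_1$ in $\O^\ast_1$, $u_1=0$ and $|A^{\sfrac12}[\nabla u_1]|=g\sqrt{\Lambda}$ on the free boundary, with $A$ H\"older, the lower‑order term bounded, and $g$ H\"older. By the blow‑up analysis of Section \ref{s:blow-up} and Lemma \ref{l:carac_reg_part}, every blow‑up of $u_1$ at $x_0$ (in the coordinates $F_{x_0}$) is a half‑plane solution, i.e. the free boundary is flat at $x_0$. Hence De Silva's improvement‑of‑flatness theorem \cite{de-silva-11}, in the variable‑coefficient version of \cite[Appendix A]{spolaor-trey-velichkov-19}, applies and gives that $\partial\O^\ast_1\cap B_\rho(x_0)$ is the graph of a $C^{1,\alpha}$ function for some $\rho>0$; since $\text{Reg}(\partial\O^\ast_1\cap D)$ is relatively open by Proposition \ref{p:Reg_is_open}, this proves the first assertion. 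For (iii): once the free boundary is $C^{1,\alpha}$, $u_1$ is $C^{1,\alpha}$ up to the boundary and the functions $g_i$, hence $g$, gain regularity; flattening the boundary and differentiating the resulting transmission system, a standard Schauder bootstrap shows that $a_{ij}\in C^{k,\delta}(D)$ and $b\in C^{k-1,\delta}(D)$ force $\partial\O^\ast_1$ to be $C^{k+1,\alpha}$, hence $C^\infty$ when the coefficients are $C^\infty$. The main obstacle is block (i): establishing the NTA property and, above all, the boundary Harnack principle for the eigenfunctions in the presence of the variable matrix $A$ and of the weight $b$ — it is precisely in handling the term $\lambda_i bu_i$ and the continuity of the limits $g_i$ that the Lipschitz regularity of $b$ is used.
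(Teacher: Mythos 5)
Your overall architecture coincides with the paper's: Reifenberg flatness near regular points plus the density estimate gives the NTA property, a boundary Harnack principle yields H\"older continuous ratios $g_i=u_i/u_1$ and hence the scalar viscosity problem $|A^{\sfrac12}[\nabla u_1]|=g\sqrt{\Lambda}$ of Lemma \ref{l:opt_cond_u1}, De Silva's theorem gives $C^{1,\alpha}$, and a bootstrap gives the higher regularity. The gap is in your block (i), and it is precisely the point you yourself flag as ``the main obstacle'': you do not actually prove the boundary Harnack principle for the eigenfunctions. Theorem \ref{t:harnack} (Jerison--Kenig) applies to functions satisfying $\dive(\tilde A\nabla u)=0$ exactly; the functions $u_1\pm C_1^{-1}u_i$ you propose to compare are nonnegative (a nice way to dispose of the sign problem, which the paper instead handles by splitting $u_i=v_i-w_i$ with boundary data $u_i^{\pm}$ via Lemma \ref{l:ext_positive_part}), but they solve $-\dive(A\nabla\cdot)=b(\lambda_1u_1\pm C_1^{-1}\lambda_iu_i)\neq 0$, so the theorem does not apply to them, and the phrase ``the bounded zero-order term being absorbed by comparison with an $A$-harmonic barrier'' is an assertion, not an argument: a barrier controls the size of the correction but not the H\"older continuity of the \emph{quotient} up to the boundary, which is the whole content of the boundary Harnack principle. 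The paper's resolution (Theorem \ref{t:boundary_harnack_eig}, Step 2) is genuinely nontrivial: one adds a variable $z_{d+1}$ and conjugates by $e^{-\sqrt{\lambda_i m}\,z_{d+1}}$ and $e^{-\sqrt{\lambda_i M}\,z_{d+1}}$ with $m=\min_B b$, $M=\max_B b$, so that the resulting functions are sub/supersolutions of a pure divergence-form equation in $\R^{d+1}$ trapped between genuinely $\tilde A$-harmonic functions $h_{i,m},h_{i,M}$; the Lipschitz continuity of $b$ enters to make $M-m\leq C\overline r$ so that the two traps differ only by $1+C\overline r$. Without this (or an equivalent device) the H\"older continuity of $g_i$, and hence the existence and regularity of $g$, is unproved.

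A secondary gap of the same nature occurs in your block (iii). To run the Kinderlehrer--Nirenberg (or hodograph) bootstrap and conclude $C^{k+1,\alpha}$ regularity of the free boundary you need $g\in C^{k,\alpha}$ on the boundary, i.e.\ higher-order regularity of the ratios $u_i/u_1$ up to $\partial\O^\ast_1$; ``the functions $g_i$, hence $g$, gain regularity'' is again an assertion. The paper proves this as Theorem \ref{t:higher_boundary_harnack_eig} by writing $\dive(\varphi^2A\nabla(u_i/\varphi))=(\lambda_1-\lambda_i)bu_i\varphi$ for a positive local solution $\varphi$ and invoking the higher-order boundary Harnack of De Silva--Savin; some such input is needed and should be made explicit. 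The remainder of your proposal (the derivation of the viscosity condition for $u_1$ from that for $|U|$, the flatness at regular points from the blow-up analysis, and the application of De Silva's improvement of flatness) matches the paper and is correct.
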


\begin{definition}
A bounded open set $\O\subset\R^d$ is NTA with constants $M>1$ and $r_0>0$ if the following conditions hold: 
\begin{itemize}
\item (Corkscrew condition) For every $x\in\partial\O$ and $r\in(0,r_0)$ there exists $z_r(x)\in\O$ such that
\begin{equation*}
M^{-1}r<d(z_r(x),\partial\O)<|x-z_r(x)|<r,
\end{equation*}
\item $\R^d\backslash\O$ satisfies the corkscrew condition,
\item (Harnack chain condition) If $\varepsilon>0$, $x_1,x_2\in\O$, $d(x_i,\partial\O)>\varepsilon$, $|x_1-x_2|<k\varepsilon$, then there exists a sequence of $Mk$ overlapping balls included in $\O$ of radius $\varepsilon/M$ such that, the first one is centered at $x_1$ and the last one at $x_2$, and such that the center of two consecutive balls are at most $\varepsilon/(2M)$ apart.
\end{itemize}
\end{definition}

We now recall that any $(\delta,R)$-Reifenberg flat set is NTA, provided that $\delta>0$ is small enough. This result is due to Kenig and Toro, see \cite[Theorem 3.1]{kenig-toro-97}.

\begin{teo}[Reifenberg flat implies NTA]\label{t:reif_implies_nta}
There exists $\delta_0>0$ such that if $\O\subset\R^d$ is a $(\delta,R)$-Reifenberg flat domain for some $R>0$ and some $\delta\leq\delta_0$, then $\O$ is an NTA domain.
\end{teo}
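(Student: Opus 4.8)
The statement is \cite[Theorem 3.1]{kenig-toro-97}, and the plan is to follow Kenig and Toro's argument. Of the three defining properties of an NTA domain, the two corkscrew conditions and the Harnack chain condition, the only delicate point is a self-improvement of Reifenberg flatness, which I would establish first: if $\delta_0$ is small enough and $\O$ is $(\delta,R)$-Reifenberg flat with $\delta\le\delta_0$, then for every $x\in\partial\O$ and every scale $r\le R$ there is a unit vector $\nu_{x,r}$ with
\[
\{z\in B_r(x)\,:\,(z-x)\cdot\nu_{x,r}>C\delta r\}\subset\O \qquad\text{and}\qquad \{z\in B_r(x)\,:\,(z-x)\cdot\nu_{x,r}<-C\delta r\}\subset\R^d\setminus\overline{\O},
\]
and moreover $|\nu_{x,r}-\nu_{x,2r}|\le C\delta$, for a dimensional constant $C$. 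I would prove this by downward induction over dyadic scales, starting from the scale $R$, where the inclusions are exactly condition (1) in the definition of Reifenberg flatness. At a scale $r$, condition (2) forces $\partial\O\cap B_r(x)$ into the $\delta r$-neighborhood of a hyperplane $H_{x,r}$ through $x$, so the two spherical caps $\{z\in B_r(x):\pm(z-x)\cdot\nu_{x,r}>\delta r\}$ are convex (hence connected) and disjoint from $\partial\O$; since $\R^d\setminus\partial\O$ is the disjoint union of the open sets $\O$ and $\R^d\setminus\overline{\O}$, each cap lies entirely in one of them. Comparing $\partial\O\cap B_r(x)$ with $\partial\O\cap B_{2r}(x)$ (both Hausdorff-close to the respective hyperplanes through $x$) shows $|\nu_{x,r}-\nu_{x,2r}|\le C\delta$, and this lets one exhibit a point lying both in the positive cap at scale $r$ and in the (inductively known to be interior) positive cap at scale $2r$; this pins down the side of each cap and closes the induction. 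This is the step where the genuinely topological content of Reifenberg flatness is used and where smallness of $\delta$ is essential, so I expect it to be the main obstacle.

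Granting this, the corkscrew conditions are immediate. For $x\in\partial\O$ and $r\in(0,R)$ set $z_r^{\pm}(x)=x\pm\tfrac r2\nu_{x,r}$; once $\delta_0<\tfrac1{2C}$ the first point lies in $\O$ and the second in $\R^d\setminus\overline{\O}$, both satisfy $|x-z_r^{\pm}(x)|=r/2<r$, and since every boundary point inside $B_r(x)$ is within $\delta r$ of $H_{x,r}$ while boundary points outside $B_r(x)$ are at distance at least $r/2$ from $z_r^{\pm}(x)$, we get $d(z_r^{\pm}(x),\partial\O)\ge r/2-\delta r\ge r/4$. Hence $\O$ and $\R^d\setminus\O$ satisfy the corkscrew condition with, say, $M=4$ and $r_0=R$.

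For the Harnack chain condition, take $x_1,x_2\in\O$ with $d(x_i,\partial\O)>\varepsilon$ and $|x_1-x_2|<k\varepsilon$, and let $p_i\in\partial\O$ be a nearest boundary point of $x_i$. Using the separation property, the map $t\mapsto p_i+t\,\nu_{p_i,t}$ traces a path inside $\O$ along which $d(p_i+t\,\nu_{p_i,t},\partial\O)\ge t/2$; since $x_i$ differs from $p_i+d(x_i,\partial\O)\,\nu_{p_i,d(x_i,\partial\O)}$ by at most $C\delta\,d(x_i,\partial\O)$, one can join $x_i$ along this path to a point $x_i'$ at height of order $k\varepsilon$ above the hyperplane approximating $\partial\O$ at that scale (no motion is needed if $d(x_i,\partial\O)$ is already of order $k\varepsilon$, in which case the segment $[x_1,x_2]$ stays far from $\partial\O$). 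All points used are at distance at least $\varepsilon/2$ from $\partial\O$, so the path is covered by $O(k)$ balls of radius $\varepsilon/M$ contained in $\O$, for a dimensional constant $M$. Finally $x_1'$ and $x_2'$ both lie at height $\gtrsim k\varepsilon$ over a common hyperplane at scale $\sim k\varepsilon$ (because $|p_1-p_2|\le 3k\varepsilon$ forces the corresponding normals to be $C\delta$-close), so the segment $[x_1',x_2']$, of length $O(k\varepsilon)$, stays at distance $\gtrsim k\varepsilon$ from $\partial\O$ and is covered by $O(k)$ further such balls. Concatenating the three pieces yields a Harnack chain of $O(k)$ balls of radius $\varepsilon/M$ inside $\O$. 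These verifications produce NTA constants depending only on the dimension, which is the claim.
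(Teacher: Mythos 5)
The paper does not prove this statement at all; it records it as Theorem~\ref{t:reif_implies_nta} and cites Kenig--Toro, \cite[Theorem 3.1]{kenig-toro-97}. Your sketch is a faithful outline of the argument in that reference: the self-improvement of Reifenberg flatness via downward dyadic induction and the topological separation argument is indeed the crux, and the corkscrew and Harnack chain verifications follow as you describe. One small quantitative slip: from $|x_i-p_i|=d(x_i,\partial\O)=r$ and the one-sided separation $x_i\cdot\nu_{p_i,r}\ge (1-C\delta)r$ one gets $|x_i-(p_i+r\,\nu_{p_i,r})|\le C\sqrt{\delta}\,r$, not $C\delta\,r$, since a unit vector whose component along $\nu$ is $1-C\delta$ differs from $\nu$ by $O(\sqrt{\delta})$. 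This does not affect the construction of the Harnack chain, as all one needs is smallness of that discrepancy for $\delta_0$ small.
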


In the following theorem we state the Boundary Harnack Principle for NTA domains and for solutions of uniformly elliptic equations in divergence form with bounded, measurable coefficients. We refer to \cite[Corollary 1.3.7]{kenig-94} or \cite[Lemma 4.10]{jerison-kenig-82} for a proof (see also \cite{de-silva-savin-19} for operator in non-divergence form).

\begin{teo}[Boundary Harnack principle]\label{t:harnack}
Let $\O\subset\R^d$ be an NTA domain and $2r\in(0,r_0)$. Let $\tilde{A}:\R^d\rightarrow\text{Sym}_d^+$ be uniformly elliptic (i.e. $\exists\lambda>0$, $\lambda^{-1}|\xi|^2\leq\xi\cdot \tilde{A}_x\,\xi\leq\lambda|\xi|^2$ $\,\forall x,\xi\in\R^d$) with bounded measurable coefficients. Let $x_0\in\partial\O$ and let $u,v\in H^1(\O\cap B_{2r}(x_0))\cap C(\O\cap B_{2r}(x_0))$ be such that $u,v=0$ on $\partial\O\cap B_{2r}(x_0)$, $v>0$ in $\O\cap B_{2r}(x_0)$ and 
\begin{equation*}
\dive(\tilde{A}\nabla u)=\dive(\tilde{A}\nabla v)=0\quad\text{in}\quad \O\cap B_{2r}(x_0).
\end{equation*}
Then there exists $C>0$, depending only on $d$ and $\lambda$ and the NTA constants, such that
\begin{equation}\label{e:harnacka}
C^{-1}\frac{u(z_r(x_0))}{v(z_r(x_0))} \leq \frac{u(x)}{v(x)} \leq C\frac{u(z_r(x_0))}{v(z_r(x_0))}\quad\text{for every}\quad x\in\O\cap B_r(x_0).
\end{equation}
\end{teo}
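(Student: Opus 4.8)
This is a classical fact and the most honest thing to do in the paper is simply to invoke \cite[Corollary 1.3.7]{kenig-94} or \cite[Lemma 4.10]{jerison-kenig-82}, which is what I would do. Nonetheless, let me describe the strategy one would follow. The proof rests on three ingredients, all available for a uniformly elliptic divergence-form operator with merely bounded measurable coefficients: the interior Harnack inequality together with the De Giorgi--Nash boundary H\"older estimate; the Carleson (boundedness) estimate for non-negative solutions vanishing on a boundary portion; and maximum-principle comparisons. Throughout, for $x_0\in\partial\O$ and $\rho\in(0,r_0)$ I would write $A_\rho=z_\rho(x_0)$ for a corkscrew point, so $d(A_\rho,\partial\O)\geq M^{-1}\rho$; the Harnack chain condition then makes $w(A_\rho)$ and $w(A_{\rho/2})$ comparable, with constants depending only on $d$, $\lambda$ and the NTA constants, for any non-negative solution $w$, and likewise makes $w(x)$ comparable to $w(A_\rho)$ whenever $x\in\O\cap B_{\rho/C}(x_0)$ with $d(x,\partial\O)\geq\rho/C$.

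First I would establish the Carleson estimate: there is $C>0$ so that every non-negative solution $w$ of $\dive(\tilde A\nabla w)=0$ in $\O\cap B_{2r}(x_0)$ with $w=0$ on $\partial\O\cap B_{2r}(x_0)$ satisfies $w\leq C\,w(A_r)$ on $\O\cap B_r(x_0)$. This is obtained by applying the maximum principle on the dyadic annular regions $\O\cap\big(B_{2^{-j}r}(x_0)\setminus B_{2^{-j-1}r}(x_0)\big)$, comparing $w$ with the solution of the equation in such an annulus having boundary value $1$ on the outer sphere and $0$ on the inner sphere together with $\partial\O$; the exterior corkscrew condition makes $\partial\O$ "thick" enough there, so the De Giorgi--Nash boundary estimate forces this comparison solution to be $\leq 1-\mu$ on the inner sphere, giving a geometric decay $\sup_{\O\cap B_{2^{-j}r}(x_0)}w\leq\theta^{\,j}\sup_{\O\cap B_r(x_0)}w+C\,w(A_r)$ with $\theta\in(0,1)$; summing over $j$ yields the claim. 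In the same way I would record the two-sided comparison of $u$ and $v$ with this harmonic-measure-type solution, and the comparability of $v$ with $v(A_\rho)$ at nontangential points noted above.

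Finally I would run the standard oscillation-decay iteration. On the dyadic ball $B_{2^{-j}r}(x_0)$ set $M_j=\sup_{\O\cap B_{2^{-j}r}(x_0)}u/v$ and $m_j=\inf_{\O\cap B_{2^{-j}r}(x_0)}u/v$; since $u-m_jv$ and $M_jv-u$ are non-negative solutions vanishing on $\partial\O\cap B_{2^{-j}r}(x_0)$ whose sum is $(M_j-m_j)v$, at least one of them is $\geq\frac{1}{2}(M_j-m_j)\,v(A_{2^{-j-1}r})$ at the corkscrew point; feeding this into the Carleson estimate and the lower bound for $v$ improves the sup or the inf at the next scale, so that $M_{j+1}-m_{j+1}\leq(1-\eps)(M_j-m_j)$ for a uniform $\eps>0$. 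Hence $u/v$ has a limit at $x_0$ with geometric oscillation decay; in particular it extends H\"older-continuously up to $\partial\O\cap B_r(x_0)$ and is pinched between fixed multiples of $u(A_r)/v(A_r)$, which is \eqref{e:harnacka}, and a rescaling $x\mapsto x_0+r\,x$ shows the constant is independent of $r$. I expect the main obstacle to be the Carleson estimate and the comparison with harmonic measure: because $\tilde A$ has only bounded measurable coefficients there are no explicit barriers, so the monotone sub/supersolutions of the smooth case must be replaced by comparison solutions controlled through the De Giorgi--Nash boundary H\"older estimate, and it is precisely there that the exterior corkscrew and Harnack chain geometry of an NTA domain is used.
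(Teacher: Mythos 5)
The proposal is correct and takes essentially the same approach as the paper: the paper gives no proof of this theorem either, and simply refers to \cite[Corollary 1.3.7]{kenig-94} or \cite[Lemma 4.10]{jerison-kenig-82}, exactly as you suggest doing. Your accompanying sketch (Carleson estimate via dyadic comparison and the De Giorgi--Nash boundary estimate, then the oscillation-decay iteration on $u/v$ using corkscrew points and Harnack chains) is the standard argument underlying those references and is sound.
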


Since the estimate \eqref{e:harnacka} holds for every harmonic functions with a uniform constant, it is standard to deduce that the quotient of two harmonics functions on an NTA domain is H\"{o}lder continuous up to the boundary.
We refer to \cite[Corollary 1.3.9]{kenig-94} or \cite[Theorem 7.9]{jerison-kenig-82} (see also \cite[Corollary 1]{athanasopoulos-caffarelli-85}).

\begin{coro}\label{c:harnack_holder}
Let $\O, \tilde{A}, x_0, r$ and $u,v$ be as in Theorem \ref{t:harnack}. Then there exist constants $\alpha\in(0,1)$ and $C>0$, depending only on $d$ and $\lambda$ and the NTA constants, such that
\begin{equation*}
\left|\frac{u(x)}{v(x)}-\frac{u(y)}{v(y)}\right|\leq C\frac{u(z_r(x_0))}{v(z_r(x_0))}\left(\frac{|x-y|}{r}\right)^\alpha\quad\text{for every}\quad x,y\in\O\cap B_r(x_0).
\end{equation*}
In particular, for every $x\in\partial\O\cap B_r(x_0)$ the limit $\lim_{\O\ni y\rightarrow x}\frac{u(y)}{v(y)}$ exists and  $\frac{u}{v}:\overline{\O}\cap B_r(x_0)\rightarrow\R$ is $\alpha$-H\"{o}lder continuous.
\end{coro}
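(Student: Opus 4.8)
The plan is to deduce the H\"{o}lder continuity of $u/v$ from the boundary Harnack principle (Theorem~\ref{t:harnack}) by the standard dyadic oscillation-decay iteration, and then to patch the resulting boundary estimate with an interior one. First I would reduce to the case $u\ge 0$: replacing $u$ by $u+c_0 v$ for a suitable constant $c_0\ge 0$ shifts $u/v$ only by the additive constant $c_0$ and hence does not affect the statement, and this is legitimate as soon as $u/v$ is locally bounded near $\partial\O\cap\overline{B_{3r/2}}(x_0)$ — which is automatic in the situations where the corollary is applied, where $u$ and $v$ are non-degenerate and Lipschitz up to the boundary, and in general follows from a Carleson-type estimate. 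So assume $u\ge 0$ in $\O\cap B_{3r/2}(x_0)$.

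The core step is the oscillation decay. Fix a boundary point $\bar x\in\partial\O\cap B_r(x_0)$ and, for $\rho\le r/2$, write $E_\rho=\O\cap B_\rho(\bar x)$, $M(\rho)=\sup_{E_\rho}u/v$, $m(\rho)=\inf_{E_\rho}u/v$ and $\mathrm{osc}(\rho)=M(\rho)-m(\rho)$ (finite by the reduction). I would apply Theorem~\ref{t:harnack} on the ball $B_\rho(\bar x)\subset B_{2r}(x_0)$ to the nonnegative solution pairs $\big(M(\rho)v-u,\,v\big)$ and $\big(u-m(\rho)v,\,v\big)$: both first entries solve $\dive(\tilde A\nabla\cdot)=0$ in $E_\rho$, vanish on $\partial\O\cap B_\rho(\bar x)$, and are $\ge 0$ there by the definition of $M(\rho)$ and $m(\rho)$. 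With $q:=u(z_{\rho/2}(\bar x))/v(z_{\rho/2}(\bar x))$, the conclusion on $E_{\rho/2}$ gives, for every $y\in E_{\rho/2}$,
\begin{equation*}
M(\rho)-\frac{u(y)}{v(y)}\ \ge\ C^{-1}\big(M(\rho)-q\big)
\qquad\text{and}\qquad
\frac{u(y)}{v(y)}-m(\rho)\ \ge\ C^{-1}\big(q-m(\rho)\big).
\end{equation*}
Taking the infimum over $y\in E_{\rho/2}$ in each inequality, adding them and rearranging yields $\mathrm{osc}(\rho/2)\le(1-C^{-1})\,\mathrm{osc}(\rho)$; iterating from a fixed scale $\rho_0\le r/2$ gives $\mathrm{osc}(2^{-j}\rho_0)\le(1-C^{-1})^j\,\mathrm{osc}(\rho_0)$, hence H\"{o}lder decay of the oscillation with exponent $\alpha$ determined by $2^{-\alpha}=1-C^{-1}$, while $\mathrm{osc}(\rho_0)\le M(\rho_0)\le C\,u(z_{\rho_0/2}(\bar x))/v(z_{\rho_0/2}(\bar x))$ by the upper bound in Theorem~\ref{t:harnack}.

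To globalize, observe that $C$, $\alpha$ and $\rho_0$ depend only on $d$, $\lambda$ and the NTA constants, hence are uniform in $\bar x$; a Harnack chain joining $z_{\rho_0/2}(\bar x)$ to $z_r(x_0)$ inside $\O$ (this is where the Harnack chain condition enters) replaces the corkscrew ratio above by $C\,u(z_r(x_0))/v(z_r(x_0))$. Covering the compact set $\partial\O\cap\overline{B_r}(x_0)$ by finitely many balls $B_{\rho_0/2}(\bar x)$ and combining, by the usual patching, with the interior De~Giorgi--Nash H\"{o}lder estimate for $u/v$ on the part of $\O\cap B_r(x_0)$ at a fixed positive distance from $\partial\O$ (where $v$ is bounded below by Harnack), one obtains the asserted inequality on $\O\cap B_r(x_0)$. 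Finally, the oscillation decay, applied with base point an arbitrary $x\in\partial\O\cap B_r(x_0)$, shows that $u(y)/v(y)$ is Cauchy as $\O\ni y\to x$, so the boundary trace exists, and the same estimate makes $u/v$ an $\alpha$-H\"{o}lder function on $\overline{\O}\cap B_r(x_0)$.

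The argument involves no new idea; the points that need care are the legitimacy of re-applying Theorem~\ref{t:harnack} at every dyadic scale $B_\rho(\bar x)$ (which relies on the scale invariance of the NTA and ellipticity constants), the uniformity of all constants in the base point $\bar x$ together with the Harnack-chain comparison of corkscrew points, and the preliminary reduction to $u\ge 0$ — the only place where a Carleson-type estimate rather than the full boundary Harnack principle is invoked. I expect this bookkeeping, rather than any genuine difficulty, to be the main obstacle.
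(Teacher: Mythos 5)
The paper does not prove this corollary; it simply remarks that the deduction from Theorem~\ref{t:harnack} is standard and cites \cite[Cor.~1.3.9]{kenig-94} and \cite[Thm.~7.9]{jerison-kenig-82}. Your dyadic oscillation-decay argument --- applying Theorem~\ref{t:harnack} at each scale $\rho$ to the nonnegative solution pairs $(M(\rho)v-u,v)$ and $(u-m(\rho)v,v)$ and adding the two resulting inequalities to obtain $\mathrm{osc}(\rho/2)\leq(1-C^{-1})\,\mathrm{osc}(\rho)$ --- is precisely that standard deduction, and the core iteration, the normalization $\mathrm{osc}(\rho_0)\leq C\,u(z_{\rho_0/2}(\bar x))/v(z_{\rho_0/2}(\bar x))$, the Harnack-chain comparison of corkscrew points, and the interior/boundary patching are all correct.

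The one misstep is the preliminary reduction to $u\geq 0$ via $u\mapsto u+c_0 v$. This shifts the corkscrew ratio $u(z_r(x_0))/v(z_r(x_0))$ to $u(z_r(x_0))/v(z_r(x_0))+c_0$, so it does not preserve the stated estimate, and $c_0$ depends on the particular pair $(u,v)$ rather than only on $d$, $\lambda$ and the NTA constants as the conclusion requires. But the reduction is in fact unnecessary: the hypothesis $u\geq 0$ must already be read as implicit in Theorem~\ref{t:harnack} (its two-sided comparison, and the factor $u(z_r(x_0))/v(z_r(x_0))$ on the right-hand side of the corollary, are meaningless or false for sign-changing $u$), so you should simply drop that step and start the iteration directly from $m(\rho_0)\geq 0$ and the upper bound of Theorem~\ref{t:harnack}.
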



We now prove the analogous boundary Harnack theorem for the eigenfunctions on an optimal set $\O^\ast$ to the problem \eqref{e:shapeopt}. We notice that in the proof it is essential that the first eigenfunction $u_1$ is positive and non-degenerate (Proposition \ref{p:nondegu1}). The case of the eigenfunctions for the Laplacian is already treated in \cite[Appendix A]{ramos-tavares-terracini-16}. We extend this result to the case of the operator $-b^{-1}\dive(A\nabla\cdot)$. We highlight that one of the difficulty comes from the presence of the Lipschitz function $b$. 

\begin{teo}[Boundary Harnack principle for eigenvalues]\label{t:boundary_harnack_eig}
Let $U=(u_1,\dots,u_k)$ be the first $k$ normalized eigenfunctions on $\O^\ast$ and let $x_0\in\text{Reg}(\partial\O^\ast_1\cap D)$. Then $\O^\ast_1$ is NTA in $B_r(x_0)$ for some $r=r(x_0)>0$ and there exists $\alpha\in(0,1)$, depending only on $d, \lambda_{\text{\tiny\sc A}}$ and the NTA constants of $\O^\ast_1$, such that for every $i=2,\dots,k$
\begin{equation*}
\frac{u_i}{u_1} \text{ is } \alpha\text{-H\"{o}lder continuous in } \overline{\O^\ast_1}\cap B_r(x_0).
\end{equation*}
\end{teo}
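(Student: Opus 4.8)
The plan is to reduce, via the non-degeneracy of $u_1$, to the comparison of two positive divergence-form solutions, and then to transfer the classical boundary Harnack principle (Theorem \ref{t:harnack}, Corollary \ref{c:harnack_holder}) from the $A$-harmonic setting to the eigenvalue equation by an oscillation-decay iteration in which the lower-order term $\lambda_i b u_i$ is treated perturbatively. \emph{Step 1 (NTA).} By Proposition \ref{p:reifenberg}, for any prescribed $\delta$ the set $\O^\ast_1$ is $(\delta,R)$-Reifenberg flat near $x_0$; choosing $\delta\le\delta_0$ and applying Theorem \ref{t:reif_implies_nta} (after the standard localization of $\partial\O^\ast_1$ near $x_0$) gives that $\O^\ast_1$ is NTA in $B_r(x_0)$ for some $r=r(x_0)>0$, with NTA constants depending only on $d$.

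\emph{Step 2 (reduction to positive solutions).} By Proposition \ref{p:nondegu1} one has $|u_i|\le C_1 u_1$ in $\O^\ast_1$, so the function $v_i:=2C_1 u_1+u_i$ satisfies $v_i\ge C_1 u_1>0$ in $\O^\ast_1$, vanishes continuously on $\partial\O^\ast_1\cap B_r(x_0)$, and solves $\dive(A\nabla v_i)=-b\,(2C_1\lambda_1 u_1+\lambda_i u_i)\in L^\infty$. Since $u_i/u_1=v_i/u_1-2C_1$, it suffices to prove that $v_i/u_1$ is $\alpha$-Hölder continuous up to $\partial\O^\ast_1$ in a ball around $x_0$, i.e. to treat the quotient of two positive solutions of uniformly elliptic divergence-form equations with bounded right-hand side, both vanishing on $\partial\O^\ast_1\cap B_r(x_0)$, where the common denominator $u_1$ is moreover $A$-superharmonic and behaves like $\dist(\cdot,\partial\O^\ast_1)$ (Theorem \ref{t:lipquasimin} and Proposition \ref{p:growth_boundary}).

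\emph{Step 3 (oscillation decay).} Fix a dyadic scale $\rho\le r$. In $\O^\ast_1\cap B_\rho(x_0)$ write $v_i=h_v+z_v$ and $u_1=h_u+z_u$, where $h_v,h_u$ are the $A$-harmonic replacements of $v_i,u_1$ (hence positive, $A$-harmonic, vanishing on $\partial\O^\ast_1\cap B_\rho(x_0)$) and $z_v,z_u\in H^1_0(\O^\ast_1\cap B_\rho(x_0))$ solve the corresponding equations. Since $u_1,u_i$ are Lipschitz and vanish at $x_0$, the right-hand sides are $O(\rho)$ on $B_\rho(x_0)$, whence $\|z_v\|_{L^\infty}+\|z_u\|_{L^\infty}\le C\rho^{3}$ by the $L^\infty$ estimate for the Dirichlet problem; and, using $u_1$ itself as a barrier (again Proposition \ref{p:nondegu1}), one gets the scale-invariant bound $|z_v|+|z_u|\le M u_1$ in $\O^\ast_1\cap B_\rho(x_0)$ for a fixed $M$. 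By the growth of $u_1$ at the free boundary (Proposition \ref{p:growth_boundary}) and the corkscrew property, $u_1(z_\rho(x_0))\asymp\rho$, so at the corkscrew point the remainders are $O(\rho^{2})$ relative to $u_1$. Applying Theorem \ref{t:harnack} and Corollary \ref{c:harnack_holder} to the pair $h_v,h_u$ contracts the oscillation of $h_v/h_u$ by a dimensional factor $\theta<1$ from scale $\rho$ to scale $\rho/2$; feeding in the two remainder estimates yields
\[
\operatorname{osc}_{\O^\ast_1\cap B_{\rho/2}(x_0)}\!\left(\frac{v_i}{u_1}\right)\ \le\ \theta\,\operatorname{osc}_{\O^\ast_1\cap B_{\rho}(x_0)}\!\left(\frac{v_i}{u_1}\right)\ +\ C\rho^{\beta}
\]
for some $\beta>0$. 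Iterating this inequality over dyadic scales (and over slowly varying centres) gives geometric decay of the oscillation of $v_i/u_1$, hence $v_i/u_1\in C^{0,\alpha}(\overline{\O^\ast_1}\cap B_{r'}(x_0))$ with $\alpha$ determined by $\theta$ and $\beta$, i.e. by $d$, $\lambda_{\text{\tiny\sc A}}$ and the NTA constants; in particular $\alpha$ is independent of $i$, which proves the theorem.

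\emph{Main obstacle.} The delicate point is Step 3: the comparison functions are no longer $A$-harmonic, and the denominator $u_1$ tends to $0$ on $\partial\O^\ast_1$ while the error terms $z_v,z_u$ do not, a priori, decay at a comparable rate. This is resolved by balancing the absolute smallness $\|z\|_{L^\infty}\le C\rho^{3}$ (which is decisive at corkscrew distance from the boundary) against the barrier bound $|z|\le M u_1$ (which prevents $z/u_1$ from blowing up near $\partial\O^\ast_1$). It is precisely at this step that the Lipschitz regularity of $b$ is used: it makes the lower-order terms $b u_i$, $b u_1$ Hölder continuous in $\O^\ast_1$, so that $u_i,u_1\in C^{1,\alpha}_{\mathrm{loc}}(\O^\ast_1)$ by Schauder estimates and the remainders vanish at the free boundary at a rate compatible with that of $u_1$, which is what lets the iteration close — and this is the only place where more than $b\in L^\infty$ is needed. (Alternatively, one may bypass the iteration by invoking a boundary Harnack principle for divergence-form operators with bounded inhomogeneity and applying it to $v_i$ and $u_1$, but the iteration above only uses results already stated in the paper.)
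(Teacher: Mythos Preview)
Your overall plan is reasonable and your Steps 1 and 2 are fine (in fact your reduction $v_i=2C_1u_1+u_i$ to a positive solution is simpler than the paper's, which instead uses Lemma \ref{l:ext_positive_part} to split $u_i=v_i-w_i$ with $v_i,w_i$ solving the eigenvalue equation with boundary data $u_i^\pm$). However, Step 3 has a genuine gap. From $\|z_u\|_{L^\infty}\le C\rho^3$ and $|z_u|\le M u_1$ you only get $|z_u|/u_1\le\min\{M,\,C\rho^3/u_1\}$; at points $x\in B_{\rho/2}$ with $\dist(x,\partial\O^\ast_1)\ll\rho$ one has $u_1(x)\ll\rho$, so the second bound blows up while the first stays a fixed constant $M$ independent of $\rho$. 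Thus $|v_i/u_1-h_v/h_u|=|z_v/u_1-(h_v/h_u)(z_u/u_1)|$ is only \emph{bounded} near the free boundary, not $O(\rho^\beta)$, and the iteration does not close. What is actually needed is $|z_u|\le C\rho^\beta h_u$ (equivalently $|z_u|/u_1\le C\rho^\beta$ uniformly on $B_{\rho/2}$), which is a Green's function/boundary-Harnack-with-right-hand-side estimate that does not follow from the two bounds you wrote down and is not among the results quoted in the paper. Your parenthetical remark at the end essentially concedes this, but the claim that ``the iteration above only uses results already stated in the paper'' is not correct.

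The paper takes a completely different route that avoids this difficulty. It splits $x,y$ into an interior case $d_x,d_y\ge 2\bar r$ (handled by Schauder estimates and the lower bound $u_1\ge c\,\dist$) and a boundary case. In the boundary case it \emph{lifts} to $\R^{d+1}$ via the exponential trick $u_{1,m}(z,z_{d+1})=e^{-\sqrt{\lambda_1 m}\,z_{d+1}}u_1(z)$ with $m=\min_B b$, $M=\max_B b$: for the operator $\tilde A=\mathrm{diag}(A,1)$ one has $\dive(\tilde A\nabla u_{1,m})=\lambda_1(m-b)u_{1,m}\le 0$, so $u_{1,m}$ and the corresponding $u_{i,m},u_{1,M},u_{i,M}$ are bracketed between their $\tilde A$-harmonic replacements $h_{\cdot,\cdot}$ in the NTA cylinder $(\O^\ast_1\cap B)\times I$, to which the classical boundary Harnack (Theorem \ref{t:harnack}/Corollary \ref{c:harnack_holder}) applies directly. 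The error incurred in swapping $m\leftrightarrow M$ is controlled by $e^{\sqrt{\lambda M}-\sqrt{\lambda m}}-1\le C\bar r$, and \emph{this} is precisely where the Lipschitz continuity of $b$ enters (not via interior $C^{1,\alpha}$ as you suggest). This reduces the problem to the homogeneous boundary Harnack without ever needing a pointwise estimate on an inhomogeneous remainder divided by $u_1$.
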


We will need the following Lemma.

\begin{lm}\label{l:ext_positive_part}
Let $\O\subset D$ be a quasi-open set, $u\in H^1_0(\O)$ and $\lambda>0$. Then, for every $x_0\in\partial\O\cap D$ there exists $r_0>0$ depending only on $d, \lambda_{\text{\tiny\sc A}}, c_b$ and $\lambda$, such that for every $r\leq r_0$ with $B_r(x_0)\subset D$, there exists a unique solution $v\in H^1_0(D)$ of
\begin{equation}\label{e:ext_positive_parta}
\left\lbrace
         \begin{array}{ll}
              -\dive(A\nabla v)=\lambda bv\quad &\text{in}\quad\O\cap B_r(x_0)\\
              v=u,\quad&\text{on}\quad\partial(\O\cap B_r(x_0)).\\
         \end{array}
\right. 
\end{equation}
If, moreover, $u\in L^\infty(D)$, then $v\in L^\infty(D)$ and we have the estimate 
\begin{equation}\label{e:ext_positive_partb}
\|v\|_{L^\infty(\O\cap B_r(x_0))}\leq C\big(r\|u\|_{H^1(\O;m)} + \|u\|_{L^\infty(\partial B_r(x_0))}\big)
\end{equation}
where the constant $C>0$ depends only on $d, \lambda_{\text{\tiny\sc A}}, c_b$ and $\lambda$.
\end{lm}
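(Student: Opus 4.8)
The plan is to treat \eqref{e:ext_positive_parta} as a small perturbation of a coercive problem: for $r$ small, $\lambda$ lies below the first Dirichlet eigenvalue of $\O\cap B_r(x_0)$, so the ``wrong sign'' zeroth order term $+\lambda b v$ is harmless, existence and uniqueness reduce to Lax--Milgram, and the $L^\infty$ bound to standard elliptic estimates. In detail: since $\O\cap B_r(x_0)\subset B_r(x_0)$, the monotonicity of $\lambda_1$ under inclusion together with the Faber--Krahn (or Poincar\'e) inequality on the ball and the bounds \eqref{e:ellipA}--\eqref{e:hypfctb} give $\lambda_1(\O\cap B_r(x_0))\ge c(d,\lambda_{\text{\tiny\sc A}},c_b)\,r^{-2}$, so there is $r_0=r_0(d,\lambda_{\text{\tiny\sc A}},c_b,\lambda)$ such that $\lambda_1(\O\cap B_r(x_0))\ge 2\lambda$ whenever $r\le r_0$ and $B_r(x_0)\subset D$. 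For such $r$ the bilinear form
\[ \mathsf a(\phi,\psi):=\int_{\O\cap B_r(x_0)}\big(A\nabla\phi\cdot\nabla\psi-\lambda\,b\,\phi\,\psi\big) \]
is bounded and coercive on $H^1_0(\O\cap B_r(x_0))$, with $\mathsf a(\phi,\phi)\ge\tfrac12\lambda_{\text{\tiny\sc A}}^{-2}\|\nabla\phi\|_{L^2}^2$. I would look for $v$ in the form $v=u+w$ with $w\in H^1_0(\O\cap B_r(x_0))$; then \eqref{e:ext_positive_parta} is equivalent to $\mathsf a(w,\psi)=-\mathsf a(u,\psi)$ for every $\psi\in H^1_0(\O\cap B_r(x_0))$, a bounded linear functional of $\psi$. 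Lax--Milgram yields a unique such $w$, hence a unique $v\in H^1_0(D)$ with $v-u\in H^1_0(\O\cap B_r(x_0))$; uniqueness follows since two solutions differ by an element of the kernel of $\mathsf a$, trivial by coercivity. Testing with $\psi=w$ and using the Poincar\'e inequality on $B_r(x_0)$ (to absorb, resp.\ estimate with a power of $r$, the zeroth order contributions) gives $\|\nabla w\|_{L^2}\le C\|u\|_{H^1(\O;m)}$ and $\|w\|_{L^2}\le C\,r\,\|u\|_{H^1(\O;m)}$, with $C=C(d,\lambda_{\text{\tiny\sc A}},c_b,\lambda)$.

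For the $L^\infty$ bound, assume $u\in L^\infty(D)$ and split $v=v_h+z$ with $v_h$ the $A$-harmonic function in $\O\cap B_r(x_0)$ satisfying $v_h-u\in H^1_0(\O\cap B_r(x_0))$, so that $z:=v-v_h\in H^1_0(\O\cap B_r(x_0))$ solves $-\dive(A\nabla z)=\lambda b\,v=\lambda b\,(v_h+z)$. The weak maximum principle for $A$-harmonic functions, together with $u=0$ on $\partial\O$, gives $\|v_h\|_{L^\infty(\O\cap B_r(x_0))}\le\|u\|_{L^\infty(\partial B_r(x_0))}$. Standard elliptic regularity gives $z\in L^\infty$, and applying the De Giorgi--Stampacchia estimate \cite[Lemma 2.1]{trey-19} (exactly as in the proof of Lemma \ref{l:degiorgi}) to $z$ and to $-z$, with forcing bounded by the constant $\lambda c_b(\|v_h\|_{L^\infty}+\|z\|_{L^\infty})$ on a domain of measure $\le\omega_d r^d$, yields $\|z\|_{L^\infty}\le C r^2(\|v_h\|_{L^\infty}+\|z\|_{L^\infty})$, the part of the forcing not treated this way being controlled, through the energy bound above, by $C\,r\,\|u\|_{H^1(\O;m)}$. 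Shrinking $r_0$ so that $Cr_0^2\le\tfrac12$ absorbs the term $Cr^2\|z\|_{L^\infty}$, and combining with the estimate for $v_h$ produces \eqref{e:ext_positive_partb}, the term $r\|u\|_{H^1(\O;m)}$ coming from the $H^1_0$-part $z$ and the term $\|u\|_{L^\infty(\partial B_r(x_0))}$ from the harmonic part $v_h$.

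The crux of the argument is the $L^\infty$ estimate: because the reaction term $+\lambda b v$ has the ``wrong'' sign, neither the maximum principle nor a De Giorgi iteration applies to $v$ directly, and everything hinges on keeping $r$ below a threshold $r_0(d,\lambda_{\text{\tiny\sc A}},c_b,\lambda)$ --- first to make $\mathsf a$ coercive, so that $v$ exists with controlled energy, and then to absorb the solution-dependent part of the forcing $\lambda b v$ when running the De Giorgi--Stampacchia scheme on $z$. A minor technical point is that $\O\cap B_r(x_0)$ is only quasi-open, so the relevant trace and maximum-principle statements must be invoked in the form established in \cite{trey-19}.
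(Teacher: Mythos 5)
Your existence/uniqueness argument is essentially the paper's: the paper minimizes $\tilde J(\varphi)=\int_D A\nabla\varphi\cdot\nabla\varphi-\lambda\int_D\varphi^2b$ over $\{\varphi: u-\varphi\in H^1_0(\O\cap B_r(x_0))\}$ and gets coercivity from $\lambda_1(\O\cap B_r(x_0))\geq\lambda_1(B_{r_0}(x_0))\gtrsim r_0^{-2}$, which is exactly your Lax--Milgram setup in variational form; no difference worth noting there, and your energy bounds $\|\nabla w\|_{L^2}\leq C\|u\|_{H^1(\O;m)}$, $\|w\|_{L^2}\leq Cr\|u\|_{H^1(\O;m)}$ are the ones the paper also derives.

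For the $L^\infty$ bound the routes genuinely diverge, and yours has one soft spot. The paper uses the same decomposition $v=h+w$ but then writes $v=\lambda^nR^n(v)+\sum_{i=0}^{n-1}\lambda^iR^i(h)$ for the resolvent $R=R_{\O\cap B_r(x_0)}$ and exploits the bounds $\|R\|_{\mathcal{L}(L^2,L^{2^\ast})}\leq C_d$ and $\|R\|_{\mathcal{L}(L^d,L^\infty)}\leq Cr$ from \cite[Lemma 2.1]{trey-19}: a finite interpolation chain $L^2\to L^{2^\ast}\to\cdots\to L^\infty$ produces $\|v\|_{L^\infty}\leq C(r\|v\|_{L^2(D;m)}+\|u\|_{L^\infty(\partial B_r(x_0))})$ quantitatively in one pass. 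You instead run an absorption argument: bound the full forcing $\lambda b(v_h+z)$ in $L^\infty$, apply the Stampacchia estimate on a set of measure $\lesssim r^d$ to get $\|z\|_{L^\infty}\leq Cr^2(\|v_h\|_{L^\infty}+\|z\|_{L^\infty})$, and absorb. This is legitimate \emph{only once you know $\|z\|_{L^\infty}<\infty$}, and your justification ``standard elliptic regularity gives $z\in L^\infty$'' is not a one-shot statement: the right-hand side $\lambda bv$ is a priori only in $L^2$, and for $d\geq 5$ an $L^2$ right-hand side does not give a bounded solution. What saves you is precisely the bootstrap through the equation ($v\in L^2\Rightarrow z\in L^{2^\ast}\Rightarrow v\in L^{2^\ast}\Rightarrow\cdots$, terminating because $h\in L^\infty$), i.e.\ the paper's Neumann-series identity; so the ``standard regularity'' step should be spelled out as that finite iteration. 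Relatedly, your sentence about ``the part of the forcing not treated this way being controlled \ldots by $Cr\|u\|_{H^1(\O;m)}$'' is either superfluous (if the absorption goes through on the whole forcing, you get the stronger bound $\|z\|_{L^\infty}\leq Cr^2\|v_h\|_{L^\infty}$ and the $r\|u\|_{H^1}$ term is not needed) or it tacitly invokes an $L^2\to L^\infty$ resolvent bound with constant $Cr$, which fails for $d\geq 4$. Once the qualitative boundedness is established properly, your absorption scheme is correct and arguably cleaner than the paper's explicit interpolation, but as written the key step leans on the very argument it is meant to replace.
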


\begin{proof}
Observe that any minimizer in $\mathcal{A}:=\{\varphi\in H^1_0(D): u-\varphi\in H^1_0(\O\cap B_r(x_0))\}$ of the functional
\begin{equation*}
\tilde{J}(\varphi)=\int_DA\nabla\varphi\cdot\nabla\varphi - \lambda\int_D\varphi^2b
\end{equation*}
is solution of \eqref{e:ext_positive_parta}. Therefore, it is enough to prove that $\{\varphi\in\mathcal{A}\ : \ \tilde{J}(\varphi)\leq C\}$ is weakly compact in $H^1_0(D)$ to prove the existence of a function $v$ solution of \eqref{e:ext_positive_parta}. We first compute
\begin{align*}
\int_D\varphi^2b &\leq 2\int_{\O\cap B_r(x_0)}(\varphi-u)^2b+2\int_Du^2b \\
&\leq \frac{2}{\lambda_1(\O\cap B_r(x_0))}\int_{\O\cap B_r(x_0)}A\nabla(\varphi-u)\cdot\nabla(\varphi-u) + 2\int_Du^2b \\
&\leq \frac{4\lambda_{\text{\tiny\sc A}}^2}{\lambda_1(B_{r_0}(x_0))}\int_D\big(|\nabla\varphi|^2+|\nabla u|^2\big) + 2\int_Du^2b.
\end{align*}
Then, for $r_0$ small enough (such that $4\lambda_{\text{\tiny\sc A}}^4\lambda\leq\lambda_1^{-\Delta}(B_1)/(2\lambda_{\text{\tiny\sc A}}^2c_br_0^2)$ where $\lambda_1^{-\Delta}(B_1)$ stands for the first eigenvalue of the Dirichlet Laplacian on $B_1$) we have
\begin{equation*}
\int_D|\nabla\varphi|^2 \leq \lambda_{\text{\tiny\sc A}}^2\tilde{J}(\varphi)+\lambda_{\text{\tiny\sc A}}^2\lambda\int_D\varphi^2b\leq \lambda_{\text{\tiny\sc A}}^2\tilde{J}(\varphi)+ \frac12\int_D\big(|\nabla\varphi|^2+|\nabla u|^2\big) +2\lambda_{\text{\tiny\sc A}}^2\lambda\int_Du^2b,
\end{equation*}
which gives that
\begin{equation*}
\int_D|\nabla\varphi|^2 \leq 2\lambda_{\text{\tiny\sc A}}^2\tilde{J}(\varphi)+ \int_D|\nabla u|^2 + 4\lambda_{\text{\tiny\sc A}}^2\lambda\int_Du^2b\leq 2\lambda_{\text{\tiny\sc A}}^2\tilde{J}(\varphi)+(1+4\lambda_{\text{\tiny\sc A}}^2\lambda)\|u\|_{H^1(\O;m)}.
\end{equation*}
This proves the existence of $v$, and the uniqueness easily follows provided that $\lambda\leq\lambda_1(\O\cap B_r(x_0))$.

We now prove the $L^\infty$-estimate. We consider the functions defined by
\begin{equation*}
\left\lbrace
         \begin{array}{ll}
              \dive(A\nabla h)=0,\qquad -\dive(A\nabla w)=\lambda bv&\text{in}\quad\O\cap B_r(x_0)\\
              h=u,\qquad\qquad\qquad w=0&\text{on}\quad\partial(\O\cap B_r(x_0)).\\
         \end{array}
\right. 
\end{equation*}
Reasoning as above this functions exist and are unique, and we have $v=h+w$. Let $R=R_{\O\cap B_r(x_0)}$ be the resolvent of $-b^{-1}\dive(A\nabla\cdot)$ in $\O\cap B_r(x_0)$. We have the estimates $\|R\|_{\mathcal{L}(L^2,L^{2^\ast})}\leq C_d$ where $2^\ast=\frac{2d}{d-2}$ and $\|R\|_{\mathcal{L}(L^d,L^\infty)}\leq Cr$ by \cite[Lemma 2.1]{trey-19}, where the constant $C$ depends only on $d, \lambda_{\text{\tiny\sc A}}$ and $c_b$. Notice also that we have
\begin{equation*}
v=\lambda^nR^n(v)+\sum_{i=0}^{n-1}\lambda^iR^i(h),
\end{equation*}
and that $\|h\|_{L^\infty(\O\cap B_r(x_0))}\leq \|u\|_{L^\infty(\partial B_r(x_0))}$ by the maximum principle.
Therefore, with an interpolation argument, there exists a dimensional constant $n\geq 1$ such that we have the estimate
\begin{equation*}
\|v\|_{L^\infty(\O\cap B_r(x_0))}\leq C\big(r\|v\|_{L^2(D;m)}+\|u\|_{L^\infty(\partial B_r(x_0))}\big),
\end{equation*}
where now $C$ also depends on $\lambda$. Hence, it remains only to estimate $\|v\|_{L^2(D;m)}$ to complete the proof.
Then, for $r_0$ small enough, we have
\begin{align*}
\int_Dv^2b &\leq 2\int_{\O\cap B_r(x_0)}(v-u)^2b+2\int_Du^2b \leq \frac{4}{\lambda_1(B_{r_0}(x_0))}\int_D\big(|\nabla v|^2 + |\nabla u|^2 \big)+2\int_Du^2b \\
&\leq \frac{1}{2\lambda}\tilde{J}(v) +\frac12\int_Dv^2b + \frac{1}{2\lambda_{\text{\tiny\sc A}}^2\lambda}\int_D|\nabla u|^2 + 2\int_Du^2b,
\end{align*}
which implies that (since $\tilde{J}(v)\leq\tilde{J}(u)$)
\begin{equation*}
\int_Dv^2b\leq \frac{1}{\lambda}\tilde{J}(u) +\frac{1}{\lambda_{\text{\tiny\sc A}}^2\lambda}\int_D|\nabla u|^2 + 4\int_Du^2b \leq \Big(\frac{2\lambda_{\text{\tiny\sc A}}^2}{\lambda}+4\Big)\|u\|_{H^1(D;m)}.
\end{equation*}
\end{proof}

\begin{proof}[Proof of Theorem \ref{t:boundary_harnack_eig}]
By Proposition \ref{p:reifenberg} and Theorem \ref{t:reif_implies_nta}, $\O^\ast_1$ is an NTA domain near $x_0$. Let $\alpha$ be the constant given by Corollary \ref{c:harnack_holder} and set $\beta=\frac{\alpha}{1+\alpha}$. Let $x,y\in B_r(x_0)$ and set $\overline{r}=|x-y|^\beta$, $d_x=d(x,\partial\O^\ast_1)$, $d_y=d(y,\partial\O^\ast_1)$. 
We divide the proof in three steps.

{\it Step 1.} Assume that $d_x, d_y \geq 2\overline{r}$. By a change of variables, it follows that $\tilde{u}(z)=\overline{r}^{-1}u_1(x+\overline{r}z)$ is solution of
\begin{equation*}
-\dive(\tilde{A}\nabla\tilde{u})=\overline{r}^2\lambda_1(\O^\ast)\tilde{b}\tilde{u}\quad\text{in}\quad B_2,
\end{equation*}
where we have set $\tilde{A}_z=A_{x+\overline{r}z}$ and $\tilde{b}(z)=b(x+\overline{r}z)$. By standard Schauder estimates (see \cite[Theorem 8.8]{gilbarg-trudinger-01}) we have 
\begin{equation*}
\|\tilde{u}\|_{C^{1,\delta_{\text{\tiny\sc A}}}(B_{1})}\leq C\big(\|\tilde{u}\|_{L^\infty(B_2)} + \overline{r}^2\lambda_1(\O^\ast)\|\tilde{b}\tilde{u}\|_{L^\infty(B_2)}\big),
\end{equation*}
where $C$ depends only on $d, c_{\text{\tiny\sc A}}$ and $\lambda_{\text{\tiny\sc A}}$. In particular,
\begin{equation*}
\|u_1\|_{C^1(B_{\overline{r}}(x))} \leq \|\tilde{u}\|_{C^1(B_{1})}\leq \|\tilde{u}\|_{C^{1,\delta_{\text{\tiny\sc A}}}(B_{1})} \leq C\|\tilde{u}\|_{L^\infty(B_2)} \leq \frac{C}{\overline{r}}.
\end{equation*}
Similarly, we have $\|u_i\|_{C^1(B_{\overline{r}}(x))} \leq C/\overline{r}$. On the other hand, by non-degeneracy of $u_1$ we have $u_1(x)\geq cd_x$ and $u_1(y)\geq cd_y$ for some constant $c>0$. Therefore, since $u_i$ is $L$-Lipschitz continuous (and because $y\in B_{\overline{r}}(x)$), we get
\begin{align*}
\bigg|\frac{u_i(x)}{u_1(x)}-\frac{u_i(y)}{u_1(y)}\bigg| &\leq \frac{|u_i(x)-u_i(y)|}{u_1(x)}+\frac{|u_1(x)-u_1(y)|\,|u_i(y)|}{u_1(x)u_1(y)}\leq \frac{C}{\overline{r}}|x-y|\Big(\frac{1}{cd_x} +\frac{Ld_y}{c^2d_xd_y}\Big) \\
&\leq \frac{C}{\overline{r}^2}|x-y| = C|x-y|^{1-2\beta}\leq C|x-y|^\beta,
\end{align*}
where the last inequality holds provided that $\beta\leq1/3$.

{\it Step 2.} Assume that $d_x\leq 2\overline{r}$. Let $\overline{x}\in\partial\O^\ast_1$ such that $d_x=|\overline{x}-x|$. 
We write for simplicity $\lambda_1=\lambda_1(\O^\ast)$, $\lambda_i=\lambda_i(\O^\ast)$ and $B=B_{6\overline{r}}(\overline{x})$. 
Since $u_i$ may change its sign, we consider the functions 
\begin{equation*}
\left\lbrace
         \begin{array}{ll}
              -\dive(A\nabla v_i)=\lambda_ibv_i,\qquad -\dive(A\nabla w_i)=\lambda_ibw_i&\text{in}\quad\O^\ast_1\cap B\\
              v_i=u_i^+,\qquad\qquad\qquad\quad \ \ w_i=u_i^-&\text{on}\quad\partial(\O^\ast_1\cap B).\\
         \end{array}
\right. 
\end{equation*}
These functions exist thanks to Lemma \ref{l:ext_positive_part} and we have $u_i=v_i-w_i$. 
We now set $m=\min_{z\in B}b(z)$ and $M=\max_{z\in B}b(z)$ and $I=(-1,1)$. Moreover, for $(z,z_{d+1})\in (\O^\ast_1\cap B)\times I$ we define the functions
\begin{align*}
&u_{1,m}(z,z_{d+1})=e^{-\sqrt{\lambda_1m}z_{d+1}}u_1(z)\qquad&u_{1,M}(z,z_{d+1})=e^{-\sqrt{\lambda_1M}z_{d+1}}u_1(z) \\
&u_{i,m}(z,z_{d+1})=e^{-\sqrt{\lambda_im}z_{d+1}}v_i(z)\qquad&u_{i,M}(z,z_{d+1})=e^{-\sqrt{\lambda_iM}z_{d+1}}v_i(z).
\end{align*}
We define the matrix-valued function $\tilde{A} : (\O^\ast_1\cap B)\times I\subset\R^{d+1}\rightarrow\text{Sym}_{d+1}^+$ by
\begin{equation*}
\tilde{A}_{(z,z_{d+1})}=\begin{pmatrix} A_z & 0 \\ 0 & 1 \end{pmatrix}
\qquad\text{for every}\quad (z,z_{d+1})\in (\O^\ast_1\cap B)\times I.
\end{equation*}
Moreover, we define the harmonic extensions of the above functions as follows
\begin{equation*}
\left\lbrace
         \begin{array}{ll}
              \dive(\tilde{A}\nabla h_{1,m})=\dive(\tilde{A}\nabla h_{1,M})=\dive(\tilde{A}\nabla h_{i,m})=\dive(\tilde{A}\nabla h_{i,M})=0&\text{in}\quad(\O^\ast_1\cap B)\times I\\
              h_{1,m}=u_{1,m}, \quad h_{1,M}=u_{1,M}, \quad h_{i,m}=u_{i,m}, \quad h_{i,M}=u_{i,M}&\text{on}\quad\partial\big[(\O^\ast_1\cap B)\times I\big]\\
         \end{array}
\right. 
\end{equation*}
Now, we get with an easy computation that
\begin{equation*}
\dive(\tilde{A}\nabla(u_{1,m}-h_{1,m}))=\lambda_1e^{-\sqrt{\lambda_1m}x_{d+1}}(m-b(x))u_1(x)\leq 0\quad\text{in}\quad(\O^\ast_1\cap B)\times I,
\end{equation*}
which, by the weak maximum principle, implies that $h_{1,m}\leq u_{1,m}$ in $(\O^\ast_1\cap B)\times I$. Similarly we have (since the functions $u_{i,m}, u_{i,M}$ are positive)
\begin{equation}\label{e:boundary_harnack_eig1}
h_{1,m}\leq u_{1,m},\quad u_{1,M}\leq h_{1,M},\quad h_{i,m}\leq u_{i,m},\quad u_{i,M}\leq h_{i,M}\quad\text{in}\quad (\O^\ast_1\cap B)\times I.
\end{equation}
Moreover, using again the maximum principle, we have the following inequalities
\begin{equation}\label{e:boundary_harnack_eig2}
\frac{e^{\sqrt{\lambda_1m}}}{e^{\sqrt{\lambda_1M}}}\,h_{1,m}\leq h_{1,M}\leq \frac{e^{\sqrt{\lambda_1M}}}{e^{\sqrt{\lambda_1m}}}\,h_{1,m},\quad
\text{in}\quad (\O^\ast_1\cap B)\times I,
\end{equation}
and similarly we have
\begin{equation}\label{e:boundary_harnack_eig3}
\frac{e^{\sqrt{\lambda_im}}}{e^{\sqrt{\lambda_iM}}}\,h_{i,m}\leq h_{i,M}\leq \frac{e^{\sqrt{\lambda_iM}}}{e^{\sqrt{\lambda_im}}}\,h_{i,m},\quad
\text{in}\quad (\O^\ast_1\cap B)\times I.
\end{equation}
Now, since $x,y\in B_{3\overline{r}}(\overline{x})\subset B$, we can use \eqref{e:boundary_harnack_eig1}, \eqref{e:boundary_harnack_eig2} and \eqref{e:boundary_harnack_eig3} to estimate
\begin{align*}
\frac{v_i(x)}{u_1(x)}-\frac{v_i(y)}{u_1(y)} & = \frac{u_{i,M}(x,0)}{u_{1,m}(x,0)}-\frac{u_{i,m}(y,0)}{u_{1,M}(y,0)} \leq \frac{h_{i,M}(x,0)}{h_{1,m}(x,0)}-\frac{h_{i,m}(y,0)}{h_{1,M}(y,0)} \\
&\leq \frac{e^{\sqrt{\lambda_iM}}}{e^{\sqrt{\lambda_im}}}\frac{h_{i,m}(x,0)}{h_{1,m}(x,0)}
-\frac{e^{\sqrt{\lambda_1m}}}{e^{\sqrt{\lambda_1M}}}\frac{h_{i,m}(y,0)}{h_{1,m}(y,0)} \\
&\leq \frac{h_{i,m}(x,0)}{h_{1,m}(x,0)}
-\frac{h_{i,m}(y,0)}{h_{1,m}(y,0)} +C\overline{r}\frac{h_{i,m}(x,0)}{h_{1,m}(x,0)} +C\overline{r}\frac{h_{i,m}(y,0)}{h_{1,m}(y,0)} 
\end{align*}
where the last inequality follows from the definitions of $m,M$ and the fact that $b$ is a Lipschitz continuous function. Now, observe that $\O^\ast_1\times I\subset\R^{d+1}$ is an NTA domain near $(\overline{x},0)$ with the same constants than $\O^\ast_1$. By Corollary \ref{c:harnack_holder}, setting $z_0=z_{3\overline{r}}(\overline{x},0)\in\R^{d+1}$, we have (notice also that $x,y\in B_{3\overline{r}}(\overline{x})$)
\begin{align*}
\frac{h_{i,m}(x,0)}{h_{1,m}(x,0)}
-\frac{h_{i,m}(y,0)}{h_{1,m}(y,0)} &\leq C\frac{h_{i,m}(z_0)}{h_{1,m}(z_0)} \left(\frac{|x-y|}{3\overline{r}}\right)^\alpha = C\overline{r}\frac{h_{i,m}(z_0)}{h_{1,m}(z_0)},
\end{align*}
where in the last equality we have used that $\overline{r}=|x-y|^\beta$ with $\beta=\frac{\alpha}{1+\alpha}$. Moreover, by Theorem \ref{t:harnack} we have
\begin{equation*}
\frac{h_{i,m}(x,0)}{h_{1,m}(x,0)}\leq C\frac{h_{i,m}(z_0)}{h_{1,m}(z_0)},\qquad \frac{h_{i,m}(y,0)}{h_{1,m}(y,0)}\leq C\frac{h_{i,m}(z_0)}{h_{1,m}(z_0)},
\end{equation*}
which finally gives
\begin{equation}\label{e:boundary_harnack_eig4}
\frac{v_i(x)}{u_1(x)}-\frac{v_i(y)}{u_1(y)} \leq C\overline{r}\frac{h_{i,m}(z_0)}{h_{1,m}(z_0)}.
\end{equation}
Then, using \eqref{e:boundary_harnack_eig2} and \eqref{e:boundary_harnack_eig1} we have the following estimate
\begin{equation*}
h_{1,m}(z_0)\geq \frac{e^{\sqrt{\lambda_1m}}}{e^{\sqrt{\lambda_1M}}}h_{1,M}(z_0) \geq \frac{e^{\sqrt{\lambda_1m}}}{e^{\sqrt{\lambda_1M}}}u_{1,M}(z_0) \geq \left(\frac{e^{\sqrt{\lambda_1m}}}{e^{\sqrt{\lambda_1M}}}\right)^2u_{1,m}(z_0)\geq Cu_{1,m}(z_0).
\end{equation*}
Now, in view of the definition of $z_0=z_{3\overline{r}}(\overline{x},0)\in\R^{d+1}$ we have $d(z_0, \partial(\O^\ast_1\times I))>3\overline{r}M^{-1}$ and by non-degeneracy of $u_1$ (Proposition \ref{p:nondegu1}) it follows that $u_{1,m}(z_0)\geq C\overline{r}$. Moreover, by \eqref{e:ext_positive_partb}, it follows that $\|v_i\|_{L^\infty(B)}\leq C\overline{r}$ since $u_i$ is Lipschitz continuous. Therefore we have
\begin{equation*}
\frac{v_i(x)}{u_1(x)}-\frac{v_i(y)}{u_1(y)} \leq C\overline{r}\frac{u_{i,m}(z_0)}{u_{1,m}(z_0)}\leq C\overline{r}\frac{\|v_i\|_{L^\infty(B)}}{u_{1,m}(z_0)} \leq C\overline{r}=C|x-y|^\beta.
\end{equation*}
This concludes the proof since the same estimate also holds for $w_i$ and that we have $u_i/u_1=v_i/u_1-w_i/u_1$.
\end{proof}

As a consequence of the optimality condition of $U$ (Lemma \ref{l:opt_cond_U}) and of the boundary Harnack principle (Theorem \ref{t:boundary_harnack_eig}), it follows that the first eigenfunction is solution of a one-phase free boundary problem on $\O^\ast_1$.

\begin{lm}\label{l:opt_cond_u1}
For every $x_0\in\text{Reg}(\partial\O^\ast_1\cap D)$ there exist $r=r(x_0)>0$, $c\in(0,1)$ and a H\"{o}lder continuous function $g:\partial\O^\ast_1\cap B_r(x_0)\rightarrow[c,1]$ such that $u_1$ is a viscosity solution to the problem
\begin{equation*}
-\dive(A\nabla u_1)=\lambda_1(\O^\ast)bu_1\ \ \text{in}\ \ \O^\ast_1,\quad u_1=0\ \ \text{on}\ \ \partial\O^\ast_1,\quad |A^{\sfrac12}[\nabla u_1]|=g\sqrt{\Lambda}\ \ \text{on}\ \ \partial\O^\ast_1\cap B_r(x_0).
\end{equation*}
\end{lm}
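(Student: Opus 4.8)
The plan is as follows. The equation $-\dive(A\nabla u_1)=\lambda_1(\O^\ast)bu_1$ in $\O^\ast_1$ and the condition $u_1=0$ on $\partial\O^\ast_1$ hold by the very definition of $u_1$ (the eigenvalue equation on $\O^\ast$ is local and $u_1$ is continuous), so the only thing to prove is the free boundary condition in the viscosity sense. The idea is to build $g$ out of the boundary traces of the quotients $u_i/u_1$, which are at our disposal thanks to the boundary Harnack principle for eigenfunctions (Theorem \ref{t:boundary_harnack_eig}), and then to transfer to $u_1$ the optimality condition $|A^{\sfrac12}[\nabla|U|]|=\sqrt\Lambda$ already established for $|U|$ (Lemma \ref{l:opt_cond_U}), using the factorization $|U|=u_1\,G$ valid near $\partial\O^\ast_1$.

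More precisely, I would fix $x_0\in\text{Reg}(\partial\O^\ast_1\cap D)$ and choose $r=r(x_0)>0$ so small that $B_r(x_0)\subset D$, that $\O^\ast_1$ is NTA in $B_r(x_0)$, and that, by Theorem \ref{t:boundary_harnack_eig}, each quotient $u_i/u_1$ with $i=2,\dots,k$ extends to an $\alpha$-H\"older continuous function on $\overline{\O^\ast_1}\cap B_r(x_0)$ whose trace on $\partial\O^\ast_1\cap B_r(x_0)$ I call $g_i$. On $\O^\ast_1\cap B_r(x_0)$ one has, since $u_1>0$ there, $|U|=u_1\,G$ with $G:=\big(1+\sum_{i=2}^k(u_i/u_1)^2\big)^{\sfrac12}$; because $|U|^2\ge u_1^2$ and $|U|\le C_1u_1$ by Proposition \ref{p:nondegu1}, the function $G$ is valued in $[1,C_1]$ and extends $\alpha$-H\"older continuously up to $\partial\O^\ast_1\cap B_r(x_0)$, with trace $g_0:=\big(1+\sum_{i=2}^k g_i^2\big)^{\sfrac12}\in[1,C_1]$. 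I then set $g:=1/g_0$, which is $\alpha$-H\"older continuous and valued in $[c,1]$ with $c:=1/C_1\in(0,1]$ (enlarging $C_1$ if needed so that $c\in(0,1)$).

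The transfer step: since $\O^\ast$ has finitely many connected components and, by Proposition \ref{p:connected_comp_sep}, no two of them meet inside $D$, for each $x\in\partial\O^\ast_1\cap D$ there is $\rho>0$ with $\{|U|>0\}\cap B_\rho(x)=\O^\ast_1\cap B_\rho(x)$, so $u_1=|U|=0$ on $B_\rho(x)\setminus\O^\ast_1$. Now let $x\in\partial\O^\ast_1\cap B_r(x_0)$ and let $\varphi\in\mathcal C^2(D)$ be such that $\varphi^+$ touches $u_1$ by below at $x$. Given $\eps\in(0,1)$, shrink $\rho$ so that $G>g_0(x)-\eps$ on $\O^\ast_1\cap B_\rho(x)$ (possible by the one-sided continuity of $G$ at $x$) and put $\phi:=(g_0(x)-\eps)\varphi\in\mathcal C^2(D)$, so that $\phi^+=(g_0(x)-\eps)\varphi^+$. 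On $\O^\ast_1\cap B_\rho(x)$ one gets $|U|=u_1G\ge\varphi^+G\ge(g_0(x)-\eps)\varphi^+=\phi^+$, on $B_\rho(x)\setminus\O^\ast_1$ one has $|U|=0=\phi^+$, and $\phi^+(x)=0=|U(x)|$; hence $\phi^+$ touches $|U|$ by below at $x$, and Lemma \ref{l:opt_cond_U} gives $(g_0(x)-\eps)|A^{\sfrac12}_x[\nabla\varphi(x)]|\le\sqrt\Lambda$, so letting $\eps\to0$ yields $|A^{\sfrac12}_x[\nabla\varphi(x)]|\le g(x)\sqrt\Lambda$. The case when $\varphi^+$ touches $u_1$ by above is symmetric: with $\phi:=(g_0(x)+\eps)\varphi$ and $G<g_0(x)+\eps$ near $x$ one checks, using $u_1\le\varphi^+$ and $\phi^+\ge0$, that $\phi^+$ touches $|U|$ by above at $x$, whence Lemma \ref{l:opt_cond_U} and $\eps\to0$ give $|A^{\sfrac12}_x[\nabla\varphi(x)]|\ge g(x)\sqrt\Lambda$. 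This shows that $u_1$ is a viscosity solution of the claimed one-phase problem on $\partial\O^\ast_1\cap B_r(x_0)$, which is what is needed (and then reduces Theorem \ref{t:main} to the one-phase regularity theory).

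The genuinely hard analytic input here — the boundary Harnack principle for the eigenfunctions, i.e.\ the H\"older continuity of the $u_i/u_1$ up to $\text{Reg}(\partial\O^\ast_1\cap D)$ — is already in hand (Theorem \ref{t:boundary_harnack_eig}), so from that point on the Lemma is essentially bookkeeping. The only points requiring a little care are the local identification $\{|U|>0\}=\O^\ast_1$ near $x$ (so as to control $|U|$ on the complement of $\O^\ast_1$), which rests on Proposition \ref{p:connected_comp_sep}, and the verification that the rescaled functions $(g_0(x)\mp\eps)\varphi$ genuinely touch $|U|$, which uses only the one-sided continuity of the multiplier $G$ up to the boundary point $x$ — precisely the fact provided by Theorem \ref{t:boundary_harnack_eig}.
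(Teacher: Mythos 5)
Your proof is correct and follows the same strategy as the paper: extract the H\"older quotients $g_i=u_i/u_1$ from the boundary Harnack principle (Theorem \ref{t:boundary_harnack_eig}), set $g=(1+g_2^2+\cdots+g_k^2)^{-1/2}$, and transfer the viscosity condition from $|U|$ to $u_1$ via the factorization $|U|=u_1\,G$ on $\O^\ast_1$. The one genuine difference is in the test-function manipulation. The paper multiplies $\varphi$ by the \emph{variable} factor $\tfrac{1}{g(y)}-C|x-y|^\beta$ furnished by the quantitative H\"older estimate, and reads off the gradient bound at the single touching point; strictly speaking the resulting comparison function is only differentiable at $y$ rather than $C^2$, a small technical gap relative to the stated viscosity definition. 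You instead multiply by the \emph{constant} $g_0(x)\mp\eps$, which keeps the test function in $C^2$, uses only one-sided continuity of $G$ up to $\partial\O^\ast_1$ (not the H\"older modulus), and removes the wrinkle by passing $\eps\to0$; this is a little cleaner. Your explicit appeal to Proposition \ref{p:connected_comp_sep} to get $\{|U|>0\}\cap B_\rho(x)=\O^\ast_1\cap B_\rho(x)$ is a legitimate point the paper leaves implicit (it is folded into the reduction of Remark \ref{r:redu_Oast1}); note that it is genuinely needed for the ``touching by above'' direction, where one must control $|U|$ on $B_\rho(x)\setminus\O^\ast_1$, while for ``touching by below'' the inequality $\varphi^+\le u_1=0$ outside $\O^\ast_1$ already forces $\phi^+=0$ there.
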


\begin{proof}
Let $U=(u_1,\dots,u_k)$ be the first $k$ eigenfunctions on $\O^\ast$. By Theorem \ref{t:boundary_harnack_eig} the functions $g_i:=\frac{u_i}{u_1}:\partial\O^\ast\cap B_r(x_0)\rightarrow\R$, for $i=2,\dots,k$, are H\"{o}lder continuous. Therefore, the function $g:\partial\O^\ast_1\cap B_r(x_0)\rightarrow[0,1]$ defined by
\begin{equation}\label{e:def_g}
g=\frac{1}{\scriptstyle{\sqrt{\mathlarger{1+g_2^2+\cdots+g_k^2}}}}.  
\end{equation}
is also H\"{o}lder continuous. Since $u_1=g|U|$, it follows from the non-degeneracy of $u_1$ that $g\geq c:=C_1^{-1}$ where $C_1$ is the constant from Proposition \ref{p:nondegu1}. Now, let $y\in\partial\O^\ast_1\cap B_r(x_0)$ and let $\varphi\in C^2(D)$ be a function touching $u_1$ by below at the point $y$. Since $1/g$ is $\beta$-H\"{o}lder continuous for some $\beta\in(0,1)$, there exists $C>0$ such that for $\rho>0$ small enough we have
\begin{equation*}
\frac{1}{g(x)}\geq\frac{1}{g(y)}-C|x-y|^\beta\geq 0\quad\text{for every}\quad x\in\overline{\O^\ast_1}\cap B_{\rho}(y).
\end{equation*}
Therefore, the function $\psi(x)=\varphi(x)\Big(\frac{1}{g(y)}-C|x-y|^\beta\Big)$ is such that $\psi(y)=|U(y)|$ and satisfies 
\begin{equation}
\psi(x)\leq u_1(x)\bigg(\frac{1}{g(x_0)}-C|x-y|^\beta\bigg)\leq |U(x)|\quad\text{for every}\quad x\in\overline{\O^\ast_1}\cap B_{\rho}(y).
\end{equation}
This proves that $\psi$ touches $|U|$ by below at the point $y$. On the other hand, $\psi$ is differentiable at $y$ and we have $\nabla\psi(y)=\frac{1}{g(y)}\nabla\varphi(y)$. Therefore, using that $U$ is a viscosity solution of \eqref{e:viscpb}, it follows that
\begin{equation*}
\sqrt{\Lambda}\geq|A^{\sfrac12}_y[\nabla\psi(y)]|\geq\frac{1}{g(y)}|A_y^{\sfrac12}[\nabla\varphi(y)]|.
\end{equation*}
The case when $\varphi$ touches $u_1$ by above is similar. 
\end{proof}

\begin{teo}[Higher boundary Harnack principle for eigenvalues]\label{t:higher_boundary_harnack_eig}
Let $k\geq 1$ and assume that $\O^\ast_1$ is $C^{k,\alpha}$-regular near $x_0\in\partial\O^\ast_1\cap D$ for some $\alpha\in(0,1)$. If $k\geq 2$, suppose moreover that $a_{ij}, b \in C^{k-1,\alpha}(D)$. Then there exists $r>0$ such that for every $i=2,\dots,k$
\begin{equation*}
\frac{u_i}{u_1} \text{ is of class } C^{k,\alpha} \text{ in } \overline{\O^\ast_1}\cap B_r(x_0).
\end{equation*}
\end{teo}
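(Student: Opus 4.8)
The plan is to prove this higher-order boundary Harnack result by a ``hodograph-type'' reduction followed by a bootstrap. First I would flatten the boundary: since $\O^\ast_1$ is assumed $C^{k,\alpha}$-regular near $x_0$, there is a $C^{k,\alpha}$ diffeomorphism $\Phi$ of a neighborhood of $x_0$ onto a neighborhood of the origin mapping $\O^\ast_1$ to the half-space $\{x_d>0\}$ and $\partial\O^\ast_1$ to $\{x_d=0\}$. Pushing the equations forward by $\Phi$, both $u_1$ and $u_i$ solve equations of the form $-\dive(\widehat A\nabla\cdot)=f_j$ in $B_r^+$ with $\widehat A$ the pushed-forward (symmetric, uniformly elliptic) matrix, whose coefficients inherit the regularity $C^{\min(k-1,\,k)-?}$ — more precisely $\widehat A\in C^{k-1,\alpha}$ when $a_{ij}\in C^{k-1,\alpha}$ and the boundary is $C^{k,\alpha}$ (the change of variables costs one derivative, which is exactly why the hypothesis $a_{ij},b\in C^{k-1,\alpha}$ is imposed for $k\ge 2$), and $u_1,u_i$ vanish on $\{x_d=0\}$. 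By Proposition \ref{p:nondegu1} and the non-degeneracy/growth estimates (Propositions \ref{p:nondeg}, \ref{p:growth_boundary}), $u_1\simeq x_d$ near the boundary, so $\de_d u_1>0$ up to $\{x_d=0\}$ and the quotient $w:=u_i/u_1$ is bounded.

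Next I would derive the equation satisfied by the quotient $w=u_i/u_1$. A direct computation (divide the equation for $u_i$ by $u_1$, use the equation for $u_1$) shows that $w$ solves a degenerate elliptic equation of the form
\begin{equation*}
-\dive\big(u_1^2\,\widehat A\nabla w\big)=u_1^2\,(\lambda_1-\lambda_i)\,b\,w\qquad\text{in }B_r^+,
\end{equation*}
i.e. an equation with the weight $u_1^2$, which near $\{x_d=0\}$ behaves like $x_d^2$. This is precisely a weighted (Muckenhoupt $A_2$, with weight $\sim x_d^2$) degenerate elliptic equation of the type treated in the higher-order boundary Harnack literature — De Silva–Savin and, in the variational setting, the work following Fabes–Kenig–Serapioni. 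The idea is then to invoke (or reprove) the Schauder-type regularity for such weighted equations: if the weight $u_1/x_d$ is $C^{k-1,\alpha}$ up to the boundary and $\widehat A\in C^{k-1,\alpha}$, then solutions $w$ of the weighted equation are $C^{k,\alpha}$ up to $\{x_d=0\}$. To feed this in, I first need that $u_1/x_d$ (equivalently $u_1/\dist(\cdot,\partial\O^\ast_1)$) is itself $C^{k-1,\alpha}$ up to the boundary; this follows by standard (boundary) Schauder estimates for $u_1$ alone, since $u_1$ solves $-\dive(\widehat A\nabla u_1)=\lambda_1 b u_1$ in $B_r^+$ with $C^{k-1,\alpha}$ data and vanishes on the flat boundary, hence $u_1\in C^{k,\alpha}(\overline{B_{r/2}^+})$ and, by non-degeneracy $\de_d u_1\ge c>0$ on $\{x_d=0\}$, the ratio $u_1/x_d$ extends $C^{k-1,\alpha}$-regularly (it equals $\int_0^1 \de_d u_1(x',tx_d)\,dt$).

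The key step — and the main obstacle — is establishing the $C^{k,\alpha}$ up-to-the-boundary Schauder estimate for the degenerate equation $-\dive(u_1^2\widehat A\nabla w)=u_1^2(\lambda_1-\lambda_i)bw$. For $k=1$ this is already contained in Corollary \ref{c:harnack_holder} (via the NTA/boundary-Harnack machinery of the previous subsection, giving the $C^{0,\alpha}$ base case, which seeds the induction). For $k\ge 2$ the cleanest route is an induction on $k$: assuming $\O^\ast_1$ is $C^{k,\alpha}$ and $a_{ij},b\in C^{k-1,\alpha}$, one already knows from the case $k-1$ that $w\in C^{k-1,\alpha}$; differentiating the flattened weighted equation in the tangential directions $\de_\tau$ shows $\de_\tau w$ solves an equation of the same degenerate type with right-hand side of regularity $C^{k-2,\alpha}$, so $\de_\tau w\in C^{k-1,\alpha}$ by the inductive hypothesis, giving all tangential derivatives of $w$ up to order $k$ with the right H\"older modulus; the single normal derivative $\de_d w$ is then recovered algebraically from the equation (solving for $\de_d(u_1^2\widehat a_{dd}\de_d w)$ in terms of quantities already known to be $C^{k-1,\alpha}$, then dividing by $u_1^2\sim x_d^2$ and integrating, using that $\de_d w$ is bounded). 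The delicate point throughout is the interplay between the vanishing of the weight $u_1^2$ at the boundary and the loss of one derivative in the change of variables, which forces the precise hypothesis $a_{ij},b\in C^{k-1,\alpha}$; one must check at each step that the right-hand sides genuinely sit in $C^{k-2,\alpha}$ with the weight $u_1^2$ and that no additional boundary loss occurs. Alternatively, one can bypass the induction entirely by citing the higher-order boundary Harnack principle for divergence-form equations directly (as in the $C^\infty$-coefficient case treated by De Silva–Savin and adapted in \cite{de-silva-savin-19}), after performing the flattening and verifying the weight $u_1/x_d$ has the required regularity; I would present the argument in this self-contained inductive form but remark that it is the exact divergence-form analogue of the known results.

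Finally, undoing the flattening diffeomorphism $\Phi$, which is $C^{k,\alpha}$, preserves $C^{k,\alpha}$ regularity of $w=u_i/u_1$, so $u_i/u_1\in C^{k,\alpha}(\overline{\O^\ast_1}\cap B_{r'}(x_0))$ for some smaller $r'>0$, completing the proof.
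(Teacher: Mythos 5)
Your overall strategy (flatten, derive the degenerate equation $\dive(u_1^2\widehat A\nabla w)=(\lambda_1-\lambda_i)\,b\,u_1^2\,w$ for $w=u_i/u_1$, check that $u_1/x_d\in C^{k-1,\alpha}$, then run a weighted Schauder bootstrap) reproduces the internal mechanism of the De Silva--Savin higher boundary Harnack theorem, and your computation of the equation for $w$ is correct. The paper takes a shorter route: it introduces an auxiliary first eigenfunction $\varphi>0$ of the same operator on a ball $B_R(x_0)$ chosen so that $\lambda_1(B_R)=\lambda_1(\O^\ast)$, observes that $\dive\big(\varphi^2A\nabla(u_1/\varphi)\big)=0$ and $\dive\big(\varphi^2A\nabla(u_i/\varphi)\big)=(\lambda_1(\O^\ast)-\lambda_i(\O^\ast))\,b\,u_i\,\varphi$ in $\O^\ast_1\cap B_{2r}(x_0)$ --- where $\varphi$ is bounded away from zero, so $\varphi^2A$ is a uniformly elliptic, \emph{non-degenerate} $C^{k-1,\alpha}$ matrix --- and then applies the divergence-form higher boundary Harnack principle of \cite{de-silva-savin-14} as a black box to the pair $(u_i/\varphi,\,u_1/\varphi)$, whose quotient is exactly $u_i/u_1$. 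The point of dividing by $\varphi$ is precisely to remove the zero-order terms so that the denominator solves a homogeneous equation, which is what the cited theorem requires; if you apply such a theorem directly to $u_i,u_1$ (or to the weighted equation) you must verify that a denominator satisfying $-\dive(A\nabla u_1)=\lambda_1 b u_1\neq 0$ is admissible.

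The genuine gap in your self-contained inductive version is the base case $k=1$. Corollary \ref{c:harnack_holder} only yields that $w=u_i/u_1$ is $C^{0,\alpha}$ up to the boundary, whereas the statement for $k=1$ requires $C^{1,\alpha}$. Your inductive step (differentiate the equation tangentially and apply the inductive hypothesis to $\de_\tau w$) cannot produce this: at the $C^{0,\alpha}$ stage $w$ is not known to be differentiable, so $\de_\tau w$ does not solve an equation in any usable sense, and the passage from $C^{0,\alpha}$ to $C^{1,\alpha}$ for the quotient is exactly the hard analytic core of De Silva--Savin's theorem (a pointwise boundary $C^{1,\alpha}$ estimate obtained by approximation with linear functions and compactness, not by differentiating the equation). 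So you must either reprove that step, or cite the higher boundary Harnack theorem --- in which case the correct reference for the divergence-form setting is \cite{de-silva-savin-14} rather than \cite{de-silva-savin-19} (which treats non-divergence operators), and the cleanest way to meet its hypotheses is the paper's auxiliary-eigenfunction trick.
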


\begin{proof}
Let $R>0$ such that there exists $\varphi\in H^1_0(B_R(x_0))$ satisfying $\varphi>0$ in $B_R(x_0)$ and solution of the equation
\begin{equation*}
-\dive(A\nabla\varphi)=\lambda_1(\O^\ast)b\varphi\quad\text{in}\quad B_R(x_0).
\end{equation*}
(More precisely, we extend $a_{i,j}$ and $b$ to bounded functions in $\R^d$ with $b\geq c_b$, and we choose $R>0$ such that $\lambda_1(B_R)=\lambda_1(\O^\ast)$). Let $2r<R$ be such that $\O^\ast_1$ is $C^{k,\alpha}$-regular in the ball $B_{2r}(x_0)\subset D$. Then we have
\begin{align*}
\dive\left(\varphi^2A\nabla\Big(\frac{u_1}{\varphi}\Big)\right) &= \dive\big(\varphi A\nabla u_1 - u_1A\nabla\varphi\big) \\
&= \varphi\dive(A\nabla u_1) + \nabla\varphi A\nabla u_1 - \nabla u_1 A\nabla\varphi -u_1\dive(A\nabla\varphi) = 0\ \ \text{in}\ \ \O^\ast_1\cap B_{2r}(x_0),
\end{align*}
and similarly
\begin{equation*}
\dive\left(\varphi^2A\nabla\Big(\frac{u_i}{\varphi}\Big)\right) = (\lambda_1(\O^\ast)-\lambda_i(\O^\ast))bu_i\varphi\quad\text{in}\quad \O^\ast_1\cap B_{2r}(x_0).
\end{equation*}
Now, the proof follows by \cite[Theorem 2.4]{de-silva-savin-14} for $k=1$ and by \cite[Theorem 3.1]{de-silva-savin-14} for $k\geq 2$, which say that $u_1/\varphi, u_i/\varphi \in C^{k,\alpha}(\overline{\O^\ast_1}\cap B_r(x_0))$. 
\end{proof}

\begin{proof}[Proof of Proposition \ref{p:Cinfty_reg}]
We prove the regularity by a finite induction on $l\in\{1,\dots,k\}$. For $l=1$, by \cite[Theorem 1.1]{de-silva-11} and Lemma \ref{l:opt_cond_u1} it follows that $\text{Reg}(\partial\O^\ast_1\cap D)$ is locally $C^{1,\alpha}$-regular. Now, if $\text{Reg}(\partial\O^\ast_1\cap D)$ is $C^{l,\alpha}$-regular, $l\leq k$, by Theorem \ref{t:higher_boundary_harnack_eig} and the definition of $g$ in \eqref{e:def_g}, we have that $g$ is a $C^{l,\alpha}$ function on $\text{Reg}(\partial\O^\ast_1\cap D)$. Therefore, in view of Lemma \ref{l:opt_cond_u1} and by \cite[Theorem 2]{kinderlehrer-nirenberg-77} it follows that $\text{Reg}(\partial\O^\ast_1\cap D)$ is locally $C^{l+1,\alpha}$-regular. This completes the proof.
\end{proof}

\subsection{Dimension of the singular set}\label{sub:est_dim}
We prove in this last subsection some kind of smallness of the singular set. We recall that $\O^\ast$ denotes an optimal set to \eqref{e:shapeopt} and that $\O^\ast_1$ stands for any connected component of $\O^\ast$ at which the first eigenfunction is positive. 

An estimate of the dimension of the singular set can be obtain as a consequence of the Federer's Theorem. Indeed, since $\O^\ast_1$ is a set of finite perimeter (Proposition \ref{p:finitperi}) and in view of the density estimate (Proposition \ref{p:density_est}), it follows from the Federer's Theorem (see, for instance, \cite[Theorem 16.2]{maggi-12}) that $\mathcal{H}^{d-1}(\text{Sing}(\partial\O^\ast_1\cap D))=0$. In Proposition \ref{p:dim_sing} below we provide a more precise estimate of the dimension of the singular set.

\begin{definition}\label{d:dstar}
We define $d^\ast$ as the smallest dimension which admits a one-homogeneous global
minimizer of the Alt-Caffarelli functional with exactly one singularity at zero.
\end{definition}

The exact value of the critical dimension $d^\ast$ is still unknown but we know that $d^*\in\{5,6,7\}$ (see \cite{jerison-savin-15} for $d^\ast\geq 5$ and \cite{de-silva-jerison-09} for $d^\ast\leq 7$). The following result on the smallness of the singular set is standard and was first proved in the framework of the minimal surfaces (for which the critical dimension is exactly $8$, see for example \cite[Chapter 11]{giusti-84}). Later, in \cite{weiss-99}, Weiss adapted this strategy for minimizers of the Alt-Caffarelli functional by introducing a monotonicity formula. In \cite{mazzoleni-terracini-velichkov-17}, the authors prove this result in the vectorial setting. In this section we follow the same approach and we extend this result to the case of variable coefficients.

\begin{prop}[On the dimension of the singular set]\label{p:dim_sing}
The singular part $\text{Sing}(\partial\O^\ast_1\cap D)$ is:
\begin{enumerate}
\item empty if $d<d^\ast$,
\item a discrete (locally finite) set if $d=d^\ast$,
\item of Hausdorff dimension at most $(d-d^\ast)$ if $d>d^\ast$, that is, $\mathcal{H}^{d-d^\ast+s}(\text{Sing}(\partial\O^\ast_1\cap D))=0$ for every $s>0$.
\end{enumerate}
\end{prop}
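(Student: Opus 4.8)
The plan is to run Federer's dimension reduction argument, in the form adapted to the Alt--Caffarelli functional by Weiss \cite{weiss-99} and to the vectorial setting by Mazzoleni, Terracini and Velichkov \cite{mazzoleni-terracini-velichkov-17}; the only points that need care are that the blow-up analysis is carried out in the frozen coordinates $F_{x_0}$ and that the Weiss formula of Proposition \ref{p:weissder} carries a lower-order error term $Cr^{\delta_{\text{\tiny\sc A}}}$. First I would recall that, by Lemmas \ref{l:homblowup}, \ref{l:minscalAC} and \ref{l:blowupxi}, every blow-up limit $B_0\in\mathcal{BU}_U(x_0)$ has the form $B_0=|B_0|\,\xi$ in the coordinates $F_{x_0}$, with $\tilde B_0=B_0\circ A^{\sfrac12}_{x_0}$ and $|\tilde B_0|$ a one-homogeneous global minimizer of the scalar Alt--Caffarelli functional \eqref{e:defscalarAF}; and that, by Lemmas \ref{l:densityUx} and \ref{l:carac_reg_part}, one has $\text{Sing}(\partial\O^\ast_1\cap D)=\{x_0\in\partial\O^\ast_1\cap D:\Theta_{U_{x_0}}(0)>\tfrac12\}$ with $\Theta_{U_{x_0}}(0)\in\{\tfrac12\}\cup[\tfrac12+\delta,1)$.

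The reduction rests on three ingredients, which I would isolate as Lemmas \ref{l:comp_weiss} and \ref{l:conv_sing_sets}. \emph{(i) Upper semicontinuity of the density.} The almost-monotonicity of $r\mapsto W(U_{x_0},r)+Cr^{\delta_{\text{\tiny\sc A}}}$ gives $\Lambda\omega_d\,\Theta_{U_{x_0}}(0)\le W(U_{x_0},r)+Cr^{\delta_{\text{\tiny\sc A}}}$ for every $r$, while $x\mapsto W(U_x,r)$ is continuous at fixed $r$ by the strong $H^1_{\text{loc}}$ and $L^1_{\text{loc}}$ convergence of the translated and rescaled functions (Proposition \ref{p:convblowup}); this is exactly the computation used in the proof of Proposition \ref{p:Reg_is_open}, and it yields that $x_0\mapsto\Theta_{U_{x_0}}(0)$ is upper semicontinuous, so $\text{Sing}(\partial\O^\ast_1\cap D)$ is relatively closed. \emph{(ii) Stability of the admissible class under iterated blow-up.} A blow-up of $|\tilde B_0|$ at a free boundary point $y$ is again a one-homogeneous global minimizer of the scalar functional (Proposition \ref{p:minAFonehom} applied after translating $y$ to the origin); if $y\neq 0$, one-homogeneity with respect to both $0$ and $y$ forces invariance in the direction $y$, so the new minimizer splits as $|w(x')|$ for a one-homogeneous global minimizer $w$ in $\R^{d-1}$, and the density at the vertex is preserved, with the vertex singular iff $\{w>0\}$ is not a half-space. \emph{(iii) Rigidity in low dimension.} By Definition \ref{d:dstar}, for $d<d^\ast$ no one-homogeneous global minimizer of the scalar Alt--Caffarelli functional in $\R^d$ has a singularity at $0$, so each $\tilde B_0$ has $\{|\tilde B_0|>0\}$ a half-space and $\Theta_{U_{x_0}}(0)=\tfrac12$.

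With these in hand the argument is the classical one (see \cite[Chapter 11]{giusti-84} and \cite{weiss-99, mazzoleni-terracini-velichkov-17}). Ingredient (iii) immediately gives $\text{Sing}(\partial\O^\ast_1\cap D)=\emptyset$ for $d<d^\ast$. For $d\ge d^\ast$ one argues by contradiction: if $\text{Sing}(\partial\O^\ast_1\cap D)$ had Hausdorff dimension $>d-d^\ast$, or, when $d=d^\ast$, possessed an accumulation point, then combining the closedness and upper semicontinuity from (i) with the splitting-off-lines mechanism of (ii) and Federer's covering/coarea extraction one would produce a one-homogeneous global minimizer of the scalar Alt--Caffarelli functional in dimension at most $d^\ast-1$ with a singularity at $0$, contradicting (iii); this gives discreteness when $d=d^\ast$ and $\mathcal{H}^{d-d^\ast+s}(\text{Sing}(\partial\O^\ast_1\cap D))=0$ for every $s>0$ when $d>d^\ast$. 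The initial step of the reduction is anchored by the fact that $\O^\ast_1$ has finite perimeter (Proposition \ref{p:finitperi}) and satisfies the density estimate (Proposition \ref{p:density_est}), so that $\mathcal{H}^{d-1}(\text{Sing}(\partial\O^\ast_1\cap D))=0$ to begin with. The main obstacle is ingredients (i)--(ii) in the variable-coefficient setting: one must check that the point-dependent changes of variables $F_{x_n}$ and the error $Cr^{\delta_{\text{\tiny\sc A}}}$ in the Weiss formula do not destroy the upper semicontinuity of the density nor the compactness of the normalized minimizers; this is precisely why the blow-ups are taken in the frozen coordinates and why the $o(1)$ error disappears in the limit, so that the genuinely delicate part of the proof is reduced to the constant-coefficient dimension reduction of \cite{weiss-99, mazzoleni-terracini-velichkov-17}.
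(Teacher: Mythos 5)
Your proposal is correct and follows essentially the same route as the paper: the Weiss--Federer dimension reduction of \cite{weiss-99,mazzoleni-terracini-velichkov-17}, reduced to the constant-coefficient case via the blow-up analysis in the frozen coordinates, with the two genuinely new ingredients being exactly the ones you isolate — the comparison of Weiss energies at nearby centers under the point-dependent change of variables (Lemma \ref{l:comp_weiss}) and the resulting convergence of singular sets along blow-up sequences (Lemma \ref{l:conv_sing_sets}). The paper's treatment of the case $d=d^\ast$ splits into two subcases (singular point of the blow-up limit away from the origin versus not), whereas you invoke the singular-set convergence directly, but this is the same mechanism.
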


We first prove two preliminary Lemmas and to this aim we extend the definition of the Weiss functional for any ball. 
Let $U\in H^1(\R^d,\R^k)$, $x\in\R^d$ and $r>0$. We set
\begin{equation*}
J(U,x,r)=\int_{B_r(x)}|\nabla U|^2 + \Lambda|\{|U|>0\}\cap B_r(x)|
\end{equation*}
and
\begin{equation*}
W(U,x,r)= \frac{1}{r^d}J(U,x,r)-\frac{1}{r^{d+1}}\int_{\partial B_r(x)}|U|^2.
\end{equation*}
Obviously we have $J(U,r)=J(U,0,r)$ and $W(U,r)=W(U,0,r)$.

\begin{lm}\label{l:comp_weiss}
Let $(x_n)_{n\in\N}\subset\partial\O^\ast_1\cap D$ be a sequence converging to $x_0\in \partial\O^\ast_1\cap D$ and let $B_n=B_{x_0,r_n}$ be a blow-up sequence with fixed center. We set $\tilde{B}_n=B_n\circ A^{\sfrac12}_{x_0}$ and $\tilde{\O}_n=\{|\tilde{B}_n|>0\}$. Then, up to a subsequence, the sequence $y_n=A^{-\sfrac12}_{x_0}\Big[\frac{x_n-x_0}{r_n}\Big]\in\partial\tilde{\O}_n$ converges to some  $y_0$ and, for every small $r>0$, there exists $n_0$ such that for every $n\geq n_0$ we have
\begin{equation}\label{e:comp_weissa}
W(U_{x_n},rr_n)\leq W(\tilde{B}_n,y_0,r) + C|x_0-x_n|^{\delta_{\text{\tiny\sc A}}/2}+C\frac{|y_0-y_n|}{r},
\end{equation}
where the constant $C>0$ depends only on $d, c_{\text{\tiny\sc A}}, \lambda_{\text{\tiny\sc A}}, \Lambda$ and the Lipschitz constant $L=\|\nabla U\|_{L^\infty(K)}$ of $U$ in some compact neighborhood $K\subset D$ of $x_0$.
\end{lm}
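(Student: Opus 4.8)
The plan is to deduce \eqref{e:comp_weissa} from two ingredients: an exact change of variables relating $W(U_{x_n},0,rr_n)$ (the Weiss energy of $U_{x_n}$ at the origin, in the $A_{x_n}$-frame) to $W(\tilde B_n,y_n,r)$ (in the $A_{x_0}$-frame), with an error governed by the H\"{o}lder modulus of $A$; and the fact that $y\mapsto W(\tilde B_n,y,r)$ is Lipschitz in its center with constant $O(1/r)$, which lets us replace $y_n$ by $y_0$. To start, since $|y_n|=|A^{-\sfrac12}_{x_0}[(x_n-x_0)/r_n]|\le\lambda_{\text{\tiny\sc A}}|x_n-x_0|r_n^{-1}$ is bounded in the regime in which the lemma is used, up to a subsequence $y_n\to y_0$ for some $y_0\in\R^d$, and $y_n\in\partial\tilde\O_n$ because $x_n\in\partial\O^\ast$ and $\tilde\O_n=A^{-\sfrac12}_{x_0}\big[(\O^\ast-x_0)/r_n\big]$. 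Write $T_0:=A^{\sfrac12}_{x_0}$, $T_n:=A^{\sfrac12}_{x_n}$ and consider the affine map $\Phi_n(\eta):=y_n+r_n^{-1}T_0^{-1}T_n\,\eta$; a direct computation from the definitions of $F_{x_n}$, $U_{x_n}$ and of $\tilde B_n(\xi)=r_n^{-1}U_{x_0}(r_n\xi)$ gives the pointwise identity $U_{x_n}(\eta)=r_n\,\tilde B_n(\Phi_n(\eta))$, hence $\{|U_{x_n}|>0\}=\Phi_n^{-1}(\tilde\O_n)$, $|\nabla U_{x_n}(\eta)|^2=\big(A^{-\sfrac12}_{x_0}A_{x_n}A^{-\sfrac12}_{x_0}\nabla\tilde B_n\cdot\nabla\tilde B_n\big)(\Phi_n\eta)$, and $\Phi_n(B_{rr_n})=y_n+T_0^{-1}T_n[B_r]$, an ellipsoid centered at $y_n$.

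Now one changes variables $\xi=\Phi_n(\eta)$ in each of the three pieces of $W(U_{x_n},0,rr_n)$. Using that $x\mapsto A^{\sfrac12}_x$ is $\tfrac{\delta_{\text{\tiny\sc A}}}{2}$-H\"{o}lder continuous, one has $T_0^{-1}T_n=\text{Id}+O(|x_n-x_0|^{\delta_{\text{\tiny\sc A}}/2})$, $A^{-\sfrac12}_{x_0}A_{x_n}A^{-\sfrac12}_{x_0}=\text{Id}+O(|x_n-x_0|^{\delta_{\text{\tiny\sc A}}})$ and $|\det(T_0^{-1}T_n)|=1+O(|x_n-x_0|^{\delta_{\text{\tiny\sc A}}})$. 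The change of variables turns the Dirichlet term into $r^{-d}|\det(T_0^{-1}T_n)|^{-1}\int_{\Phi_n(B_{rr_n})}\big(A^{-\sfrac12}_{x_0}A_{x_n}A^{-\sfrac12}_{x_0}\nabla\tilde B_n\cdot\nabla\tilde B_n\big)$, the measure term into $r^{-d}|\det(T_0^{-1}T_n)|^{-1}\Lambda|\tilde\O_n\cap\Phi_n(B_{rr_n})|$, and the boundary term into $r^{-(d+1)}\int_{\partial B_r(y_n)}|\tilde B_n|^2$ up to the analogous (ellipsoidal) distortion of the sphere. Since $U$ --- hence each $B_n$ and $\tilde B_n$ --- is uniformly Lipschitz on a compact neighborhood $K\subset D$ of $x_0$ (Theorem \ref{t:lipquasimin}), since $|\Phi_n(B_{rr_n})\,\triangle\,B_r(y_n)|\le C\,|x_n-x_0|^{\delta_{\text{\tiny\sc A}}/2}\,r^d$, since $\tilde B_n(y_n)=0$ forces $|\tilde B_n|\le CLr$ on $\partial B_r(y_n)$, and since each matrix quantity above is within $O(|x_n-x_0|^{\delta_{\text{\tiny\sc A}}})$ of its frozen value, every resulting discrepancy is bounded by $C(d,c_{\text{\tiny\sc A}},\lambda_{\text{\tiny\sc A}},\Lambda,L)\,|x_n-x_0|^{\delta_{\text{\tiny\sc A}}/2}$, where $L=\|\nabla U\|_{L^\infty(K)}$. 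This yields
\[
W(U_{x_n},rr_n)\ \le\ W(\tilde B_n,y_n,r)\ +\ C\,|x_n-x_0|^{\delta_{\text{\tiny\sc A}}/2}.
\]

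It then remains to control the dependence of $W(\tilde B_n,\cdot,r)$ on its center. If $y,y'$ both lie within distance $r$ of $y_0$, then using $|B_r(y)\,\triangle\,B_r(y')|\le C r^{d-1}|y-y'|$ and $\|\nabla\tilde B_n\|_{L^\infty(B_{Cr}(y_0))}\le L$ for the Dirichlet and measure terms, and using that $\tilde B_n(y_n)=0$ forces $|\tilde B_n|\le CLr$ on $\partial B_r(y)\cup\partial B_r(y')$ for $n$ large for the boundary term, one gets
\[
\big|W(\tilde B_n,y,r)-W(\tilde B_n,y',r)\big|\ \le\ CL^2\,\frac{|y-y'|}{r}.
\]
Taking $y=y_n$, $y'=y_0$ and combining with the previous display gives \eqref{e:comp_weissa} with the asserted dependence of the constant. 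I expect the main obstacle to be the bookkeeping in the change-of-variables step: in contrast with a pure dilation, $\Phi_n$ distorts $B_{rr_n}$ into an ellipsoid and multiplies the gradient form by $A^{-\sfrac12}_{x_0}A_{x_n}A^{-\sfrac12}_{x_0}$, so one must verify that each of the four sources of error --- the Jacobian $|\det(T_0^{-1}T_n)|^{-1}$, the perturbed integrand, the ellipsoidal domain of integration, and the surface measure on $\partial B_r(y_n)$ --- is controlled, uniformly in $n$ and in $r$ small, by the H\"{o}lder modulus of $A$ times a constant depending only on the allowed data and on the uniform Lipschitz constant of $U$ near $x_0$.
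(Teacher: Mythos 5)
Your proposal is correct and follows essentially the same route as the paper: both rest on the identity $U_{x_n}(\eta)=r_n\tilde B_n\big(y_n+r_n^{-1}A_{x_0}^{-\sfrac12}A_{x_n}^{\sfrac12}\eta\big)$, the $\tfrac{\delta_{\text{\tiny\sc A}}}{2}$-H\"older continuity of $A^{\sfrac12}$ to control the Jacobian, the perturbed quadratic form and the ellipsoidal distortion of $B_r$, symmetric-difference estimates of order $r^d|x_0-x_n|^{\delta_{\text{\tiny\sc A}}/2}$ and $r^{d-1}|y_0-y_n|$, and the Lipschitz bound together with $\tilde B_n(y_n)=0$ for the boundary term. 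The only (harmless) organizational difference is that you pass through $W(\tilde B_n,y_n,r)$ and then recenter to $y_0$ in a second step, whereas the paper compares directly with $W(\tilde B_n,y_0,r)$.
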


\begin{proof}
We first compare $J(U_{x_n},rr_n)$ and $J(\tilde{B}_n,y_0,r)$. Since $U_{x_n}=U\circ F_{x_n}$ by definition, we compute
\begin{align*}
J(U_{x_n},rr_n) &= \int_{B_{rr_n}}\big(|\nabla U_{x_n}(\xi)|^2 + \Lambda\ind_{\{|U_{x_n}(\xi)|>0\}}\big)\,d\xi \\
&=\int_{B_{rr_n}}\big(A_{x_n}\nabla U\cdot\nabla U + \Lambda\ind_{\{|U|>0\}}\big)\circ F_{x_n}(\xi)\,d\xi \\
&\leq \int_{B_{rr_n}}\big(A_{x_0}\nabla U\cdot\nabla U + \Lambda\ind_{\{|U|>0\}}\big)\circ F_{x_n}(\xi)\,d\xi  + \omega_d(rr_n)^dL^2c_{\text{\tiny\sc A}}|x_0-x_n|^{\delta_{\text{\tiny\sc A}}},
\end{align*}
where in the last inequality we have used that the coefficients $a_{ij}$ are $\delta_{\text{\tiny\sc A}}$-H\"{o}lder continuous, that is $\|A_{x_0}-A_{x_n}\|\leq c_{\text{\tiny\sc A}}|x_0-x_n|^{\delta_{\text{\tiny\sc A}}}$. We perform the change of variables $\tilde{\xi}=r_n^{-1}F^{-1}_{x_0}\circ F_{x_n}(\xi)$ and set $B=y_n+A^{-\sfrac12}_{x_0}A^{\sfrac12}_{x_n}\big[B_r\big]$ to get
\begin{equation}\label{e:comp_weiss1}
\frac{1}{(rr_n)^d}J(U_{x_n},rr_n) \leq \frac{1}{r^d}\int_B\big(|\nabla\tilde{B}_n|^2+\Lambda\ind_{\{|\tilde{B}_n|>0\}}\big)\,\big|\det(A^{-\sfrac12}_{x_n}A^{\sfrac12}_{x_0})\big|\,d\tilde{\xi} + \omega_dL^2c_{\text{\tiny\sc A}}|x_0-x_n|^{\delta_{\text{\tiny\sc A}}}.
\end{equation}
Moreover, since the coefficients of $A^{\sfrac12}$ are $\frac{\delta_{\text{\tiny\sc A}}}{2}$-H\"{o}lder continuous, we have the estimate of the determinant  $|\det(A^{-\sfrac12}_{x_n}A^{\sfrac12}_{x_0})\big| \leq 1+c_{\text{\tiny\sc A}}|x_0-x_n|^{\delta_{\text{\tiny\sc A}}/2}$ and the following estimate of the symmetric difference 
\begin{align*}
|B\triangle B_r(y_n)| &= |A^{-\sfrac12}_{x_0}A^{\sfrac12}_{x_n}\big[B_r\big]\triangle B_r|\leq \omega_dr^d\Big[\big(1+c_{\text{\tiny\sc A}}|x_0-x_n|^{\delta_{\text{\tiny\sc A}}/2}\big)^d-\big(1-c_{\text{\tiny\sc A}}|x_0-x_n|^{\delta_{\text{\tiny\sc A}}/2}\big)^d\Big]\\
&\leq \omega_dr^d\Big[\big(1+d2^dc_{\text{\tiny\sc A}}^d|x_0-x_n|^{\delta_{\text{\tiny\sc A}}/2}\big)-\big(1-d2^dc_{\text{\tiny\sc A}}^d|x_0-x_n|^{\delta_{\text{\tiny\sc A}}/2}\big)\Big]\leq r^dC|x_0-x_n|^{\delta_{\text{\tiny\sc A}}/2}.
\end{align*}
Similarly, for $n$ big enough so that $|y_0-y_n|\leq r/2$, we have
\begin{equation*}
|B_r(y_0)\triangle B_r(y_n)| \leq \omega_dr^d\bigg[\bigg(1+\frac{|y_0-y_n|}{r}\bigg)^d - \bigg(1-\frac{|y_0-y_n|}{r}\bigg)^d\bigg]\leq r^dC\frac{|y_0-y_n|}{r}.
\end{equation*}
Combining all these estimates \eqref{e:comp_weiss1} now gives (because $\tilde{B}_n$ is $\lambda_{\text{\tiny\sc A}}L$-Lipschitz continuous)
\begin{align}\label{e:comp_weiss2}
\nonumber\frac{1}{(rr_n)^d}J(U_{x_n},rr_n) &\leq \frac{1}{r^d}J(\tilde{B}_n,y_0,r)+\omega_dL^2c_{\text{\tiny\sc A}}|x_0-x_n|^{\delta_{\text{\tiny\sc A}}}+\frac{\lambda_{\text{\tiny\sc A}}^2L^2+\Lambda}{r^d}|B|c_{\text{\tiny\sc A}}|x_0-x_n|^{\delta_{\text{\tiny\sc A}}/2} + \\ 
&\quad +\frac{\lambda_{\text{\tiny\sc A}}^2L^2+\Lambda}{r^d}\Big[|B\triangle B_r(y_n)|+|B_r(y_0)\triangle B_r(y_n)|+|B|c_{\text{\tiny\sc A}}|x_0-x_n|^{\delta_{\text{\tiny\sc A}}/2} \Big]\\
\nonumber&\leq \frac{1}{r^d}J(\tilde{B}_n,y_0,r) + C|x_0-x_n|^{\delta_{\text{\tiny\sc A}}/2}+C\frac{|y_0-y_n|}{r} .
\end{align}
We now compare the boundary integral terms. Since $U_{x_n}(\xi)=r_n\tilde{B}_n(y_n+r_n^{-1}A^{-\sfrac12}_{x_0}A^{\sfrac12}_{x_n}(\xi))$ and by the change of variables $\tilde{\xi}=r_n^{-1}\xi+y_0$ we have
\begin{align*}
\int_{\partial B_{rr_n}}|U_{x_n}|^2(\xi)\,d\mathcal{H}^{d-1}(\xi) &= \int_{\partial B_{rr_n}}r_n^2|\tilde{B}_n|^2(y_n+r_n^{-1}A^{-\sfrac12}_{x_0}A^{\sfrac12}_{x_n}(\xi))\,d\mathcal{H}^{d-1}(\xi) \\
&= r_n^{d+1}\int_{\partial B_{r}(y_0)}|\tilde{B}_n|^2(y_n+A^{-\sfrac12}_{x_0}A^{\sfrac12}_{x_n}(\tilde{\xi}-y_0))\,d\mathcal{H}^{d-1}(\tilde{\xi}).
\end{align*}
Therefore, using that $\tilde{B}_n$ is $\lambda_{\text{\tiny\sc A}} L$-Lipschitz continuous, $\tilde{B}_n(y_n)=0$ and that $|y_0-y_n|\leq r/2$, we get that
\begin{align*}
&\frac{1}{r^{d+1}}\int_{\partial B_{r}(y_0)}|\tilde{B}_n|^2(\xi)\,d\mathcal{H}^{d-1}(\xi)-\frac{1}{(rr_n)^{d+1}}\int_{\partial B_{rr_n}}|U_{x_n}|^2(\xi)\,d\mathcal{H}^{d-1}(\xi)= \\
&\qquad\qquad=\frac{1}{r^{d+1}}\int_{\partial B_r(y_0)}\Big(|\tilde{B}_n|^2(\xi)-|\tilde{B}_n|^2(y_n+A^{-\sfrac12}_{x_0}A^{\sfrac12}_{x_n}(\xi-y_0)) \Big)\,d\mathcal{H}^{d-1}(\xi) \\
&\qquad\qquad\leq\frac{\lambda_{\text{\tiny\sc A}}^2L^2}{r^{d+1}}\int_{\partial B_r(y_0)}\big|A^{-\sfrac12}_{x_0}(A^{\sfrac12}_{x_0}-A^{\sfrac12}_{x_n})(\xi-y_0)+y_0-y_n\big|\,\big(|\xi-y_n| + \lambda_{\text{\tiny\sc A}}^2r \big)\,d\mathcal{H}^{d-1}(\xi) \\
&\qquad\qquad\leq C|x_0-x_n|^{\delta_{\text{\tiny\sc A}}/2}+C\frac{|y_0-y_n|}{r}.
\end{align*}
Now, in view of \eqref{e:comp_weiss2} we get \eqref{e:comp_weissa}. This completes the proof.
\end{proof}

In the following Lemma we prove that if $\tilde{B_n}$ is a blow-up sequence with fixed center converging to $\tilde{B}_0$, then locally the singular set of $\{|\tilde{B}_n|>0\}$ must lie close to the singular set of $\{|\tilde{B}_0|>0\}$ (see \cite[Lemma 4.2]{weiss-99} and \cite[Lemma 5.20]{mazzoleni-terracini-velichkov-17}).

\begin{lm}\label{l:conv_sing_sets}
Let $x_0\in\partial\O^\ast_1\cap D$ and let $B_n=B_{x_0,r_n}$ be a blow-up sequence converging in the sense of Proposition \ref{p:convblowup} to some $B_0\in\mathcal{BU}_U(x_0)$. We set $\tilde{B}_n=B_n\circ A^{\sfrac12}_{x_0}$, $\tilde{B}_0=B_0\circ A^{\sfrac12}_{x_0}$, $\tilde{\O}_n=\{|\tilde{B}_n|>0\}$ and  $\tilde{\O}_0=\{|\tilde{B}_0|>0\}$. Then, for every compact set $K\subset\R^d$ and every open set $\mathcal{O}\subset\R^d$ such that $\text{Sing}(\partial\tilde{\O}_0)\cap K\subset \mathcal{O}$, we have $\text{Sing}(\partial\tilde{\O}_n)\cap K\subset \mathcal{O}$ for $n$ large enough.
\end{lm}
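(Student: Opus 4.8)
Following the approach of \cite[Lemma 4.2]{weiss-99} and \cite[Lemma 5.20]{mazzoleni-terracini-velichkov-17}, the plan is to argue by contradiction. Suppose the statement fails; then, after passing to a subsequence, we find points $z_n\in\big(\text{Sing}(\partial\tilde\O_n)\cap K\big)\setminus\mathcal O$, and by compactness of $K$ we may assume $z_n\to z_0\in K\setminus\mathcal O$. Since the change of coordinates $A_{x_0}^{\sfrac12}$ is a fixed linear isomorphism, part \textit{(4)} of Proposition \ref{p:convblowup} (which applies because $x_0\in\partial\O^\ast_1\cap D$) gives that $\overline{\tilde\O}_n$ and $\tilde\O_n^c$ converge locally in the Hausdorff sense to $\overline{\tilde\O}_0$ and $\tilde\O_0^c$; in particular $z_0\in\partial\tilde\O_0$. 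It then suffices to prove that $z_0\in\text{Sing}(\partial\tilde\O_0)$, which contradicts the hypothesis $\text{Sing}(\partial\tilde\O_0)\cap K\subset\mathcal O$.

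Next I would transport $z_n$ to a free boundary point of $\O^\ast$. Setting $x_n:=F_{x_0}(r_nz_n)=x_0+r_nA_{x_0}^{\sfrac12}z_n$, the identity $\tilde\O_n=A_{x_0}^{-\sfrac12}\big((\O^\ast-x_0)/r_n\big)$ shows $x_n\in\partial\O^\ast$; since $x_n\to x_0\in\partial\O^\ast_1\cap D$ and the connected components of $\O^\ast$ cannot meet inside $D$ (Proposition \ref{p:connected_comp_sep}), we have $x_n\in\partial\O^\ast_1\cap D$ for $n$ large. The same identity shows that, for every $\rho>0$, the rescaling $(\tilde\O_n)_{z_n,\rho}$ equals $A_{x_0}^{-\sfrac12}\big(\O^\ast_{x_n,r_n\rho}\big)$; as a linear isomorphism maps a half-space to a half-space, $z_n$ is a regular point of $\partial\tilde\O_n$ in the sense of Definition \ref{d:reg_sing} if and only if $x_n$ is a regular point of $\partial\O^\ast\cap D$. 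Hence $x_n\in\text{Sing}(\partial\O^\ast\cap D)$, and by Lemmas \ref{l:carac_reg_part} and \ref{l:densityUx} we obtain
\begin{equation*}
\lim_{\rho\to 0^+}W(U_{x_n},\rho)=\Lambda\omega_d\,\Theta_{U_{x_n}}(0)\ \ge\ \Lambda\omega_d\Big(\tfrac12+\delta\Big),
\end{equation*}
where $\delta>0$ is the universal constant of Lemma \ref{l:densityUx}.

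With $y_n:=A_{x_0}^{-\sfrac12}\big[(x_n-x_0)/r_n\big]=z_n\to z_0=:y_0$, Lemma \ref{l:comp_weiss} applies and yields, for every small fixed $r>0$ and all $n$ large,
\begin{equation*}
W(U_{x_n},rr_n)\ \le\ W(\tilde B_n,z_0,r)+C|x_0-x_n|^{\delta_{\text{\tiny\sc A}}/2}+C\,\frac{|z_0-z_n|}{r}.
\end{equation*}
On the other hand, the almost-monotonicity of Proposition \ref{p:weissder} applied at $x_n$ gives $W(U_{x_n},rr_n)+C(rr_n)^{\delta_{\text{\tiny\sc A}}}\ge\lim_{\rho\to 0^+}W(U_{x_n},\rho)\ge\Lambda\omega_d(\tfrac12+\delta)$. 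Combining these two inequalities and letting $n\to\infty$ with $r$ fixed --- so that $W(\tilde B_n,z_0,r)\to W(\tilde B_0,z_0,r)$ by the strong $H^1_{\text{loc}}$ convergence of $\tilde B_n$ and the $L^1_{\text{loc}}$ convergence of $\ind_{\tilde\O_n}$, while the error terms $|x_0-x_n|^{\delta_{\text{\tiny\sc A}}/2}$, $|z_0-z_n|/r$ and $(rr_n)^{\delta_{\text{\tiny\sc A}}}$ all vanish --- and then letting $r\to0^+$, we get
\begin{equation*}
\Lambda\omega_d\Big(\tfrac12+\delta\Big)\ \le\ \lim_{r\to0^+}W(\tilde B_0,z_0,r).
\end{equation*}
Since $\tilde B_0$ is a global minimizer of the Alt-Caffarelli functional (Lemma \ref{l:optblowup}), the right-hand side equals $\Lambda\omega_d$ times the density of $\tilde\O_0$ at $z_0$ (apply Proposition \ref{p:minAFonehom} to the translate $\tilde B_0(z_0+\cdot)$, which vanishes at $0$, together with the computation \eqref{e:densityUx2} for its one-homogeneous blow-up). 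Therefore this density is at least $\tfrac12+\delta>\tfrac12$, so the blow-up of $\tilde\O_0$ at $z_0$ is not a half-space, i.e. $z_0\in\text{Sing}(\partial\tilde\O_0)$ --- the desired contradiction.

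The step demanding the most care is the order of the limits above: one must first send $n\to\infty$ with the intermediate scale $r$ held fixed and small (this is precisely the regime in which Lemma \ref{l:comp_weiss} is formulated), and only afterwards let $r\to0^+$; and one must invoke the almost-monotonicity formula of Proposition \ref{p:weissder} in order to pass from the intermediate-scale Weiss energy $W(U_{x_n},rr_n)$ to the limiting density $\Theta_{U_{x_n}}(0)$. The remaining ingredients are routine: the uniform density gap on the side of $\O^\ast$ is Lemma \ref{l:densityUx}, the density characterization of $\text{Reg}(\partial\O^\ast\cap D)$ is Lemma \ref{l:carac_reg_part} (while on the $\tilde\O_0$ side the fact that a half-space blow-up has density exactly $\tfrac12$ is immediate from Definition \ref{d:reg_sing}), and the transfer of the regular/singular dichotomy between $\partial\tilde\O_n$ and $\partial\O^\ast\cap D$ only uses that $A_{x_0}^{\sfrac12}$ is a linear isomorphism.
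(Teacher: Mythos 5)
Your proof is correct and uses exactly the same ingredients as the paper's: the comparison estimate of Lemma \ref{l:comp_weiss}, the almost-monotonicity of Proposition \ref{p:weissder}, the density gap of Lemma \ref{l:densityUx} and the density characterization of regular points. The only (cosmetic) difference is the direction in which the contradiction is run --- the paper fixes $r$ using the regularity of the limit point $y_0$ and deduces that $x_n$ is regular, contradicting $y_n\in\text{Sing}(\partial\tilde{\O}_n)$, whereas you start from the singularity of $x_n$ and deduce that $z_0$ is singular, contradicting $z_0\notin\mathcal{O}$.
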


\begin{proof}
Arguing by contradiction there exist a compact set $K\subset\R^d$ and an open set $\mathcal{O}\subset\R^d$ such that $\text{Sing}(\partial\tilde{\O}_0)\cap K\subset \mathcal{O}$ and a sequence $(y_n)_{n\in\N}\subset\text{Sing}(\partial\tilde{\O}_n)\cap K\setminus \mathcal{O}$. Up to a subsequence, $y_n$ converges to some $y_0\in K\setminus \mathcal{O}$. Since $\partial\tilde{\O}_n$ locally Hausdorff converges to $\partial\tilde{\O}_0$ by Proposition \ref{p:convblowup}, it follows that $y_0\in\partial\tilde{\O}_0$ and, since $\text{Sing}(\partial\tilde{\O}_0)\cap K\subset \mathcal{O}$, we have that $y_0$ is a regular point of $\partial\tilde{\O}_0$, that is $y_0\in\text{Reg}(\partial\tilde{\O}_0)$. 
Since, moreover, $\tilde{B}_0$ is solution of the Alt-Caffarelli functional and is one-homogeneous, it follows that $\frac{1}{\Lambda\omega_d}\lim_{r\rightarrow 0^+}W(\tilde{B}_0,y_0,r) = \frac{1}{2}$ (see \cite[Lemma 5.4]{mazzoleni-terracini-velichkov-17}). We now fix $r>0$ such that
\begin{equation*}
\frac{1}{\Lambda\omega_d}W(\tilde{B}_0,y_0,r)\leq \frac12 + \frac{\delta}{4},
\end{equation*}
where $\delta$ is the constant from Lemma \ref{l:densityUx}. Now, since $\lim_{n\rightarrow\infty}W(\tilde{B}_n,y_0,r)=W(\tilde{B}_0,y_0,r)$, it follows that for every $n$ large enough we have
\begin{equation}\label{e:conv_sing_sets1}
\frac{1}{\Lambda\omega_d}W(\tilde{B}_n,y_0,r)\leq \frac{1}{\Lambda\omega_d}W(\tilde{B}_0,y_0,r) +\frac{\delta}{4} \leq \frac12 + \frac{\delta}{3}.
\end{equation}
Set $x_n=x_0+r_nA^{\sfrac12}_{x_0}(y_n)\in\partial\O^\ast_1\cap D$ and notice that $x_n$ converges to $x_0$. By Lemma \ref{l:comp_weiss} and \eqref{e:conv_sing_sets1} we get that for every $n$ large enough
\begin{equation*}\label{e:conv_sing_sets2}
\frac{1}{\Lambda\omega_d}W(U_{x_n},rr_n)\leq \frac12 + \frac{\delta}{3} + C|x_0-x_n|^{\delta_{\text{\tiny\sc A}}/2}+C\frac{|y_0-y_n|}{r}.
\end{equation*}
On the other hand, by Proposition \ref{p:weissder}, the function $\varphi_n(s)=W(U_{x_n},s)+Cs^{\delta_{\text{\tiny\sc A}}}$ is non-decreasing and hence
\begin{align*}
\Theta_{U_{x_n}}(0)&=\frac{1}{\Lambda\omega_d}\lim_{s\rightarrow 0^+}W(U_{x_n},s)=\frac{1}{\Lambda\omega_d}\lim_{s\rightarrow 0^+}\varphi_n(s) \leq\frac{1}{\Lambda\omega_d}\varphi_n(rr_n) = \frac{1}{\Lambda\omega_d}W(U_{x_n},rr_n)+C(rr_n)^{\delta_{\text{\tiny\sc A}}} \\
&\leq \frac12 + \frac{\delta}{3} + C|x_0-x_n|^{\delta_{\text{\tiny\sc A}}/2}+C\frac{|y_0-y_n|}{r} + C(rr_n)^{\delta_{\text{\tiny\sc A}}}< \frac12 + \frac{\delta}{2},
\end{align*}
where the last inequality holds for $n$ large enough. It follows from Lemmas \ref{l:densityUx} and \ref{l:carac_reg_part} that $x_n$ is a regular point of $\O^\ast_1$, in contradiction with the fact that $y_n=A^{-\sfrac12}_{x_0}\Big[\frac{x_n-x_0}{r_n}\Big]\in\text{Sing}(\partial\tilde{\O}_n)$.
\end{proof}

We are now in position to prove Proposition \ref{p:dim_sing}.

\begin{proof}[Proof of Proposition \ref{p:dim_sing}]
{\it (1)} Let $x_0\in\partial\O^\ast_1\cap D$ and $B_0\in\mathcal{BU}_U(x_0)$ and set $\tilde{B}_0=B_0\circ A^{\sfrac12}_{x_0}$ and $\tilde{\O}_0=\{|\tilde{B}_0|>0\}$. By Lemma \ref{l:minscalAC}, $|\tilde{B}_0|$ is a local minimizer of the scalar Alt-Caffarelli functional and since $d<d^\ast$, it follows that $\partial\tilde{\O}_0$ is the graph of a $C^{1,\alpha}$ function near $0$ (see \cite[Section 3]{weiss-99}). In particular, the density of $\tilde{\O}_0$ at $0$ is $1/2$ and hence $\Theta_{U_{x_0}}(0)=1/2$ by \eqref{e:densityUx1}. In view of Lemma \ref{l:carac_reg_part} we get that $x_0\in\text{Reg}(\partial\O^\ast_1\cap D)$.

{\it (2)} Assume by contradiction that there exists a sequence $(x_n)_{n\in\N}\subset\text{Sing}(\partial\O^\ast_1\cap D)$ converging to some $x_0\in\partial\O^\ast_1\cap D$. Set $r_n=|x_0-x_n|$ and let $B_n:=B_{x_0,r_n}$ be a blow-up sequence converging (in the sense of Proposition \ref{p:convblowup}) to some blow-up limit $B_0\in\mathcal{BU}_U(x_0)$. 
We consider two cases:

Case 1: $\text{Sing}(\partial\tilde{\O}_0)\backslash\{0\}\neq\emptyset$. By a rotation we may assume that $e_d\in\R^d$ is a singular point of $\partial\O_0$. Notice that $u_0=|\tilde{B}_0|$ is solution of the scalar Alt-Caffarelli functional and is one-homogeneous. Consider a blow-up limit $u_{00}$ of $u_0$ at $e_d$. By \cite[Lemma 3.1]{weiss-99}, $\{u_{00}>0\}$ is a minimal cone with vertex $0$ such that the whole line $te_d$, $t\in\R$, consists of singular points. Then, by \cite[Lemma 3.2]{weiss-99}, denoting the restriction $\overline{u}=u_{00|\R^{d-1}}$, we have that $\{\overline{u}>0\}$ is a minimal cone of dimension $(d-1)$ which is singular at $0$. Now, either $0$ is the only singular point and we have a contradiction with the definition of $d^\ast$, or we can repeat this procedure and get a contradiction since there are no three-dimensional singular minimal cones.

Case 2: $\text{Sing}(\partial\tilde{\O}_0)\backslash\{0\}=\emptyset$. Let $r>0$ to be chosen later. By Lemma \ref{l:comp_weiss}, we have for every $n$ large enough
\begin{equation*}
W(U_{x_n},rr_n)\leq W(\tilde{B}_n,y_0,r) + C\frac{|y_0-y_n|}{r}+C|x_0-x_n|^{\delta_{\text{\tiny\sc A}}/2}.
\end{equation*}
Now, by Proposition \ref{p:weissder}, the function $\varphi_n(s)=W(U_{x_n},s)+Cs^{\delta_{\text{\tiny\sc A}}}$ is non-decreasing and, since $x_n\in\text{Sing}(\partial\O^\ast_1\cap D)$, by Lemmas \ref{l:carac_reg_part} and \ref{l:densityUx} we have that $\frac{1}{\Lambda\omega_d}\lim_{s\rightarrow 0^+}W(U_{x_n},s)\geq \frac{1}{2}+\delta$. Therefore, we have
\begin{align}\label{e:dim_sing3}
\nonumber\frac{1}{2}+\delta &\leq \frac{1}{\Lambda\omega_d}\lim_{s\rightarrow 0^+}W(U_{x_n},s) = \frac{1}{\Lambda\omega_d}\lim_{s\rightarrow 0^+}\varphi_n(s) \leq \varphi_n(rr_n)= \frac{1}{\Lambda\omega_d}W(U_{x_n},rr_n)+C(rr_n)^{\delta_{\text{\tiny\sc A}}} \\
&\leq \frac{1}{\Lambda\omega_d}W(\tilde{B}_n,y_0,r) + C\frac{|y_0-y_n|}{r}+C|x_0-x_n|^{\delta_{\text{\tiny\sc A}}/2} +C(rr_n)^{\delta_{\text{\tiny\sc A}}}.
\end{align}
Now, since $y_0\in\partial\tilde{\O}_0\backslash\{0\}$ is a regular point of $\partial\tilde{\O}_0$ (and also because $\tilde{B}_0$ is solution of the Alt-Caffarelli functional and is one-homogeneous), it follows that $\frac{1}{\Lambda\omega_d}\lim_{r\rightarrow 0^+}W(\tilde{B}_0,y_0,r)=\frac{1}{2}$ (see \cite[Lemma 5.4]{mazzoleni-terracini-velichkov-17}). Using also that $\lim_{n\rightarrow\infty}W(\tilde{B}_n,y_0,r) = W(\tilde{B}_0,y_0,r)$, it follows that we can choose $r>0$ small enough such that for every $n$ large enough we have
\begin{equation*}
\frac{1}{\Lambda\omega_d}W(\tilde{B}_n,y_0,r) \leq \frac{1}{\Lambda\omega_d}W(\tilde{B}_0,y_0,r) + \frac{\delta}{4} \leq \frac{1}{2} + \frac{\delta}{2}.
\end{equation*}
Therefore, passing to the limit $n\rightarrow\infty$ in the equation \eqref{e:dim_sing3} gives a contradiction.

{\it (3)} Assume by contradiction that $\mathcal{H}^{d-d^\ast+s}(\text{Sing}(\partial\O^\ast_1\cap D))>0$ for some $s>0$. By Lemma \ref{l:conv_sing_sets} and \cite[Lemmas 4.3 and 4.4]{weiss-99} there exists $x_0\in\partial\O^\ast_1\cap D$ and a blow-up limit $B_0\in\mathcal{BU}_U(x_0)$ such that $\mathcal{H}^{d-d^\ast+s}(\text{Sing}(\partial\tilde{\O}_0))>0$, where we have set $\tilde{B}_0=B_0\circ A^{\sfrac12}_{x_0}$ and $\tilde{\O}_0=\{|\tilde{B}_0|>0\}$. Since $|\tilde{B}_0|$ is a minimizer of the Alt-Caffarelli functional and is one-homogeneous, the dimension reduction procedure in \cite[Lemma 4.5]{weiss-99} applies and yields to a minimizer $u:\R^{d^\ast}\rightarrow\R$ of the Alt-Caffarelli functional such that $\mathcal{H}^{s}(\text{Sing}(\partial\{u>0\}))>0$, in contradiction with \cite[Lemma 4.1]{weiss-99}.
\end{proof}

\bigskip\bigskip
\noindent {\bf Acknowledgments.} 
This work was partially supported  by the French Agence Nationale de la Recherche (ANR) with the projects GeoSpec (LabEx PERSYVAL-Lab, ANR-11-LABX-0025-01) and the project SHAPO (ANR-18-CE40-0013). 

\bibliographystyle{plain}
\bibliography{bib-t}

\begin{thebibliography}{10}

\bibitem{alt-caffarelli-81}
H.~W. {Alt} and L.~A. {Caffarelli}.
\newblock {Existence and regularity for a minimum problem with free boundary.}
\newblock {\em {J. Reine Angew. Math.}}, 325:105--144, 1981.

\bibitem{athanasopoulos-caffarelli-85}
I.~{Athanasopoulos} and L.~A. {Caffarelli}.
\newblock {A theorem of real analysis and its application to free boundary
  problems}.
\newblock {\em Communications on Pure and Applied Mathematics}, 38(5):499--502,
  1985.

\bibitem{briancon-lamboley-09}
T.~{Brian\c con} and J.~{Lamboley}.
\newblock {Regularity of the optimal shape for the first eigenvalue of the
  Laplacian with volume and inclusion constraints.}
\newblock {\em {Ann. Inst. Henri Poincar\'e, Anal. Non Lin\'eaire}},
  26(4):1149--1163, 2009.

\bibitem{bucur-12}
D.~{Bucur}.
\newblock {Minimization of the $k$-th eigenvalue of the Dirichlet Laplacian.}
\newblock {\em {Arch. Ration. Mech. Anal.}}, 206(3):1073--1083, 2012.

\bibitem{caffarelli-lin-07}
L.~A. {Caffarelli} and F.~H. {Lin}.
\newblock {An optimal partition problem for eigenvalues}.
\newblock {\em {J. Sci. Comput.}}, 31:5--14, 2007.

\bibitem{conti-terracini-verzini-03}
M.~{Conti}, S.~{Terracini}, and G.~{Verzini}.
\newblock {An optimal partition problem related to nonlinear eigenvalues.}
\newblock {\em {J. Funct. Anal.}}, 198(1):160--196, 2003.

\bibitem{david-engelstein-garcia-toro-19}
G.~{David}, M.~{Engelstein}, M.~{Smit Vega Garcia}, and T.~{Toro}.
\newblock {Regularity for almost-minimizers of variable coefficient
  Bernoulli-type functionals}.
\newblock {\em arXiv preprint arXiv:1909.05043}, 2019.

\bibitem{david-toro-15}
G.~{David} and T.~{Toro}.
\newblock {Regularity of almost minimizers with free boundary}.
\newblock {\em Calculus of Variations and Partial Differential Equations},
  54(1):455--524, 2015.

\bibitem{de-silva-11}
D.~{De Silva}.
\newblock {Free boundary regularity for a problem with right hand side.}
\newblock {\em {Interfaces Free Bound.}}, 13(2):223--238, 2011.

\bibitem{de-silva-jerison-09}
D.~{De Silva} and D.~{Jerison}.
\newblock {A singular energy minimizing free boundary.}
\newblock {\em {J. Reine Angew. Math.}}, 635:1--21, 2009.

\bibitem{de-silva-savin-14}
D.~{De Silva} and O.~{Savin}.
\newblock {A note on higher regularity boundary Harnack inequality}.
\newblock {\em arXiv preprint arXiv:1403.2588}, 2014.

\bibitem{de-silva-savin-19}
D.~{De Silva} and O.~{Savin}.
\newblock {A short proof of Boundary Harnack Inequality}.
\newblock {\em arXiv preprint arXiv:1909.00062}, 2019.

\bibitem{gilbarg-trudinger-01}
D.~{Gilbarg} and N.~S. {Trudinger}.
\newblock {\em {Elliptic partial differential equations of second order.
  Reprint of the 1998 ed.}}
\newblock Berlin: Springer, reprint of the 1998 ed. edition, 2001.

\bibitem{giusti-84}
E.~{Giusti}.
\newblock {Minimal surfaces and functions of bounded variation.}
\newblock {Monographs in Mathematics, Vol. 80. Boston-Basel-Stuttgart:
  Birkh\"auser. XII, 240 p. DM 96.00 (1984).}, 1984.

\bibitem{jerison-savin-15}
D.~{Jerison} and O.~{Savin}.
\newblock {Some remarks on stability of cones for the one-phase free boundary
  problem.}
\newblock {\em {Geom. Funct. Anal.}}, 25(4):1240--1257, 2015.

\bibitem{jerison-kenig-82}
D.~S. {Jerison} and C.~E. {Kenig}.
\newblock {Boundary behavior of harmonic functions in non-tangentially
  accessible domains}.
\newblock {\em Advances in Mathematics}, 46(1):80--147, 1982.

\bibitem{kenig-94}
C.~E. {Kenig}.
\newblock {\em {Harmonic analysis techniques for second order elliptic boundary
  value problems}}, volume~83.
\newblock American Mathematical Soc., 1994.

\bibitem{kenig-toro-97}
C.~E. {Kenig} and T.~{Toro}.
\newblock {Harmonic measure on locally flat domains}.
\newblock {\em Duke Math.}, 87(3):509--551, 1997.

\bibitem{kinderlehrer-nirenberg-77}
D.~{Kinderlehrer} and L.~{Nirenberg}.
\newblock {Regularity in free boundary problems.}
\newblock {\em {Ann. Sc. Norm. Super. Pisa, Cl. Sci., IV. Ser.}}, 4:373--391,
  1977.

\bibitem{kriventsov-lin-18}
D.~{Kriventsov} and F.~{Lin}.
\newblock {Regularity for shape optimizers: the nondegenerate case}.
\newblock {\em Communications on Pure and Applied Mathematics},
  71(8):1535--1596, 2018.

\bibitem{kriventsov-lin-19}
D.~{Kriventsov} and F.~{Lin}.
\newblock Regularity for shape optimizers: the degenerate case.
\newblock {\em Communications on Pure and Applied Mathematics},
  72(8):1678--1721, 2019.

\bibitem{lamboley-sicbaldi-19}
J.~{Lamboley} and P.~{Sicbaldi}.
\newblock {Existence and regularity of Faber Krahn minimizers in a Riemannian
  manifold}.
\newblock {\em arXiv preprint arXiv:1907.08159}, 2019.

\bibitem{maggi-12}
F.~{Maggi}.
\newblock {\em {Sets of finite perimeter and geometric variational problems. An
  introduction to geometric measure theory}}, volume 135.
\newblock Cambridge: Cambridge University Press, 2012.

\bibitem{mazzoleni-terracini-velichkov-17}
D.~{Mazzoleni}, S.~{Terracini}, and B.~{Velichkov}.
\newblock {Regularity of the optimal sets for some spectral functionals.}
\newblock {\em {Geom. Funct. Anal.}}, 27(2):373--426, 2017.

\bibitem{mazzoleni-terracini-velichkov-18}
D.~{Mazzoleni}, S.~{Terracini}, and B.~{Velichkov}.
\newblock {Regularity of the free boundary for the vectorial Bernoulli
  problem}.
\newblock {\em arXiv preprint arXiv:1804.09243}, 2018.

\bibitem{ramos-tavares-terracini-16}
M.~{Ramos} and H.~{Tavares}~S. {Terracini}.
\newblock {Extremality conditions and regularity of solutions to optimal
  partition problems involving Laplacian eigenvalues}.
\newblock {\em Archive for Rational Mechanics and Analysis}, 220(1):363--443,
  2016.

\bibitem{russ-trey-velichkov-19}
E.~{Russ}, B.~{Trey}, and B.~{Velichkov}.
\newblock {Existence and regularity of optimal shapes for elliptic operators
  with drift}.
\newblock {\em Calculus of Variations and Partial Differential Equations},
  58(6):199, 2019.

\bibitem{spolaor-trey-velichkov-19}
L.~{Spolaor}, B.~{Trey}, and B.~{Velichkov}.
\newblock {Free boundary regularity for a multiphase shape optimization
  problem}.
\newblock {\em Communications in Partial Differential Equations}, pages 1--32,
  2019.

\bibitem{trey-19}
B.~{Trey}.
\newblock {Lipschitz continuity of the eigenfunctions on optimal sets for
  functionals with variable coefficients}.
\newblock {\em arXiv preprint arXiv:1909.12597}, 2019.

\bibitem{weiss-99}
G.~S. {Weiss}.
\newblock {Partial regularity for a minimum problem with free boundary}.
\newblock {\em {J. Geom. Anal.}}, 9(2):317--326, 1999.

\end{thebibliography}

\end{document}